\numberwithin{equation}{section}
\tikzset{wei/.style={draw=red,double=red!40!white,double distance=1.5pt,thin}}
\tikzset{bdot/.style={fill,circle,color=blue,inner sep=3pt,outer sep=0}}
\tikzset{dir/.style={postaction={decorate,decoration={markings,mark=at position .8 with {\arrow[scale=1.3]{>}}}}}}
\keywords{} \setcounter{tocdepth}{1} \bibliographystyle{amsalpha}
 \newlength{\baseunit}               
\newcommand{\arxiv}[1]{\href{http://arxiv.org/abs/#1}{\tt arXiv:\nolinkurl{#1}}}
\newtheorem{theorem}{Theorem}[section]
\newtheorem{thm}[theorem]{Theorem}
\newtheorem{lemma}[theorem]{Lemma}
\newtheorem{remark}[theorem]{Remark}
\newtheorem{prop}[theorem]{Proposition}
\newtheorem{corollary}[theorem]{Corollary}
\newtheorem{ex}[theorem]{Example}
\newtheorem*{grad}{Grading convention}
\newtheorem{definition}[theorem]{Definition}
\newtheorem{problem}[theorem]{Problem}
  \newcommand{\nc}{\newcommand}
 \newcommand{\renc}{\renewcommand}
\nc{\Stosic}{Sto{\v{s}}i{\'c}\xspace}
     \newcommand{\cF}{{\mathcal F}} 
\newcommand{\cH}{{\mathcal H}}   \newcommand{\cL}{{\mathcal
L}}   \newcommand{\cO}{{\mathcal O}}
\newcommand{\cP}{{\mathcal P}} \newcommand{\cR}{{\mathcal R}} \newcommand{\cQ}{{\mathcal Q}} \newcommand{\cS}{{\mathcal
S}}   \newcommand{\cV}{{\mathcal V}}
\newcommand{\Bi}{\mathbf{i}}
\newcommand{\charge}{\mathbf{\mathfrak{z}}}
\newcommand{\oneone}{1_1}
\newcommand{\onetwo}{1_2}
\newcommand{\onethree}{1_3}
\newcommand{\twoone}{2_1}
\newcommand{\twotwo}{2_2}
\newcommand{\twothree}{2_3}
\newcommand{\threeone}{3_1}
\newcommand{\threetwo}{3_2}
\newcommand{\threethree}{3_3}
\newcommand{\fourone}{4_1}
\newcommand{\fourtwo}{4_2}
\newcommand{\fivetwo}{5_2}
 \newcommand{\mg}{\mathfrak{g}}
 \newcommand{\mD}{\mathfrak{D}}
 \newcommand{\fQ}{\mathfrak{Q}} \newcommand{\FH}{\mathfrak{H}} 
  \newcommand{\mV}{\mathbb{V}} 
\newcommand{\mC}{\mathbb{C}} \newcommand{\C}{\mathbb{C}}  \newcommand{\mZ}{\mathbb{Z}}
\newcommand{\bLa}{\boldsymbol{\Lambda}} \newcommand{\La}{\Lambda} \newcommand{\K}{\mathbbm{k}}
 \newcommand{\la}{\lambda} \newcommand{\al}{\alpha}
\newcommand{\gmu}{{\grave{\mu}}}
\newcommand{\gla}{{\grave{\la}}}
\newcommand{\Q}{\cQ(\bmuh)} \newcommand{\Rep}{\operatorname{Rep}}
\newcommand{\GRep}{\operatorname{GRep}}
\newcommand{\END}{\operatorname{End}}
\newcommand{\op}{\operatorname}
\newcommand{\linj}{\longrightarrow}
\newcommand{\becircled}{\mathaccent "7017}
\newcommand{\bla}{{\boldsymbol{\la}}}
\newcommand{\bnu}{{\underline{\boldsymbol{\nu}}}}
\newcommand{\CST}{C_{\sS,\sT}}
\newcommand{\bmu}{{\boldsymbol{\mu}}}
\newcommand{\bmuh}{{\hat{\bmu}}}
\newcommand{\bnuh}{{\hat{\boldsymbol{\nu}}}}
\newcommand{\blah}{{\hat{\boldsymbol{\la}}}}
 \newcommand{\de}{\bf} 
  \newcommand{\bc}{{\bf c}} \newcommand{\excise}[1]{} \newcommand{\bd}{{\bf d}}
\newcommand{\bb}{{\bf b}}
\newcommand{\be}{{\bf e}}
\newcommand{\bk}{{\bf k}}
\newcommand{\bbf}{{\bf f}}
 \newcommand{\eT}{e_{{T}}} \newcommand{\tAb}{\tilde{\bf{A}}^\bnu}
  \newcommand{\sS}{\mathsf{S}} \newcommand{\sT}{\mathsf{T}}
\newcommand{\Bf}{\mathbf{F}}
\newcommand{\Be}{\mathbf{E}}
\newcommand{\Bk}{\mathbf{K}}
\newcommand{\Compe}{\operatorname{VComp}_e}
\newcommand{\fCompe}{\operatorname{VCompf}_e}
\newcommand{\Fock}{\mathbb{F}}
\newcommand{\lsy}{(}
\newcommand{\rsy}{)}
\newcommand{\Serre}{\mathfrak{S}}
\newcommand{\mmod}{{-}{\operatorname{mod}}}
\newcommand{\mgmod}{{-}{\operatorname{mod}}}
\newcommand{\mpmod}{{-}{\operatorname{pmod}}}
\nc{\slehat}{\mathfrak{\widehat{sl}}_e}
\nc{\glehat}{\mathfrak{\widehat{gl}}_e}
\nc{\tU}{\mathcal{U}}
\nc{\eE}{\EuScript{E}}
\nc{\eF}{\EuScript{F}}
\nc{\fF}{\mathfrak{F}}
\nc{\fE}{\mathfrak{E}}
\nc{\fK}{\mathfrak{K}}
\nc{\res}{\op{res}}
\DeclareMathOperator{\Hom}{Hom} \DeclareMathOperator{\RHom}{RHom}
 \DeclareMathOperator{\End}{End}
 \DeclareMathOperator{\Ext}{Ext}
\begin{document} \pagestyle{plain}
\begin{center}
\noindent {\Large \bf Quiver Schur algebras and $q$-Fock space}
\bigskip

  \begin{tabular}{c@{\hspace{2cm}}c}
{\sc Catharina Stroppel}& {\sc Ben Webster}\\
{\tt stroppel@uni-bonn.de} & {\tt bwebster@virginia.edu}\\
Universit\"at Bonn & University of Virginia\\
Bonn, Deutschland & Charlottesville, VA, USA
\end{tabular}
\end{center}
\bigskip
\keywords{Schur algebra, quivers, Steinberg variety, convolution, Borel-Moore, quantum group, Hall algebra}

{\small
\begin{quote}
\noindent {\em Abstract.}
We develop a graded version of the theory of
cyclotomic $q$-Schur algebras, in the spirit of the work of
Brundan-Kleshchev on Hecke algebras and of Ariki on $q$-Schur
algebras. As an application, we identify the coefficients of the
canonical basis on a higher level Fock space with $q$-analogues of
the decomposition numbers of cyclotomic $q$-Schur algebras.

We present cyclotomic $q$-Schur algebras as a quotient of a
convolution algebra arising in the geometry of quivers - we call it {\bf quiver Schur algebra} - and also diagrammatically, similar in flavor to a recent construction of
Khovanov and Lauda.  They are manifestly graded and so equip
the cyclotomic $q$-Schur algebra with a non-obvious grading. On the way we construct a {\bf graded cellular basis} of
this algebra, resembling similar constructions for cyclotomic Hecke algebras.

The quiver Schur algebra is also interesting from the perspective of
higher representation theory. The sum of Grothendieck groups
of certain cyclotomic quotients is known to agree with a higher level Fock space.
We show that our graded version defines a higher $q$-Fock space (defined as a tensor product of
level 1 $q$-deformed Fock spaces).  Under this identification, the
indecomposable projective modules are identified with the canonical
basis and the Weyl modules with the standard basis. This allows us to
prove the already described relation between decomposition numbers and
canonical bases.
\end{quote}
}

\keywords{Quiver Grassmannians, cyclotomic Hecke algebra, cellular bases, Fock space, convolution algebra, Khovanov-Lauda-Rouquier algebras, Schur algebras}

\tableofcontents
\section{Introduction}

\renc{\thetheorem}{\Alph{theorem}}
Recent years have seen remarkable advances in higher
representation theory; the most exciting from the perspective of
classical representation theory were probably the proof of Brou\'e's conjecture for the symmetric groups by Chuang and
Rouquier \cite{CR04} and the introduction and study of graded versions of Hecke
algebras by Brundan and Kleshchev \cite{BKKL} with its Lie theoretic origins (\cite{BK1}, \cite{BS3}).
At the same time, the question of
finding categorical analogues of the usual structures of Lie theory
has proceeded in the work of Khovanov, Lauda, Rouquier,
Vazirani, the authors and others.
In this paper, we address a question of interest from both perspectives, representation
theory and higher categorical structures.

As classical (or quantum) representation theorists, we ask
\begin{center} {\em Is there a natural graded version of the $q$-Schur
    algebra and its higher level analogues, the cyclotomic $q$-Schur
    algebra?}
\end{center}
This question has been
addressed already in the special case of level 1 by Ariki \cite{Ariki}. We give here a more general
construction that both illuminates connections to geometry and is more
explicit. Our construction includes the case of ordinary Schur algebras, see Remark \ref{Schurlevel1} and the explicit example at the end of the paper.

As higher representation theorists, we ask
\begin{center}
 {\em Is there a natural categorification of $q$-Fock space
    and its higher level analogues with a categorical action of $\widehat{\mathfrak{sl}}_n$?}
\end{center}
We will show that the above two questions not only have natural
answers; they have the {\it same} answer.

Our main theorem is a {\it graded version} with a {\bf graded cellular basis} of the cyclotomic $q$-Schur algebra of Dipper, James and Mathas, \cite{DJM} and a combinatorics of {\it graded decomposition numbers} using higher Fock space.\\

To describe the results more precisely, let $\K$ be an algebraically
closed field and $n,\ell,e$ natural numbers with $e>1$ (we will allow
the possibility that $e=\infty$).  Let $q\in \K$
be a scalar with $e$  the smallest integer such that
$1+q+\cdots +q^{e-1}=0$ and $(Q_1,\ldots, Q_\ell)$ an $\ell$-tuple of
elements of $\K$ satisfying the same equation. In particular,
$Q_i=q^{\charge_i}$ for some $\charge_i\in \mathbb Z/e\mathbb Z$ (since $q^e=1$),
unless $q=1$, in which case $\K$ has characteristic $e$, and we set
$Q_i=\charge_i$.  The associated {\bf cyclotomic Hecke algebra} or
{\bf Ariki-Koike algebra} $$\mathfrak{H}(n;q,Q_1,\ldots,
Q_\ell)=\mathfrak{H}(S_n\wr \mathbb{Z}/\ell\mathbb{Z};q,Q_1,\ldots,
Q_\ell)$$ is the associative unitary $\K$-algebra with generators
$T_i$, $1\leq i\leq n-1$ modulo the following relations, for $1\leq
i<j-1< n-1$,
\[(T_0-Q_1)\cdots(T_0-Q_\ell)=1,\qquad(T_i-q)(T_i+1)=1,\]
\[T_iT_j=T_jT_i,\qquad T_iT_{i+1}T_i=T_{i+1}T_iT_{i+1},\qquad T_0T_1T_0T_1=T_1T_0T_1T_0.\]
The {\bf cyclotomic $q$-Schur algebra} we consider is the endomorphism ring
\begin{eqnarray}
\label{DefqSchur}
\mathbf{S}(n;q,Q_1,\ldots, Q_\ell)&=&\op{End}_{\mathfrak{H}(n;q,Q_1,\ldots, Q_\ell)}
\left(\bigoplus_{\hat{\mu}\in\La} M(\hat{\mu})\right)
\end{eqnarray}
where the sum runs over all {\it admissible} $\ell$-multi-compositions $\hat{\mu}$ of $n$ and $M(\hat{\mu})$ denotes the (signed) permutation module associated to $\hat{\mu}$. The admissibility condition we choose (see Lemma~\ref{admiss}) defines a certain subset $\La$ of all $\ell$-multi-compositions; in this way we pick via \eqref{DefqSchur} a specific representative out of the family of cyclotomic $q$-Schur algebras from \cite[\S6]{DJM}.

To make contact with the theory of quiver Hecke algebras, we encode the
parameters $(q,Q_1,\dots,Q_\ell)$ as a sequence $\bnu=(w_{\charge_1},\ldots,
w_{\charge_\ell})$ of fundamental weights for the affine Lie algebra
$\slehat$. The corresponding cyclotomic Hecke algebra only depends on the
multiplicities of each charge, that is, only on the weight
$\nu=\sum_{i}\nu_i$.  Thus, we write
\begin{eqnarray}
\label{SH}
\mathbf{S}^\bnu=\bigoplus_{n\geq
  0}\mathbf{S}(n;q,Q_1,\ldots, Q_\ell)&\text{ resp.}&
  \mathfrak{H}^\nu:=\bigoplus_{n\geq
  0}\mathfrak{H}(n;q,Q_1,\ldots, Q_\ell)
  \end{eqnarray}
  for the sums of corresponding cyclotomic $q$-Schur and Ariki-Koike  algebras of all different ranks. To this data we introduce then a certain $\mZ$-graded algebra $A^{\bnu}$ which we call the {\bf cyclotomic quiver Schur algebra}. The name stems from the fact that the algebra is related to the cyclotomic quiver Hecke
algebras $R^\nu$ and their tensor product analogues $T^\bnu$ (defined
in \cite{Webmerged}) like the finite Schur algebra is to the classical Hecke algebra. Our main result (Theorem \ref{schur-isomorphism}) says that  $A^{\bnu}$ is a graded version of $\mathbf{S}^\bnu$ given by an extension of the Brundan-Kleshchev isomorphism $\Phi^\nu:R^\nu\cong \mathfrak{H}^\nu$
from \cite{BKKL} between the diagrammatic cyclotomic quiver Hecke
algebras $R^\nu$ introduced in \cite{KL1} and the sum of cyclotomic Hecke algebra $\mathfrak{H}^\nu$.

\begin{theorem}
\label{theorem A}
  There is an isomorphism $\Phi^\bnu$ from $A^\bnu$ to the cyclotomic $q$-Schur
  algebra $\mathbf{S}^\bnu$, extending the isomorphism
  $\Phi^\nu:R^\nu\cong \mathfrak{H}^\nu$. In particular, the cyclotomic $q$-Schur algebra
  $ \mathbf{S}^\bnu$ from \eqref{SH} inherits a $\mZ$-grading.
\end{theorem}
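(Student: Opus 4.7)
The plan is to build $\Phi^\bnu$ as an extension of the Brundan--Kleshchev isomorphism $\Phi^\nu \colon R^\nu \to H^\nu$ by realizing both sides as endomorphism rings of a common module. Concretely, transport the Ariki--Koike permutation modules $M(\hat\mu)$ along $\Phi^\nu$ to obtain graded $R^\nu$-modules $\widetilde{M}(\hat\mu)$; then tautologically
$$\mathscr{H}^\bnu \;\cong\; \End_{R^\nu}\Bigl(\bigoplus_\mu \widetilde{M}(\hat\mu)\Bigr)$$
as ungraded algebras. To prove Theorem A it suffices to exhibit a graded algebra isomorphism between $A^\bnu$ and the right-hand side of this identification; such an isomorphism will simultaneously extend $\Phi^\nu$ (on the idempotent truncation corresponding to the finest composition) and transport the intrinsic $\mZ$-grading of $A^\bnu$ onto $\mathscr{H}^\bnu$.

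To construct the morphism, take the generators of $A^\bnu$ arising from its diagrammatic/convolution description (merges, splits, dots, and crossings acting on strand-type sequences indexed by compositions $\mu$) and let them act on $\bigoplus_\mu \widetilde{M}(\hat\mu)$ in the natural way; the restriction to the subalgebra attached to a single $\mu$ must recover $\Phi^\nu$ on $R^\nu$. Verifying the defining relations of $A^\bnu$ under this assignment is a direct, albeit lengthy, case analysis combining the Brundan--Kleshchev relations for $R^\nu$ with the new merge/split relations and the compatibilities between the two.

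Surjectivity of $\Phi^\bnu$ then follows because $\mathscr{H}^\bnu$ is generated by the Hecke generators together with the natural projections and inclusions between different permutation modules $M(\hat\mu)$: the Hecke generators lie in the image of $R^\nu \subset A^\bnu$ via $\Phi^\nu$, while the projection/inclusion morphisms are realised by the merge/split generators of $A^\bnu$.

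The main obstacle, and the heart of the argument, is injectivity. For this I would invoke the graded cellular basis of $A^\bnu$ constructed earlier in the paper, indexed by pairs of semistandard $\bnuh$-tableaux, and compare it with the Dipper--James--Mathas cellular basis $\{\CST\}$ of $\mathscr{H}^\bnu$, indexed by the same combinatorial set. The key claim is that $\Phi^\bnu$ sends the graded cellular basis element labelled by $(\sS,\sT)$ to an element whose expansion in the DJM basis is upper-unitriangular with respect to the cellular (dominance) order, with leading term $\CST$. This triangularity is the delicate point: it hinges on a precise analysis of residue sequences on tableaux, on the explicit graded lifts $\widetilde{M}(\hat\mu)$, and on control of grading shifts so that the resulting isomorphism is genuinely graded. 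Once established, it immediately forces $\Phi^\bnu$ to be a $\K$-linear bijection, completing the proof.
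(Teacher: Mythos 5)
Your overall architecture --- realize $\mathscr{H}^\bnu$ as the endomorphism ring of the (transported, graded) permutation modules and then match it against $A^\bnu$ using the two cellular bases --- is the right one, and the final bijectivity step is indeed settled in the paper by exactly the kind of combinatorial comparison you describe. But there is one step in your proposal that cannot be carried out as written: you propose to \emph{define} $\Phi^\bnu$ by sending the diagrammatic generators of $A^\bnu$ (merges, splits, dots, crossings) to operators on $\bigoplus_\mu \widetilde M(\hat\mu)$ and then ``verifying the defining relations.'' The paper does not possess a presentation of $A^\bnu$ by generators and relations --- it explicitly records only partial relations (Proposition \ref{easyrel}) and poses the existence of a complete presentation as an open problem. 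So there is no finite list of relations to check, and well-definedness of a map out of $A^\bnu$ cannot be established this way. The paper instead sidesteps this entirely: $\Phi^\bnu$ is the \emph{canonical} map $A^\bnu\to\End_{\eT A^\bnu\eT}(A^\bnu\eT)$, which exists for any algebra and idempotent. The real content is then (i) $\eT A^\bnu\eT\cong T^\bnu$ (Proposition \ref{Bencycliso}), and (ii) the identification $e_\xi A^\bnu\eT\cong x_\xi T^\bnu$ of each right $T^\bnu$-module $e_\xi A^\bnu\eT$ with a signed permutation module, which is achieved by showing that the ``chicken feet'' vector $\varphi_\xi$ generates a sign representation for the Young subgroup $S_\xi$. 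That last point is delicate: it depends on the specific normalization \eqref{PQchoice} of the Brundan--Kleshchev isomorphism (Proposition \ref{sign-transform}), a choice your proposal leaves implicit when you say ``transport $M(\hat\mu)$ along $\Phi^\nu$.''

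On the bijectivity, your plan is heavier than necessary and shifts the burden to two unproved claims: that $\mathscr{H}^\bnu$ is generated by Hecke generators together with projections/inclusions (used for surjectivity), and that $\Phi^\bnu(\CST)$ is unitriangular in the Dipper--James--Mathas basis (used for injectivity). The paper needs neither. Once the $\CST$ are shown to span $A^\bnu_n$ (Proposition \ref{CST-span}) and to be linearly independent (Lemma \ref{CST-lin-ind}), their number equals $\dim\mathbf{S}(n;q,Q_1,\dots,Q_\ell)$ by the DJM count, so the canonical surjection onto the endomorphism ring is an isomorphism by pure dimension comparison. The finer statement that the two cellular structures match is proved separately (Theorem \ref{cellular}), and there it is done by characterizing cellular ideals as morphisms factoring through lexicographically larger permutation modules rather than by a unitriangular change of basis. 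If you replace your ``check the relations'' step by the canonical double-centralizer map plus the sign-representation identification of $e_\xi A^\bnu\eT$, your argument collapses onto the paper's.
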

Like the cyclotomic quiver Hecke algebra, the algebra $A^\bnu$ can be realized as a natural quotient of a geometrically defined convolution algebra. This construction is based on the geometry of quivers using a certain category of flagged nilpotent representations (quiver partial flag varieties) of the cyclic affine type $A$ quiver. It naturally extends the work of Varagnolo and Vasserot \cite{VV} and clarifies the origin of the grading.\\

The construction of $A^\bnu$ (and its summand $A^\bnu_n$ for fixed $n$) proceeds in three steps. We first define an infinite dimensional convolution algebra, which we call {\bf quiver Schur algebra}, using flagged representations of the cyclic quiver $\Gamma$. This algebra only depends on $e$ (and has summands depending on $n$). The second step is to add some extra shadow vertices to the quiver and define an convolution algebra working with flagged representations of the extended quiver $\Gamma$ depending on the parameters $Q_i$ and $\ell$. Finally the last step is to pass to a certain finite dimensional quotient which is the desired algebra $A^\bnu$ (with the direct summand $A^\bnu_n$ for fixed $n$).

Although we focus here on the cyclotomic quotients, we want to stress that the quiver Schur algebra appears naturally in representation theory. It is, after completion, isomorphic to Vigneras' Schur algebra \cite{Vigneras} for the general linear {$p$}-adic group, see \cite{MS}.\\

To make explicit calculations we describe the quiver Schur algebra algebraically by considering a faithful representation on a direct sum of polynomial rings, extending the corresponding result for quiver Hecke algebras. Moreover, we give a diagrammatical description of the algebra by extending the diagram calculus of Khovanov and Lauda \cite{KL1}. In contrast to their work, however, we are not able to give a complete list of relations diagrammatically. Still, we have enough information to construct (signed) permutation modules for the cyclotomic Hecke algebra and show that our algebra $A^\bnu$ is isomorphic to the cyclotomic $q$-Schur algebra using the known three different description (geometric, algebraic and diagrammatical) of the quiver Hecke algebras. 

Note that this can also be viewed as an extension of work of the second author \cite[5.31]{Webmerged}, which showed that similar diagrammatic algebras were the endomorphism algebras of some, but not all, (signed) permutation
modules.

Independently, Ariki \cite{Ariki} introduced graded $q$-Schur
algebras and studied their permutation modules. This is a special
case of our results when $\ell=1$ (and $e$ and $n$ are not too small)
and indeed, we show that our grading coincides with Ariki's.

In the course of the proof we also establish, similar in spirit to the arguments in \cite{BS3}, the existence of a {\it graded cellular basis} (Theorem \ref{A-is-cellular}) in the sense of Hu and Mathas \cite{HM}:

To avoid case by case arguments for the combinatorics in the special case $e=2$  we restrict ourselves in the combinatorial part (starting in Section \ref{sec:basis-defined}) of the paper and for the remaining results of the introduction to the case $e>2$, although the main theorems extend to the case $e=2$ as well.

\begin{theorem}
\label{Theorem B}
The cyclotomic quiver Schur algebra $A^\bnu$ is a graded cellular algebra. Moreover,
\begin{itemize}
\item The cellular ideals coincide under $\Phi^\bnu$ with those of Dipper-James-Mathas.
\item The cell modules define graded lifts of the Weyl modules.
\end{itemize}
\end{theorem}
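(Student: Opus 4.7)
The plan is to construct a graded cellular datum on $A^\bnu$ directly from the diagrammatic/algebraic description of the algebra, and then verify its axioms by comparison with the known Dipper--James--Mathas cellular structure on $\mathscr{H}^\bnu$ via the isomorphism $\Phi^\bnu$ of Theorem~\ref{theorem A}. The poset of the datum will be $\ell$-multipartitions $\blah$ of $n$, ordered by dominance; the combinatorial labels for each $\blah$ will be pairs $(\sS,\sT)$ of standard $\blah$-tableaux; and the involution will come from vertical reflection of diagrams.

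First I would define, for each such pair, an explicit homogeneous element $\CST^\blah\in A^\bnu$ built from diagrammatic generators. Following the template of Hu--Mathas for the cyclotomic Hecke algebra, one fixes a distinguished reduced expression for the permutation $d(\sT)$ sending the initial tableau to $\sT$, encodes it as a composition of KLR-style strands, and then caps/cups it with the merge/split morphisms of $A^\bnu$ corresponding to the shape $\blah$; the element indexed by $\sS$ is obtained by applying the reflection involution. The degree of $\CST^\blah$ is then computed strand-by-strand and shown to equal $\deg(\sS)+\deg(\sT)$ in the sense of Brundan--Kleshchev--Wang.

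The key step is a triangularity statement: under $\Phi^\bnu$, the image $\Phi^\bnu(\CST^\blah)$ agrees with the DJM cellular basis element of the same label modulo the DJM cellular ideal spanned by elements of strictly dominant shape. To prove this I would compute $\Phi^\bnu(\CST^\blah)$ in the faithful polynomial/permutation-module representation provided by Theorem~\ref{theorem A}, read off its leading matrix coefficient with respect to the natural tableau basis, and argue via the combinatorics of reduced expressions in the extended symmetric group that every correction factors through a parabolic of strictly larger shape. With this in hand, the cellular filtration on $A^\bnu$ coincides set-theoretically with the pullback of the DJM filtration, the cellular multiplication formula
\[
a\cdot \CST^\blah\;\equiv\;\sum_{\sS'} r_a(\sS',\sS)\, C^{\blah}_{\sS',\sT}\pmod{A^\bnu_{>\blah}}
\]
is transported from $\mathscr{H}^\bnu$, and the diagrammatic involution is checked to coincide with the Mathas anti-involution. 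Graded cellularity then follows since every ingredient is homogeneous. The two bulleted consequences are immediate: matching of cellular ideals is precisely the triangularity, and each graded cell module has the same ungraded character as the corresponding Weyl module $W(\blah)$, hence is a graded lift of it.

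The main obstacle is the simultaneous control of degree and triangularity. The element $\CST^\blah$ will in general differ from a naive lift of the Murphy-like generator $m_{\sS,\sT}$ by extra terms produced when KLR dot-slide and braid relations are applied while performing the cups and caps; one must show, by induction on the dominance order together with a careful degree count using the residue sequence of $\blah$, that each such correction either vanishes or lies in $A^\bnu_{>\blah}$ and in the correct degree. This is the analogue for the Schur setting of the central computation carried out in \cite{HM} and \cite{BS3} for the cyclotomic Hecke algebras, and I expect it to proceed by similar means once the merge/split morphisms of $A^\bnu$ are handled explicitly via the diagrammatic calculus.
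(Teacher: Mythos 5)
There is a genuine gap, and it sits at the very first step: your labelling set is wrong. You take $M(\blah)$ to be pairs of \emph{standard} $\blah$-tableaux, which is the correct combinatorics for the cyclotomic Hecke algebra $H^\bnu$ (equivalently for the subalgebra $\eT A^\bnu\eT\cong T^\bnu$), but not for the quiver Schur algebra. The algebra $A^\bnu$ contains idempotents $e_{\grave\mu}$ for arbitrary vector compositions (thick strands), and a spanning set indexed by pairs of standard tableaux has only $\sum_{\blah}|{\rm Std}(\blah)|^2$ elements --- the dimension of $H^\bnu_n$, not of $\mathbf{S}(n;q,Q_1,\ldots,Q_\ell)$. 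The paper's cell datum uses pairs of \emph{semi-standard} multitableaux of the same shape but arbitrary types, precisely so that the basis elements $\CST=B_\sS^*B_\sT$ reach all the idempotents $e_{\grave\mu}$; correspondingly the cell modules have dimension $\sum_{\mu}|T_0(\blah,\mu)|$ and are graded lifts of Weyl modules, whereas your cell modules would be graded Specht modules. The genuinely hard content of the paper's proof is exactly the part your plan omits: relating thick idempotents to thin ones via the ``chicken feet'' and ``pitchfork'' elements $\varphi,\varphi^*$, the idempotent vanishing computations (Lemmas \ref{id-zero}--\ref{multisplit}) needed to show the $\CST$ span, and the injectivity of $a\mapsto\varphi a\varphi^*$ on fixed-type pieces, which reduces linear independence of the semi-standard basis to the Hu--Mathas basis of $T^\bnu$. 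The degree formula must likewise be extended from standard to semi-standard tableaux (Proposition \ref{degrees}), which is not covered by Brundan--Kleshchev--Wang.

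A secondary problem is logical order. You propose to verify the cellular axioms by transporting the DJM structure through $\Phi^\bnu$ of Theorem~\ref{theorem A}, but in the paper that isomorphism (Theorem \ref{schur-isomorphism}) is itself \emph{deduced from} the cellular basis: surjectivity of $h_\xi$ plus the count of linearly independent $\CST$'s is what forces the dimensions to match. So you cannot assume Theorem~\ref{theorem A} as an input without an independent proof of it; the cellularity of $A^\bnu$ has to be established intrinsically (spanning and linear independence inside $A^\bnu$), and only afterwards is the identification of the cellular ideals with the Dipper--James--Mathas ideals a matter of comparing maps between signed permutation modules that factor through larger shapes.
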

The grading on the basis vectors comes from a degree function defined on semistandard multitableaux extending known degree functions on standard tableaux. We show that this grading comes naturally from geometry and extends the grading on the tensor algebras from \cite{Webmerged}.

After circulating a draft of this paper, we received a preprint of Hu and
Mathas \cite{HMQ} which gives a definition of a different, but closely
related algebra which they also call a quiver Schur algebra.  They prove Theorems~\ref{Theorem B},~\ref{Theorem C} and
\ref{Theorem D} in this context for the (special) case of a linear,
rather than cyclic, quiver (that is, when $e=\infty$).  In \cite[5.31]{Webmerged}, the second author shows that Hu
and Mathas's algebra is Morita equivalent to certain
 tensor product algebras $T^\bla$; these algebras are, in turn,
Morita equivalent to those defined here when $e=\infty$ by Proposition \ref{T-morita}.

From both the geometric and the diagrammatic sides, our construction fits quite snugly inside Rouquier's program of categorical representation theory, \cite{Rou2KM}:
\begin{theorem}
  \label{Theorem C}
 The category of graded $A^\bnu$-modules carries a categorical
  action of $U_q(\slehat)$ in the sense of Rouquier.  As a
  $U_q(\slehat)$-module, its complexified graded Grothendieck group is canonically isomorphic to the $\ell$-fold tensor product
  $$\Fock_\ell=\Fock_1(\charge_\ell)\otimes \cdots \otimes \Fock_1(\charge_1),$$
  of level $1$ (fermionic) Fock space with central charge $\charge=({\charge_1},\ldots, {\charge_\ell})$
 given by $\bnu$.
\end{theorem}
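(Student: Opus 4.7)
The plan is to build a $2$-representation of $U_q(\slehat)$ on $A^{\bnu}\mmod$ using induction and restriction functors coming from natural embeddings $A^{\bnu}_n \hookrightarrow A^{\bnu}_{n+1}$, then identify the Grothendieck group using the cellular basis from Theorem~\ref{Theorem B}.

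First, I would construct the $2$-Kac-Moody action. For each residue $i\in \mZ/e\mZ$, the diagrammatic description of $A^{\bnu}$ provides an idempotent corresponding to adding an extra strand of residue $i$ on the right. This produces an $(A^{\bnu}_{n+1},A^{\bnu}_n)$-bimodule $\fF_i$, whose left-adjoint restriction bimodule is $\fE_i$, with a grading shift that makes the adjunction homogeneous. The tautological dots and crossings acting on these extra strands provide endomorphisms and natural transformations fitting into a KLR-style diagrammatic $2$-category. Verifying Rouquier's axioms reduces, thanks to the already-established extension $R^\nu \hookrightarrow A^\bnu$ underlying $\Phi^\bnu$, to (i) the quiver Hecke relations on strings of added strands, which hold by construction of $A^\bnu$, and (ii) the $\mathfrak{sl}_2$ biadjunction relations giving $\fE_i\fF_j\cong \fF_j\fE_i\oplus\bigoplus[\cdots]$ for $i\ne j$, and the mixed relation for $i=j$; both follow by a Mackey-style filtration argument using the geometric convolution description (analogous to the $R^\nu$-case of Kang--Kashiwara and to \cite{VV}), where the filtration is indexed by the possible positions of the new strand relative to the existing ones.

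Next, I would identify the Grothendieck group. By Theorem~\ref{Theorem B}, the graded cell modules $\Delta(\blah)$ over all $\ell$-multipartitions $\blah$ of $n\geq 0$ give a $\mZ[q,q^{-1}]$-basis of the graded Grothendieck group $[A^\bnu\gmod]$. The tensor product Fock space $\mathbb{F}_\ell$ with charge $\charge$ has a standard basis indexed by the same set of $\ell$-multipartitions. So the $\mZ[q,q^{-1}]$-linear map
\[
[\Delta(\blah)]\;\longmapsto\;|\blah\rangle_{\charge}\in \mathbb{F}_\ell
\]
is a canonical $\mathbb{C}(q)$-linear isomorphism of underlying vector spaces after complexifying.

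It then remains to check this is $U_q(\slehat)$-equivariant. Using the cellular filtration and the Mackey filtration of $\fF_i\Delta(\blah)$ into cell sub-quotients, one obtains that $[\fF_i]$ acts on $[\Delta(\blah)]$ as a sum, with appropriate $q$-powers, over all addable $i$-boxes of $\blah$; similarly $[\fE_i]$ acts via removable $i$-boxes. This is exactly the known formula for the action of the Chevalley generators $F_i,E_i$ of $U_q(\slehat)$ on the standard basis of $\mathbb{F}_\ell$ (using the central charge $\charge$ to compute residues and $q$-coefficients). Equivariance of the identification with the Fock space, and hence the isomorphism of $U_q(\slehat)$-modules, follows.

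The main obstacle, and where most of the technical work lies, is the verification of the Rouquier $2$-morphism relations--in particular the invertibility of the relevant natural transformation $\fE_i\fF_i\oplus \text{id}^{\oplus a} \to \fF_i\fE_i\oplus\text{id}^{\oplus b}$ on $A^\bnu_n\mmod$. Unlike the cyclotomic Hecke case $R^\nu$ (where one can appeal directly to the $\mathfrak{sl}_2$-categorification theorem of Kang--Kashiwara), here one must also control the extra degrees of freedom coming from the merge/split morphisms of the Schur algebra. The cleanest route is to pull the claim back to the convolution description on quiver partial flag varieties and invoke Varagnolo--Vasserot style localization, using that the Schur version amounts to working with partial rather than complete flags; the Mackey-type decomposition then falls out of a stratification of the relevant fibre product by the position of the added strand.
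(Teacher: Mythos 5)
Your plan for the categorical action is precisely the route the paper avoids, and the step you yourself flag as ``the main obstacle'' is the one that is never carried out: the verification of Rouquier's $\mathfrak{sl}_2$-relations (in particular the invertibility of the map $\fE_i\fF_i\oplus\mathrm{id}^{\oplus a}\to\fF_i\fE_i\oplus\mathrm{id}^{\oplus b}$) directly on $A^\bnu\mmod$ via a Mackey-type stratification of the partial-flag fibre products. No such argument exists in the sources you invoke for the Schur-type (thick-strand) setting, and producing one is a genuine piece of work, not a routine transcription of the Kang--Kashiwara or Varagnolo--Vasserot arguments. The paper's key idea, which your proposal is missing, is to sidestep this entirely: by the double centralizer property of Theorem~\ref{schur-isomorphism} (equivalently, faithfulness of $M\mapsto M\eT$ on projectives, using that no projective has socle killed by $\eT$), the category $A^\bnu\mpmod$ embeds as a full subcategory of $T^\bnu\mmod$ closed under $\fE_i$ and $\fF_i$; the categorical $\tU$-action on $T^\bnu\mmod$ is already established in \cite[2.11]{CTP}, so $A^\bnu\mmod$ inherits one with no new relation-checking. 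As written, your proposal does not establish the first assertion of the theorem.

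The decategorification half also diverges from the paper, and your version carries a second unproven step. You send cell modules to standard basis vectors and then need that $\fF_i W^\xi$ has a graded cell filtration whose subquotients are indexed by addable $i$-boxes with the correct $q$-powers; this graded branching rule at the Schur level is exactly what would have to be proved, and you assert it rather than derive it. The paper instead works with \emph{projectives}: it computes $\lsy[A^\bnu e_{\gmu}],[A^\bnu e_{\gla}]\rsy=\dim_q e_{\gmu}A^\bnu e_{\gla}$ from the graded cellular basis, matches this with $\lsy h_{\gmu},h_{\gla}\rsy$ in $\Fock_\ell$ using the combinatorial degree formula of Proposition~\ref{degrees}, and then uses non-degeneracy of the form to get the isomorphism $[A^\bnu e_{\gmu}]\mapsto h_{\gmu}$ (with $[W^\xi]\mapsto u_\xi$ falling out by duality), together with a dimension count on weight spaces. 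If you want to salvage your route, you would need to either prove the graded branching rule for Weyl modules of $A^\bnu$ or switch to the paper's inner-product argument, and in either case you must first repair the categorical-action step.
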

Here the parameter $q$ corresponds to the effect of grading shift on
the Grothendieck group, and is thus a formal variable not a complex
number. The action of the standard Chevalley generators $E_i,F_i$ is given by $i$-induction and $i$-restriction functors. In the ungraded case these functors and their connection to undeformed Fock space were independently studied by \cite{Wada}.

Our constructions can also be described using an affine version of the ``thick calculus''
introduced by Khovanov, Lauda, Mackaay and \Stosic for the
upper half of $\slehat$. Unlike in \cite{KLMS} however, our category has objects
which do not appear in the ``thin calculus'' and categorifies
the upper half of $U_q(\glehat)$ instead of $U_q(\slehat)$.

It is tempting to think this could easily be extended to a
graphical categorification of the Lie algebra $\glehat$, which is the
direct sum of $\slehat$ and a Heisenberg Lie algebra $\mathbb{H}$
modulo an identification of their centers. This is especially
intriguing and promising given the various interesting
categorification of Heisenberg algebras (\cite{CaLa}, \cite{KhovHeis}, \cite{LicSav}, \cite{survey}, \cite{CLS}) which appeared recently in the
literature. However, the connection cannot be as straightforward as
one might hope at first, since the functors associated to the standard
generators in the categorification of the upper half of $U_q(\glehat)$
simply do not have biadjoints (unlike those in $U_q(\slehat)$). This was already pointed out in \cite[5.1,5.2]{ShanFock}. In
their action on the categories of $A^\bnu$-modules, they send
projectives to projectives, but not injectives to injectives, so their
right adjoints are exact, but not their left adjoints.
\\

The Grothendieck group of graded $A^\bnu$-modules is naturally a $\mZ[q,q^{-1}]$-module and comes also along with several distinguished lattices and bases.
To describe them combinatorially we introduce a bar-involution on the
tensor product $\Fock_\ell$ of Fock spaces appearing in
Theorem~\ref{Theorem C} which allows us to define, apart from the
standard basis, two other distinguished bases: the {\bf canonical} and
{\bf dual canonical} bases. This canonical basis is a ``limit'' of
that for higher level $q$-Fock spaces defined by Uglov, \cite{Uglov}, in a sense we
describe later.  Our canonical isomorphism induces correspondences
\begin{samepage}
  \begin{eqnarray*}
    \text{bar involution}&\longleftrightarrow&\text{Serre-twisted duality}\\
    \text{canonical basis}&\longleftrightarrow& \text{indecomposable
      projectives}\hspace{.69cm}  (\operatorname{char}(\K)=0)\\
    \text{dual canonical basis}&\longleftrightarrow& \text{simple
      modules} \hspace{2.7cm}
    (\operatorname{char}(\K)=0)\\
    \text{standard basis}&\longleftrightarrow& \text{Weyl modules}
  \end{eqnarray*}
\end{samepage}
As a consequence we get  information about
(graded) decomposition numbers of cyclotomic $q$-Schur algebras:
\begin{theorem}[Theorem \ref{decompnumb}]\label{Theorem D}
 If $\K$ has characteristic 0,  the graded decomposition numbers of the cyclotomic $q$-Schur algebra
 are the coefficients of the canonical basis in terms of the standard basis on the higher level $q$-Fock space $\mathbb{F}_\ell$.
\end{theorem}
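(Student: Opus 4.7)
The plan is to deduce Theorem D by combining the graded cellular structure of $A^\bnu$ (Theorem B) with the Grothendieck group identification $K_0^{gr}(A^\bnu\mmod)_{\mathbb{C}(q)} \cong \mathbb{F}_\ell$ (Theorem C), so that it suffices to match three pieces of data on the two sides: Weyl modules with the standard basis, indecomposable projectives with the canonical basis, and graded decomposition multiplicities with basis-change coefficients. The graded decomposition number $d_{\blah,\bmuh}(q)$ is defined as the graded multiplicity $[W(\blah):L(\bmuh)]_q$. Since $A^\bnu$ is graded cellular, the projective cover $P(\bmuh)$ carries a graded Weyl filtration and the usual cellular BGG reciprocity yields the identity
\[
  [P(\bmuh)] \;=\; \sum_{\blah} d_{\blah,\bmuh}(q)\,[W(\blah)]
\]
in $K_0^{gr}(A^\bnu\mmod)$, up to the standard graded normalization; this is formal from Theorem B and the theory of graded cellular algebras.

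The first concrete step is to check that, under the isomorphism of Theorem C, the classes $[W(\blah)]$ go to the standard basis vectors $s_{\blah} \in \mathbb{F}_\ell$. This should be essentially built into the construction: the standard basis of $\mathbb{F}_\ell$ is indexed by $\ell$-multipartitions, which also index the cell modules of $A^\bnu$ by Theorem B, and the categorical $U_q(\slehat)$-action on $A^\bnu\mmod$ from Theorem C is set up so that the induction/restriction functors act on Weyl classes by the Fock space formulas. One compares the action on a highest weight vector (the empty multipartition) and induces, using exactness of the relevant functors on Weyl-filtered objects.

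The second step, which I expect to be the main obstacle, is matching the classes $[P(\bmuh)]$ with the canonical basis $b_{\bmuh}$. The natural way to do this is to characterize the canonical basis by the two conditions (i) bar-invariance and (ii) unitriangular expansion in the standard basis with off-diagonal coefficients in $q\mZ[q]$. Condition (ii) follows from the cellular filtration together with the fact that decomposition multiplicities vanish strictly outside the dominance order and reduce to the identity on the diagonal, a standard consequence of the graded cellular setup. Condition (i) is the delicate point: one must show that the bar involution on $\mathbb{F}_\ell$ introduced in this paper corresponds to Serre-twisted duality on projectives, so that $[P(\bmuh)]$ is bar-invariant. This will follow by identifying the bar involution combinatorially on the level of the tensor product of Fock spaces, checking it intertwines the $U_q(\slehat)$-action with its twist by the Chevalley antiinvolution, and matching this on the categorical side with the effect of the graded duality functor that fixes indecomposable projectives up to self-duality, in the spirit of the arguments in \cite{BS3}.

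Once both identifications are in place, applying the Grothendieck-group isomorphism of Theorem C to the displayed BGG reciprocity gives
\[
  b_{\bmuh} \;=\; \sum_{\blah} d_{\blah,\bmuh}(q)\,s_{\blah},
\]
which is exactly the assertion of Theorem D. The cleanest way to package the argument is therefore: prove Weyl $\leftrightarrow$ standard first, then prove bar-invariance of projective classes, then invoke the uniqueness of the canonical basis to identify $[P(\bmuh)] = b_{\bmuh}$, and finally read off the coefficients.
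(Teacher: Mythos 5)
Your overall architecture coincides with the paper's: graded BGG reciprocity expresses $[P^\xi]$ in the Weyl classes, the Weyl classes are matched with the standard basis of $\mathbb{F}_\ell$, and the projective classes are identified with the canonical basis via bar-invariance plus unitriangularity and a uniqueness argument (this is exactly the chain Theorem \ref{fock-space} $\to$ Theorem \ref{psi-properties} $\to$ Theorem \ref{decompnumb} $\to$ Theorem \ref{canonical}). However, there is one genuine gap. You assert that condition (ii) --- that the off-diagonal coefficients $e_{\xi\eta}(q)$ lie in $q\mZ[q]$ (equivalently $q^{-1}\mZ_{\geq 0}[q^{-1}]$ in the paper's normalization) --- is ``a standard consequence of the graded cellular setup.'' It is not. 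Graded cellularity yields unitriangularity with off-diagonal coefficients only in $\mZ[q,q^{-1}]$ and says nothing about which powers of $q$ occur; and without the strict degree bound, bar-invariance plus unitriangularity does \emph{not} uniquely characterize a basis (one could perturb $p_\xi$ by bar-invariant terms such as $(q+q^{-1})u_\eta$), so the final identification with the canonical basis collapses. The paper supplies this degree bound by a genuinely geometric input: there is a surjection $\Ext^*(\mathbf{IC}_{\xi},\mathbf{IC}_{\xi'})\to \Hom(P^\xi,P^{\xi'})$, and since perverse sheaves form the heart of a $t$-structure these Ext groups are concentrated in non-negative degrees, which forces only strictly shifted Weyl modules into the standard filtration of $P^\xi$ below the top. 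You must either import this perversity argument or find a substitute; it is not formal.

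A secondary, less serious divergence: for Weyl $\leftrightarrow$ standard the paper does not argue via the categorical action on the highest weight vector, but via the non-degenerate inner product on $\mathbb{F}_\ell$, matching $\lsy h_{\gmu},u_\xi\rsy$ (computed combinatorially from the Fock space action) with the graded dimensions $\dim_q W^\xi e_{\gmu}$ read off from the cellular basis, using the equality of combinatorial and geometric degree functions (Proposition \ref{degrees}). Your proposed route could in principle work, but as sketched it is too vague to verify; the inner-product argument is what actually pins down $[W^\xi]\mapsto u_\xi$ in the paper.
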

Again this problem was studied independently by Ariki \cite{Ariki} for
the level $\ell=1$ case. The above theorem combines and generalizes
therefore results of Ariki on Schur algebras and
Brundan-Kleshchev-Wang \cite{BKWSpecht} on cyclotomic Hecke algebras.
It is very similar in spirit to a conjecture of Yvonne
\cite[2.13]{Yv}; however, there are several small differences between
Yvonne's conjecture and our results.  The most important is that
Yvonne used Jantzen filtrations to define a $q$-analogue of
decomposition numbers instead of a grading.  This approach has been worked out in level 1 by Ram-Tingley
\cite{RamTing} and Shan \cite{ShanFock}.  Their results show that the
same $q$-analogue of decomposition numbers arise from counting
multiplicities with respect to depth in Jantzen filtrations.  We
expect that this will hold in higher level as well; it should follow from the following fact (which was conjectured in a first draft of this paper and) proved in \cite{M}:
\begin{theorem}
\label{Koszul}
The (underlying basic algebra of) the cyclotomic quiver Schur algebra $A^\bnu$ is Koszul.
\end{theorem}

We prefer working with gradings instead of
filtrations, since they are easier to handle in practice. (A similar
phenomenon appears for the classical category $\mathcal{O}$ for
semi-simple complex Lie algebras, where the Jantzen filtration can also
be described in terms of a grading, \cite{BGS}, \cite{Strgrad}. This
grading is actually directly connected with the grading on the
algebras $R^\nu\cong H^\nu$ in case $e=\infty$, see \cite{BS3}, \cite{HM}).
Again, for level $\ell=1$, the Theorem \ref{Koszul} was already
known to be true, \cite{ChuangMiyachi}; an elementary argument for $e=\infty$
is given in \cite{BS2}.  \\

We should emphasize that at the moment, this approach only allows us
to understand the higher-level Fock spaces which are constructed as
tensor products of level $1$ Fock spaces (or their irreducible
$\glehat$ constituents).  This does not include the twisted
higher level Fock spaces studied by Uglov, \cite{Uglov}, which will
require a generalization of the algebras we consider here.  The same Fock spaces are
categorified by category $\cO$ of certain Cherednik algebras, see
e.g. \cite{ShanFock}, \cite{GordonLosev}. The action is again given by
induction and restriction functors as in \cite{Wada}, but it remains
to be clarified how our work fits into this framework.

Let us briefly summarize the paper.
\begin{itemize}
\item Section \ref{q-Schur} contains preliminaries of the geometry of quiver representations needed to define
  the quiver Schur algebra both as a geometric convolution
  algebra and in terms of an action on a polynomial ring which then is related to Demazure operators in Section \ref{sec:splitmerge}. We connect its graded Grothendieck group with the generic nilpotent Hall algebra of the cyclic quiver.
\item In Section \ref{sec:high-levels}, we discuss a generalization of
  this algebra using extended (or shadowed) quiver representations that will allow us to deal with higher level Fock
  spaces.
\item In Section \ref{sec:cyclotomic-quotients}, we define cyclotomic
 quotients, equip them with a (graded) cellular structure and establish the isomorphism to cyclotomic $q$-Schur algebras in Section \ref{Section6}.
\item In Section \ref{sec:q-fock-space}, we consider the connection of
  these constructions to higher representation theory, describe the
  categorical action of $\slehat$ on these categories, and show that
  they categorify $q$-Fock spaces. In particular, we consider the
  relationship between projective modules, canonical bases, and
  decomposition numbers.
\end{itemize}

{\bf Acknowledgment:} We thank Geordie Williamson, Michela Varagnolo,
Peter Tingley, Iain Gordon, Michael Ehrig, Bernard Leclerc and
Peter Littelmann
for useful input and fruitful discussions.  We are in particularly
grateful to Peng Shan, Vanessa Miemitz and Daniel Tubbenhauer for their careful reading
and probing questions.
We also would like to thank
the Hausdorff Center for supporting B.W.'s visit to Bonn at the
genesis of this work, and the
organizers of the Oporto Meeting on Geometry, Topology and Physics
2010 for facilitating our collaboration. B.W. was supported by an NSF Postdoctoral Research Fellowship and  by the NSA under Grant H98230-10-1-0199.

\section{The quiver Schur algebra}
\label{q-Schur}
\renc{\thetheorem}{\arabic{section}.\arabic{theorem}}
Throughout this paper, we will fix an integer $e> 1$; as mentioned
before, we also allow
the possibility that $e=\infty$. (We will assume for simplicity $e>2$ at some point later in the text.) Let $\Gamma$
be the Dynkin diagram for $\hat{\mathfrak{sl}}_e$;  with the fixed
clockwise orientation if $e$ is finite and with the fixed linear orientation
 if $e=\infty$, Figure \ref{fig:circle}. Let
$\mathbb{V}=\{1,2,\ldots, e\}$ (or $\mathbb{V}=\mZ$ for $e=\infty$) be the set
of vertices of $\Gamma$, identified with the set of remainders of
integers modulo $e$. Let $h_{\overline i}:i\rightarrow i+1$ be the
arrow from the vertex $i$ to the vertex $i+1$
where here and in the following all formulas should be read taking
indices corresponding to vertices of $\Gamma$ modulo $e$ when $e<\infty$.

\begin{definition}
 A (finite-dimensional) {\de representation} $(V,f)$ of $\Gamma$ over a field $\K$ is
  \begin{itemize}
 \item a collection of $\K$-vector spaces $V_i, i\in \mathbb{V}$
   such that $\sum_i\dim V_i <\infty$, together with
\item $\K$-linear maps
   $f_i:V_i\rightarrow V_{i+1}$.
  \end{itemize}
A {\de subrepresentation} is a collection of vector subspaces $W_i\subset V_i$ such that $f_i(W_i)\subset W_{i+1}$.
A representation $(V, f)$ of $\Gamma$ is called {\de nilpotent} if the map
  $f_e\cdots f_{2}f_1:V_1\to V_1$ is nilpotent (when $e=\infty$, all
  representations are called nilpotent).

\end{definition}

\subsection{Quiver representations and quiver flag varieties}
\label{sec:quiv-repr-grassm}
  The {\bf dimension vector} of a
  representation $(V,f)$ is the tuple ${\bd}=(d_1,\dots,d_e)$, where
  $d_i=\op{dim} V_i$. We let $|\bd|=\sum d_i$ and denote by  $\al_i$ the special dimension vector where $d_j=\delta_{ij}$.
  Mapping it to the simple root  $\al_i$ of $\slehat$ identifies the set of dimension vectors
 with the positive cone in the root lattice of $\slehat$, and with semi-simple nilpotent representations of $\Gamma$:

\begin{lemma}
\label{ss}
There is a unique irreducible nilpotent representation $S_j$ of dimension vector $\al_j$. Any semi-simple nilpotent representation is of the form $(V, f)$ with $f_i=0$ for all $i$.
\end{lemma}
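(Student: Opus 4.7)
The plan is to establish the lemma in two stages: first construct and characterize $S_j$, then reduce the semi-simple case to a classification of simple nilpotent representations.

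For the first sentence, I would note that if $\dim V_i = \delta_{ij}$, then every arrow map $f_i: V_i \to V_{i+1}$ has either its domain or its codomain equal to zero, so all $f_i$ are forced to vanish. This determines $(V,f)$ up to isomorphism; call it $S_j$. Irreducibility is immediate since a subrepresentation $W$ satisfies $W_j \subseteq V_j = \K$, forcing $W = 0$ or $W = V$. Nilpotency is also clear because $f_e\cdots f_1$ is the zero endomorphism of $V_1$.

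For the second sentence, I would argue that every semi-simple nilpotent representation is a direct sum of copies of the $S_j$'s, from which $f_i = 0$ for all $i$ follows by additivity. Thus the work is to show that the $S_j$'s are, up to isomorphism, the only simple nilpotent representations. Given a nonzero nilpotent $(V,f)$, I would pick any vertex $i$ with $V_i \ne 0$ and any nonzero $v \in V_i$ and consider the sequence
\[
v,\; f_i(v),\; f_{i+1}f_i(v),\; f_{i+2}f_{i+1}f_i(v),\;\dots
\]
The cyclic composition $f_{i-1}\cdots f_{i+1}f_i$ on $V_i$ is nilpotent: this follows from the hypothesis that $f_e\cdots f_1$ is nilpotent on $V_1$ together with the standard fact that $AB$ and $BA$ are simultaneously nilpotent (applied iteratively to move the starting vertex from $1$ to $i$). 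Consequently the above sequence must eventually hit zero, so there is a smallest $k \ge 0$ with $w := f_{i+k-1}\cdots f_i(v) \ne 0$ but $f_{i+k}(w) = 0$. The one-dimensional subspace $\K w \subseteq V_{i+k}$, together with zero at all other vertices, is then a subrepresentation isomorphic to $S_{i+k}$.

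If $(V,f)$ was assumed simple, this subrepresentation must be all of $V$, proving $(V,f) \cong S_{i+k}$ and completing the classification. The second sentence then follows: a semi-simple nilpotent representation is $\bigoplus_j S_j^{\oplus a_j}$ and all structure maps of such a direct sum are zero. The only mildly subtle point, and the one I would be most careful about, is the nilpotency of the cyclic composition starting at an arbitrary vertex $i$; everything else is forced by dimension-counting and the definition of a subrepresentation.
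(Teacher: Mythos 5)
Your proof is correct and follows essentially the same strategy as the paper: identify $S_j$ explicitly, show every simple nilpotent representation is isomorphic to some $S_j$ by locating a one-dimensional subrepresentation annihilated by the arrow maps, and conclude for semi-simple representations by additivity. The only difference is cosmetic: the paper exhibits that subrepresentation via the kernel of a non-injective $f_i$ (which exists by nilpotency), whereas you take the last nonzero iterate of a vector pushed around the cycle; both hinge on the same nilpotency-at-every-vertex observation that you correctly flag and justify via the $AB$/$BA$ trick.
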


\begin{proof}
  Obviously $S_j$ equals $(V,f)$, where $f_i$ and $V_i$ are zero
  except of $V_j=\K$. Assume $(V, f)$ is a non-trivial irreducible
  nilpotent representation. If, for given $j$, $f_j\not=0$ then $f_j$ is injective, since
  otherwise $W_j={\op{ker}f_j}$ and $W_r=\{0\}$ for $r\not=j$ defines a
  non-trivial proper subrepresentation. Not all $f_i$'s are injective,
  since the representation is nilpotent and non-trivial. Pick $i$ such that $f_i$ is
  not injective with $V_i\not=0$. Then $f_i=0$ and hence $(V, f)$ is isomorphic to
  $S_i$, since $S_i$ is a subrepresentation. Any representation $(V, f)$ with $f_i=0$ for all $i$ is
  obviously semi-simple. Conversely, assume $(V, f)$ is semi-simple,
  hence isomorphic to $\bigoplus_{i=1}^e S_i^{d_i}$. In particular,
  $S_i^{d_i}$ is a direct summand (for any $i$) which implies that
  $f_i=0$.
\end{proof}

Let ${\op{Rep}}_{{\bd}}$ be the affine space of representations of $\Gamma$ with dimension vector ${\bd}$, i.e.
\begin{equation}
\label{Rep}
{\op{Rep}}_{{\bd}}=\bigoplus_{i\in \mathbb{V}}\Hom(\C^{d_i},\C^{d_{i+1}}).
\end{equation}
\begin{figure}[h]
   \begin{tikzpicture}[thick,scale=2]
      \node (z) at (0,0) {$e$};
      \node (n) at (-1,-.2) {$e-1$};
      \node (o) at (1,-.2) {$1$};
      \draw[->] (n) to[out=20,in=180] (z);
   \draw[->] (z) to[out=0,in=160] (o);
    \draw[->] (-1.7,-.7) to[out=60,in=-160] (n);
  \draw[->] (o) to[out=-20,in=120] (1.7,-.7);
 \node[rotate=120] at (1.9,-.9){$\cdots$} ;
\node[rotate=60] at (-1.85,-.9){$\cdots$} ;

\node[circle,fill=red,inner
sep=2pt,outer sep=3pt] (rz) at (0,.8) {};
\draw[->,red] (z) to (rz);
\node[circle,fill=red,inner
sep=2pt,outer sep=3pt] (ro) at (1.3,.6) {};
\draw[->,red] (o) to (ro);
\node[circle,fill=red,inner
sep=2pt,outer sep=3pt] (rn) at (-1.3,.6) {};
\draw[->,red] (n) to (rn);

    \end{tikzpicture}
\caption{The oriented Dynkin quiver $\Gamma$ and the extended
Dynkin quiver $\tilde{\Gamma}$.}
\label{fig:circle}
\end{figure}
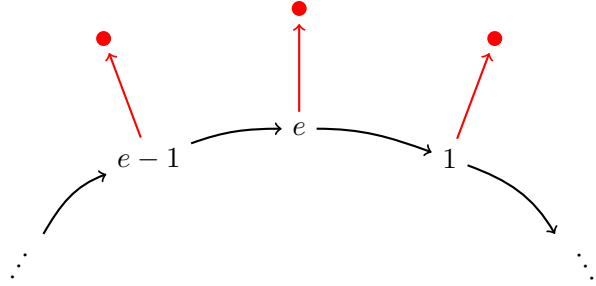
This space has a natural algebraic action by conjugation of the algebraic group $G_\bd=\op{GL}(d_1)\times\cdots\times\op{GL}(d_e)$, and we are interested in the {\bf moduli
 space of representations}, i.e. the quotient $\op{GRep}_{{\bf
   d}}={\op{Rep}}_{{\bd}}/G_\bd$
   parametrizing isomorphism classes of representations. Since the $G_\bd$-action is very
far from being free, we must interpret this quotient in an intelligent way.
One option is to consider it as an Artin stack.  While this
is perhaps the most elegant approach, it is more technical than
necessary for our purposes. Instead the reader is encouraged to
interpret this quotient as a formal symbol where, by convention for
any complex algebraic $G$-variety $X$.
\begin{itemize}
\item $H^*(X/G)$ is the $G$-equi\-vari\-ant cohomology $H_{G}^*(X)$, and
  $H^{BM}_*(X/G)$ is the $G$-equivariant Borel-Moore homology of $X$
  (for a discussion of equivariant Borel-Moore homology, see
  \cite[section 19]{Fultonint}, or \cite[\S 1.2]{VV}).
\item $D(X/G)$ (resp.\ $D^+(X/G)$) is the bounded (resp.\ bounded below) equivariant derived category of
  Bernstein-Lunts \cite{BeLu}, with the usual six functor formalism
 described therein. See also \cite{WW1} as an additional reference for our purposes.
\end{itemize}

Note that $H^*_{G_\bd}({\op{Rep}}_{{\bd}})=H^*(BG_\bd)$, where
$BG_\bd$ denotes the classifying space of $G_\bd$ (or the classifying
space of its $\mC$-points in the analytic topology for the
topologically minded) since ${\op{Rep}}_{{\bd}}$ is
contractible. Thus, we can use the usual Borel isomorphism to identify
the $H^*(\op{GRep}_{\bd})$'s with polynomial rings.

We will consider the direct sum $H^*(\op{GRep})=\bigoplus_{\bd} H^*(\op{GRep}_{{\bf
    d}})$ which corresponds to taking the union of the quotients $ \op{GRep}=\bigsqcup_{\bd} \op{GRep}_{{\bf
    d}}$ as Artin stacks.  One can think of this as a quotient by the groupoid $G=\bigsqcup_{\bd}
G_{\bd}$, and we will speak of the $G$-equivariant cohomology of $\op{Rep}=\bigsqcup_{\bd} \op{Rep}_{{\bf
    d}}$, etc.\\

We'll be interested in spaces of quiver representations equipped with
compatible flags.  These have appeared several times in the literature,
most importantly in work of Lusztig, e.g. \cite{Lus91}, on the geometric construction of
the canonical basis; our work builds on his ideas. Now, we introduce
the
combinatorics underlying these spaces.

A {\de composition} of length $r$ of $n\in\mZ_{>0}$ is a tuple $\mu=(\mu_1,\mu_2,\ldots,\mu_r)\in\mZ_{>0}^r$ such that $\sum_{i=1}^r \mu_i=n$. In contrast, a {\de vector composition}\footnote{This differs from the notion of an $r$-multi-composition where the tuples could be of different lengths or an $r$-multi-partition where the tuples are partitions, but again not necessarily of the same length $m$.} of type $m\in\mZ_{>0}$ and length  $r=r(\hat{\bmu})$ is a tuple $\bmuh=(\bmu^{(1)},\bmu^{(2)},\ldots,\bmu^{(r)})$ of nonzero elements from
$\mZ_{\geq 0}^m$. If $m=e$ we call a vector composition also {\de residue data} and denote their set by $\op{Comp}_e$.
Alternatively, $\bmuh$ can be viewed as an $r\times m$ matrix $\bmu[{i},j]$ with the $i$th row $\bmu^{(i)}$.
We call the column sequence, i.e. the result of reading
the columns, the {\bf flag type sequence} $\op{t}(\bmuh)$,
whereas the {\de residue sequence} $\op{res}(\bmuh)$ is the sequence
\begin{equation}
\label{res}
1^{\bmu_1^{(1)}},2^{\bmu_2^{(1)}},\cdots, e^{\bmu_e^{(1)}} |
1^{\bmu_1^{(2)}},2^{\bmu_2^{(2)}},\cdots, e^{\bmu_e^{(2)}} |
\cdots|
1^{\bmu_1^{(r)}},2^{\bmu_2^{(r)}},\cdots, e^{\bmu_e^{(r)}}.
\end{equation}
The parts separated by the vertical lines $|$ are called {\bf blocks} of $\op{res}(\bmuh)$.
We say that $\bmuh$ has {\bf complete flag type} if every
$\bmu^{(i)}$ is a unit vector which has exactly one non-zero entry.
Hence blocks of $\op{res}(\bmuh)$ contain in this case at most one element.

The transposed vector composition of $\bmuh$ of type $r(\bmu)$
and length $m$ is defined to be $\check{\bmu}=(\check{\bmu}^{(1)}, \check{\bmu}^{(2)}, \ldots,
\check{\bmu}^{(m)})$ where
$\check{\bmu}^{(i)}_j=\bmu[{j},i]=\bmuh_i^{(j)}$, which we call in case $m=e$ also the {\de
flag data}. Given a composition $\mu$ of $n$ of length $r$, we denote by
${\mathcal{F}}(\mu)$ the {\bf variety of flags of type $\mu$}, that is
the variety of flags $$F^1(\mu)\subset F^2(\mu)\subset\cdots\subset
F^r(\mu)=\mC^n,$$ where $F^i(\mu)$ is a subspace of dimension
$\sum_{j=1}^i\mu_j$.  For a vector composition $\bmuh$ we let
$\cF(\bmuh)=\prod_i\cF(\check{\bmu}^{(i)})$, a product of partial flag varieties inside $\mC^d=\prod_{i=1}^e\mC^{d_i}$ of type given by the flag data. Here $\bd=\bd(\bmuh)=(d_1,\ldots, d_e)$ denotes the {\bf dimension vector} of the vector composition $\bmuh$ which is simply the sum
$\bd=\sum_{i=1}^r \bmuh^{(i)}$, and $d$ denotes the total dimension. For a given dimension vector $\bd$ denote \[\Compe(\bd)=\{\bmuh\in\Compe\mid \bd(\bmuh)=\bd\}.\]

\begin{ex}
\label{ourex}
{\rm
Let $e=2$ and $\bmuh=((2,1),(1,1),(2,3),(0,1))$, a residue
data  of length $r=4$ with dimension vector $\bd=(5,6)$. The flag data
is
\[\check{\bmu}=((2,1,2,0),(1,1,3,1))\text{ and
}\res(\bmuh)=1,1,2|1,2|1,1,2,2,2|2.\]
There are $\binom{11}{5}$ elements of complete flag type in $\Compe(\bd)$.
}
\end{ex}

\begin{definition}
 For a given vector composition $\bmuh\in\op{Comp}_e$ (of length $r$) a {\de
   representation with compatible flags of type $\check\bmu$} is a
  nilpotent representation $(V,f)$ of $\Gamma$ with dimension
  vector ${\bd}=\bd(\bmuh)$ together with a flag $F(i)$ of type
  $\check{\bmu}^{(i)}$ inside $V_i$ for each $1\leq i\leq e$ such that $f_i(F(i)^{j})\subset F({i+1})^{j-1}$ for $1\leq j\leq r$.
We denote by \[\cQ(\bmuh)=\big\{
(V,f,F)\in \Rep_\bd\times \cF(\bmuh)\mid f_i(F(i)^{j})\subset F({i+1})^{j-1} \;\forall\; i,j\big\}\] the subset of $\Rep_\bd\times \cF(\bmuh)$ of representations with compatible flag. It comes equipped with the obvious action of the group $G:=G_{\bf{d}}$ by
change of basis.
\end{definition}

Alternatively (see Lemma \ref{ss}), a representation with compatible flags is a tuple
$((V,f), F)$ consisting of a representation
$(V,f)\in\Rep_{{\bd}}$ equipped with a filtration by
subrepresentations with semi-simple successive quotients (with
dimension vectors given by $\bmuh$). In Example
\ref{ourex}, we have a quiver $\Gamma$ with two vertices and simple representations $S_1, S_2$ and the residue data or flag data gives flags of the form
\begin{eqnarray*}
F_2^1(1)\subseteq F_3^2(1)\subseteq F_5^3(1)\subseteq F_5^4(1)=\mC^5,&&
F_1^1(2)\subseteq F_2^2(2)\subseteq F_5^3(2)\subseteq F_6^4(2)=\mC^6,
\end{eqnarray*}
where the subindex denotes the dimension of the subspaces. In particular, if we set $V_i=F^i(1)\oplus F^i(2)$, then the semisimple subquotients
are the representations given by the dimension vector $\bmuh$, namely $$V_4\cong S_2,\quad V_3/V_4\cong S_1^2\oplus S_2^3,\quad
V_2/V_3\cong S_1\oplus S_2,\quad V_1/V_2\cong S_1^2\oplus S_2^1.$$

The flag data defines a Young subgroup $S_{\check\bmu}\subseteq S_{\bd}$.
 Note that ${\cF}({\bmuh})\cong G_\bd/P_{\check\bmu}$ is the partial flag variety defined by the parabolic subgroup $P_{\check{\bmu}}$ of $G_\bd$, given by upper triangular block matrices with block sizes determined by the flag sequence. The flags which appear will be complete if and only if $\bmuh$ has
complete flag type, as suggested by the name.  This
special case is particularly important; it played a key role in
earlier papers of Lusztig
\cite{Lus91} and Varagnolo and Vasserot \cite{VV}.

Forgetting either the flag or the representation defines two $G_{\bd}$-equivariant morphisms 
\begin{eqnarray}
p:\quad\Q\longrightarrow \Rep_{\bd},&& ((V,f),F)\mapsto (V,f)\in\Rep_{\bd},\label{p}\\
\pi:\quad\Q\longrightarrow \cF({\bmuh}),&& ((V,f),F)\mapsto F\in\cF({\bmuh}).\label{pq}
\end{eqnarray}
of algebraic varieties. Generalizing the notion of a quiver Grassmannian we call the fibres of $p$ the {\de quiver partial flag varieties}.

To each $i\in\mV$ assign a polynomial ring over $\K$ in $d_i$ variables $x_{i,1}, \ldots , x_{i,d_i}$ and set
\begin{equation}
\label{alphabeth}
R({\bd})=\bigotimes_{j=1}^e\K[x_{j,1},\dots,x_{j,d_j}]=
\K[x_{1,1},\dots,x_{1,d_1},\dots, x_{e,1},\dots,x_{e,d_e}].
\end{equation}
This algebra carries an action of the Coxeter group $S_{\bd}=S_{d_1}\times \cdots \times S_{d_e}$ by permuting the variables in the same tensor factor. Thus, the Borel presentation identifies the rings
\begin{equation}
\label{GH}
H^*(\cF({\bmuh})/G)\cong R({\bd})^{S_{\bmuh}}=: \bLa(\hat{\boldsymbol{\mu}}),
\end{equation}
by sending Chern classes of tautological bundles to elementary
symmetric functions, \cite[Proposition 1]{Brionlectures}.

In the extreme case where $\bd=(r,0,\dots,0)$ we have $\bmuh^{(i)}=(1,0,\ldots,0)$ for all $i$ and $S_{\bmuh}$ is trivial. Hence we obtain the $\op{GL}_d$-equivariant cohomology of the variety of full flags in $\mC^d$. If $\bmuh=({\bd})$ then $\bmuh=((d_1),(d_2),\ldots, (d_e))$, the variety $\cF({\bmuh})$ is
just a point and $\bLa({\bmuh})=R({\bd})^{S_{\bd}}$. We call this
the {\bf ring of total invariants}.
\begin{lemma}
\label{lem:vb}
The map $\pi$ is a vector bundle with affine fibre; in particular, we have a natural isomorphism
$\pi^*: H^*(\cF({\bmuh})/G)\cong H^*(\Q/G)$, and thus $H^*(\Q/G)\cong \bLa({\bmuh})$.

\end{lemma}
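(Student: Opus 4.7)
The strategy is to realize $\pi$ as a $G$-equivariant vector bundle with affine fibres; the desired cohomology isomorphism then follows from homotopy invariance of Borel equivariant cohomology together with the Borel presentation (\ref{GH}).

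First I will fix a flag $F\in\cF(\bmuh)$ and identify the fibre $\pi^{-1}(F)$ explicitly. It consists of tuples $(f_i)_{i\in\mV}$ with $f_i\in\Hom(V_i,V_{i+1})$ satisfying the linear compatibility conditions $f_i(F(i)_j)\subset F(i+1)_{j-1}$ (using the convention $F(i)_0=0$). These cut out a linear subspace of $\Rep_\bd$, so every fibre is an affine space, of dimension independent of $F$. At this stage I must also check that the built-in nilpotency condition imposes no additional constraint: iterating the compatibility around the cycle gives $f_{i+e-1}\cdots f_i(F(i)_j)\subset F(i)_{j-e}$, so the endomorphism $f_e\cdots f_1$ of $V_1$ strictly lowers the filtration degree and hence is nilpotent after at most $\lceil r/e\rceil$ iterations. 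Thus $(V,f)$ is automatically nilpotent whenever it is compatible with the flag, and $\pi^{-1}(F)$ really is the full linear subspace cut out by the $f_i(F(i)_j)\subset F(i+1)_{j-1}$ conditions.

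Next I will upgrade this fibrewise linear structure to a vector bundle structure via the $G$-action. The group $G=G_\bd$ acts transitively on $\cF(\bmuh)\cong G/P_{\check{\bmu}}$, with the parabolic $P:=P_{\check{\bmu}}$ stabilizing a chosen base flag $F_0$, and $\pi$ is $G$-equivariant. Since $P$ preserves $F_0$ and acts linearly on $\Rep_\bd$, it acts linearly on $\pi^{-1}(F_0)$, and the standard associated-bundle construction yields a $G$-equivariant isomorphism
\[
\Q\;\cong\;G\times_P\pi^{-1}(F_0)\;\longrightarrow\;G/P\cong\cF(\bmuh),
\]
which identifies $\pi$ with the vector bundle on $G/P$ induced by the $P$-representation $\pi^{-1}(F_0)$. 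Local trivializations exist since $G\to G/P$ admits Zariski-local sections.

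Finally, since any vector bundle is equivariantly homotopy equivalent to its base, pullback $\pi^*\colon H^*(\cF(\bmuh)/G)\to H^*(\Q/G)$ is an isomorphism; combining with (\ref{GH}) gives $H^*(\Q/G)\cong\bLa(\bmuh)$. The only mildly subtle point (and what might count as the ``main obstacle'') is the explicit identification of the fibre together with the verification that nilpotency imposes no additional constraints; once that is in place, the rest is standard associated-bundle yoga for the homogeneous space $G/P$, and no genuine difficulty remains.
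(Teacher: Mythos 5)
Your proof is correct, and since the paper states Lemma \ref{lem:vb} without proof, your argument supplies exactly the standard reasoning the authors intend: identify the fibre over a fixed flag as the linear subspace cut out by the compatibility conditions (noting that these conditions force nilpotency automatically), realize $\cQ(\bmuh)\cong G\times_P\pi^{-1}(F_0)$ as an associated bundle over $G/P_{\check\bmu}$, and conclude by homotopy invariance of equivariant cohomology together with the Borel presentation \eqref{GH}. No gaps.
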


\begin{grad}{\rm
Throughout this paper, we will use a somewhat unusual grading convention
on cohomology rings: we shift the (equivariant) cohomology ring or
equivariant Borel-Moore homology of a
smooth variety $X$ downward by the complex dimension of $X$, so that
the identity class is in degree $-\dim_\C X$.  This choice of grading
has the felicitous effect that pull-back and push-forward maps in cohomology are more symmetric.
Usually, for a map
$f\colon X\to Y$ we have that
pull-back has degree $0$, and push-forward has degree $2\dim_\C Y-2\dim_\C
X$, whereas in our grading convention
\begin{equation}
\label{gradconv}
\text{\it both push-forward and pull-back have degree $\dim_\C
  Y-\dim_\C X$.}
\end{equation}
Those readers comfortable with the theory of the constructible derived
category will recognize this as replacing the usual constant sheaf
with the intersection cohomology sheaf, denoted $\K_{\Q/G}\in D^+(\Q/G)$, of $\Q/G$.

Since $\Q$ is smooth, $\K_{\Q/G}$ is simply a homological shift of the
usual constant sheaf on each individual component (but by different
amounts on each component).  A more conceptual
explanation for the convention is the invariance of $\K_{\Q/G}$ under Verdier duality.  This
grading shift also provides a straight-forward explanation of the
grading on the representation $\mathcal{P}o\ell$ of the quiver Hecke algebra
in \cite{KL1}, \cite{KLII}.
}
\end{grad}

Since $p$ is proper
and the constant sheaf of geometric origin, the
Beilinson-Bernstein-Deligne decomposition theorem from \cite{BBD} applies (in the formulation \cite[Theorem 4.22]{CM}), and $p$ sends the
$\K_{\Q/G}$ on $\Q/G$ to a direct sum $L$ of shifts of
simple perverse sheaves on $\Rep_{\bd}$.  Our first object of study in
this paper will be the algebra of extensions of these sheaves.

\subsection{The convolution algebra}
Let ${\bmuh, \blah}\in \op{Comp}_e$ be vector compositions with associated dimension vector ${{\bd}}$, and
consider the corresponding ``Steinberg variety''
\begin{equation}
\label{Steinberg}
Z(\bmuh, \blah)=\cQ(\bmuh)\times_{\Rep_{{\bd}}} \cQ(\blah).
\end{equation}
Let  $G_\bd$ act diagonally and abbreviate $\cH({\bmuh, \blah})=Z(\bmuh, \blah)/G_\bd$.
Note that $\Q$ is smooth and $p$ is proper. So, by \cite[Theorem
8.6.7]{CG97}, we can identify the algebra of self-extensions of $L$
(although not as a graded algebra) with the equivariant Borel-Moore homology $H_*^{BM,G}$ of our Steinberg variety: we have the natural identification
\[\Ext^*_{D^b(\op{GRep}_{\bd})}(p_*\K_{\cQ({\bmuh})} ,p_*\K_{\cQ(\blah)})=H_*^{BM,G}(Z({\bmuh, \blah})),\]
such that the Yoneda product
\begin{eqnarray*}
&\Ext^*_{D^b(\op{GRep}_{\bd})}(p_*\K_{\cQ(\bmuh)},p_*\K_{\cQ(\blah)})\otimes\Ext^*_{D^b(\op{Rep}_{\bd})}(p_*\K_{\cQ(\blah)},p_*\K_{\cQ(\bnuh)})&\\
&\rightarrow\quad\Ext^*_{D^b(\op{GRep}_{\bd})}(p_*\K_{\cQ(\bmuh)},p_*\K_{\cQ(\bnuh)})&
\end{eqnarray*}
agrees with the convolution product. This
defines an associative non-unital graded algebra structure $A_{\bd}$ on
\begin{eqnarray}
\label{BM}
\bigoplus_{(\hat\mu,\hat{\la})}\Ext^*_{D^b(\op{GRep}_{\bd})}\left(p_*\K_{\cQ(\bmuh)}, p_*\K_{\cQ(\bmuh)}\right)=\bigoplus_{(\hat\mu,\hat{\la})} H_*^{BM}(\cH({\bmuh, \blah})).
\end{eqnarray}
Here  the sums are over all elements in $\Compe(\bd)\times\Compe(\bd)$. For reasons of
diagrammatic algebra, we call this product  {\bf vertical composition}
and we denote it like the usual multiplication $(f,g)\mapsto fg$. We call the algebra $A_{\bd}$ the {\bf quiver Schur algebra}.

\subsection{A faithful polynomial representation}
 The quiver Schur algebra $A_{\bd}$ acts, by \cite[Proposition 8.6.15]{CG97}, naturally on the sum (over all vector partitions of dimension vector $\bd$) of cohomologies
\begin{eqnarray} \label{V} V_{\bd}&:=&\bigoplus_{\bmuh\in \Compe(\bd)} H_*^{BM}(\Q/G)\cong \bigoplus_{\bmuh\in \Compe(\bd)}
\bLa(\bmuh).
\end{eqnarray}
Note that this is compatible with the grading convention \eqref{gradconv}.

\begin{prop}
\label{faithful}
If $\operatorname{char}(\K)=0$, the $A_{\bd}$-module $V_{\bd}$ is faithful.
\end{prop}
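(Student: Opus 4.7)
The plan is to prove faithfulness by a localization argument over a torus, reducing to a combinatorial calculation at $T$-fixed points.

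First I would establish that both $A_\bd$ and $V_\bd$ are free modules over the central subring $Z := H^*(BG_\bd) = R(\bd)^{S_\bd}$, the ring of total invariants (which acts on $V_\bd$ by multiplication on each summand $\bLa(\bmuh) = R(\bd)^{S_{\check{\bmu}}}$ via the inclusion $Z \hookrightarrow \bLa(\bmuh)$, and on $A_\bd$ by convolution with the fundamental class). Freeness of $V_\bd$ over $Z$ is immediate from Chevalley's theorem, since each $\bLa(\bmuh) = R(\bd)^{S_{\check{\bmu}}}$ is free over $R(\bd)^{S_\bd}$ of rank $|S_\bd/S_{\check{\bmu}}|$. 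Freeness of $A_\bd$ over $Z$ comes from the decomposition theorem: each $\Ext^*(L_i, L_j)$ between simple perverse summands is a free $Z$-module, as these summands are direct summands of pushforwards of constant sheaves on smooth proper varieties over $\Rep_\bd/G$.

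Given these freeness statements, faithfulness of the $A_\bd$-action on $V_\bd$ is equivalent to faithfulness of the induced action of $A_\bd^K := A_\bd \otimes_Z K$ on $V_\bd^K := V_\bd \otimes_Z K$, where $K = \Frac(Z)$. Now choose a generic one-parameter subgroup $\chi$ of the maximal torus $T \subset G_\bd$, and apply the Atiyah-Bott/Thomason localization theorem. On the module side, $V_\bd^K$ becomes a $K$-vector space with distinguished basis indexed by the $T$-fixed points of $\bigsqcup_{\bmuh} \cQ(\bmuh)$ — these are the ``monomial flagged representations'': ordered decompositions of the coordinate spaces $\mC^{d_i}$ into pieces of sizes dictated by $\check{\bmu}$. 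On the algebra side, Chriss-Ginzburg (\cite[\S5.11, \S8.9]{CG97}) identifies $A_\bd^K$ with the convolution algebra of the localized Steinberg variety $\bigsqcup Z(\bmuh,\blah)^T$, which is a disjoint union of affine spaces over pairs of fixed points.

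The localized convolution algebra thus has an explicit basis, indexed by pairs of monomial flags of appropriate types, and each such basis element acts on $V_\bd^K$ by the matrix unit pairing the corresponding two fixed-point classes (up to an invertible factor coming from the equivariant Euler class of the normal bundle). Faithfulness of the localized action is then immediate: a nonzero linear combination must have some nonzero coefficient on some pair of fixed points, and evaluating on the corresponding basis vector of $V_\bd^K$ produces a nonzero element. The main technical point — and the step I expect to be most delicate — is the verification that the localized basis elements really do act as matrix units (up to Euler class factors) rather than in some more tangled way; this is a direct but careful application of the base-change/proper-pullback formulas, analogous to the Varagnolo-Vasserot treatment of the KLR polynomial representation in \cite{VV}, and the computation reduces to checking that the two projections from $Z(\bmuh,\blah)^T$ to the fixed points of $\cQ(\bmuh)$ and $\cQ(\blah)$ are compatible isomorphisms on connected components.
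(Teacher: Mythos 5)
Your argument is correct in outline but follows a genuinely different route from the paper. The paper's proof is sheaf-theoretic: by the decomposition theorem $p_*\K_{\cQ(\bmuh)}$ is a direct sum of shifted IC sheaves of nilpotent orbits, the action of $A_{\bd}$ on $V_{\bd}$ is induced by the hypercohomology functor $j_*$, and parity vanishing (even equivariant cohomology of orbits, even IC stalks) allows one to quote \cite[Theorem 3.4.2]{BGS} to conclude that $j_*$ is faithful on semisimple equivariant perverse sheaves. Your proof instead works over the base ring $Z=H^*_{G_{\bd}}(\op{pt})$ and uses torus localization to diagonalize everything at the $T$-fixed points (which are indeed isolated here since $e>1$: the only $T$-fixed representation is the zero one, so the fixed points of $\cQ(\bmuh)$ are coordinate flags, and $Z(\bmuh,\blah)^T$ is the full product of the two fixed-point sets); the localized convolution algebra is then a sum of matrix algebras acting on the localized module in the evident way. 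What your approach buys is an explicit, computable picture of the action, of the kind the paper only develops later via Demazure operators; the paper's approach is shorter and sets up the perverse-sheaf dictionary that is reused in Proposition \ref{s-s-perv}.

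One step of yours needs more care: the torsion-freeness of $A_{\bd}$ over $Z$, which your reduction to the fraction field genuinely requires, is not automatic. Equivariant Borel--Moore homology of a singular $G$-variety can have $H^*_G(\op{pt})$-torsion (already $H^{BM,T}_*(\C^*)$ is torsion over $H^*_T(\op{pt})$ for the scaling action), and properness of $p$ together with smoothness of $\cQ(\bmuh)$ does not by itself make the Ext algebra free over $H^*_{G_{\bd}}(\op{pt})$. To fill this in you need either the same parity-vanishing input the paper invokes, or the paving of the Steinberg variety by affine bundles over the $G_{\bd}$-orbits of $\cF(\bmuh)\times\cF(\blah)$ — exactly the filtration used later in the proof of Theorem \ref{basis}. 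With that supplied, your localization argument goes through.
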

In fact, the hypothesis on characteristic is unnecessary, since this
is a special case of Proposition~\ref{hl-faithful}, which is
characteristic independent; however, there is
a more geometric proof in this special case, which we give here.
\begin{proof}
Recall that the ring $A_\bd$ is just the Ext-algebra of
$\oplus_{|\bmuh|=\bd}p_*\K_{\Q}$. The space  $V_{\bd}$  is the
hypercohomology of $\oplus_{|\bmuh|=\bd}p_*\K_{\Q)}$ up to
shifts of grading, since Borel-Moore homology
$H^{BM}_*(X)$ of a smooth space $X$ equals hypercohomology
$\mathbb{H}^{-*}(X,\mathbb{D})$ of its dualizing sheaf $\mathbb{D}$
which is, up to a shift, the constant sheaf.  Hence, if we let  $j$ be
the map from $\op{Rep}_{{\bd}}$ to a point, faithfulness is equivalent
to $j_*$ being injective,
\begin{equation}
j_*:\quad\Ext^\bullet(p_*\K_{\Q},
p_*\K_{\cQ(\bmuh')})\to \Ext^\bullet(j_*p_*\K_{\Q},
j_*p_*\K_{\cQ(\bmuh')}).\label{injective}
\end{equation}

We only need to check that the same property holds when
$p_*\K_{\Q} $ is replaced by a summand.  The nilpotent orbits of $G_\bd$ in $\Rep_\bd$ are equivariantly simply
connected; thus every simple  perverse sheaf supported on the nilpotent locus
is the intermediate extension of a trivial local system.
Thus, by the decomposition theorem, the
sheaves $p_*\K_{\Q}$ decompose into summands which are shifts of the
intersection cohomology sheaves of these orbits. We note that the spaces we deal with have good parity vanishing
properties. Each orbit has even equivariant cohomology, since it has a transitive action of $G$ with the stabilizer of a point given
by a connected algebraic group (\cite[Lemma~78, Theorem~79]{Libine}).  Also, the stalks of intersection cohomology sheaves
have even cohomology \cite[Theorem 5.2(3)]{Hen}.
Thus,  by \cite[Theorem 3.4.2]{BGS}, $j_*$ is faithful on semi-simple $G_d$-equi\-va\-riant perverse
sheaves, and so the map \eqref{injective} is injective.
\end{proof}

\subsection{Monoidal structure}
\label{monoidal}
The algebra $A_{\bd}$ comes along with distinguished idempotents $e_{\bmuh}$ indexed by vector compositions with dimension vector $\bd$. The $A_{\bd}$-module
\begin{equation}
\label{Ps}
P(\bmuh)=\bigoplus_{\blah\in\Compe(\bd)} e_\blah A_{\bd} e_\bmuh
\end{equation}
is a finitely generated indecomposable graded projective $A_\bd$-module. Each finitely generated indecomposable graded projective $A_{\bd}$-module is (up to a grading shift) isomorphic to one of this form. We denote by
$A_\bd\mpmod$ the category of graded finitely generated projective $A_\bd$-modules.
\begin{prop}\label{s-s-perv}
Assume $\operatorname{char}(\K)=0$.  The category $A_\bd\mpmod$ is
equivalent to the additive category of sums of shifts of semi-simple perverse
  sheaves in $D^+(\op{GRep})$ which are pure of weight $0$ with nilpotent support in
  $\GRep_\bd$.
\end{prop}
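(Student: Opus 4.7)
The plan is to combine the decomposition theorem with a standard Yoneda--Morita argument. Set $L := \bigoplus_{\bmuh \in \Compe(\bd)} p_*\K_{\cQ(\bmuh)}$, so that by construction $A_\bd = \Ext^*_{D^+(\GRep_\bd)}(L, L)$. Let $\cS \subseteq D^+(\GRep_\bd)$ denote the additive subcategory generated under finite direct sums, cohomological shifts, and direct summands by $L$, and let $\cT$ denote the additive category of sums of shifts of semi-simple weight-zero perverse sheaves with nilpotent support in $\GRep_\bd$. I would prove the proposition by first establishing $\cS = \cT$, and then verifying that the Yoneda-type functor
\[
F \colon \cS \longrightarrow A_\bd\mpmod, \qquad M \longmapsto \Ext^*_{D^+(\GRep_\bd)}(L, M)
\]
is an equivalence.

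For the inclusion $\cS \subseteq \cT$, recall that $\cQ(\bmuh)$ is smooth by Lemma \ref{lem:vb} and $p$ is proper and $G$-equivariant, so the equivariant decomposition theorem writes $p_*\K_{\cQ(\bmuh)}$ as a direct sum of shifts of simple perverse sheaves pure of weight zero. The image of $p$ lies in the nilpotent locus, because a representation admitting a compatible flag with semi-simple successive quotients has $f_e \cdots f_1$ nilpotent by Lemma \ref{ss}. For the reverse inclusion, I would use that nilpotent $G_\bd$-orbits on $\Rep_\bd$ are equivariantly simply connected -- their stabilizers are connected, as already noted in the proof of Proposition \ref{faithful} -- so the simple objects of $\cT$ are exactly the intersection cohomology sheaves $IC(\overline{\cO})$ for nilpotent orbits $\cO$. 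Given such an $\cO$ with representative $(V,f)$, one picks a Jordan--H\"older filtration of $(V,f)$ and takes $\bmuh$ to be the complete-flag vector composition whose $k$-th row is the dimension vector $\al_{i_k}$ of the $k$-th simple subquotient. The filtration is then compatible with $(V,f)$, so $\cO$ lies in $\IM(p)$ for this $\bmuh$, and the parity vanishing properties recorded in the proof of Proposition \ref{faithful} force $IC(\overline{\cO})$ to appear with non-zero multiplicity in $p_*\K_{\cQ(\bmuh)}$. This surjectivity step -- ruling out that contributions from strictly larger orbits cancel $IC(\overline{\cO})$ entirely -- is what I expect to be the main obstacle, and it is where the parity/purity framework is essential.

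Given $\cS = \cT$, the identification $\cS \simeq A_\bd\mpmod$ is a formal consequence. Because all objects of $\cS$ are pure of weight zero, $\Ext^i(M, N) = 0$ for $i < 0$ between any $M, N \in \cS$, so $F$ takes values in graded projective modules. The Yoneda lemma combined with the defining identification $A_\bd = \Ext^*(L, L)$ makes $F$ fully faithful, and it sends $L$ to the regular module $A_\bd$; under this identification $F(p_*\K_{\cQ(\bmuh)})$ becomes the indecomposable projective $P(\bmuh)$ of \eqref{Ps}. Since both $\cS$ and $A_\bd\mpmod$ are the additive Karoubian closures (under grading shifts) of their respective generators $L$ and $A_\bd$, the fully faithful functor $F$ sending one generator to the other is automatically an equivalence, which completes the proof.
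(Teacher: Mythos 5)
Your formal Yoneda/Morita reduction and the inclusion $\cS\subseteq\cT$ match the paper, but the surjectivity step contains a genuine gap, and it is located exactly where you flagged uncertainty. Choosing $\bmuh$ from a Jordan--H\"older filtration always produces a vector composition of \emph{complete flag type}, and for such $\bmuh$ it is simply false that every $\mathbf{IC}(\bar{\cO})$ with $\cO$ nilpotent occurs as a summand of $p_*\K_{\cQ(\bmuh)}$: the paper's remark following Proposition \ref{VVR} gives the explicit counterexample of the constant (skyscraper) sheaf on the zero representation of dimension vector $(1,\dots,1)$, which appears in no $p_*\K_{\cQ(\bmuh)}$ with $\bmuh$ of complete flag type. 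This failure is precisely why the quiver Schur algebra $\mathbf{A}$ is strictly larger than the quiver Hecke subcategory $\mathbf{R}$; your argument would prove $\mathbf{R}\simeq\mathbf{A}$, which is false in affine type $A$. The appeal to parity vanishing cannot repair this: in the decomposition $p_*\K_{\cQ(\bmuh)}\cong\bigoplus_i\mathbf{IC}(\bar Y_i)[n_i]$ there is no cancellation to rule out in the first place --- the multiplicity of $\mathbf{IC}(\bar\cO)$ is a non-negative integer, and the problem is that it can be zero. Parity statements about stalks and orbit cohomology give no lower bound on it.

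The paper's fix is to use the \emph{socle filtration} of a representative $N$ of the orbit $X$ rather than a Jordan--H\"older filtration. The resulting $\bmuh_N$ generally has non-unit parts, and the point is that the socle filtration is the \emph{unique} compatible flag of its own type on any module in $X$, so $p\colon\cQ(\bmuh_N)\to\op{GRep}$ is generically an isomorphism over $X$ and has image $\bar X$ for dimension reasons. The restriction of $p_*\K_{\cQ(\bmuh_N)}$ to the open orbit $X$ is then a single shifted constant sheaf, which forces $\mathbf{IC}(\bar X)$ to occur (with multiplicity one), with all remaining summands supported on $\bar X\setminus X$. If you replace your Jordan--H\"older step by this socle-filtration argument, the rest of your proof goes through.
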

Again, the characteristic 0 hypothesis could be avoided, but at the
cost of some difficulties we prefer to avoid. The corresponding
perverse sheaves in the characteristic $p$ case will not be
semi-simple, but rather parity sheaves, in the sense of \cite{JMW}.
\begin{proof}
  There is a functor sending a finitely generated projective $A_\bd$-module $M$ to the
  perverse sheaf
  $\left(\bigoplus_{\bd(\bmuh)=\bd}p_*\K_{\cQ_{\bmuh}}\right)\otimes_{A_\bd}M$.
  This map is fully faithful, since it induces an isomorphism on the
  endomorphisms of $A_\bd$ itself.  Thus, we only need to show that every
  simple perverse sheaf on $\op{GRep}$ with nilpotent support is a
  summand of $p_*\K_{\cQ({\bmuh})}$ for some $\bmuh$.

Every such
  simple perverse sheaf is $\mathbf{IC}(\bar X)$ where $X$ is the
  locus of modules isomorphic to a fixed module $N$.  Consider the socle
  filtration on $N$. The dimension vectors of the successive
  quotients define a vector composition $\bmuh_N$. The map
  $\cQ(\bmuh_N)\to \op{GRep}$ is generically an isomorphism over $X$, and
  thus for dimension reasons, has image $\bar X$.  In
  particular, \[p_*\K_{\cQ({\bmuh_N})}\cong \mathbf{IC}(\bar X)\oplus
  \mathbf{L}\] where $\mathbf{L}$ is a finite direct sum of shifts of semi-simple perverse
  sheaves supported on $\bar X\setminus X$.   Thus, every simple
  perverse sheaf with nilpotent support is a summand of such a
  pushforward, and we are done.
\end{proof}

We are going to define a monoidal structure on $A_\bd\mpmod$ using correspondences. For $\bmuh, \bnuh\in \Compe$ the {\bf join} $\bmuh\cup\bnuh$ is the vector composition
$$\bmuh\cup\bnuh=(\mu^{(1)},\ldots,\mu^{(r(\bmuh))},\nu^{(1)},\ldots,\nu^{r(\bnuh)})$$
obtained by joining the two tuples.
Let $\bmuh_1,\blah_1 $ and
$\bmuh_2,\blah_2$ be vector compositions with associated dimension vectors ${\bf c}$ and ${{\bd}}$
respectively. Let $$\cQ(\bmuh_1;\bmuh_2,\blah_1; \blah_2)\subseteq \cQ(\bmuh_1\cup\bmuh_2)\times_{\Rep_{\bc}}\cQ(\blah_1\cup\blah_2)$$ be the space of
representations with dimension vector ${\bf c}+{{\bd}}$ which carry
a pair of compatible flags of type $\bmuh_1\cup\bmuh_2$ and $\blah_1\cup\blah_2$ respectively, such that the subspaces of dimension
vector ${\bf c}$ in the two flags coincide. Let $\cH(\bmuh_1;\bmuh_2,\blah_1; \blah_2)$ be the quotient by the diagonal $G_\bd$-action.
\begin{definition}
The \textbf{horizontal multiplication} is the map
\begin{eqnarray}
\label{horizontal}
 A_{\bf c}\times A_{{\bd}}&\longrightarrow& A_{{\bf c} +{{\bd}}}\nonumber\\ (a,b)&\longmapsto& a|b
\end{eqnarray}
induced on equivariant Borel-Moore homology by the correspondence (i.e. by pull-and-push on the following diagram)
\begin{equation}
\label{corresp}
\cH(\bmuh_1,\blah_1)\times \cH (\bmuh_2,\blah_2) \longleftarrow \cH(\bmuh_1;\bmuh_2,\blah_1;\blah_2 )\longrightarrow \cH(\bmuh_1\cup \bmuh_2,\blah_1\cup
\blah_2).
\end{equation}
Here the rightward map is the obvious inclusion, and the leftward is induced from the map $V\mapsto
(W, V/W)$ of taking the common subrepresentation $W$ of dimension vector ${\bf c}$ and the quotient
by it.
\end{definition}
We let ${\bf A}=\oplus_{\bd}(A_\bd\mpmod)$ be the direct sum of the categories
$A_\bd\mpmod$ over all dimension vectors; that is its
objects are formal direct sums of finitely many objects from these
categories, with morphism spaces given by direct sums.
\begin{prop}
\label{monoid}
The assignment $\otimes:(P(\hat{\mu}),P(\hat{\nu}))\mapsto P(\hat{\mu}\cup\hat{\nu})$
extends to a monoidal structure $({\bf A}, \otimes, \mathbf{1})$
with unit element $\mathbf{1}=P(\emptyset)$.
\end{prop}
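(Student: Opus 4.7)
My plan is to extend the horizontal multiplication \eqref{horizontal} to an associative graded algebra structure on the total sum $A^{\mathrm{tot}} := \bigoplus_{\bd} A_\bd$, and then read off the monoidal structure on $\mathbf{A}$ from this enlarged algebra. The key algebraic properties needed are: (i) associativity $(a|b)|c = a|(b|c)$; (ii) the interchange (middle-four) law $(a_1|b_1)(a_2|b_2) = (a_1 a_2)|(b_1 b_2)$ whenever the vertical products $a_1 a_2$ and $b_1 b_2$ are defined; and (iii) the class $1 \in A_{\mathbf{0}} = \K$ is a two-sided identity for $|$.

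For (i), both sides are realized geometrically by the same triple-step correspondence of representations carrying a pair of nested compatible subrepresentations of prescribed dimension vectors $\bc_1 \subset \bc_1+\bc_2 \subset \bc_1+\bc_2+\bc_3$; the identification of the two iterated fiber products over $\Rep_{\bc_1+\bc_2+\bc_3}$ is immediate from the definition. Item (iii) is trivial, since the zero dimension vector yields only the empty representation, so that \eqref{corresp} collapses to the graph of the identity. The main point is (ii). Given data $\bmuh_i,\blah_i,\bnuh_i$ ($i=1,2$) with matching dimension vectors, both sides of the proposed equality are computed by pull--push along a single Cartesian square of correspondences, whose upper-left corner is
\begin{equation*}
\cH(\bmuh_1;\bmuh_2,\blah_1;\blah_2)\,\times_{\cH(\blah_1\cup\blah_2)}\,\cH(\blah_1;\blah_2,\bnuh_1;\bnuh_2).
\end{equation*}
Base change for equivariant Borel--Moore homology (compare \cite[\S 2.7]{CG97}) then yields the desired identity. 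This is the main technical obstacle; all other steps are essentially formal once the Cartesian square has been identified.

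Once $A^{\mathrm{tot}}$ is established as a graded algebra with respect to $|$, I would define $\otimes$ on objects by $P(\bmuh)\otimes P(\bnuh) := P(\bmuh\cup\bnuh)$; this is consistent with the idempotent identity $e_{\bmuh}|e_{\bnuh} = e_{\bmuh\cup\bnuh}$, which follows from \eqref{corresp} because on idempotents the middle space in the correspondence reduces to the diagonal of $\cF(\bmuh\cup\bnuh)/G$. On morphisms, $f\otimes g$ is defined by horizontal multiplication of matrix coefficients, and bifunctoriality is precisely the interchange law (ii). Strict associativity of $\cup$ on vector compositions gives strict associativity of $\otimes$, and $\mathbf{1}=P(\emptyset)$ is a strict two-sided unit because the empty vector composition is a strict two-sided unit for $\cup$ and $e_\emptyset=1\in A_{\mathbf{0}}$. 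The pentagon and triangle axioms hold automatically, and the extension of $\otimes$ to arbitrary objects of $\mathbf{A}$ (formal direct sums of the $P(\bmuh)$) is routine by biadditivity.
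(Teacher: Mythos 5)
Your proposal is correct in outline but takes a genuinely different route from the paper. The paper disposes of the geometric content in one line by invoking Lusztig's induction product for perverse sheaves on $\op{Rep}$ via the equivalence of Proposition~\ref{s-s-perv}, and then records the product module-theoretically as $M\otimes N=A_{\bc+\bd}\otimes_{A_{\bc}\times A_{\bd}}M\boxtimes N$, asserting functoriality. You instead verify the underlying identities directly on the convolution algebras---associativity of $|$, the interchange law with vertical composition, and the unit---and read off the monoidal structure from there. This is more self-contained, and it correctly isolates the interchange law $(a_1|b_1)(a_2|b_2)=(a_1a_2)|(b_1b_2)$ as the crux; note that the paper needs exactly this identity as well, since it is what makes $A_{\bc+\bd}$ a module over $A_{\bc}\otimes A_{\bd}$ so that the induction functor is well defined and bifunctorial, a point hidden inside the citation of \cite{Lus91}. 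Two cautions. First, your base-change step is asserted rather than proved: you must check that in your square the rightward maps of \eqref{corresp} are closed embeddings and the leftward maps are smooth (affine fibrations over the relevant flag varieties, modulo the group action), so that base change in equivariant Borel--Moore homology in the sense of \cite[\S 2.7, 8.6]{CG97} actually applies; also $\cH(\blah_1\cup\blah_2)$ with a single argument is not defined---you presumably mean the quotient of $\cQ(\blah_1\cup\blah_2)$. Second, the identity $e_{\bmuh}|e_{\bnuh}=e_{\bmuh\cup\bnuh}$ should be checked including degrees under the convention \eqref{gradconv}, which Lemma~\ref{tensor} lets you do on the faithful representation. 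Neither point is a real gap so much as a place where your sketch sits at the same level of detail as the paper's own ``this is obviously functorial.''
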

\begin{proof}
In the ungraded case this follows directly from Lusztig's convolution
product \cite[\S3]{Lus91}, by Proposition \ref{s-s-perv}.  More explicitly, we define \[M\otimes N={A_{{\bf c} +{{\bd}}}}\otimes_{A_{\bf c}\times A_{{\bd}}} M\boxtimes N,\] meaning one first takes the outer tensor product of the graded $A_{\bf c}$-module $M$ and the graded $A_{\bf d}$-module $N$. The resulting $A_{\bf c} \times A_{\bf d}$-module is then induced to a graded $A_{{\bf c} +{{\bd}}}$-module via the horizontal multiplication \eqref{horizontal}. This is functorial in both entries and defines the required tensor product with the asserted properties.
\end{proof}

\subsection{Categorified generic nilpotent Hall algebra}
\label{sec:Hall}
Let $K^0_q(\mathbf{A})$ be the split Gro\-then\-dieck group of the additive Krull-Schmidt category
$\mathbf{A}$, i.e. the free abelian group on isomorphism classes $[M]$ of objects in ${A}\mpmod$ modulo the relation $[M_1]+[M_2]=[M_1\oplus M_2]$. This is a free
$\mZ[q,q^{-1}]$-module where the action of $q$ is by grading shift (and has nothing to do with the parameter $q$ from the introduction).

For a graded vector space $W=\oplus_jW^j$, we define its grading shifts $W\langle d\rangle$, $d\in\mZ$, by
$(W\langle d\rangle)^j=W^{d+j}$, and let $q^d[M]=[M\langle d\rangle]$.  The module
$K^0_q(\mathbf{A})$ is of infinite rank, but is naturally a direct sum of the
Grothendieck groups $K^0_q(A_\bd\mpmod)$, each of which is finite rank.

Let $\fCompe(\bd)\subset \Compe$ be the set of vector compositions of
$\bd$ of complete flag type.  For each $\bd$, there is a subalgebra,
\begin{eqnarray}
\label{Rd}
R_\bd&=&\bigoplus_{\blah,\bmuh\in
  \fCompe(\bd)}e_{\blah}A_\bd e_{\bmuh}.
\end{eqnarray}
There is also a corresponding
monoidal subcategory $\mathbf{R}$ of $\mathbf{A}$ generated by the indecomposable
projectives indexed by the $\bmuh\in \fCompe(\bd)$ for all $\bd$. Both $A_\bd$ and
$R_\bd$ can be
defined for any quiver and the following proposition holds in general,
though in this paper we only use these categories for the affine type
$A$ quiver. The algebra $R_\bd$ appears as {\bf quiver Hecke algebra} (associated with ${\bf d})$ in the literature:

\begin{prop}[\mbox{Vasserot-Varagnolo/Rouquier \cite[3.6]{VV}}]\hfill\label{VVtheorem}\\
  As a graded algebra, $\mathbf{R}_\bd$ is isomorphic to
  the quiver Hecke algebra $R(\bd)$ associated to the quiver $\Gamma$ in
  \cite{Rou2KM}.  In particular, $K_q^0(\mathbf{R})$ is naturally
  isomorphic to the Lusztig integral form of
  $U_q^-(\slehat)$ by mapping the isomorphism classes of indecomposable projective objects to Lusztig's canonical basis.
  \label{VVR}
\end{prop}
The idempotents in $R_\bd$ get identified with those in $R(\bd)$ by
viewing the residue sequence \eqref{res} as a sequence of simple roots
$\alpha_i$.  We should note that Proposition \ref{VVtheorem} uses the
signed version of the quiver Hecke algebra appearing in \cite{VV}, \cite{BKKL} which differs from the first paper of
Khovanov and Lauda \cite{KL1}.

\begin{remark}\label{AR}
{\rm
For a Dynkin quiver, the categories $\mathbf{R}$ and
$\mathbf{A}$ are canonically equivalent.  In fact,
both are equivalent to the full category of semi-simple perverse
sheaves on $\op{GRep}$.  However, in affine type $A$ (the case of
interest in this paper), they differ.  In terms of perverse sheaves,
the IC-sheaves which appear in $p_*\K_{\cQ({\bmuh})}$ for $\bmuh$
having complete flag type are those whose Fourier transform has
nilpotent support as well; for example, the constant sheaf on the
trivial representation with dimension vector $(1,\dots,1)$ cannot
appear.  Thus, in this case there are objects in $\mathbf{A}$ which
don't lie in  $\mathbf{R}$.
}
\end{remark}
Recall that the {\bf nilpotent Hall algebra} of the quiver $\Gamma$ is an algebra structure on the set of complex valued functions on the space of (isomorphism classes of) nilpotent representations, typically considered over a finite field. The structure constants are polynomial in the cardinality $q'$ of the field. If $[M]$ denotes the constant function on the class of the representation $M$ with dimension vector $\bd(M)$ then
\begin{eqnarray*}
[M]\cdot[N]=q^{\{\bd(M),\bd(N)\}}F_{M,N}^Q [Q],
\end{eqnarray*}
where $\{\bd',\bd''\}=
\sum_{i=1}^e\bd'_i(\bd_{i}''-\bd_{i+1}'')$ denotes the Euler form, $q'=q^2$ and the $F_{M,N}^Q$ are the Hall numbers. Hence it makes sense to consider $q$ as a formal parameter and define the {\bf generic Hall algebra} over the ring of Laurent polynomials $\mC[q,q^{-1}]$. Following Vasserot and Varagnolo, \cite{VVDuke}, we denote this algebra $U_e^-$.  By work of
Schiffmann \cite[\S 2.2]{SchifCyc} it is isomorphic as an algebra to
$U_q^-(\slehat)\otimes \bLa(\infty)$, where $\bLa(\infty)$ denotes the ring of symmetric polynomials. Identifying $\bLa(\infty)$ with $U_q^-(\mathbb{H})$, the lower half of a Heisenberg algebra,
this algebra can also be described as $U_q^-(\glehat)$ as in work of
Hubery \cite{Huberycentre}, \cite{Hub}.  This generic Hall algebra
has a basis given by characteristic functions on the isomorphism classes of nilpotent representations of $\Gamma$ and is naturally generated as algebra by the characteristic functions $\mathbf{f}_\bd$ on the classes of semi-simple representations (which we label by their dimension vectors $\bd$ following Lemma~\ref{ss}). Note that for instance $\mathbf{f}_i:=\mathbf{f}_{\alpha_i}=[S_i]$ and $\mathbf{f}_{\alpha_i+\alpha_{i+1}}=\mathbf{f}_{i+1}\mathbf{f}_{i}=[S_{i+1}\oplus S_{i}]$, whereas $\mathbf{f}_{(1,\dots,1)}$ is not in the subalgebra generated by the $[S_i]$'s, cf. Remark \ref{AR}.
The integral form $U_{e,\mZ}^-$ over $\mZ[q,q^{-1}]$ is given here by the
lattice generated by all $\mathbf{f}_\bd$'s, analogous to Lusztig's integral form for quantum groups, see \cite{SchifCyc}.

\begin{prop}\label{hall-grothendieck}
If $\operatorname{char}(\K)=0$, then
there is an isomorphism $K^0_q(\mathbf{A})\cong U_{e,\mZ}^-$, $[(\bd)]\mapsto \mathbf{f}_\bd$, of
$\mZ[q,q^{-1}]$-algebras
from the graded Grothendieck ring of $\mathbf{A}$ to the integral form of the
generic nilpotent Hall algebra of the cyclic quiver.
\end{prop}
The result is in fact also true for $\K$ of positive characteristic and can be proved using the usual technique of deforming to a
characteristic 0 discrete valuation ring. Since the general result is not
needed here, we omit it, but refer to \cite{Maksimau}.
\begin{proof}
  Fixing a prime $p$, there is a natural map from $K^0_q(\mathbf{A})$
  to $U_e^-|_{q=p}$.  This is given by
 applying the equivalence of Proposition \ref{s-s-perv}, and then
  sending the class of a semi-simple perverse sheaf to the function
  given by the super-trace of Frobenius on its stalks. This is a function on
  the points of $\op{Rep}$ over the field $\mathbb{F}_p$ and hence defines
  an element of the Hall algebra.  By the definition of the Hall
  multiplication and the Grothendieck trace formula, this is an
  algebra map. Since these super-traces are polynomial in $p$ (they are the
Poincar\'e polynomials of the quiver partial flag varieties), the
  coefficients of the expansion of this function in terms of the
  characteristic functions of orbits are also polynomial, and this
  assignment can be lifted to an algebra map $K^0_q(\mathbf{A})\to U_{e,\mZ}^-$. This map is obviously surjective, since the function for each
  intersection cohomology sheaf on a nilpotent orbit, and thus the characteristic
  function on the orbit, is in its image.  It is also injective,
  since when we expand any non-zero class in the Grothendieck group in
  terms of the classes of intersection cohomology sheaves, we must
  have a nonzero value of the corresponding function on the support of
  an intersection cohomology sheaf maximal (in the closure ordering)
  amongst those with non-zero coefficient.
Since $[(\bd)]$ corresponds to the skyscraper sheaf of the semi-simple
representation of dimension $\bd$, it is sent to the characteristic
function of that point.
\end{proof}

The monoidal structure on ${\bf A}$ and the usual monoidal structure $({\mathsf{Vect}_\K},\otimes_\K,\K)$ on the category of vector spaces are compatible in the following way:

\begin{lemma}
\label{tensor}
Let $\Phi_{\bd}: A_{\bd}\rightarrow \END(V_{\bd})$ be the representation from \eqref{V}. Then
$$\Phi_{\bc+\bd}((a|b))(v)= \Phi_{\bc}(a)(v_1)\otimes
\Phi_{\bc}(b)(v_2),$$ where $v$ is the image of $v_1\otimes v_2$ under
the canonical map $V_\bmuh\otimes V_{\blah}\rightarrow
V_{\bmuh\cup\blah}$.

That is, the functor $\mathbf{V}\colon \mathbf{A}\to \mathsf{Vect}_\K$
given by $\bmuh\mapsto V_\bmuh$ is monoidal.
\end{lemma}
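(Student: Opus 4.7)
The plan is to realise everything in sight as a composition of correspondences in equivariant Borel-Moore homology and then reduce the claim to a base-change (proper/smooth) identity in a single large commutative diagram. Once phrased this way, the lemma becomes a formal consequence of the fact that ``taking the distinguished subrepresentation of dimension vector $\bc$'' interacts well with Cartesian products of the Steinberg-type varieties $Z(\bmuh,\blah)$.

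First I would rewrite the action of $A_{\bd}$ on $V_{\bd}$ from the standard convolution formalism \cite[Prop.~8.6.15]{CG97}: the action of $[Z(\bmuh,\blah)]\in H^{BM,G}_*(Z(\bmuh,\blah))$ on $H^{BM,G}_*(\cQ(\blah))$ is given by pull-push along $\cQ(\bmuh)\xleftarrow{} Z(\bmuh,\blah)\xrightarrow{} \cQ(\blah)$. The natural map $V_\bmuh\otimes V_\blah\to V_{\bmuh\cup\blah}$ in the statement should be the external-then-internal pull-push along
\[
\cQ(\bmuh)\times \cQ(\blah)\;\xleftarrow{\;\;}\;\cQ(\bmuh;\blah)\;\xrightarrow{\;j\;}\;\cQ(\bmuh\cup\blah),
\]
where $\cQ(\bmuh;\blah)$ is the space of flagged representations of dimension vector $\bc+\bd$ carrying a distinguished subrepresentation of dimension vector $\bc$ with its $\bmuh$-flag, and a compatible $\blah$-flag on the quotient; the left map sends $(V,F)$ to (sub, quotient), the right map is the obvious closed embedding.

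Next I would set up the big diagram. For fixed $(\bmuh_1,\blah_1)$ with dimension vector $\bc$ and $(\bmuh_2,\blah_2)$ with dimension vector $\bd$, I would consider the Steinberg variety $Z(\bmuh_1;\bmuh_2,\blah_1;\blah_2)$ sitting naturally as a fibre product
\[
Z(\bmuh_1;\bmuh_2,\blah_1;\blah_2)\;=\;\cQ(\bmuh_1;\bmuh_2)\times_{\Rep_{\bc+\bd}}\cQ(\blah_1;\blah_2),
\]
and observe that it maps both (a) to $Z(\bmuh_1,\blah_1)\times Z(\bmuh_2,\blah_2)$ via taking sub/quotient in each factor, and (b) into $Z(\bmuh_1\cup\bmuh_2,\blah_1\cup\blah_2)$ via the closed embedding $j$ on each side. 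These are exactly the two correspondences that define, respectively, the horizontal multiplication $a|b$ and the tensor factor of the natural map on $V$. The point is to check that the three squares one obtains by combining the action correspondence $\cQ(\blah_i)\leftarrow Z(\bmuh_i,\blah_i)\rightarrow \cQ(\bmuh_i)$ with the horizontal correspondence \eqref{corresp} are Cartesian (or close enough that the usual base-change applies in equivariant Borel-Moore homology, see \cite[\S 2.7]{CG97}). The squares one needs are routine: they say that a compatible flag on a representation of dimension vector $\bc+\bd$ preserving a sub of dimension $\bc$ is the same as a pair of compatible flags on the sub and quotient, which is essentially by definition of $\cQ(\bmuh_1;\bmuh_2)$.

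Finally, with the diagram in place, both sides of the equation $\Phi_{\bc+\bd}(a|b)(v)= \Phi_\bc(a)(v_1)\otimes\Phi_\bd(b)(v_2)$ arise as pull-push along the same cycle in the total correspondence, modulo a fibre-square rearrangement of the order in which pushforwards and pullbacks are performed. Projection formula plus the Cartesian squares identified above then gives the desired equality. The main technical obstacle I anticipate is bookkeeping in the equivariant setting: one has to be careful that the smoothness/properness assumptions needed for base change genuinely hold for each square (the maps $j$ are proper closed embeddings, and the sub/quotient projections are vector-bundle-like in the spirit of Lemma \ref{lem:vb}, so this should go through), and that the grading-shift conventions of \eqref{gradconv} are consistent on both sides, which they are by symmetry of push and pull under that convention. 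The monoidality of $\mathbf{V}$ then follows at once because the identity just proved is exactly the compatibility of the action with the tensor product of the Hom/Ext interpretation of $A_\bd$ via Proposition~\ref{s-s-perv}.
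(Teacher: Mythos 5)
Your proposal is correct and takes essentially the same approach as the paper: the paper's entire proof of this lemma is the single sentence ``This follows directly from the definitions, \cite{CG97}'', i.e., precisely the unwinding of the Chriss--Ginzburg convolution formalism (base change across the Cartesian squares relating the horizontal correspondence \eqref{corresp} to the action correspondences) that you spell out. Your write-up is far more explicit than the authors', but it is the same argument.
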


\begin{proof}
This follows directly from the definitions, see \cite{CG97}.
\end{proof}

Thus, we can describe elements corresponding to vector compositions with a
large number of parts by looking at (the action) of the ones with a small number of parts.

\section{Diagrams and Demazure operators}
\label{sec:splitmerge}
In this section we describe a basis of the algebras $A_{\bf d}$ and elementary morphisms, called splits and merges. We give a geometric, algebraic and diagrammatical description of these maps.

Let $\blah',\blah\in \op{Comp}_e$ be residue data.
\begin{definition}
\label{merges}
We say that $\blah'$ is a {\bf merge} of
$\blah$ (and $\blah$ a {\bf
 split} of $\blah'$) at the index $k$ if
$\blah'=(\bla^{(1)},\cdots, \blah^{(k)}+\blah^{(k+1)},
\dots,\blah^{(r)})$.
\end{definition}
If $\blah'$ is a merge of $\blah$, then there is an associated correspondence
\begin{equation}
\tikz{
\node (a) at (-5,0) {$\cQ({\blah})$};
\node (b) at (0,1) {$\cQ({\blah,k})=\left\{ (V,f, F)\in\cQ({\blah})\mid f_i(F(i)^{k+1})\subset F(i+1)^{k-1}  \right\}$};
\node (c) at (5,0) {$\cQ({\blah'})$};
\draw[thick,->] (b) -- (a);
\draw[thick,->] (b) -- (c);
}
\end{equation}
where the left map is just the obvious inclusion and the right map is forgetting $F_k(i)$ for all vertices $1\leq i\leq e$ (and reindexing all subspaces in the flags
with higher indices).
Obviously the same variety defines also a
correspondence in the opposite direction (reading from right to left) which we associate to the {\bf split}. We are interested in the equivariant version:

\begin{definition}
For $\blah'$ a merge (resp. split) of $\blah$ at $k$, we
let $\blah\stackrel{k}{\to}\blah'$ or just $\blah \to\blah'$ denote the element of $A$ given by multiplication with the
equivariant fundamental class $[\cQ({\blah,k})]$ (resp.
$[\cQ({\blah',k})]$) pushed forward to $H_*^{BM}(\cH(\blah',\blah))$.
\end{definition}

 In the most obvious choice of grading conventions, pull-back by a map is of degree~0,
  and pushforward has degree given by minus the relative (real)
  dimension of the map (i.e.\ the dimension of the target minus the
  dimension of the domain). This normalization has the disadvantage of
  breaking the symmetry between splits and merges.  It is, for
  example, carefully avoided in \cite{KL1}.
  Instead, we use, \eqref{gradconv}, the perverse normalization of the constant sheaves
  which ``averages'' the degrees of pull-back and pushforward.
  Then the degree of
  convolving with the fundamental class of a correspondence is minus
  the sum of the relative (complex) dimensions of the two projection
  maps (note that for a correspondence over two copies of the same
  space, this agrees with the most obvious normalization). In particular, we get the same answer in the split and merge
  cases.

\begin{prop}\label{split-deg}
Let $\blah'$ be a merge or split of $\blah$ at index $k$. Then $\blah\to\blah'$ is homogeneous of degree
\begin{equation}\label{eq:degree}
\sum_{i=1}^e
  \bla^{(k)}_i(\bla^{(k+1)}_{i-1}-\bla^{(k+1)}_i)=:-\{\bla^{(k+1)},\bla^{(k)}\}.
\end{equation}

\end{prop}
\begin{proof}
If $\blah'$ is a merge of $\blah$, then
  \begin{itemize}
  \item the map $\cQ({\blah,k})\to \cQ({\blah'})$ is a smooth surjection
    with fiber given by the product of Grassmannians of
   $\bla^{(k)}_i$-dimensional planes in
    $\bla^{(k)}_i+\bla^{(k+1)}_i$-dimensional space, which has
    dimension $\sum
 \bla^{(k)}_i\bla^{(k+1)}_i$, and
\item the map $\cQ({\blah,k})\to \cQ({\blah})$ is a closed inclusion of
 codimension \[\sum_{i=1}^e \dim \Hom(F(i-1)^{k+1}/F(i-1)^{k},F(i)^{k}/F(i)^{k-1})=\sum_{i=1}^e
  \bla^{(k)}_i\bla^{(k+1)}_{i-1}.\]
  \end{itemize}
The result follows for merges and hence also for splits.
\end{proof}

\subsection{Explicit formulas for merges and splits}
\label{sec:calc-basic-conv}

We give now explicit formulas for elementary merges and splits. Consider the particular
choices for the vector compositions: $\cH({(\bc,\bd)},(\bc+\bd))\cong \cF{(\bc,\bd)}/G_{\bc+\bd}$. The variety $\cQ{(\bc+\bd)}$ is just a point, but equipped with the action of $G=G_{\bc+\bd}$. We want to describe how the fundamental classes of $\cH({(\bc,\bd)},(\bc+\bd))$ or $\cH((\bc+\bd),{(\bc,\bd)})$ (which are isomorphic as varieties, but different as correspondences) act on $V$ via Proposition \ref{faithful} and determine in this way the merge and split map. They are given by pullback followed by pushforward in equivariant cohomology via the diagram
\begin{eqnarray*}
\xymatrix{
H_*^{BM}(\cQ{(\bc,\bd)}/G)\ar@/^/[r]^{\iota^*}&H_*^{BM}(\cF{(\bc,\bd)}/G) \ar@/^/[l]^{\iota_*}\ar@/^/[r]^{q_*}&\ar@/^/[l]^{q^*}H_*^{BM}( \cQ(\bc+\bd)/G),
}
\end{eqnarray*}
where $\iota:\cF{(\bc,\bd)}\rightarrow  \cQ{(\bc,\bd)}$ is the zero section of the $G$-equivariant fibre bundle $\pi:\cQ{(\bc,\bd)}\longrightarrow \cF{(\bc,\bd)}$ and $q:\cF{(\bc,\bd)}\rightarrow \cQ(\bc+\bd)$ is the proper $G$-equivariant map given by forgetting the subspaces of dimension $c_i$ for any $i$.

\begin{prop}\label{euler-int}
Let $\bc$, $\bd\in\mZ_{\geq 0}^e$ non-zero. The following diagram commutes
\begin{eqnarray}
\xymatrix{
H^*(\cQ{(\bc,\bd)}/G)\ar@/^/[rr]^{q_*\circ \iota^*}\ar@{->}[d]_{\op{Borel}}
&&\ar@/^/[ll]^{\iota_*\circ q^*}H^*( \cQ{(\bc+\bd)}/G)\ar@{->}[d]_{\op{Borel}}\\
  \bLa(\bc,\bd)\ar@/^/[rr]^{\op{int}}
&&  \bLa(\bc+\bd)
\ar@/^/[ll]^{\op{E}}
}
\end{eqnarray}
where $E$ is the inclusion map from the total invariants $\bLa(\bc+\bd)$ into the invariants $\bLa(\bc,\bd)$ followed by multiplication with the  Euler class
\begin{equation}
\label{euler}
E:=\prod_{i=1}^e \prod_{j=1}^{c_{i+1}}\prod_{k=c_i+1}^{d_i+c_i}(x_{i+1,j}-x_{i,k}),
\end{equation}
and $\op{int}$ is the integration map which sends an element $f$ to the total invariant
\begin{equation}\label{int}
\displaystyle\sum_{w\in S_{\bc+\bd}} (-1)^{l(w)} w(f)\prod_{i=1}^e\frac{1}{c_i!d_i!}
\frac{\displaystyle w\bigg(\prod_{1\leq j<k\leq c_i} (x_{i,j}-x_{i,k})\prod_{c_i<\ell<m\leq c_i+d_i}(x_{i,\ell}-x_{i,m})\bigg)}
{\displaystyle \prod_{1\leq j<k\leq c_i+d_i} (x_{i,j}-x_{i,k})}
\end{equation}
where $\ell$ denotes the usual length function on the symmetric group.
\end{prop}
Those readers who are interested in the case where $\K$ is of small
positive characteristic might be worried about \eqref{int}, since it involves division by a scalar which may not be
invertible in $\K$; However, like
divided difference operators, these operations preserve integer valued
polynomials, and so are well-defined maps modulo $p$ for any prime $p$.

\begin{remark}
\label{conventions}{\rm
By convention, we set $E=1$ if either one of the products in \eqref{euler} is empty or one of the variables $x_{i-1,k}$ or $x_{i,j}$ does not exist.
The degrees of the maps $q_*\circ
\iota^*$ and $\iota_*\circ
q^*$ are again not the degrees which one would naively guess, but rather given by
the convention $\eqref{gradconv}$, in particular they are of the same degree.
 }
\end{remark}
\begin{proof}
Since $\pi^*$ is an isomorphism and $\iota$ the inclusion of the zero section, $\iota^*$ is also an isomorphism. On the other hand $q$ is the map to a point, hence $q^*$ is just the inclusion of the total invariants.  By the usual adjunction formula, $\iota^*\iota_*(a)={\bf e}\cup a$, where ${\bf e}$ is the Euler class of the vector bundle $\pi$. To see that the map $E$ is as asserted it is enough to verify the formula $E={\bf e}$ for the Euler class. The map $i$ is the inclusion of the zero section of the vector bundle $$\bigoplus_{i\in\mathbb{V}}\Hom(\cV_{i,d_i},\cV_{i+1,c_{i+1}}),$$ where $\cV=\oplus_{i=1}^e\cV_{i,d_i}$ is the tautological vector bundle on the moduli space of quiver representation with dimension vector ${\bd}$. As equivariant vector bundles over the maximal torus of $G_{\bd}$, we have a splitting into line bundles
\begin{eqnarray*}
\cV_{i+1,\bc}\cong \bigoplus_{j=1}^{c_{i+1}}\cL_{i+1,j}, && \cV_{i,\bd}\cong \bigoplus_{k=c_i+1}^{c_i+d_i}\cL_{i,k},
\end{eqnarray*}
where $\cL_{i,k}$ is the tautological line bundle for the corresponding weight space.  Thus, \begin{eqnarray*}
\Hom(\cV_{i,\bd},\cV_{i+1,\bc})&\cong & \bigoplus_{j=1}^{c_{i+1}}\bigoplus_{k=c_i+1}^{c_i+d_i}\cL_{i+1,j}\otimes \cL_{i,k}^*
\end{eqnarray*}
 and the formula \eqref{euler} for the Euler class follows.

On the other hand, $q$ is the projection from the partial flag variety $G_{\mathbf{a}}/P_{\mathbf{c},\mathbf{d}}$ to a point, where ${\mathbf{a}}={\mathbf{c}}+{\mathbf{d}}$. The formula for equivariant integration on the full flag variety is given by
\begin{eqnarray}
\label{fullflag}
\int_{G_{\mathbf{a}}/B}f&=&\frac{\displaystyle \sum_{S_{\mathbf{a}}}(-1)^{l(w)} w\cdot f}{\displaystyle \prod_{i=1}^e\prod_{j<k\leq a_i}(x_{i,j}-x_{i,k})}.
\end{eqnarray}
Thus, we have that
\small
\begin{eqnarray*}
\int_{\cQ(\bc,\bd)} f=
\int_{\cF(\bc,\bd)}\int_{P_{\bc,\bd}/B_{\bc+\bd}}\frac{1}{\prod_{i=1}^ec_i!d_i!}fD
=\int_{G_{\bc+\bd}/B_{\bc+\bd}} \frac{1}{\prod_{i=1}^ec_i!d_i!}fD,
\end{eqnarray*}
\normalsize
where $D=\prod_{j<k\leq c_i} (x_{i,j}-x_{i,k})\prod_{c_i<\ell<m\leq c_i+d_i}(x_{i,\ell}-x_{i,m})$. The integration formula follows then from \eqref{fullflag}.
\end{proof}

\subsection{Demazure operators}
The splitting and merging maps can be described algebraically via Demazure operators acting on polynomial rings.
The $i$th {\bf Demazure operator} or {\bf difference operator} $\Delta_i=\Delta_{s_i}$ acts on $\K[x_1,\ldots x_n]$ by sending $f$ to $\frac{f-s_i(f)}{x_i-x_{i+1}}$, where $s_i=(i,i+1)$ denotes the simple transposition acting by permuting the $i$th and $(i+1)$th variable.
 If $w=s_{i_1}s_{i_2}\ldots s_{i_l}$ is a reduced expression of $w\in S_n$ we define $\Delta_w=\Delta_{i_1}\Delta_{i_2}\cdots s_{i_l}$. This is independent of the reduced expression, see \cite{Demazure}. If $G$ is a product of symmetric groups we denote by $w_0\in G$ the longest element.
Demazure operators satisfy the twisted derivation rule $$\Delta_{i}(fg)=\Delta(f)g+s_i(f)\Delta_i(g)$$ and more generally for a reduced expression $w=s_{i_1}s_{i_2}\ldots s_{i_l}$ the formula
\begin{eqnarray}
\label{Demazureprop}
\Delta_{i_1}\Delta_{i_2}\cdots\Delta_{i_l}(fg)&=&
\sum A_{i_1}A_{i_2}\cdots A_{i_l}(f) B_{i_1}B_{i_2}\cdots B_{i_l}(g),
\end{eqnarray}
where the sum runs over all possible choices of either $A_j=\Delta_j$ and  $B_j=\op{id}$ or
$A_j=s_j$ and $B_j=\Delta_j$ for each $1\leq r$.

\begin{prop} Let $(\bc,\bd)$ be a vector composition.
\label{Demazure}
\begin{enumerate}
\item \label{Dem1} Assume $\bc+\bd=(c_i+d_i)\alpha_i$ for some $i$, then $$\op{int}(f)=\Delta_{w_0^{c_i,d_i}},$$
where $w_0^{c_i,d_i}\in S_{c_i+d_i}$ denotes coset representative of $w_0$ in $S_{c_i+d_i}/(S_{c_i}\times S_{d_i})$  of minimal length. In general, $\op{int}(f)$ is a product of pairwise commuting Demazure operators $w_0^{c_i,d_i}$, one for each $i$.
\item \label{Dem2}
Merging successively from a vector composition $(\alpha_i,\alpha_i,\ldots,\alpha_i)$ of length $r$ to $r\alpha_i$ equals the Demazure operator for $w_0\in S_r$, in formulas
    \begin{equation}
    \label{Demlong}
    \Delta_{w_0}(f)=\sum_{w\in S_r}(-1)^{l(w)}w(f)\frac{1}{\prod_{1\leq i<j\leq r}(x_i-x_j)}.
    \end{equation}
\item \label{Dem3}
The split $\bc+\bd$ into $\bc$ and $\bd$, where either $c_{i+1}d_i=0$ for all $i$ or $\bc+\bd=(c_i+d_i)\alpha_i$ for some $i$, is just the inclusion from $\bLa(\bc+\bd)$ to $\bLa(\bc,\bd)$.
\end{enumerate}
\end{prop}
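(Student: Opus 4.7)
The strategy is to reduce all three parts to classical identities for equivariant pushforward on partial flag varieties. Part~(\ref{Dem3}) is essentially immediate from Proposition~\ref{euler-int}: that result describes the split as the inclusion $\bLa(\bc+\bd)\hookrightarrow\bLa(\bc,\bd)$ followed by multiplication by the Euler class $E$ of~\eqref{euler}. Under either hypothesis, at every vertex $i$ one of the two factors $c_{i+1}$ or $d_i$ vanishes, so the corresponding indexing range in the triple product defining $E$ is empty; hence $E=1$ and only the inclusion remains.

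For Part~(\ref{Dem1}), I would first reduce to the single-vertex case. Since $\cF(\bc,\bd)\cong\prod_i\cF(c_i,d_i)$ is a product of partial flag varieties for $\op{GL}_{c_i+d_i}$ acting on the pairwise disjoint alphabets $\{x_{i,1},\dots,x_{i,c_i+d_i}\}$, the map $q$ and equivariant integration both factor over vertices; the resulting Demazure operators for different vertices commute trivially since they act on disjoint variables. It therefore suffices to prove $\op{int}=\Delta_{w_0^{c,d}}$ on $\bLa(\bc,\bd)$ when $\bc+\bd=(c+d)\al_i$. Here $q$ is the projection of the Grassmannian $\cF(c,d)\cong \op{GL}_{c+d}/P_{c,d}$ to a point, and equivariant pushforward along it is classically the BGG--Demazure operator $\Delta_{w_0^{c,d}}$ on $S_c\times S_d$-invariants (via the reduced factorization $w_0=w_0^{c,d}\cdot w_0^{S_c\times S_d}$ and the closed formula $\Delta_{w_0}(f)=\bigl(\prod_{j<k}(x_j-x_k)\bigr)^{-1}\sum_w(-1)^{\ell(w)}w(f)$). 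To match the formula of Proposition~\ref{euler-int} directly, I would write each $w\in S_{c+d}$ uniquely as $w=\sigma\tau$ with $\sigma$ a shortest coset representative and $\tau\in S_c\times S_d$; using the $S_c\times S_d$-antisymmetry of $\prod_{j<k\leq c}(x_j-x_k)\prod_{c<\ell<m\leq c+d}(x_\ell-x_m)$ together with the invariance of $f$, the $(-1)^{\ell(\tau)}$ signs cause the inner $\tau$-sum to produce $c!\,d!$ identical summands, cancelling the prefactor $\frac{1}{c!d!}$ and leaving the Demazure expression.

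Part~(\ref{Dem2}) then follows by iterating Part~(\ref{Dem1}): carrying out the successive merges one at a time produces the composition $\Delta_{w_0^{r-1,1}}\cdots\Delta_{w_0^{1,1}}$, where each $\Delta_{w_0^{k,1}}=\Delta_{s_k s_{k-1}\cdots s_1}$. Concatenating yields the standard reduced expression $w_0=(s_1)(s_2 s_1)\cdots(s_{r-1}\cdots s_1)$ for the longest element of $S_r$, so $\Delta_v\Delta_w=\Delta_{vw}$ whenever $\ell(vw)=\ell(v)+\ell(w)$ shows that the total composition equals $\Delta_{w_0}$; formula~\eqref{Demlong} is then the classical closed form recalled above. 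The main obstacle is the sign and coset bookkeeping in Part~(\ref{Dem1}); the conceptual identification of $q_*$ with a Demazure operator is standard, but matching the explicit formula of Proposition~\ref{euler-int} requires careful sign tracking, and the perverse grading convention~\eqref{gradconv} is what ensures $q_*$ has the right homogeneous degree for the comparison.
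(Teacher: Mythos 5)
Your proof is correct, but it runs the logic of parts~(\ref{Dem1}) and~(\ref{Dem2}) in the opposite order from the paper. The paper first proves~(\ref{Dem2}) directly, by induction on $r$: it composes the inductively known operator for the first $r-1$ merges with the explicit integration formula for the last merge and recognizes the result as the classical closed form for $\Delta_{w_0}$. It then deduces~(\ref{Dem1}) purely algebraically from~(\ref{Dem2}): associativity of merges gives $\op{int}\circ\Delta_{w_0(c_i,d_i)}=\Delta_{w_0(c_i+d_i)}=\Delta_{w_0^{c_i,d_i}}\circ\Delta_{w_0(c_i,d_i)}$, and since $\Delta_{w_0(c_i,d_i)}$ surjects onto the $S_{c_i}\times S_{d_i}$-invariants (the domain of $\op{int}$), one may cancel it. You instead prove~(\ref{Dem1}) first, by the coset decomposition $w=\sigma\tau$ and the antisymmetry of the Young-subgroup Vandermonde, which collapses the inner sum to $c!\,d!$ equal terms and matches $\op{int}$ with $\Delta_{w_0^{c,d}}$; then~(\ref{Dem2}) follows by concatenating the reduced words $w_0^{k,1}$ into a reduced word for $w_0\in S_r$ and using additivity of Demazure operators on length-additive products. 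Both routes are sound. Yours has the advantage of making~(\ref{Dem1}) rest on the classical identification of $q_*$ for $G/P\to\mathrm{pt}$ with a Demazure operator and of needing the sign bookkeeping only once; the paper's has the advantage of never manipulating the coset sum directly, trading it for the (also classical) closed formula \cite[10.12]{Fulton} and a one-line cancellation. The only point you should make explicit is that in the iteration for~(\ref{Dem2}) the merge of the first two parts of a longer composition acts on the relevant variables only (Lemma~\ref{tensor}), so part~(\ref{Dem1}) really does apply at each step; and mind the convention for reading reduced words of $w_0^{k,1}$ (the shortest representative of $w_0(S_k\times S_1)$ is $s_1s_2\cdots s_k$, not its reverse), though either concatenation is a reduced word for $w_0$ so the conclusion is unaffected.
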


\begin{proof}
Part \eqref{Dem3} is obvious, since $E=1$ by Remark \ref{conventions}. The second statement is clear for $r=1$ and $r=2$. The successive merge of the first $r-1$ $\alpha_i$'s is given by induction hypothesis, hence we only have to merge with the last $\alpha_i$ and obtain
\begin{eqnarray*}
f&\mapsto& \sum_{y\in S_{r-1}\times S_1} (-1)^{l(y)}\frac{y(f)}{(r-1)!}\frac{1}{\prod_{1\leq i<j\leq r-1}(x_i-x_j)}=:P\\
&\mapsto&\sum_{z\in S_r/S_{r-1}\times S_1} (-1)^{l(z)}z(P)=\sum_{z,y}(-1)^{l(z)+l(y)}zy(f)\frac{1}{\prod_{1\leq i<j\leq r}(x_i-x_j)}.
\end{eqnarray*}
Then \eqref{Dem2} follows from  the general formula for $\Delta_{w_0}$, see \cite[10.12]{Fulton}. Associativity of the merges and formula \eqref{Dem2} gives $\op{int}(f) \Delta_{w_0(c_i,d_i)} =\Delta_{w_0(c_i+d_i)}=\Delta_{w_0(c_i,d_i)}\Delta_{w_0(c_i,d_i)}$, where $w_0(c_i,d_i)$ and $w_0(c_i+d_i)$ are the longest elements in $S_{c_i}\times S_{d_i}$ and $S_{c_i+d_i}$ respectively. We have $\op{int}(f)=\Delta_{w_0(c_i,d_i)}$, since $\Delta_{w_0(c_i,d_i)}$ surjects to the $S_{c_i}\times S_{d_i}$-invariants, and so \eqref{Dem1} follows.
\end{proof}

\subsection{The pictorial interpretation}
\label{pictorial}
As shown in \cite{VV}, the quiver Hecke algebra $R(\bd)$ is isomorphic
to the diagram algebra introduced by Khovanov and Lauda in \cite{KL1} (modulo the mentioned small differences in signs). This result allows to turn rather involved computations in the convolution algebra into a beautiful diagram calculus. Motivated by these ideas, we present now a graphical calculus for the algebra $A$ where the split and merge maps from Proposition \ref{euler-int} are  displayed as trivalent graphs. We will always read our diagrams from bottom to top. We represent

\begin{itemize}
\item the usual (vertical) algebra multiplication as vertical stacking of diagrams,
\item horizontal multiplication as horizontal stacking of diagrams,
\item the idempotent $e_{\bmuh}$ as a series of lines labeled with the parts of the vector composition or equivalently the blocks of the residue sequence \eqref{res},
\[\tikz[thick,xscale=2.5,yscale=1.5]{
\draw (0,0) -- (0,.5) node[below,at start]{$\bmuh^{(1)}$};
\draw (.4,0) -- (.4,.5) node[below,at start]{$\bmuh^{(2)}$};
\node at (.75,.25) {$\cdots$};
\draw (1.1,0) -- (1.1,.5) node[below,at start]{$\bmuh^{(r-1)}$};
\draw (1.5,0) -- (1.5,.5) node[below,at start]{$\bmuh^{(r)}$};
}\]
\item the morphism $(\bc,\bd)\linj (\bc+\bd)$ as a joining of two strands $\bc,\bd$,
\[
\tikz[thick,xscale=2.5,yscale=1.5]{
\draw (0,0) node[below] {$\bc$} to [out=90,in=-90](.3,.5)
(.6,0) node[below] {$\bd$} to [out=90,in=-90] (.3,.5)
(.3,.5) -- (.3,.8) node[above] {$\bc+\bd$};
}\]

\item the morphism $(\bc+\bd)\linj (\bc,\bd)$ as its mirror image,
\[\tikz[thick,xscale=2.5,yscale=-1.5]{
\draw (0,0) node[above] {$\bc$} to [out=90,in=-90](.3,.5)
(.6,0) node[above] {$\bd$} to [out=90,in=-90] (.3,.5)
(.3,.5) -- (.3,.8) node[below] {$\bc+\bd$};
}\]
\item multiplication by a polynomial is displayed by putting a box containing the polynomial.
\end{itemize}

A typical element of $A$ is obtained by horizontally and vertically composing these morphisms.
The composition of a merge followed by a split of the form $(\bc,\bd)\to (\bc+\bd) \to (\bd,\bc)$ is also abbreviated as a {\bf crossing} and denoted $(\bc,\bd)\to (\bd,\bc)$.

\begin{remark}
\label{KL}
{\rm
Our calculus is an extension of the graphical calculus of \cite{KL1}:
given a crossing as in the Khovanov-Lauda picture, we interpret it as merge-split of the $k$th and $(k+1)$th strands:
\begin{eqnarray}
\label{cross}
\tikz[thick,xscale=2.5,yscale=1.5, baseline=0.8cm]{
\draw (0,0) node[below]{$\alpha_i$} to(.6,1) node[above]{$\alpha_i$}
(.6,0) node[below]{$\alpha_j$} to (0,1) node[above]{$\alpha_j$};
\pgftransformxshift{50}
\pgftransformyshift{-5}
\draw (0,0) node[below] {$\alpha_i$} to [out=90,in=-90](.3,.5)
(.6,0) node[below]{$\alpha_j$} to [out=90,in=-90] (.3,.5)
(.3,.5) -- (.3,.8) node[right, midway]{$\alpha_i+\alpha_j$}
(.3,.8) to [out=90,in=-90](.6,1.3) node[above]{$\alpha_i$}
(.3,.8) to [out=90,in=-90] (0,1.3) node[above]{$\alpha_j$};
\node at (-0.5,0.7){$\rightsquigarrow$};
}
\end{eqnarray}

Assume it involves the $a$th $\alpha_i$ and $b$th $\alpha_j$ in the residue sequence.
\begin{itemize}
\item If $j\not=i,i+1$, then our map just flips the tensor factors
  $\K[x_{i,a}]\otimes\K[x_{j,b}]\mapsto
  \K[x_{j,b}]\otimes\K[x_{i,a}]$.
\item  If $i=j$, then we associate the Demazure operator $\Delta_k$ as
  in \cite{KL1}.
\item  For $j=i+1$, we multiply by $x_{i+1,a}-x_{i,b}$ followed by flipping the tensor factors.
\end{itemize}
In each case, this agrees with the action on $\mathcal{P}o\ell$ defined
in \cite{KLII}, though Khovanov and Lauda have a single alphabet of
variables, which they index by their left-position, as opposed to
having separate alphabets for each node of the Dynkin diagram.  We
believe that when one incorporates non-unit dimension vectors, the
latter convention is more convenient. Lemma~\ref{split-deg} implies that crossings have
degree $1$, $-2$, or $0$ according to the cases  $j=i\pm 1$, $i=j$ or
$j\not=i,i\pm 1$ respectively. Given a single strand labeled with the
$a$th $\alpha_j$ we denote, following \cite{KL1} multiplication with
$x_{j,a}^R$ also by decorating the strand with $R$ dots.}
\end{remark}

To keep track of the permutation $w$ appearing in the Demazure operators $\Delta_w$, it will be useful to interpret the numbers occurring in a residue sequence as colors from the chart $\{1,\ldots,e\}$. If the sequence is of length $d$, then permutations in $S_d$ permuting only inside the colors can be viewed as elements of $S_{\bd}$, where $d_i$ is the number of $i$'s appearing. We want to associate to each idempotent, elementary split or merge such a permutation as follows:
\begin{equation}
\begin{tikzpicture}[thick,baseline=9pt]
\draw[blue] (0,0) node[below] {1} -- (0,1)
(0,1) node[above] {1} -- (0,1)
(0.3,0) node[below] {1} -- (0.3,1)
(0.3,1) node[above] {1} -- (0.3,1)
(0.6,0) node[below] {1} -- (0.6,1)
(0.6,1) node[above] {1} -- (0.6,1);
\draw[red,dashed] (0.9,0) node[below] {2} -- (0.9,1)
(0.9,1) node[above] {2} -- (0.9,1)
(1.2,0) node[below] {2} -- (1.2,1)
(1.2,1) node[above] {2} -- (1.2,1);
\draw (1.5,-0.1) -- (1.5,-0.4);
\draw (1.5,1.1) -- (1.5,1.4);
\draw[blue](1.8,0) node[below] {1} -- (1.8,1)
(1.8,1) node[above] {1} -- (1.8,1)
(2.1,0) node[below] {1} -- (2.1,1)
(2.1,1) node[above] {1} -- (2.1,1);
\draw[red,dashed] (2.4,0) node[below] {2} -- (2.4,1);
\draw[red,dashed] (2.4,1) node[above] {2} -- (2.4,1);
\end{tikzpicture}
\quad\quad
\begin{tikzpicture}[thick,baseline=9pt]
\draw[blue] (0,0) node[below] {1} -- (0,1) node[above] {1}
(0.3,0) node[below] {1} -- (0.3,1) node[above] {1}
(0.6,0) node[below] {1} -- (0.6,1) node[above] {1}
(0.9,0) node[below] {1} -- (1.8,1) node[above] {1}
(1.2,0) node[below] {1} -- (2.1,1) node[above] {1};
\draw (1.5,1.1) -- (1.5,1.5);
\draw[red,dashed] (1.5,0) node[below] {2} -- (0.9,1) node[above] {2}
(1.8,0) node[below] {2} -- (1.2,1) node[above] {2}
(2.1,0) node[below] {2} -- (2.4,1) node[above] {2};
\end{tikzpicture}
\quad\quad
\begin{tikzpicture}[thick,baseline=9pt]
\draw[blue] (0,0) node[below] {1} -- (0.6,1) node[above] {1}
(0.3,0) node[below] {1} -- (0.9,1) node[above] {1}
(0.6,0) node[below] {1} -- (1.2,1) node[above] {1};
\draw[red,dashed] (0.9,0) node[below] {2} -- (1.8,1) node[above] {2}
(1.2,0) node[below] {2} -- (2.1,1) node[above] {2};
\draw (1.5,-0.1) -- (1.5,-0.4);
\draw[blue](1.8,0) node[below] {1} -- (0,1) node[above] {1}
(2.1,0) node[below] {1} -- (0.3,1) node[above] {1};
\draw[red,dashed] (2.4,0) node[below] {2} -- (1.5,1) node[above] {2};
\end{tikzpicture}\label{colored}
\end{equation}
To an idempotent corresponding to a residue sequence of length $d$ we attach the identity element in $S_d$ which corresponds to the identity element in  $S_{\bd}$. To a split $(\bc+\bd)\linj (\bc,\bd)$
we associate  the identity element in  $S_{\bc+\bd}$. When viewed as a permutation in $S_{c+d}$ it sends  $a$ to $b$ if we find the $k$th $j$
on place $a$ in the residue sequence of $(\bc+\bd)$, and on place $b$ in the residue sequence for $(\bc,\bd)$. To an elementary merge $M:(\bc,\bd)\linj (\bc+\bd)$ we associate the product $w=w(M)=\prod_{i=1}^e w_0^{c_i,d_i}$; e.g. $(1,3,5,2,4)(6,8,7)\in S_{\bf d}=S_5\times S_3\subset S_8$ (written in cycle decomposition) to the merge in \eqref{colored}.
Note that $\Delta_{\omega(M)}$ for a merge $M$ is precisely the Demazure operator from Lemma~\ref{Demazure}. \\

Whereas we see this diagrams just as a tool of bookkeeping, they get interpreted in \cite[\S 3.3]{wKLR} as maps from the quiver Schur algebra into a {\bf weighted KLR algebra}.\\

By \eqref{euler}, a split corresponds to an inclusion followed by multiplication with some polynomial. For a composition $X=M_1M_2\cdots M_t$ of splits and merges let $\omega(X)= \omega(M_1)\omega(M_2)\cdots \omega(M_t)$, where $\omega$ of a split is, compatible with the definition above, just the identity permutation. For a finite linear combination $X'$ of such $X$'s we let $\omega(X')$ be the sum of all $\omega(X)$'s of maximal length. The following relations are basic relations in the algebra $A$ which we call {\bf braid relations}.

\begin{prop}
\label{easyrel}
Let  $\bb, \bc, \bd$ be vector compositions.
\begin{enumerate}
\item Let $E$ be the Euler class attached to the splitting $(\bc+\bd)\linj (\bc,\bd)$ and $\Delta_w=\Delta_{w_0^{c_1,d_1}}\cdots \Delta_{w_0^{c_e,d_e}}$. Then
\begin{eqnarray*}
(\bc+\bd)\linj (\bc,\bd)\linj (\bc+\bd)&=&
\Delta_w(E)  \big((\bc+\bd)\stackrel{\op{id}}{\linj} (\bc+\bd)\big)
\end{eqnarray*}
or equivalently in terms of diagrams:

\begin{center}
\tikz[thick,xscale=2.5,yscale=1.5]{
\draw (0,0) node[left] {$\bc$} to [out=90,in=-90](.3,.5)
(.6,0) node[right] {$\bd$} to [out=90,in=-90] (.3,.5)
(.3,.5) -- (.3,.8) node[above] {$\bc+\bd$}
(0,0) to [out=-90,in=90](.3,-.5)
(.6,0) to [out=-90,in=90](.3,-.5)
(.3,-.5) -- (.3,-.8) node[below] {$\bc+\bd$}
node at (1,0) {$=$}
node at (2.2,0) {$\quad\; \Delta_w(E)$}
(2.3,.8) node[above] {$\bc+\bd$} -- (2.3,0.2)
(2.3,-0.2) -- (2.3,-.8) node[below] {$\bc+\bd$}
;
\draw (2,-0.2) rectangle (2.6,0.2);
}
\end{center}
In particular, if $X:=(\bc,\bd)\rightarrow (\bd,\bc)$ is a crossing then $X^2=fX$ for some (possibly zero) polynomial $f$.
\item
Let
\begin{eqnarray*}
X_1&:=&(\bb,\bc,\bd)\linj (\bc,\bb,\bd)\linj (\bc,\bd,\bb)\linj (\bd,\bc,\bb)\\
X_2&:=&(\bb,\bc,\bd)\linj (\bb,\bd,\bc)\linj (\bd,\bb,\bc)\linj (\bd,\bc,\bb)\\
X_3&:=&(\bb,\bc,\bd)\linj (\bb+\bc+\bd)\linj (\bd,\bc,\bb)
\end{eqnarray*}
Then $X_1=X_2+R$ and $X_1=X_3+R'$,
where $\omega(R),\omega(R')<\omega(X_1)=\omega(X_2)$.
Hence, up to lower order terms $X_1\equiv X_2\equiv X_3$, in diagrams

\begin{center}
\begin{tikzpicture} [thick,xscale=1.5,yscale=0.7]
\draw
(0,0) node[above] {$\bb$} to [out=-90,in=90](.3,-.5)
(.6,0) node[above] {$\bc$} to [out=-90,in=90](.3,-.5)
(1.2,0) node[above] {$\bd$} -- (1.2,-1)
(.3,-.5) to [out=-90,in=90](.6,-1)
(.3,-.5) to [out=-90,in=90] (0,-1)
(.6,-1) to [out=-90,in=90](.9,-1.5)
(1.2,-1) to [out=-90,in=90](.9,-1.5)
(0,-1) -- (0,-2)
(.9,-1.5) to [out=-90,in=90](.6,-2)
(.9,-1.5) to [out=-90,in=90] (1.2,-2)
(0,-2) to [out=-90,in=90](.3,-2.5)
(.6,-2) to [out=-90,in=90](.3,-2.5)
(.3,-2.5) to [out=-90,in=90](0,-3) node[below] {$\bb$}
(.3,-2.5) to [out=-90,in=90] (0.6,-3) node at (0.6,-3.5) {$\bc$}
(1.2,-2) -- (1.2,-3) node[below] {$\bd$};
\node at (1.5,-1.5) {$\equiv$};
\begin{scope}[xshift=-0.5cm]
\draw
(2.5,0) node[above] {$\bb$} -- (2.5,-1)
(3.1,0) node[above] {$\bc$} to [out=-90,in=90](3.4,-.5)
(3.7,0) node[above] {$\bd$} to [out=-90,in=90](3.4,-.5)
(3.4,-.5) to [out=-90,in=90](3.1,-1)
(3.4,-.5) to [out=-90,in=90] (3.7,-1)
(2.5,-1) to [out=-90,in=90](2.8,-1.5)
(3.1,-1) to [out=-90,in=90] (2.8,-1.5)
(2.8,-1.5) to [out=-90,in=90](2.5,-2)
(2.8,-1.5) to [out=-90,in=90] (3.1,-2)
(3.7,-1) -- (3.7,-2)
(3.1,-2) to [out=-90,in=90](3.4,-2.5)
(3.7,-2) to [out=-90,in=90] (3.4,-2.5)
(2.5,-2) -- (2.5,-3) node[below] {$\bb$}
(3.4,-2.5) to [out=-90,in=90](3.7,-3) node[below] {$\bd$}
(3.4,-2.5) to [out=-90,in=90] (3.1,-3) node at (3.1,-3.5) {$\bc$};
\end{scope}
\node at (3.8,-1.5) {$\equiv$};
\begin{scope}[xshift=4cm]
\draw
(0,0) node[above] {$\bd$} to [out=-90,in=90](.3,-.5)
(.6,0) node[above] {$\bc$} to [out=-90,in=90](.3,-.5)
(1,0) node[above] {$\bb$} -- (1,-1)
(.3,-.5) to [out=-90,in=90](.3,-1)
(.3,-1) to [out=-90,in=90](.7,-1.5)
(1,-1) to [out=-90,in=90](.7,-1.5)
(.7,-1.5) to [out=-90,in=90](.3,-2)
(.7,-1.5) to [out=-90,in=90] (1,-2)
(.3,-2) to [out=-90,in=90](.3,-2.5)
(.3,-2.5) to [out=-90,in=90](0,-3) node[below] {$\bb$}
(.3,-2.5) to [out=-90,in=90] (0.6,-3) node at (0.6,-3.5) {$\bc$}
(1,-2) -- (1,-3) node[below] {$\bd$};
\end{scope}
\node at (5.7,-1.5) {$=:$};
\begin{scope}[xshift=6cm]
\draw
(0,0) node[above] {$\bd$} to [out=-90,in=90](.5,-1)
(.5,0) node[above] {$\bc$} to [out=-90,in=90](.5,-1)
(1,0) node[above] {$\bb$} to [out=-90,in=90](.5,-1)
(.5,-2) to [out=-90,in=90](0,-3) node[below] {$\bb$}
(.5,-2) to [out=-90,in=90] (0.5,-3) node at (0.6,-3.5) {$\bc$}
(.5,-2) to [out=-90,in=90] (1,-3) node[below] {$\bd$}
(.5,-1) -- (.5,-2);
\end{scope}
\end{tikzpicture}
\end{center}

\excise{\begin{center}
\tikz[thick,xscale=1.5,yscale=0.7]{
\draw
(0,0) node[above] {$\bb$} to [out=-90,in=90](.3,-.5)
(.6,0) node[above] {$\bc$} to [out=-90,in=90](.3,-.5)
(1.2,0) node[above] {$\bd$} -- (1.2,-1)
(.3,-.5) to [out=-90,in=90](.6,-1)
(.3,-.5) to [out=-90,in=90] (0,-1)
(.6,-1) to [out=-90,in=90](.9,-1.5)
(1.2,-1) to [out=-90,in=90](.9,-1.5)
(0,-1) -- (0,-2)
(.9,-1.5) to [out=-90,in=90](.6,-2)
(.9,-1.5) to [out=-90,in=90] (1.2,-2)
(0,-2) to [out=-90,in=90](.3,-2.5)
(.6,-2) to [out=-90,in=90](.3,-2.5)
(.3,-2.5) to [out=-90,in=90](0,-3) node[below] {$\bb$}
(.3,-2.5) to [out=-90,in=90] (0.6,-3) node at (0.6,-3.5) {$\bc$}
(1.2,-2) -- (1.2,-3) node[below] {$\bd$};
\node at (1.9,-1.5) {=};
\draw
(2.5,0) node[above] {$\textbf{b}$} -- (2.5,-1)
(3.1,0) node[above] {$\bc$} to [out=-90,in=90](3.4,-.5)
(3.7,0) node[above] {$\bd$} to [out=-90,in=90](3.4,-.5)
(3.4,-.5) to [out=-90,in=90](3.1,-1)
(3.4,-.5) to [out=-90,in=90] (3.7,-1)
(2.5,-1) to [out=-90,in=90](2.8,-1.5)
(3.1,-1) to [out=-90,in=90] (2.8,-1.5)
(2.8,-1.5) to [out=-90,in=90](2.5,-2)
(2.8,-1.5) to [out=-90,in=90] (3.1,-2)
(3.7,-1) -- (3.7,-2)
(3.1,-2) to [out=-90,in=90](3.4,-2.5)
(3.7,-2) to [out=-90,in=90] (3.4,-2.5)
(2.5,-2) -- (2.5,-3) node[below] {$\textbf{b}$}
(3.4,-2.5) to [out=-90,in=90](3.7,-3) node[below] {$\bd$}
(3.4,-2.5) to [out=-90,in=90] (3.1,-3) node at (3.1,-3.5) {$\bc$}
;}
\end{center}
}
\end{enumerate}
\end{prop}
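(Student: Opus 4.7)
My plan is to reduce everything to the algebraic reformulation given by Propositions~\ref{euler-int} and \ref{Demazure}, and then invoke the twisted derivation rule \eqref{Demazureprop} together with standard identities for compositions of Demazure operators. Faithfulness of the polynomial representation (Proposition~\ref{faithful}) lets us verify identities in $A$ by computing in $\bigoplus_{\bmuh} \bLa(\bmuh)$.

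For Part~(1), take $f \in \bLa(\bc+\bd)$. Under the split followed by merge, $f$ is first included into $\bLa(\bc,\bd)$ and multiplied by the Euler class $E$ from \eqref{euler}, then $\op{int}$ is applied, which by Proposition~\ref{Demazure}(1) equals the product of commuting Demazure operators $\Delta_w = \Delta_{w_0^{c_1,d_1}} \cdots \Delta_{w_0^{c_e,d_e}}$. Since $f$ is $S_{\bc+\bd}$-invariant, we have $s_j(f) = f$ and $\Delta_j(f) = 0$ for every simple transposition $s_j$ appearing in a reduced expression for $w$. The sum on the right-hand side of \eqref{Demazureprop} then collapses: only the term with every $B_j = \op{id}$ survives, giving $\Delta_w(E \cdot f) = \Delta_w(E) \cdot f$. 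Thus the composition acts on $\bLa(\bc+\bd)$ as multiplication by the element $\Delta_w(E)$, which indeed lies in $\bLa(\bc+\bd)$ by construction. For the corollary about the crossing $X = (\bc,\bd) \to (\bd,\bc)$: since $X$ is itself merge followed by split, the double crossing contains a split-then-merge segment of the form $(\bc+\bd) \to (\bc,\bd) \to (\bc+\bd)$ in its middle, which is the scalar $\Delta_w(E)$; pulling this past the outer merge and split yields $X^2 = \Delta_w(E) \cdot X$.

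For Part~(2), I interpret each of $X_1, X_2, X_3$ algebraically via its action on $\bLa(\bb,\bc,\bd)$. Each elementary crossing contributes a Demazure operator (from its merge part) and multiplication by an Euler class (from its split part). Pulling all polynomial factors to one side using \eqref{Demazureprop} and assembling the Demazure operators, the leading term of each composition is determined by the top-length permutation $\omega$. For $X_1$ this is the product (over colors $i$) of three coset representatives corresponding to the word $s_1 s_2 s_1$ of block transpositions, for $X_2$ the word $s_2 s_1 s_2$, and for $X_3$ the shortest coset representative of the longest element in $S_{b_i+c_i+d_i}/(S_{b_i} \times S_{c_i} \times S_{d_i})$ (a single long merge). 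The standard Coxeter braid relation in each color-symmetric group gives $\omega(X_1) = \omega(X_2)$, and the identity $\Delta_{v_1} \Delta_{v_2} = \Delta_{v_1 v_2}$ whenever $\ell(v_1 v_2) = \ell(v_1) + \ell(v_2)$, extended as in the proof of Proposition~\ref{Demazure}(2), gives $\omega(X_1) = \omega(X_3)$. In each case the remaining summands coming from \eqref{Demazureprop} involve strictly shorter permutations, hence contribute only to lower-order terms in the sense of $\omega$. This proves $X_1 \equiv X_2 \equiv X_3$ modulo lower order.

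The main obstacle will be the careful bookkeeping of the many polynomial factors (Euler classes of the elementary splits) as they are pushed past the Demazure operators via \eqref{Demazureprop}, and verifying in particular that after reordering, the top-order Demazure operators genuinely coincide (rather than only projecting to the same element of some quotient). This is subtle because the crossings are colored: Demazure operators from different steps commute when the colors differ but interact through the twisted derivation rule when colors coincide, so the reduced-word combinatorics for the longest element in the colored symmetric group $S_{\bb + \bc + \bd}$ must be handled separately color by color. Once these reductions are made, the remainder consists only of operators $\Delta_{w'}$ with $\ell(w') < \ell(\omega(X_1))$, which by definition lie in the lower-order part.
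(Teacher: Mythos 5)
Your part (1) is correct and is essentially the paper's own argument: both reduce to the faithful polynomial representation and apply the twisted derivation rule \eqref{Demazureprop} to a total invariant $f$, so that $\Delta_w(Ef)=\Delta_w(E)f$.

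For part (2), however, you have identified but not closed the decisive gap. Knowing that $\omega(X_1)=\omega(X_2)$ via the Coxeter braid relation (and that $\omega(X_3)$ agrees via length-additivity of Demazure operators) is the easy half of the claim. To conclude $X_1=X_2+R$ with $\omega(R)<\omega(X_1)$ you must also show that the \emph{polynomial coefficients} $\alpha,\beta$ in the leading terms $X_1=\alpha\Delta_{\omega(X_1)}+R_1$ and $X_2=\beta\Delta_{\omega(X_2)}+R_2$ coincide; otherwise $X_1-X_2$ could still contain a nonzero multiple of the top Demazure operator. You explicitly defer this as ``the main obstacle,'' but it is exactly the content of the proposition. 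The paper resolves it by a direct computation: writing each $X_i$ as $E_3\circ M_3\circ E_2\circ M_2\circ E_1\circ M_1$, commuting each Euler class past the subsequent merges so that, modulo terms with strictly shorter $\omega$, one has $M\circ E = w(M)(E)\, M$, and then comparing the resulting explicit products of linear factors. Concretely, for $X_1$ one obtains the product over $i$ of
\[
\prod_{\substack{1\leq r\leq d_{i+1}\\ d_i+1\leq s\leq c_i+d_i}}(x_{i+1,r}-x_{i,s})
\prod_{\substack{1\leq r\leq d_{i+1}\\ c_i+d_i+1\leq s\leq b_i+c_i+d_i}}(x_{i+1,r}-x_{i,s})
\prod_{\substack{d_{i+1}+1\leq r\leq c_{i+1}+d_{i+1}\\ c_i+d_i+1\leq s\leq b_i+c_i+d_i}}(x_{i+1,r}-x_{i,s}),
\]
and for $X_2$ the same three factors in a different order, whence $\alpha=\beta$; the factor for $X_3$ is the Euler class of the single long split, which is again the same product. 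Your proposal would be complete once this index-by-index verification is supplied; without it the claim that the ``remaining summands involve strictly shorter permutations'' does not by itself yield $X_1\equiv X_2\equiv X_3$.
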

At the moment,  the lower order terms are beyond any satisfactory control.

\begin{proof}
Let $x=w_0^{c_1,d_1}\cdot{w_0^{c_e,d_e}}$. Recall that $\Delta_i(gf)=\Delta_i(g)f+s_i(g)\Delta_i(f)$ for any $f,g$ and $\Delta_i(f)=0$ if $f$ is $s_i$-invariant. Hence $\Delta_x(Ef)=\Delta_x(E)f$ for any total invariant polynomial $f$ and the first claim follows.

Let now $x_1=w(X_1)$ and $x_2=w(X_2)$. Then obviously $x_1=x_2$ and $X_1=\alpha \Delta_{x_1}+R_1$ and $X_2=\beta \Delta_{x_2}+R_2$ for some polynomials $\alpha$, $\beta$ and $R_1$, $R_2$ with $\omega(R_1), \omega(R_2)<\omega(X_1)=\omega(X_2)$. We still have to verify that $\alpha=\beta$ up to terms with $\omega$ of smaller length and the definition of \eqref{euler}. Let $X_1$ be the composition $E_3\circ M_3\circ E_2\circ M_2\circ E_1\circ M_1$ where the $M_i$ denotes the $i$th merge and $E_i$ denotes the multiplication with an Euler class $E(i)$. We verify the claim by invoking the definitions and the equality $\Delta_i(gf)=s_i(g)\Delta_i(f)+\Delta(g)f$. First note that up to lower order terms
\begin{eqnarray*}
M_2\circ E_1&=&w\left(\prod_{i=1}^e \prod_{ 1\leq r\leq c_{i+1}\atop c_i+1\leq s\leq b_i+c_i}
(x_{i+1,r}-x_{i,s})\right) M_2,
\end{eqnarray*}
where $w$ is the permutation associated with $M_2$ via \eqref{colored}, hence
\begin{eqnarray*}
M_2\circ E_1&=&\prod_{1\leq r\leq c_{i+1}\atop c_i+d_i+1\leq s\leq b_i+c_i+d_i}
(x_{i+1,r}-x_{i,s})M_2.
\end{eqnarray*}
Repeating these type of calculations we obtain
\begin{eqnarray*}
X_1&=&\prod_{i=1}^e\left(
\prod_{1\leq r\leq d_{i+1}\atop {d_{i}+1\leq s\leq {c_{i}+d_i}}}
(x_{i+1,r}-x_{i,s})
\prod_{1\leq r\leq d_{i+1}\atop {c_i+d_i+1}\leq s\leq {b_{i}+c_i+d_i}}
(x_{i+1,r}-x_{i,s})\right.\\
&\times&
\left.\prod_{d_{i+1}+1\leq r\leq c_{i+1}+d_{i+1}\atop c_i+d_i+1\leq s\leq b_{i}+c_i+d_i}
(x_{i+1,r}-x_{i,s})
\right)\;M_3\circ M_2\circ M_1\\
X_2&=&\prod_{i=1}^e\left(\prod_{d_{i+1}+1\leq r\leq c_{i+1}+d_{i+1}\atop c_i+d_i+1\leq s\leq b_{i}+c_i+d_i}
(x_{i+1,r}-x_{i,s})\right. \left.\prod_{1\leq r\leq d_{i+1}\atop c_i+d_i+1\leq s\leq b_{i}+c_i+d_i}
(x_{i+1,r}-x_{i,s})
\right.\\
&\times&\left.
\prod_{1\leq r\leq d_{i+1}\atop d_i+1\leq s\leq c_{i}+d_i}
(x_{i+1,r}-x_{i,s})\right)\;M_3\circ M_2 \circ M_1
\end{eqnarray*}
and therefore $\alpha=\beta$. Note also that if $M_i=\Delta_{w_i}$ then $M_3M_2M_1$ is the Demazure operator corresponding to the product $w=w_3w_2w_1$ and the last claim follows as well.
\end{proof}

\begin{ex} Let $e=3$ and $\bb=(2,1,0), \bc=(1,1,0), \bd=(1,1,0)$. Then $X_1\equiv X_2\equiv X_3: (\bb,\bc,\bd)\linj (\bd,\bc,\bb)$
equal $$(x_{2,1}-x_{1,2})(x_{2,1}-x_{1,3})(x_{2,1}-x_{1,4})(x_{2,2}-x_{1,3})(x_{2,2}-x_{1,4})
\Delta_{s_{1}s_2s_{3}s_{1}s_{2}}\Delta_{s_{\overline{1}}s_{\overline{2}}s_{\overline{1}}}$$ up to lower order terms where $s_i$ (respectively $\overline{s}_i$) denotes the $i$th transposition for the strands colored by $1$ (respectively by $2$) in $S_\bd=S_5\times S_3$.
\end{ex}


\subsection{An explicit basis of $A$}
We construct and describe explicitly a basis of $A$ by geometric arguments and then interpret it diagrammatically.  

We again fix a dimension vector $\bd$. Let $\bmuh,\blah\in\Compe(\bd)$ with residue sequences
$\res\bmuh$ and $\res\blah$; and let $S^\bmuh$ and $S^{\blah}$
be the subgroups of $S_{\bd}$ preserving the blocks. Note
that $\bmuh$ is determined uniquely by $\res\bmuh$ and $S^\bmuh$.  Now,
fix a permutation $p\in S_d$ such that $p(\res\bmuh)=\res\blah$ (hence $p\in S_{\bd}$) and
let $p_-$ be its shortest double coset representative in $S^\blah\backslash S_{\bd}/S^\bmuh$.

Note that in the graphical pictures \eqref{colored} an element $p_-\in S_{\bd}$ is a shortest coset representative in
$S^\bmuh\backslash S_{\bd}/S^\blah$ if and only if strands with the same color which end or start in the same block do not cross. In particular, idempotents, splits and merges correspond to shortest double coset representatives.

To the triple $\bmuh,\blah,p_-$ we associate a composition of merges and splits from $\bmuh$ to $\blah$ as follows: First, let $\bmuh'$ be the unique vector composition with residue sequence
\begin{eqnarray*}
\res(\bmuh')&=&p_-^{-1}(\res\blah)=p_-^{-1}p(\res\bmuh),
\end{eqnarray*}
such that
$S^{\bmuh'}=S^{\bmuh}\cap p_-S^{\blah}p_-^{-1}$,
and similarly $\blah'$ the unique vector
composition so that \[\res(\blah')=p_-(\res\bmuh)=p_-p^{-1}(\res\blah)\]
and $S^{\blah'}=S^{\blah}\cap p_-^{-1}S^{\bmuh}p_-$. In other words we first refine the blocks of  $\res\bmuh$ into $\res\bmuh'$ as coarse as possible such that $p_-$ sends all elements in a block of $\res\bmuh$ to the same block of $\res\blah$. Similarly refine $\blah$ into $\blah'$ again as coarse as possible such that $p_-^{-1}$ sends all elements in a block to the same block. For instance, if $\res\bmuh=1,2|3,3|2|1,2|1$  and $\res\blah= 1,1,2,3|1,2,3|2$ then
$\res\bmuh'=1,2|3|3|2|1,2|1$ and $\blah'=1,2|3|1|3|1.2|2$ with the permutation $p_-$ displayed in Figure~\ref{Belement}.

\begin{lemma}
\label{defq}
  The vector compositions $\bmuh',\blah'$ have the same length (i.e. number of blocks), say $\ell$, and $\blah'$ can be obtained from $\bmuh'$ by some permutation $q\in S_\ell$ of its blocks inducing $p_-$ on the residue sequences.
\end{lemma}
\begin{proof}
  Each part of $\bmuh'$ is given by the numbers
  of 1's, 2's, etc. in a given block of the residue sequence for
  $\bmuh$ which are sent to a fixed block of $\blah$. It is obtained by refining
  $\bmuh$ according to the blocks of $\res\blah$ (read from left to right) they are sent to.
On the other hand, the parts $\blah'$ have the same description, just
reversing the roles of $\blah$ and $\bmuh$; thus, we just change the
order of the blocks, which gives the permutation $q\in S_\ell$ as asserted.
\end{proof}

For each $p\in S_{\blah}\backslash S_{\bd}/S_{\bmuh}$, we have the
associated permutation $q$ as in Lemma~\ref{defq} and from now on fix a reduced
decomposition $q=s_{i_1}\cdots s_{i_\ell}$ in terms of simple transpositions $s_i=(i,i+1)\in S_\ell$. We also fix the
morphism $\bmuh'\overset{q}\linj\blah'$ given by the corresponding composition of crossings and define
\begin{eqnarray}
\label{mula}
\bmuh \overset{p;1}\Longrightarrow \blah:=\bmuh\linj \bmuh'\overset{q}\linj\blah'\linj \blah.
\end{eqnarray}
Of course, this definition depends on our choice of reduced expression for $q$. For $h\in\Lambda(\bmuh')$ let $\bmuh'\overset{h}\linj\bmuh'$ be multiplication with $h$ and set
\[\bmuh \overset{p;h}\Longrightarrow \blah:=\bmuh\linj \bmuh'\overset{h}\linj\bmuh'\overset{q}\linj\blah'\linj \blah.\]

\begin{thm}\label{basis}
The morphisms \( \bmuh\overset{p;h}\Longrightarrow\blah\) generate $A$ as a $\K$-vector space.  In fact, if we let range
\begin{itemize}
\item $(\bmuh,\blah)$ over all ordered pairs of vector compositions of type $e$,
\item  $p$ over the minimal coset representatives $w$ in $S_{\blah}\backslash S_{\bd}/S_{\bmuh}$ and
\item $h$ over a basis for $\bLa(\bmuh')$,
\end{itemize}
then the morphisms $\bmuh \overset{p;h}\Longrightarrow \blah$ form a basis of $A$.
\end{thm}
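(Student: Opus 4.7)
The strategy has two phases: first establish spanning by an inductive straightening argument based on the braid relations of Proposition~\ref{easyrel}, then establish linear independence by pushing everything through the faithful polynomial representation of Proposition~\ref{faithful}.

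For spanning, I would argue piece by piece on each summand $e_\blah A e_\bmuh$. By \eqref{BM} together with Proposition~\ref{euler-int}, every element of $e_\blah A e_\bmuh$ is a $\K$-linear combination of compositions of elementary splits, elementary merges, and polynomial multiplications. Filter such compositions by the length of the associated permutation $\omega(\cdot)$ from Section~\ref{pictorial}, and induct on $\omega$-length. Part~(1) of Proposition~\ref{easyrel} replaces a merge immediately followed by the reverse split by multiplication with a polynomial (dropping $\omega$-length), while part~(2) allows one to reorder consecutive splits and merges modulo terms of strictly smaller $\omega$-length. Iterating these rewrites, any composition whose permutation lies in the double coset $S^\blah p S^\bmuh$ can be rewritten, modulo lower $\omega$-terms, as a fixed composition along a reduced expression for the minimal representative $p_-$, with a scalar multiplication inserted. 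The definitions of $\bmuh'$, $\blah'$ in Proposition~\ref{defq} are exactly engineered so that this scalar lives in $\bLa(\bmuh')$ (the blocks $S^{\bmuh'} = S^\bmuh \cap p_- S^\blah p_-^{-1}$ are the finest refinement on which the crossings are well-defined). Expanding this scalar in the chosen basis of $\bLa(\bmuh')$ and applying the inductive hypothesis to the lower order remainder gives spanning.

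For linear independence, suppose a nontrivial linear relation $\sum c_{w,h}\,\bigl(\bmuh \overset{w;h}\Longrightarrow \blah\bigr)=0$ holds. By Proposition~\ref{faithful} this relation must hold in $\End(V_\bd)$, so evaluate on the summand $V_\bmuh = \bLa(\bmuh)\subseteq V$. By Propositions~\ref{euler-int} and~\ref{Demazure}, the action of $\bmuh \overset{w;h}\Longrightarrow \blah$ decomposes as: the inclusion $\bLa(\bmuh)\inj \bLa(\bmuh')$ (which is an honest inclusion by Proposition~\ref{Demazure}\eqref{Dem3}, since refining $\bmuh$ to $\bmuh'$ has $E=1$); multiplication by $h$; the composite of crossings realizing $q$, whose leading term is the Demazure operator $\Delta_w$; and finally the merge into $\blah$, which by Proposition~\ref{Demazure}\eqref{Dem1} is another product of Demazure operators, specifically projection onto $\bLa(\blah)$. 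Pick a maximal-length $w_0$ among the $w$ with some $c_{w_0,h}\ne 0$. Choose a test polynomial in $\bLa(\bmuh)$ of sufficiently high degree concentrated on the $w_0$ summand: standard Demazure-operator theory (linear independence of $\Delta_w$ for different $w$, and surjectivity of $\Delta_{w_0}$ onto the relevant parabolic invariants) shows that no cancellation between different $w$'s or between different basis elements $h$ for fixed $w_0$ can occur. This forces each $c_{w_0,h}=0$, contradicting maximality.

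The main obstacle is the straightening step: one has to verify that the braid relations of Proposition~\ref{easyrel}, which are stated only modulo lower $\omega$-terms, suffice to reduce an arbitrary word in splits, merges, and polynomials to the canonical form $\bmuh \overset{p;h}\Longrightarrow \blah$, and that the ``lower order terms'' produced at each step really are lower with respect to the same filtration so the induction closes. Equivalently, one must verify that $\omega$ defines a genuine algebra filtration and that the leading-term map is multiplicative on the associated graded. A parallel issue on the independence side is checking that the polynomial factors produced by splits (Euler classes from \eqref{euler}) do not accidentally reduce the apparent $\omega$-length of the leading term or create coincidences between different $(w,h)$; this is handled by noting that the Euler class appearing in $\blah'\linj\blah$ depends only on $\blah$, hence is common to all $(w,h)$ landing in a fixed pair $(\bmuh,\blah)$ and can be divided out after localizing at the generic point of the appropriate $G$-orbit.
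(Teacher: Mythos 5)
Your proposal takes a genuinely different route from the paper: the paper's proof is geometric, filtering the Steinberg variety $\cH(\bmuh,\blah)$ by the relative position of the two flags (the $G_\bd$-orbits on $\cF(\bmuh)\times\cF(\blah)$ indexed by $S_{\bmuh}\backslash S_{\bd}/S_{\blah}$), using parity vanishing to degenerate the spectral sequence, and then checking via the iterated fiber product $\cH(\bmuh,w,\blah)$ that each $\bmuh\overset{w;h}\Longrightarrow\blah$ hits a basis of $H^{BM}_*(\Phi^{-1}(\cF_w)/G)$ in the associated graded. Your linear-independence argument via the faithful polynomial representation is close in spirit to the alternative the authors themselves sketch in the remark after the theorem (where the problem is reduced, by pre- and post-composing with complete splits and merges, to the Khovanov--Lauda case); it is plausible, though the claimed non-interaction between different $(w,h)$ needs the reduction to complete flag type to be made rigorous, since the interleaved Euler classes are not handled by ``standard Demazure-operator theory'' alone.

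The genuine gap is in your spanning argument, and it occurs at the very first step. You assert that by \eqref{BM} and Proposition~\ref{euler-int} every element of $e_{\blah}Ae_{\bmuh}$ is a linear combination of composites of elementary splits, merges and polynomial multiplications. Neither result says this: \eqref{BM} merely \emph{defines} $e_{\blah}Ae_{\bmuh}$ as the Borel--Moore homology $H^{BM}_*(\cH(\bmuh,\blah))$ of a Steinberg-type variety, and Proposition~\ref{euler-int} only computes how certain specific classes act. That the convolution algebra is generated by the elementary correspondences is precisely the nontrivial content of the spanning half of the theorem (compare the analogous, and nontrivial, statement that the convolution algebra of the classical Steinberg variety is generated by the simple intertwiners and the polynomial part); assuming it makes the argument circular. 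This is why the paper computes the size of $A$ independently, via the geometric filtration. A secondary problem is that even granting generation, the straightening you describe needs more relations than Proposition~\ref{easyrel} supplies (that proposition only treats a merge immediately followed by its reverse split, and two specific braid-type configurations), whereas a general word involves splits and merges in arbitrary patterns together with polynomials that must be commuted past them; the authors explicitly state that they do not have a complete set of defining relations, so the rewriting system you rely on is not available.
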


\begin{remark}
\label{rkchoice}
{\rm
If we choose different reduced decompositions for $q$ as in Lemma~\ref{defq}, the resulting elements \eqref{mula} will differ by a linear combination of maps corresponding to shorter double cosets (Proposition \ref{easyrel}) which also shows that non-reduced decompositions can be replaced by reduced ones by changing $h$.}
\end{remark}

For a permutation $q$ as in Lemma \ref{defq} with its fixed reduced expression $q=s_{i_1}\cdots s_{i_\ell}$ set $q^{(j)}=s_{i_j}\cdots
s_{i_1}$.  From this we can read off a string diagram of $q$, as in
Khovanov-Lauda \cite{KL1}.  We let $\bmuh_{2j}=q^{(j)}(\bmuh')$ and
$\bmuh_{2j+1}$ be the merge of $\bmuh_{2j}$ at index $i_{j+1}$.  Then,
of course, $\bmuh_{2j}$ is a split of $\bmuh_{2j}$ at index $i_j$.  In
particular, $\bmuh_{2k}=\blah'$.

For each $w\in S_{\bmuh}\backslash S_{\bd}/S_{\blah}$, let $p(w)$ be
the unique permutation in $S_d$ of minimal length which acts as $w$ does on each
individual color, where $[1,d]$ is colored according to $\bmuh$ for
the domain and
$\blah$ for the image.  Put another way, if we think of $S_{\bd}$ as
acting on colored alphabets $1^1,\dots,d_1^1,1^2,\cdots d_2^2,\dots,1^e,\dots,d_e^e,$
and let $p(\bmuh)$ be the unique permutation of minimal length sending this sequence to one colored according to the
residue sequence of $\bmuh$, then $p(w)=p(\blah)\cdot w\cdot
p(\bmuh)^{-1}$ (where $p(\blah)$ is the unique permutation sending a
totally ordered sequence of 1's, 2's etc. to $\res\blah$).

\begin{proof}[Proof of Theorem \ref{basis}]
Our method of proof is to show that this remains a basis when we take
the associated graded of our algebra with respect to a geometrically defined filtration.

Forgetting the representation (and only keeping the flags) defines a
canonical map
\begin{eqnarray}
\label{Phi}
\Phi:&&\cH(\bmuh,\blah)\to \cF(\bmuh) \times \cF(\blah)
\end{eqnarray}
which is $G_{\bd}$-equivariant. Every fiber is the space of quiver
representations on a vector space preserving a particular pair of
flags, which is naturally an affine space (one can assume both flags
are spanned by vectors in a fixed basis, and thus the condition of
preserving the pair of flags is simply requiring certain matrix
coefficients to vanish).  Thus, over each $G_\bd$-orbit, the map is an
affine bundle, but with a different fiber over each orbit.
The $G_\bd$-orbits on $\cF(\bmuh) \times \cF(\blah)$ are in bijection
with the double coset space $S_{\bmuh}\backslash S_{\bd}/S_{\blah}$, namely
if we choose completions of the partial flags to complete flags, their
relative position is an element of $S_\bd$, and its double coset is
independent of the completion chosen.
 That is, using the Bruhat decomposition with fixed lifts of elements from $S_{\bd}$ to $G_{\bd}$ it sends an element $(xP_{\op{t}(\bmuh)}, yP_{\op{t}(\blah)})$, $x,y\in S_{\bd}$ to $x^{-1}y$, and a shortest double coset representative $w$ defines the orbit $\cF_w$ of all elements of the form $(gP_{\op{t}(\bmuh)}, gwP_{\op{t}(\blah)})$, $g\in G_{\bd}$. A pair of flags contained in $\cF_w$ is said to have relative position $w$.

The orbit $\cF_w$ is an affine bundle over $G_{\bd}/(P_{\bmuh}\cap wP_{\blah}w^{-1})$ via the bundle map \[(gP_{\op{t}(\bmuh)}, gwP_{\op{t}(\blah)})\mapsto g(P_{\bmuh}\cap wP_{\blah}w^{-1}),\] and so both the orbit and its preimage in $G_\bd$ have (equivariant) homology concentrated in even degrees.

Using the surjection $\Phi$ we can partition $\cH(\bmuh,\blah)$ into the preimages of the orbits and filter the space by letting $\cH_{k}=\bigcup_{l(w)\leq k}\Phi^{-1}(\cF_w)$ be the union of orbits with length of the shortest representative at most $k$.   The space $\cH_{k}\setminus \cH_{k-1}=\bigcup_{l(w)=k}\Phi^{-1}(\cF_w)$ is a disjoint union of affine bundles over partial flag varieties, and in particular has even homology.  The usual spectral sequence for a filtered topological space shows that $H_*^{BM}(\cH(\bmuh,\blah))$ has a filtration whose associated graded is the sum of the homologies of these orbits (the spectral sequence degenerates immediately for degree reasons because of the concentrations in even degrees).

Thus, it suffices to show that  the morphisms $\bmuh
\overset{w;h}\Longrightarrow \blah$ pass to a basis of the associated
graded.  That is, if we fix $w$, that the classes $\bmuh
\overset{w;h}\Longrightarrow \blah$ pullback to a basis of
$H^{BM}_*(\Phi^{-1}(\cF_w)/G)$.

It is enough to show that $\bmuh \overset{w;1}\Longrightarrow \blah$ goes to the fundamental class of $\Phi^{-1}(\cF_w)$, since adding the $h$ simply has the effect of multiplying by classes that range over a basis of $H^*(\Phi^{-1}(\cF_w)/G)$.
Consider the iterated fiber product
\begin{eqnarray*}
 \cH(\bmuh,w,\blah)&=&
 \cQ(\bmuh_0,\bmuh)\times_{\cQ(\bmuh_0)}\cQ(\bmuh_0,i_1)\times_{\cQ(\bmuh_1)}\cQ(\bmuh_2,i_1)
 \times_{\cQ(\bmuh_2)}\cdots\\
&& \times_{\cQ(\bmuh_{2k-2})} \cQ(\bmuh_{2k-2},i_k)\times_{\cQ(\bmuh_{2k-1})}\cQ(\bmuh_{2k},i_k)\times_{\cQ(\bmuh_{2k})}\cQ(\bmuh_{2k},\blah),
\end{eqnarray*}
where $\cQ(\bmuh_0,\bmuh)$ denotes the subset of $\cQ(\bmuh_0)$ such
that the associated graded remains semi-simple after coarsening the
flag to one of type $\bmuh$.  Recall that by the definition of
$\bmuh_{2k+1}$ we have that
\[\cQ(\bmuh_k,\bmuh_{k+1})=\cQ(\bmuh_k,i_{k+1})\qquad \cQ(\bmuh_k,\bmuh_{k-1})=\cQ(\bmuh_k,i_{k}).\]
We can think of the above fiber product as the space of
representations with compatible flags $F,F_0,\dots,F_{2k},F'$ of
dimension vectors $\bmuh,\bmuh_0,\dots,\bmuh_{2k},\blah$
such that if any two consecutive flags have subspaces of the same
size, those spaces must coincide.   We have a map
\begin{eqnarray*}
b^w:&&\cH(\bmuh,w,\blah)\to \cH(\bmuh,\blah)
\end{eqnarray*}
which remembers the flags $F$ and $F'$ and  $\bmuh
\overset{w;1}\Longrightarrow
\blah=b_*[\cH(\bmuh,w,\blah)]$ is the push-forward
of the fundamental class of $\cH(\bmuh,w,\blah)$.

More generally, we
can identify $\bLa(\bmuh_{2k})$ with the classes in
$H^*(\cH(\bmuh,w,\blah))$ generated by the Chern classes of the
tautological bundles corresponding to the successive quotients of the
flag $F_{2k}$.  Then $\bmuh
\overset{w;h}\Longrightarrow
\blah=b_*(h\cap [\cH(\bmuh,w,\blah)])$ is the pushforward of the cap
product of $h$ with the fundamental class (the Poincar\'e dual of the
class $h$).

To establish the linearly independence the following Lemma will be used.
\begin{lemma}
  The image of the map $b^w$ lies in $\cH_{\ell(w)}$ and induces an
  isomorphism between the locus in $\cH(\bmuh,w,\blah)$ where $F$ and $F'$ have relative
  position $w$ and $\Phi^{-1}(\cF_w)$.
\end{lemma}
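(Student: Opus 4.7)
My plan is to treat $\cH(\bmuh, w, \blah)$ as a Bott-Samelson style iterated resolution of the closure $\Phi^{-1}(\overline{\cF_w})$, with $b^w$ playing the role of the Bott-Samelson map. The chain of flags $F, F_0, F_1, \ldots, F_{2k}, F'$ is dictated by the reduced decomposition $q = s_{i_k} \cdots s_{i_1}$: consecutive pairs $(F_{2j}, F_{2j+2})$ are linked by a merge-split at index $i_{j+1}$, while the extremal steps $F \to F_0$ and $F_{2k} \to F'$ are refinement/coarsening steps whose types $\bmuh_0 = \bmuh'$ and $\bmuh_{2k} = \blah'$ were defined precisely so that $p_-$ acts as a block permutation between them.

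For the first claim, I track the relative position through the chain using the colored permutation picture of Section~\ref{pictorial}. The endpoint refinement steps contribute the identity permutation, by the defining properties $S^{\bmuh'} = S^\bmuh \cap p_- S^\blah p_-^{-1}$ and $S^{\blah'} = S^\blah \cap p_-^{-1} S^\bmuh p_-$, which ensure that the refinement is forced by the block structure of $p_-$. Each intermediate merge-split at index $i_{j+1}$ contributes at most the simple transposition $s_{i_{j+1}}$ to the relative position. Composing, the relative position of $(F, F')$ is a subword of $s_{i_k} \cdots s_{i_1}$, hence of length at most $k = \ell(w)$, placing the image of $b^w$ in $\cH_{\ell(w)}$.

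For the isomorphism over the open stratum, I construct an inverse to $b^w$ restricted to $\Phi^{-1}(\cF_w)$. Given $(F, F')$ with relative position exactly $w = p_-$, I first reconstruct $F_0$ as the unique refinement of $F$ to type $\bmuh'$ compatible with the block structure of $p_-$ against $F'$, and analogously $F_{2k}$. Then I reconstruct $F_1, \ldots, F_{2k-1}$ inductively: the relative position between $F_{2j}$ and $F_{2k}$ along the remaining chain must equal $s_{i_j} \cdots s_{i_1}$, and since this tail expression is reduced, each intermediate merge-split step offers a unique valid choice. This is the classical Bott-Samelson uniqueness for reduced words, transported to our setting. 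Since $b^w$ is proper and a bijection between smooth varieties of equal dimension over $\cF_w$, it is an isomorphism on this stratum.

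The principal obstacle will be making the uniqueness argument precise in the quiver context. Unlike the classical $G/B$ case, we must simultaneously verify that the forced choice of intermediate flag at each step remains compatible with the quiver map $f$. I expect this to follow because a merge-split at index $i_{j+1}$ only alters the flag at that single node of the Dynkin diagram, while the $f$-compatibility condition $f_i(F(i)_j) \subseteq F(i+1)_{j-1}$ at the modified node is controlled by the unchanged flags at neighboring nodes and by working over the open stratum where no degenerate configurations occur. Once this compatibility is checked, the Bott-Samelson uniqueness for $S_{\bd}$ acting on the colored strands yields the required uniqueness of all intermediate flags.
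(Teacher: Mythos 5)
Your overall strategy coincides with the paper's: bound the relative position of the intermediate flags via the subword property of the reduced expression for $q$ to place the image in $\cH_{\ell(w)}$, and over the open stratum use Bott--Samelson-type uniqueness to reconstruct the chain $F_0,\dots,F_{2k}$ from $(F,F')$ alone. One small correction to the first half: the length $k$ of $q$ in $S_\ell$ is not $\ell(w)$ --- a single block swap $s_{i_{j+1}}$ induces an element of $S_{\bd}$ whose length grows with the block sizes. What your subword argument actually gives is that the relative position of $(F,F')$ is $\leq p(w)$ in Bruhat order; that still suffices, since the filtration is by length of the shortest double coset representative and length is monotone along Bruhat order, but the identification ``$k=\ell(w)$'' should be dropped.

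The step you yourself flag as the principal obstacle is the real crux, and the route you sketch through it does not work. A merge--split at index $i_{j+1}$ merges the blocks $\bmuh^{(i_{j+1})}$ and $\bmuh^{(i_{j+1}+1)}$ of the \emph{vector} composition; each block is a vector in $\mZ_{\geq 0}^e$, so the operation deletes and reinserts a subspace of the flag $F(i)$ at \emph{every} vertex $i$ of $\Gamma$ simultaneously, not ``at a single node of the Dynkin diagram,'' and so the claim that $f$-compatibility is ``controlled by the unchanged flags at neighboring nodes'' has no content. Once Bruhat uniqueness has forced the candidate $F_{2j}$, two things must be verified for your inverse to land in $\cH(\bmuh,w,\blah)$ rather than merely in a fiber product of flag varieties: (i) $F_{2j}$ is compatible with the decomposition $\oplus_{i}V_i$ --- this follows by choosing a basis adapted to the vertex grading in which both $F$ and $F'$ are coordinate flags, so that the uniquely determined $F_{2j}$ consists of coordinate subspaces as well; and (ii) $f$ strictly preserves $F_{2j}$ in the sense required by the definition of $\cQ$. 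The paper obtains (ii) from the standard fact that a nilpotent endomorphism preserving two flags $F,F'$ preserves every flag $F''$ fitting into a length-additive chain between them ($\ell(v w)=\ell(v)+\ell(w)$ for the two relative positions), applied to $f$ viewed as a nilpotent endomorphism of $\oplus_i V_i$; this is exactly where the hypothesis that the relative position is the full $w$ and that the decomposition of $q$ is reduced enters. Without this ingredient your argument establishes uniqueness of the candidate chain but not its existence as a point of $\cH(\bmuh,w,\blah)$.
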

\begin{proof}
  Let $p^{(j)}$ be the element of $S_d$ induced by acting on the
  blocks of $\res\bmuh'$ with $q^{(j)}$.  By the usual Bruhat
  decomposition, we must have that $F_{2j}$, considered as a flag on
  $\oplus_{i=1}^eV_i$, has
  relative position $\leq p^{(j)}$, i.e. smaller or equal $p^{(j)}$ in Bruhat order, with respect to $F$ and $\leq p^{(j)}p(w)^{-1}$
  with respect to $F'$.  Since $p^{(j)}\leq p(w)$ in right Bruhat order (its
  inversions are a subset of those of $p(w)$), this condition uniquely
  specifies $F_{2j}$ (which also uniquely specifies $F_{2j-1}$, since
  this is a coarsening of $F_{2j}$).

Thus, we only need to show that this flag is compatible with the
decomposition $\oplus_{i=1}^eV_i$  and strictly preserved by the
accompanying quiver representation $f$.  The former is clear, since we
can choose a basis compatible with this decomposition such that both
$F$ and $F'$ contain only coordinate subspaces; thus, the spaces of
$F_{j}$ are coordinate for this basis as well.  It follows that they
are also compatible
with the decomposition.  The latter is a well-known property of the Bruhat
decomposition:
if a nilpotent preserves two different flags $F$ and $F'$, then it
preserves any other flag $F''$ that fits into a chain where the
relative position of $F$ and $F''$ is $v$, that of $F''$ to $F'$ is
$w$, and $\ell(vw)=\ell(v)+\ell(w)$. We simply apply this to $f$
thought of as a nilpotent endomorphism of $\oplus_{i=1}^eV_i$.
\end{proof}

Thus, we have a Cartesian diagram
\[
\tikz{ \node (aa) at (0,0) {$\Phi^{-1}(\cF_w)$}; \node (ba) at (4,0)
{$\cH(\bmuh,\blah)$}; \node (ab) at (0,-2) {$b^{-1}(\Phi^{-1}(\cF_w))$}; \node (bb) at (4,-2)
{$\cH(\bmuh,w,\blah)$}; \draw[right hook->,thick] (aa) to (ba);
\draw[->,thick] (bb)--(ba) node[right,midway]{$b$}; \draw[->,thick]
(ab)--(aa) node[left,midway]{$\wr$}; \draw[right hook->,thick] (ab) to (bb); }
\]
If we push-forward and pull-back $h\cap [\cH(\bmuh,w,\blah)]$ from the bottom right to the upper left,
we obtain the class of $\bmuh
\overset{w;h}\Longrightarrow
\blah$ in the associated graded with respect to the geometric
filtration, thought of as a Borel-Moore class on $\Phi^{-1}(\cF_w)$.
We can also calculate this class by pull-back and pushforward; this
goes to $h\cap [\Phi^{-1}(\cF_w)]$, where we consider $h$ now as a
class on $\Phi^{-1}(\cF_w)$ by pull-back.  The tautological
bundles for $F_{2k}$ pulled back to $\Phi^{-1}(\cF_w)$ induce an
isomorphism $\bLa(\bmuh_{2k})\cong H^*(\Phi^{-1}(\cF_w)/G)$, and cap
product with the fundamental class induces an isomorphism
$H^*(\Phi^{-1}(\cF_w)/G)\cong H^{BM}_*(\Phi^{-1}(\cF_w)/G)$.  Thus,
the classes $\bmuh
\overset{w;h}\Longrightarrow
\blah$ pull-back to a basis of $ H^{BM}_*(\Phi^{-1}(\cF_w)/G)$. Ranging
 over all $w$, we obtain a basis of 
$H^{BM}_*(\cH(\bmuh,\blah))$, since it is a basis of the associated graded.
\excise{
Let us give a second proof based on the localization formalism.  If we
expand the class of $b_*[\cH(\bmuh,w,\blah)]$ in terms of
$\psi_{v,v'}$, we see that
\begin{itemize}
\item if $(v')^{-1}v\nleq w$ in Bruhat order, then the coefficient is 0.
\item if $(v')^{-1}v= w$ then this term can only come from the chain of
  coefficients $\cdot \psi_{w_{(1)}}$
\end{itemize}
}
\end{proof}

\begin{remark}{\rm It is worth noting that Theorem \ref{basis} could also be proved diagrammatically. The easy part is the linearly independence which follows analogously to the arguments in \cite{KL1} as follows:
Let $\bmuh,\blah$ be fixed. By construction, $\bmuh\overset{w;h}\Longrightarrow\blah$ is zero if $w\not=w_-$.
Let $S$ be the split from  $\blah$ into unit vectors and $M$ the merge from unit vectors to $\bmuh$. Then a linear combination $\sum \alpha_{w,h}(\bmuh\overset{w;h}\Longrightarrow\blah)$ is zero if and only if $\sum \alpha_{w,h}S\circ(\bmuh\overset{w;h}\Longrightarrow\blah)\circ M=0$ (because $M$ is surjective and composing with $S$ is injective). The linear independence is therefore given by \cite[Theorem 2.5]{KL1}.
}
\end{remark}
Theorem \ref{basis} provides a basis which appears natural from the diagrammatical as well as geometrical point of view. Proposition \ref{easyrel} provides some fairly obvious relations, but unfortunately we do not have a complete set of relations defining our algebras. Although we have reduced the question to one of linear algebra, it appears to be quite
difficult linear algebra and we are left with the following problem:

\begin{problem}
  Find an explicit presentation with generators and defining relations
  of the algebras $A_\bd$ (or a  Morita equivalent algebra).
\end{problem}

The following result at least describes its center (which is a Morita invariant):
\begin{lemma}
\label{center}
The ring of total invariants $R({\bd})^{S_{d}}$ is isomorphic to the center of ${\bf A}_{\bd}$.
\end{lemma}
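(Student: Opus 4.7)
The plan is to exhibit an explicit algebra map $\Phi\colon R(\bd)^{S_\bd} \to Z(A_\bd)$ and verify bijectivity via the faithful polynomial representation $V_\bd = \bigoplus_\bmuh \bLa(\bmuh)$ together with the split/merge basis of $A_\bd$.

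To define $\Phi$, I would send a total invariant $f$ to the endomorphism of $V_\bd$ acting as multiplication by $f$ on each summand $\bLa(\bmuh)$; this is well-defined because $S_\bd$-invariance forces $f \in \bLa(\bmuh)$ for every $\bmuh \in \Compe(\bd)$. Centrality of $\Phi(f)$ can then be checked on the generators supplied by Theorem~\ref{basis}. Commutation with idempotents and with polynomial multiplications is automatic. For a merge, Proposition~\ref{Demazure} describes the action as a product of Demazure operators $\Delta_w$ for $w$ in a Young subgroup of $S_\bd$, and the twisted derivation rule collapses to $\Delta_w(fg) = f\,\Delta_w(g)$ because $f$ is fixed by $w$. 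For a split, Proposition~\ref{euler-int} presents the action as inclusion of a smaller invariant ring followed by multiplication by an Euler class, both of which commute with multiplication by the invariant $f$. Faithfulness (Proposition~\ref{faithful}) then upgrades this to centrality in $A_\bd$. Injectivity is immediate: $f = \Phi(f)(1)$ read off the summand $\bLa(\bmuh)$ for $\bmuh = (\bd)$, which equals $R(\bd)^{S_\bd}$.

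For surjectivity I would start at the complete flag end. Fix $\bmuh^0$ of complete flag type, so that $S_{\bmuh^0}$ is trivial and $\bLa(\bmuh^0) = R(\bd)$. Taking $w=e$ in Theorem~\ref{basis} shows that the corner $e_{\bmuh^0} A_\bd e_{\bmuh^0}$ contains the operator of multiplication by every $h \in R(\bd)$. Hence for any $z \in Z(A_\bd)$, the restriction $z|_{\bLa(\bmuh^0)}$ commutes with every such multiplication and is itself multiplication by some $f \in R(\bd)$. To show $f$ is $S_\bd$-invariant I would test centrality against each crossing $X_{i,a}$ of the $a$-th and $(a{+}1)$-th strand of color $i$ inside $\bmuh^0$: by Remark~\ref{KL} this crossing acts on $\bLa(\bmuh^0)$ as the Demazure operator $\Delta_{i,a}$, so the equality $zX_{i,a} = X_{i,a}z$ evaluated on an arbitrary polynomial $g$ expands, via the twisted derivation rule, to $\Delta_{i,a}(f)\,g + s_{i,a}(f)\Delta_{i,a}(g) = f\,\Delta_{i,a}(g)$, forcing $s_{i,a}(f) = f$ (and $\Delta_{i,a}(f) = 0$). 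Since the $s_{i,a}$ generate $S_\bd$, one concludes $f \in R(\bd)^{S_\bd}$. Finally, to identify $z$ with $\Phi(f)$ on every summand, I would choose a composition of merges $M\colon \bmuh^0 \to \bmuh$ and use centrality: $z(M(g)) = M(zg) = M(fg) = f\,M(g)$, where $S_\bd$-invariance of $f$ lets me commute it past the constituent Demazure operators. Surjectivity of $M$ onto $\bLa(\bmuh)$ (Proposition~\ref{Demazure}) then forces $z$ to act as multiplication by $f$ on $\bLa(\bmuh)$, so $z = \Phi(f)$.

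The main obstacle I anticipate is in the surjectivity step: making sure that testing centrality against only same-color adjacent crossings is enough to force full $S_\bd$-invariance (rather than invariance under some intermediate parabolic), and that the transport argument from the complete-flag-type component to an arbitrary $\bLa(\bmuh)$ does not require additional hypotheses beyond the $S_\bd$-invariance already secured. Both issues reduce cleanly to the twisted derivation rule, so once the diagrammatic/generator picture is set up correctly the argument should be essentially mechanical.
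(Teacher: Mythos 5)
Your argument is correct in outline and is a legitimate variant of the paper's proof (which itself defers to the analogous statement \cite[2.9]{KL1} for quiver Hecke algebras). Your map $\Phi$ is the paper's $\alpha$, and both directions of the comparison ultimately rest on the fact that Demazure operators and Euler-class multiplications are linear over the total invariants. Where you genuinely differ is in the converse direction: the paper restricts a central element to the \emph{coarsest} corner $e_{(\bd)}A_{\bd}e_{(\bd)}\cong R(\bd)^{S_{\bd}}$, where no invariance argument is needed, and instead must show this restriction is injective on the center, which it does by embedding each $A_{\bd}e_{\bmuh}$ into a projective induced from single-colour components. You restrict to the \emph{finest} (complete-flag-type) corner, where the restriction lands only in $R(\bd)$, recover $S_{\bd}$-invariance by testing against same-colour crossings, and then get the identification on every summand from faithfulness of $V_{\bd}$ together with surjectivity of merges. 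Both routes work; yours trades the module-theoretic embedding for a computation with the twisted derivation rule.

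There is one step you need to patch. A composition of elementary merges $\bmuh^0\to\bmuh$ exists only when the residue sequence of $\bmuh^0$ refines the blocks of $\bmuh$ \emph{in order}, and a single fixed $\bmuh^0$ (for instance the colour-sorted one, which you implicitly need so that the crossings $X_{i,a}$ are single adjacent crossings) does not refine an arbitrary $\bmuh$. As written, your transport argument therefore only reaches the summands $\bLa(\bmuh)$ that your chosen $\bmuh^0$ refines. The fix is cheap: run the corner argument on a complete-flag refinement $\bmuh^0_{\bmuh}$ of each $\bmuh$ separately, obtaining an $S_{\bd}$-invariant $f_{\bmuh}$ with $z$ acting by $f_{\bmuh}$ on $\bLa(\bmuh)$, and then check that all these scalars coincide. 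This follows, for example, by applying centrality to the total merge $N\colon\bmuh\to(\bd)$, which is surjective onto $R(\bd)^{S_{\bd}}$: one gets $f_{(\bd)}N(g)=N(f_{\bmuh}g)=f_{\bmuh}N(g)$, and evaluating at $g$ with $N(g)=1$ gives $f_{\bmuh}=f_{(\bd)}$. (Alternatively, any two complete-flag-type summands are linked by different-colour crossings, which act either as flips or as multiplication by nonzero elements of the integral domain $R(\bd)$, forcing the scalars to agree.) With that observation added, your proof is complete.
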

\begin{proof}
  The proof is essentially identical to \cite[2.9]{KL1}.
  Restricting the action maps simultaneously to the total invariants, defines a map $\alpha$ from  $R({\bd})^{S_{d}}$ into the center of ${\bf A}_{\bf d}$. For each
  $\bmuh\in\Compe(\bd)$, we can obtain a new vector composition $\bmuh'\in\Compe(\bd)$ by splitting
  each dimension vector between each colors. By Proposition~\ref{euler-int}, composing with this defines an inclusion from $Ae_\bmuh$ to $Ae_{\bmuh'}$. Now, we obtain a third vector composition $\bmuh''\in\Compe(\bd)$ by reordering all the colors in the residue sequence, the smallest to the left, by permuting the blocks by applying crossings. In this case the Demazure operator from Proposition \ref{euler-int} is the identity and we get an inclusion from $Ae_{\bmuh}'$ to $Ae_{\bmuh''}$.
  It follows that  $Ae_{\bmuh}$ is a submodule (not necessarily direct summand) of  $Ae_{\bmuh''}$. In particular, every central element acts non-trivially on some $P(\bmuh'')=Ae_{\bmuh''}$. Furthermore, every $P(\bmuh'')$
is just obtained by inducing the direct sum of modules of the form $P((d_1,0,\dots)),
P((0,d_2,0,\dots)),\dots),\ldots$; we thus have an injective homomorphism $\beta\colon Z(A_\bd)\to
e_{\mu_\bd}Ae_{\mu_\bd}\cong R({\bd})^{S_{d}}$ given by
$\psi(z)=e_{\mu_\bd}$. Furthermore, the composition $\beta\circ
\alpha$ is the identity on $ R({\bd})^{S_{d}}$.  Thus, $\beta$ is
surjective as well, and gives the desired isomorphism.
\end{proof}

\section{Higher level generalizations}
\label{sec:high-levels}

In this section we extend our convolution algebras to a ``higher level version.''
Motivated by the construction of quiver varieties of Nakajima
\cite{Nak94}, \cite{Nak98}, \cite{Nak01} we consider not only quiver
representations of $\Gamma$, but enrich them with extra data. For a further development of our approach, see also \cite{wKLR}.

We equip the quiver $\Gamma$ with shadow vertices, one for each $i\in\mathbb{V}$
together with an arrow pointing from the $i$th vertex to the $i$th shadow vertex, see Figure~\ref{fig:circle} were the shadow vertices are drawn in red/grey. For given $\nu:\mathbb{V}\to \mZ_{\geq 0}$,
we extend the affine space \eqref{Rep} of representations of a fixed dimension vector $\bd$ to the affine space
\[\Rep_{\bd;\nu}:={\Rep}_\bd\times
\bigoplus_i\Hom_K\left(V_i,K^{\nu(i)}\right),\qquad \Rep_\nu=\bigsqcup_\bd
\Rep_{\bd;\nu}\] of representations $(V,f,\gamma)$ shadowed by vector spaces
$K^{\nu(i)}$. It comes endowed with the product $G_\bd$-action.
\begin{definition} Assume we are given a {\de weight data} consisting out of
\begin{itemize}
\item an $\ell$-tuple $\bnu=(\nu_1,\dots,\nu_\ell)$ of maps
$\mathbb V\to \mZ_{\geq 0}$, called an $\ell$-{\de weight};
\item an $\ell+1$-tuple \(
\grave{\mu}=(\bmuh(0),\bmuh(1),\dots,\bmuh(\ell))\) of vector compositions of no fixed length, but all of type $e$.
\end{itemize}
Then we call $\bmuh=\bmuh(0)\cup \cdots \cup \bmuh(\ell)\in\Compe(\bf d)$ the {\bf associated vector composition} and denote by
$\fQ(\grave\mu)$ the subspace of $\cQ(\bmuh)\times
\bigoplus_i\Hom_K(V_i,K^{\nu(i)})$ defined as
\[\fQ(\grave\mu)=\Big\{\left((V,f,F),\{\gamma_i\}\right) \mid \gamma_i(\grave W_i(k))\subset
K^{\nu_1(i)+\cdots +\nu_k(i)}\Big\},\] where as usual $K^a\subset K^b$ for $a\leq b$ is the subspace spanned by the first
$a$ unit vectors, and $\grave{W}_i(1)\subset \grave{W}_i(2)\subset\cdots\subset \grave{W}_i({\ell+1})=V_i$ is the partial flag at the vertex $i$ coarsening $F_i$ and obtained by picking out the largest subspace corresponding to each part of $\grave\mu$.
\end{definition}

Just as an unadorned representation carries a canonical socle
filtration (i.e. a filtration starting with the maximal semi-simple submodule and proceeding such that the successive quotients are maximal semi-simple), an extended representation $(V,f,\gamma)\in\Rep_{\bd;\nu}$ carries a slightly modified
filtration which starts with $\{0\}\subset R_1=(W,f,\gamma)$ such that $(W,f)$ is the largest
subrepresentation $(V,f)$ for which $\gamma(W_i)\subset
K^{\nu_1(i)+\cdots +\nu_k(i)}$ and proceeds inductively by considering
the corresponding first step in $(V/W,\overline{f},\overline{\gamma})$
pulled back to $(V,f)$. The dimension vectors of these subquotients
define a weight data $(\bnu,\grave{\mu})$ where $\grave{\mu}$ denotes
the type of the multi-flag $R_i/R_{i-1}$ induced by the filtration.
We call this the {\bf extended socle filtration}.

\subsection{Extended convolution algebras}
Generalizing \eqref{p}, we have a projection map
$$p\colon \fQ(\grave\mu)\to\Rep_{\bd;\nu}/G_\bd$$
where $\nu:=\nu_1+\cdots+\nu_\ell$, and we can study the convolution
algebra \[\tilde A^{\bnu}:=\Ext^*_{D(\Rep_\nu\!/G)}\Big(\bigoplus_{\grave
\mu}p_*\K_{\fQ(\grave\mu)},\bigoplus_{\grave \mu}p_*\K_{\fQ(\grave\mu)}\Big)\cong \bigoplus_{\grave\mu,\grave\nu}
H^{BM}_*(\FH(\grave\mu,\grave\nu)),\] where $\FH(\grave\mu,\grave\nu)\cong \fQ(\grave\mu)\times_{\Rep_{\bd;\nu}}
\fQ(\grave\nu)/G$ and the sum runs over all weight data $(\bnu,\grave{\mu})$ with associated vector composition $\bmuh\in\Compe(\bf d).$

No individual algebra (with vertical multiplication given by convolution) in this family has a notion of horizontal multiplication.  Instead, there is a horizontal
multiplication $\tilde{A}^\bnu\times \tilde {A}^{\bnu'}\to \tilde {A}^{\bnu\cup \bnu'}$ which concatenates the tuples of
weights.  This can, of course, be organized in a single algebra $\tilde{{A}}$, but for our purposes it is more
profitable to think of the separate algebras
$\tilde{{A}}^\bnu$. The following is a direct consequence of our definitions.
\begin{prop}
For each fixed dimension vector ${\bf d}$, horizontal composition induces a right ${\bf A}$-module
  structure on $\mathbf{A}^\bnu$.
\end{prop}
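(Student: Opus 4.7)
The plan is to realize $\mathbf{A}$ as the special case of the extended algebras $\tilde{A}^\bnu$ in which the tuple of shadow weights is empty, and then to apply the horizontal composition from the previous proposition to the pair $(\bnu,\emptyset)$. When $\ell=0$, the shadow spaces $K^{\nu(i)}=0$ trivialize the condition in the definition of $\fQ(\grave\mu)$, so $\fQ(\grave\mu)=\cQ(\bmuh(0))$ and $\tilde{A}^{\emptyset}$ coincides with the unshadowed quiver Schur algebra. Using $\bnu\cup\emptyset=\bnu$, the horizontal composition then becomes a map $\tilde{A}^\bnu\times\tilde{A}^{\emptyset}\to\tilde{A}^{\bnu}$, which is our candidate right $\mathbf{A}$-action. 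Passing from algebras to module categories exactly as in Proposition \ref{monoid}, one sets $M\otimes N=\tilde{A}^\bnu\otimes_{\tilde{A}^\bnu\otimes A_{\bf c}}M\boxtimes N$; this is the desired right module structure on $\mathbf{A}^\bnu$.

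Geometrically, the action is described by the same template as \eqref{corresp}. Given weight data $(\bnu,\grave\mu),(\bnu,\grave\nu)$ and vector compositions $\bmuh_R,\blah_R\in\Compe({\bf c})$ carrying no shadow data, I would introduce
\[ \fQ(\grave\mu;\bmuh_R,\grave\nu;\blah_R)\subseteq\fQ(\grave\mu\cup\bmuh_R)\times_{\Rep_{{\bf d};\nu}}\fQ(\grave\nu\cup\blah_R), \]
namely the locus where the two compatible shadowed flags coincide on the subspace of dimension $\bd(\grave\mu)=\bd(\grave\nu)$. The convolution attached to the correspondence
\[ \FH(\grave\mu,\grave\nu)\times\cH(\bmuh_R,\blah_R) \longleftarrow \fQ(\grave\mu;\bmuh_R,\grave\nu;\blah_R)/G \longrightarrow \FH(\grave\mu\cup\bmuh_R,\,\grave\nu\cup\blah_R) \]
then yields the action map $(a,b)\mapsto a|b$. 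Well-definedness is automatic because by construction the shadow maps $\gamma_i$ of any point of $\fQ(\grave\mu\cup\bmuh_R)$ factor through the shadow subrepresentation $\grave{W}_i$ determined by $\grave\mu$ alone, and this is precisely the piece preserved by the correspondence; the appended part $\bmuh_R$ is invisible to $\gamma_i$.

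Associativity and unitality are then verified by the standard iterated fiber product argument for convolution algebras (in the style of \cite{CG97}). For $a\in\tilde{A}^\bnu$ and $b,c\in\mathbf{A}$, both $(a|b)|c$ and $a|(b|c)$ are computed as push-pull along the iterated correspondence classifying shadowed flagged representations equipped with three nested compatible refinements, and base change of the defining squares identifies the two triple fiber products canonically. The unit $\mathbf{1}=P(\emptyset)\in\mathbf{A}$ corresponds to the identity correspondence on $\fQ(\grave\mu)$ and so acts as the identity after induction, while right bilinearity is a direct consequence of the bilinearity of the Borel--Moore push-pull formalism.

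No single step is a serious obstacle: the content is entirely formal once the correspondence is in place. The only real work is bookkeeping, namely tracking the shadow flag $\grave W_\bullet$ through the nested subrepresentations and verifying that none of the associativity squares breaks the condition $\gamma_i(\grave W_i(k))\subseteq K^{\nu_1(i)+\cdots+\nu_k(i)}$. Since the unshadowed factor $A_{\bf c}$ does not interact with the $\gamma_i$ at all, the usual convolution proofs for horizontal composition of quiver Schur algebras transport verbatim into the extended setting.
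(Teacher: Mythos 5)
Your proposal is correct and follows exactly the route the paper intends: the paper offers no written proof (it states the proposition as "a direct consequence of our definitions"), the intended argument being precisely your observation that $\tilde A^{\emptyset}=A$ and that the horizontal multiplication $\tilde A^{\bnu}\times\tilde A^{\bnu'}\to\tilde A^{\bnu\cup\bnu'}$ specialized to $\bnu'=\emptyset$ yields the right action, with associativity and unitality checked by the same convolution/base-change formalism as in Proposition \ref{monoid}. Your additional bookkeeping on the shadow maps $\gamma_i$ being insensitive to the appended unshadowed factor is the correct and only nontrivial point to verify.
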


Using horizontal and vertical composition, the algebra
$\tilde{A}^\bnu$ is generated by a small number of elements (like ${\bf A}$
is) which are defined in some sense locally and also have an easy diagrammatic description. We mimic this construction now in the extended case incorporating the shadow vertices. Of
course, we still have the old merges and splits not involving the shadow vertices, but also have a new ``move'' on $\ell$-tuples of compositions.
\begin{definition}
  We call $\gla$ a {\bf left shift} of $\gmu$ (and $\gmu$ a {\bf right
    shift} of $\gla$) by $\bc$ if for some index $m$, we have that
  \[\gla(m)=\gmu(m)\cup \bc\text{ and }\gmu(m+1)=\bc\cup \gla(m+1).\] In
  words, if a vector $\bc$ has been shifted from the start of the $m+1$-st
  composition of $\gmu$ to the end of the $m$th composition for
  $\gla$.
\end{definition}

If $\gla$ is a left shift of $\gmu$,
then $\fQ(\gla)$ is naturally a subspace of $\fQ(\gmu)$.  Thus, generalizing the construction after Definition \ref{merges}, we can
think of it as a correspondence (read from right to left)
\[\tikz{
\node (a) at (-3,0) {$\fQ(\gla)$};
\node (b) at (0,1) {$\fQ(\gla)$};
\node (c) at (3,0) {$\fQ(\gmu)$};
\draw[thick,->] (b) -- (a);
\draw[thick,->] (b) -- (c);
}
\]
Similarly, for right shifts we can reverse this correspondence and read from left to right.  Thus,
the fundamental class of this correspondence gives elements of
${A}^\bnu$ corresponding to left and right shifts, which we denote by
$\gmu\to\gla$ and $\gla\to\gmu$.

  \begin{prop}\label{shift-deg}
   The degree of the map $\gla\to\gmu$ or $\gmu\leftarrow\gla$ associated to right or left shift by $\bc$ at the index $m$ equals $\sum_i c_i\nu_m(i)$.
  \end{prop}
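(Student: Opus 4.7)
The plan is to imitate the proof of Proposition \ref{split-deg}: identify the correspondence as essentially a closed inclusion, compute its codimension, and then use the grading convention \eqref{gradconv} to translate the codimension into a degree.

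First I would observe that the underlying associated vector composition $\bmuh=\bmuh(0)\cup\cdots\cup\bmuh(\ell)$ is the same for $\gla$ and $\gmu$, since by definition $\gla(m)\cup\gla(m+1)=\gmu(m)\cup\bc\cup\gla(m+1)=\gmu(m)\cup\gmu(m+1)$. Consequently $\fQ(\gla)$ and $\fQ(\gmu)$ live inside the same ambient space $\cQ(\bmuh)\times\bigoplus_i\Hom(V_i,K^{\nu(i)})$, share the same underlying flagged-representation data, and differ only by the additional compatibility conditions imposed on the shadow maps $\gamma_i$. This realizes the inclusion $\fQ(\gla)\hookrightarrow\fQ(\gmu)$ as a closed subvariety cut out by linear equations on the $\gamma_i$ factor of the ambient space.

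Second, I would pin down exactly which linear equations appear. Comparing the filtrations, $\grave W_i(k)_\gla$ agrees with $\grave W_i(k)_\gmu$ for every $k$ except at one specific index (the one determined by $m$), where $\grave W_i(\cdot)_\gla$ is $c_i$-dimensionally larger, picking up an extra subspace $U_i$ that in $\gmu$ would only appear at the next filtration step. The compatibility $\gamma_i(\grave W_i(\cdot)_\gla)\subset K^{\nu_1(i)+\cdots+\nu_\cdot(i)}$ therefore requires $\gamma_i(U_i)$ to land in a subspace of $K^{\nu(i)}$ that is $\nu_m(i)$-dimensionally smaller than the one already guaranteed by the $\gmu$-conditions. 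A dimension count for linear maps on the $c_i$-dimensional space $U_i$ into a codimension-$\nu_m(i)$ subspace shows this contributes codimension $c_i\cdot\nu_m(i)$; summing over $i\in\mV$ gives the total codimension $\sum_i c_i\nu_m(i)$.

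Finally, the grading convention \eqref{gradconv} gives that the degree of convolution with the fundamental class of the correspondence $\fQ(\gla)\stackrel{\mathrm{id}}{\longleftarrow}\fQ(\gla)\hookrightarrow\fQ(\gmu)$ is minus the sum of the relative complex dimensions of its two projections. Since one projection is the identity (contributing $0$) and the other is a closed inclusion of codimension $\sum_i c_i\nu_m(i)$ (contributing $-\sum_i c_i\nu_m(i)$), the degree is exactly $\sum_i c_i\nu_m(i)$. The same calculation applies to the reversed correspondence, so left and right shifts receive the same degree, precisely as the perverse normalization was designed to ensure (cf.\ the parallel symmetry for splits and merges in Proposition \ref{split-deg}).

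The main obstacle will be the second step: carefully tracking the interaction between the filtration indexing of $\grave W_i(k)$ and the step sizes $\nu_k(i)$ of the filtration on $K^{\nu(i)}$ to verify that the correct jump ($\nu_m$, not $\nu_{m\pm 1}$) is what appears. Everything else is essentially bookkeeping directly parallel to Proposition \ref{split-deg}.
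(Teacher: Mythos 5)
Your proposal is correct and follows exactly the route the paper intends: the paper's entire proof of this proposition is the sentence ``This follows from a direct calculation,'' and your calculation --- realizing $\fQ(\gla)\hookrightarrow\fQ(\gmu)$ as a linear subbundle of corank $\sum_i c_i\nu_m(i)$ cut out by extra conditions on the shadow maps $\gamma_i$ alone (the flagged quiver data being identical on both sides), and then applying the symmetric grading convention \eqref{gradconv} to a correspondence one of whose legs is the identity --- is precisely that direct calculation. The only caveat is the one you already flag yourself: the paper's indexing of the coarsened flag $\grave W_i(k)$ against the steps of the filtration of $K^{\nu(i)}$ is itself off-by-one-prone, so pinning the relevant filtration step to $\nu_m$ rather than $\nu_{m\pm 1}$ requires fixing that convention, but this is a bookkeeping matter that does not affect the substance of your argument.
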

\begin{proof}
 This follows from a direct calculation.
\end{proof}

Theorem \ref{basis} generalizes immediately to the algebra
$\tilde{A}^\bnu$.  We wish to define elements $\grave\mu
\overset{w;h}\Longrightarrow \grave\nu$ in this case, generalizing
those for $A$.  Consider a minimal length coset representative $w$ in
  $S_{\grave\mu}\backslash S_{\bd}/S_{\grave\nu}$. We let $\grave\mu
  \overset{w;h}\Longrightarrow \grave\la$ be an arbitrarily chosen
  composition of merges, splits and left and right shifts such that
  if we ignore the left and right shifts the resulting element represents $w$, cf. Remark ~\ref{rkchoice}. If we let $S_{\grave\mu}$ be the Young subgroup associated to the concatenation of the parts of $\grave\mu$, then the following holds.
\begin{prop}\label{red-basis}
The morphisms $\grave\mu \overset{w;h}\Longrightarrow \grave\la$ form a
basis of $\tilde{A}^\bnu$ where we let range
\begin{itemize}
\item $(\grave\mu,\grave\nu)$  over ordered pairs of $(\ell+1)$-tuples of
vector compositions $\grave\mu$, $\grave\la$ of type $e$,
\item  $w$  over minimal coset representatives in
  $S_{\grave\mu}\backslash S_{\bd}/S_{\grave\la}$, and \item $h$  over
a basis for $\bLa(\bmuh_j)$. \end{itemize}
\end{prop}
\begin{proof} Analogous to the proof of Theorem
  \ref{basis}; the important points are to note that
  \begin{itemize}
  \item the diagrams $\grave\mu \overset{w;h}\Longrightarrow
    \grave\la$ give Borel-Moore classes supported on pairs of flags
    with relative position $\leq w$, and
\item modulo classes supported on pairs with
    relative position $<w$, the elements $\grave\mu \overset{w;h}\Longrightarrow
    \grave\la$ give a basis of the Borel-Moore classes
    of the subset with relative position $w$ as we let $h$ range over a basis of the appropriate
    polynomial ring.
  \end{itemize}
This immediately shows that these elements are a basis.
\end{proof}

As in Section \ref{sec:calc-basic-conv}, we can also calculate how
this morphism acts on the polynomial ring $\tilde V^\bnu$ from \eqref{Vgen}.  Using horizontal composition,
it suffices to consider the case where $\bnu=(\la)$ (so $\ell=1$) and look at pairs
$(\emptyset,\bd)$ and $(\bd,\emptyset)$ of vector compositions of type $e$.  Both $\FH( (\emptyset,\bd), (\bd,\emptyset))$ and $\FH(
(\bd,\emptyset),(\emptyset,\bd))$ are simply $\cQ(\bd)$, and thus their Borel-Moore homology is a rank 1 free module over
$H^*(BG_\bd)\cong \bLa(\bd)$ generated by the corresponding fundamental class. Their actions on $\tilde{V}^\bnu$ are simply the actions
of $\iota^*$ and $\iota_*$  where $\iota\colon \fQ(\bd,\emptyset)\to \fQ(\emptyset,\bd)$ is the obvious inclusion map.

\begin{prop} \label{red-euler}
The following diagram commutes,
\[\tikz{
\node (aa) at (0,0) {$H^{BM}_*(\fQ(\bd,\emptyset))$};
\node (ba) at (6,0){$H^{BM}_*(\fQ(\emptyset,\bd))$};
\node (ab) at (0,-3) {$\bLa(\bmuh(0)\cup\cdots\cup \bmuh(\ell))$}; \node (bb) at (6,-3) {$\bLa(\bmuh(0)\cup\cdots\cup \bmuh(\ell))$};
\draw[->,thick] (aa) to[out=10,in=170] node[above,midway]{$\iota_*$} (ba);
\draw[->,thick] (ba) to[out=-170,in=-10] node[below,midway]{$\iota^*$} (aa);
\draw[->,thick] (ba) -- (bb) node[right,midway]{$\op{Borel}$};
\draw[->,thick] (aa) -- (ab) node[left,midway]{$\op{Borel}$};
\draw[->,thick] (ab) to[out=10,in=170] node[midway,above] {$\prod_{k=1}^et_k^{\la(k)}$} (bb);
\draw[->,thick] (bb) to[out=-170,in=-10] node[below,midway]{$1$} (ab);
}\]
where $t_k=x_{k,1}\cdots x_{k,d_k}$ is the highest degree elementary symmetric function in
the alphabet \eqref{alphabeth} for the $k$th vertex in $\Gamma$.
\end{prop}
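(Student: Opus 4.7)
The plan is to realize $\iota$ as the zero section of a $G_\bd$-equivariant vector bundle and then invoke the adjunction formula relating push-forward and pull-back along such an inclusion.

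First I identify the geometry. With $\ell=1$ and $\bnu=(\la)$, both concatenations $\bmuh(0)\cup\bmuh(1)$ collapse to the length-one composition $(\bd)$, so the compatibility $f_i(F(i)_1)\subset F(i+1)_0=0$ forces $f=0$ and $\cQ((\bd))$ reduces to the semisimple point equipped with its $G_\bd$-action. The distinction between $\grave\mu=(\emptyset,\bd)$ and $\grave\mu=(\bd,\emptyset)$ is therefore carried entirely by the shadow condition: on $(\emptyset,\bd)$ there is no constraint on $\gamma$, while on $(\bd,\emptyset)$ the condition forces $\gamma=0$. Consequently $\fQ((\emptyset,\bd))$ is the total space of the $G_\bd$-equivariant vector bundle
\[
\cE \;=\; \bigoplus_{i=1}^e \Hom_{\K}(V_i,\K^{\la(i)})
\]
over the point $\fQ((\bd,\emptyset))$, and $\iota$ is its zero section; in particular both fibre products $\FH((\emptyset,\bd),(\bd,\emptyset))$ and $\FH((\bd,\emptyset),(\emptyset,\bd))$ collapse to $\cQ(\bd)$, as stated in the proposition.

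Since $\iota$ is the zero section of a vector bundle with projection $\pi$, the pull-backs $\iota^*$ and $\pi^*$ are mutually inverse isomorphisms on equivariant cohomology, and under the Borel isomorphism \eqref{GH} both sides are identified with $\bLa(\bd)$ via the identity map on polynomials; this produces the lower arrow in the diagram. For the upper arrow, the self-intersection formula reads $\iota^*\iota_*(c)=e_{G_\bd}(\cE)\cup c$, so combined with $\iota^*=\mathrm{id}$ it shows that under the Borel identification $\iota_*$ acts as multiplication by the equivariant Euler class $e_{G_\bd}(\cE)$.

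To compute this Euler class, restrict to the maximal torus of $G_\bd$ and split the tautological bundle $V_i$ into line bundles $\cL_{i,1}\oplus\cdots\oplus\cL_{i,d_i}$ with equivariant Chern classes $x_{i,j}$, as in the proof of Proposition \ref{euler-int}. Since $\K^{\la(i)}$ is trivial of rank $\la(i)$, the bundle $\cE$ decomposes into a sum of $\sum_i d_i\la(i)$ line bundles indexed by triples $(i,j,a)$ with $1\le a\le \la(i)$, each contributing a factor $x_{i,j}$ to the Euler class. Multiplying yields
\[
e_{G_\bd}(\cE) \;=\; \prod_{i=1}^e\prod_{j=1}^{d_i} x_{i,j}^{\la(i)} \;=\; \prod_{k=1}^e t_k^{\la(k)},
\]
as asserted; a consistency check via Proposition~\ref{shift-deg} and the grading convention \eqref{gradconv} confirms that both maps sit in the common degree $\sum_k d_k\la(k)$, equal to the complex rank of $\cE$. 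The main technical obstacle is book-keeping: one must verify that the shadow condition in the definition of $\fQ(\grave\mu)$ selects the zero section (rather than its complement) of the $\Hom$-bundle, and then maintain the sign and Chern-class conventions consistent with those already fixed in Section~\ref{sec:calc-basic-conv}; once this is done, all the formulas follow from the splitting principle and the standard self-intersection formula.
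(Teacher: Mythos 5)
Your proof is correct and follows essentially the same route as the paper's: identify $\iota$ as the zero section of the $G_\bd$-equivariant (underlying trivial) bundle $\bigoplus_k\Hom(\cV_k,K^{\la(k)})$, apply the self-intersection formula, and compute the Euler class as $\prod_k c_{top}(\cV_k^*)^{\la(k)}=\prod_k t_k^{\la(k)}$ via the splitting principle. The only addition is your explicit check that the shadow condition kills $\gamma$ on $\fQ(\bd,\emptyset)$ and is vacuous on $\fQ(\emptyset,\bd)$, which the paper leaves implicit.
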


\begin{proof} The map $\iota$ is the inclusion of the $0$-section of a
$G_\bd$-equivariant vector bundle, whose underlying vector bundle is trivial, but equals
$\oplus_{k=1}^e\Hom(\cV_k,K^{\la(k)})$ equivariantly.  As the second vector space is trivial, the Euler class of this bundle is
$\prod_{k=1}^e c_{top}(\cV_k^*)^{\la(k)}$.  Since the Borel isomorphism sends the Chern class of maximal possible degree $c_{top}(\cV_k^*)$ to $t_k$, the result
follows.
\end{proof}

To make the algebra more concrete we first generalize Proposition \ref{faithful}:
\begin{prop}\label{hl-faithful}
The algebra $\tilde{A}^\bnu$ has a natural faithful action on
  \begin{equation}
    \label{Vgen}
    \tilde{V}^\bnu=
    \bigoplus_{\grave\mu}H^*(\fQ(\grave\mu))\cong \bLa(\bmuh(0)\cup\cdots\cup \bmuh(\ell)).
  \end{equation}
\end{prop}
\begin{proof}
  Let $\tilde {\fQ}$ and $\tilde \FH$ denote the varieties
  corresponding to ${\fQ}$ and ${\FH}$ respectively where we do not take the
  quotient by $G_\bd$.  Consider the inclusion maps of $T_\bd$-fixed
  points
\[ \iota_{\gmu}\colon \tilde {\fQ}(\gmu)^{T_\bd}/T_\bd \to
  \tilde {\fQ}(\gmu)/T_\bd,\qquad \iota_{\gmu;\grave\nu}\colon
  \FH(\grave\mu, \grave\nu)^{T_\bd\times T_\bd}/T_\bd\times T_\bd\to
  \FH(\grave\mu, \grave\nu)/T_\bd\times T_\bd,\]
and the natural map of
  quotients
\[
\beta_{\gmu}\colon\tilde {\fQ}(\gmu)/T_\bd\to \tilde
  {\fQ}(\gmu)/G_\bd =\fQ(\gmu),\;\;
    \beta_{\gmu;\grave\nu}\colon\tilde {\FH}(\gmu,\grave\nu)/T_\bd\times
  T_\bd\to \tilde {\FH}(\gmu,\grave\nu)/G_\bd \times
  G_\bd=\FH(\gmu,\grave\nu).\]
  We also have the proper projection maps
  \[p_1:\tilde {\FH}(\gmu,\grave\nu)/T_\bd\times
  T_\bd\rightarrow \tilde {\fQ}(\gmu)/T_\bd, \quad p_2: \tilde {\FH}(\gmu,\grave\nu)/T_\bd\times
  T_\bd\rightarrow \tilde {\fQ}(\grave\nu)/T_\bd\]
  and similarly \[\overline{p}_1:\tilde {\FH}(\gmu,\grave\nu)/G_\bd\times
  G_\bd\rightarrow \tilde {\fQ}(\gmu)/G_\bd,\quad\overline{p}_2: \tilde {\FH}(\gmu,\grave\nu)/G_\bd\times
  G_\bd\rightarrow \tilde {\fQ}(\grave\nu)/G_\bd,\]
and also the maps $p_1^T$, $p_2^T$ for the $T$-fixed points. We abbreviate $U=H^*(BG_\bd)$ and $V=H^*(BT_\bd)$.  Since $G_\bd$ is a
product of general linear groups, the pull-back by $\beta:BT_\bd \to
BG_\bd$ is an isomorphism between $U$ and invariants of the Weyl group
$S_\bd$ in
$V$.  In particular, $V$ is a free $U$-algebra of finite rank. (Note
that the maps  $\beta$ are smooth, hence pullbacks exist.)

Now, consider the Leray-Serre
  spectral sequence applied to the map $\beta_{\gmu}$. It degenerates at
  the $E^2$-page for parity reasons, so as a $V$-module, we have that $H^*( \tilde  {\fQ}(\gmu)/T_\bd)$ is isomorphic to $V\otimes_U H^*( \tilde{\fQ}(\gmu)/G_\bd)=V\otimes_U H^*( {\fQ}(\gmu))$; similarly for $\FH$.

  We claim there is a (up to signs) commutative  diagram
\[\tikz[->,thick]{\matrix[row sep=3mm,column sep=10mm,ampersand
  replacement=\&]{
 \& \node (b) {$
  \Hom_{U}(H^*(\fQ(\grave\mu) ),H^*(\fQ(\grave\nu) ))$};\\
\node(a) {$H^{BM}_*(\FH(\grave\mu, \grave\nu))$}; \&\\
 \& \node (g) {$
  \Hom_{V}(V\otimes_{U} H^*(\fQ(\grave\mu) ),V\otimes_{U} H^*(\fQ(\grave\nu) ))$};\\
\node(c) {$H^{BM}_*(\tilde \FH(\grave\mu, \grave\nu)/T_\bd\times T_\bd)$}; \&\\ \& \node(d){$
  \Hom_{V}(H^*(\tilde \fQ(\grave\mu) /T_\bd
  ),H^*(\tilde \fQ(\grave\nu) /T_\bd))$};\\
\node(e) {$H^{BM}_*(\tilde \FH(\grave\mu, \grave\nu)^{T_\bd\times T_\bd}/T_\bd\times T_\bd)$}; \&\\ \& \node(f){$
  \Hom_{V}(H^*(\tilde \fQ(\grave\mu) ^{T_\bd} /T_\bd
  ),H^*(\tilde \fQ(\grave\nu) ^{T_\bd} /T_\bd)) $};\\
};
\draw (a)-- node [above,midway]{$\star-$} (b);
\draw (c)--  node [above,midway]{$\star-$} (d);
\draw (e)--  node [above,midway]{$\star-$} (f);
\draw (a)-- node [above,left]{$\beta_{\gmu;\grave\nu}^*$} (c);
\draw (c)--node [left,midway]{$\iota^*_{\gmu;\grave\nu} (\iota_{\gmu;\grave\nu})_*\iota^*_{\gmu;\grave\nu}$} (e);
\draw (b)-- node [right,midway]{$\operatorname{id}_V\otimes -$} (g);
\draw (g)--  node [above,midway,rotate=-90]{$\sim$}  (d);
\draw (d)-- node [right,midway]{$\iota^*_{\grave\nu}\circ -\circ (\iota_\gmu)_*$}(f);
}\]

where $\star$ denotes the convolution in equivariant Borel-Moore homology. (Note that the $\iota^*$'s exist by \cite[2.6.21]{CG97}, since the variety  $\iota_{\gmu}$ can be embedded into $\cQ(\bmuh)\times \bigoplus_i\Hom_K(V_i,K^{\nu(i)})$ with smooth fixed point set, similarly for the other $\iota$'s.)
We first show that the top square is commutative. For $a\in H^*(\fQ(\grave\mu)/T_\bd)$
and $b\in H^{BM}_*(\FH(\grave\mu, \grave\nu))$ we can find $v\in V$ and $x\in H^*(\fQ(\grave\mu))$ such that
\begin{multline*}
  a\star \beta^*_{\gmu;\grave\nu}(b)= {p_2}_*
  (p_1^*(a)\cap\beta^*_{\gmu;\grave\nu} (b))\\ ={p_2}_*
  (p_1^*(v\cdot\beta^*_\gmu(x))\cap\beta^*_{\gmu;\grave\nu} (b))
  ={p_2}_* (v\cdot p_1^*(\beta^*_\gmu(x))\cap\beta^*_{\gmu;\grave\nu}
  (b)),
\end{multline*}
where we used that ${p_1}_*$ is $V$-equivariant. Hence
\begin{align*}
  a\star \beta^*_{\gmu;\grave\nu}(b)&={p_2}_* (v\cdot (\beta_\gmu\circ
  p_1)^*(x)\cap\beta^*_{\gmu;\grave\nu} (b))= {p_2}_* (v\cdot
  (p_1\circ\beta_{\gmu,\grave\nu})^*(x)\cap\beta^*_{\gmu;\grave\nu}
  (b))\\
 & = {p_2}_* (v\cdot
  \beta_{\gmu,\grave\nu}^*(\overline{p}_1^*(x))\cap\beta^*_{\gmu;\grave\nu}
  (b)) = v\cdot{p_2}_*
  (\beta_{\gmu,\grave\nu}^*(\overline{p}_1^*(x)\cap b)\\ &=
  v\beta_{\grave\nu}^*{\overline{p}_2}_*(\overline{p}_1^*(x)\cap b) =
  v\beta_{\grave\nu}^*(x\star b),
\end{align*}
where we used $\beta_\mu\circ p_1=\overline{p}_1\circ\beta^*_{\gmu;\grave\nu}$, base change and the fact that ${p_2}_*$ is $V$-equivariant.
Altogether
\[a\star \beta^*_{\gmu;\grave\nu}(b)=\beta_{\grave\nu}^*(v\cdot x\star b)\]
and hence the top square is commutative. To see that the bottom square is commutative up to sign, consider $a\in  H^*(\fQ(\grave\mu)/T_\bd)$ and $b\in H^{BM}_*(\tilde \FH(\grave\mu, \grave\nu)/T_\bd\times T_\bd)$. Then
\begin{align*}
  \iota^*_{\grave\nu}((\iota_{\gmu})_*a\star b)
  &= \iota^*_{\grave\nu} ({p_2}_*(p_1^*({\iota_{\gmu}}_*(a))\cap b)) &
  \text{(def. of conv.)}\\
  &={p_2^T}_*(\iota^*_{\gmu;\grave\nu} (p_1^*({\iota _{\gmu}}_*(a))\cap b)) &
  \text{(base change)}\\
  &={p_2^T}_* (\iota^*_{\gmu;\grave\nu}(\iota_{\gmu;\grave\nu})_*
  ({p_1^T}^*(a)) \cap b)  & \text{(base change)}\\
  &=\pm {p_2}_*^T  (\iota^*_{\gmu;\grave\nu} (\iota_{\gmu;\grave\nu})_*{p_1^T}^*(a))\cap \iota_{\gmu;\grave\nu}^*(b))  & \text{(projection)}\\
  &=\pm {p_2}_*^T (q\cdot( {p_1^T}^*(a))\cap
  \iota_{\gmu;\grave\nu}^*(b))  & (\iota^*_{\gmu;\grave\nu}(\iota_{\gmu;\grave\nu})_*=q\cdot)\\
  &=\pm {p_2}_*^T ( {p_1^T}^*(a)\cap
  \iota_{\gmu;\grave\nu}^*{\iota_{\gmu;\grave\nu}}_*\iota_{\gmu;\grave\nu}^*(b))
  &(\iota^*_{\gmu;\grave\nu}(\iota_{\gmu;\grave\nu})_*=q\cdot)\\
  &=\pm a\star  \iota_{\gmu;\grave\nu}^*
  {\iota_{\gmu;\grave\nu}}_*\iota_{\gmu;\grave\nu}^*(b). &\text{(def. of conv.)}
\end{align*}
where in addition to the definition of convolution and the base change
formula, we used the
projection formula \cite[Remark 5.4]{FultonAnderson} and the property from \cite[Corollary~2.6.44]{CG97} that $\iota^*_{\gmu;\grave\nu}(\iota_{\gmu;\grave\nu})_*$ is multiplication with some Euler class $q$.

The map  $\beta_{\gmu,\grave\nu}^*$ is injective, since $\operatorname{id}_V\otimes_-$ and the map $M\to V\otimes_UM, m\mapsto 1\otimes m$ is injective for any free $U$ module.
Furthermore, the $\iota^*$'s and $\iota_*$'s are injective, because all
varieties that appear are equivariantly formal and the local Euler classes are  not zerodivisors, see \cite[Theorem 3 and Lemma 4]{Brionlectures}. The bottom horizontal arrow is injective, since the torus fixed points are isolated.

Thus any element in the kernel of the top horizontal map is in the kernel of the
map from the top left to the bottom right. We have  
seen already that going first vertically and then horizontally is injective, so the kernel is trivial.  This completes the proof.
\end{proof}

Just as $A$ contains an idempotent which projects down to the quiver Hecke algebra, the algebra $\tAb$ contains an
analogous idempotent $\eT$. Let
\begin{eqnarray}
\label{eT}
\eT&=&\sum e_{\grave\mu}
\end{eqnarray}
denote the idempotent in $\tAb$ defined as the sum over all primitive
idempotents $e_{\grave\mu}$ indexed by all vector multi-compositions
${\grave\mu}$ where each composition appearing is of the form $\al_i$,
that is, the corresponding flag is complete. We call $\eT\tAb\eT$ the {\bf geometric
tensor algebra}. The name and construction is based on a diagrammatically defined { tensor algebra} $\tilde{T}^\bnu$ defined in  \cite[Def. 4.5]{Webmerged} to categorify tensor products of representations of quantum groups. To connect the two algebras we first extend our graphical calculus from Section \ref{pictorial} to the algebras $\tilde{
  A}^{\bnu}$.

\subsection{Extended graphical calculus}
\label{KLRalgebra}
To extend our graphical calculus to the algebras $\tilde{
  A}^{\bnu}$ we have to encode the data attached to the shadow vertices. To do so we add a {\bf red band} labeled
with $\nu_i$ between the black lines representing $\bmuh(i-1)$ and
$\bmuh(i)$. For instance, for fixed $\bnu$, the idempotent
$e_{(\bnu,\grave{\mu})}$ corresponding to the weight data
$(\bnu,\grave{\mu})$ and another element of the algebra is graphically
described in Figure~\ref{idempotent} below.
\begin{figure}[h]
\begin{tikzpicture} [thick,scale=3]
\draw[wei] (0,.3) -- (0,0) node [below] {\small $\nu_1$};
\foreach \x in {1,2,...,3} {
\draw (\x/5,.3) -- (\x/5,0) node [below] {\small$\bmuh^{(\x)}$};}
\begin{scope}[xshift=0.8cm]
\draw[wei] (0,.3) -- (0,0) node [below] {\small $\nu_2$};
\foreach \x in {1,2,...,3} {
\draw (\x/5,.3) -- (\x/5,0) node [below] {\small$\bmuh^{(\x)}$};}
\end{scope}
\begin{scope}[xshift=2cm]
\draw[wei] (0,.3) -- (0,0) node [below] {\small $\omega_j$};
\foreach \x in {0,0.6,...,1.8} {
\draw (\x+.6,.3) -- (\x+.6,0) node[midway,circle,fill,inner sep=2pt] {}
node [below] {\tiny $\alpha_{j+2}$};
\draw (\x+.2,.3) -- (\x+.2,0) node [below] {\tiny $\alpha_j$};
\draw (\x+.4,.3) -- (\x+.4,0) node [below] {\tiny $\alpha_{j+1}$};
}
\end{scope}
\end{tikzpicture}
\caption{An idempotent and $t^{(j;9)}$ from Lemma \ref{id-split} for $e=3$.}
\label{idempotent}
\end{figure}
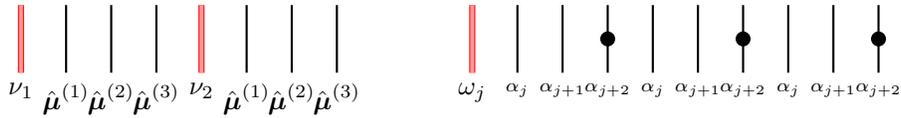

The splitting and merging morphisms defined earlier are displayed as in Section~\ref{pictorial}, except of additional red lines separating the multi-compositions. We have also diagrams associated to the new morphisms, that is to left and right shifts. They are denoted by a (left resp. right) crossing of red and black strands, as shown below:
  \begin{equation}
  \label{redblack}
    \begin{tikzpicture}[baseline=-0.3cm]
      \node at (-3,0)
[label=below:{a left shift}]
      {\begin{tikzpicture} [thick,scale=2.3] \draw[wei] (0,0) --
          (.3,.3); \draw (.3,0) -- (0,.3);
        \end{tikzpicture}}; \node at (3,0)
[label=below:{a right shift}] {\begin{tikzpicture} [thick,scale=2.3]
          \draw[wei] (.3,0) -- (0,.3); \draw
          (0,0) -- (.3,.3);
        \end{tikzpicture}};
    \end{tikzpicture}
  \end{equation}
For instance Figure~\ref{violating} (read as usual from bottom to top) shows a merge and split followed by a left shift. Proposition~\ref{shift-deg} assigns a degree to each crossing of a red band with a black strand, see Remark~\ref{degrees} for explicit formulas.

Following \cite{KL1} and \cite{Webmerged} we define now a diagram algebra, much like that defined above, but only allowing lines labeled by simple roots, and only allowing crossings, rather than splits and merges. Let $m$ be a fixed integer with its integral points $\cP=\left\{(x,y)\mid x\in \{1,2,\ldots, m\}, y\in\{1,2\}\right\}$. A {\bf strand diagram} is a collections of labeled decorated arcs (or strands) on the plane connecting points from $\cP$ with $y=0$ with those with $y=1$ satisfying the following properties. Arcs are assumed to have no critical points and each of them is colored black or red. Black arcs are labeled by elements from $\mV$ (or alternatively simple roots of $\mg=\hat{\mathfrak{sl}}_e$) and red arcs are labeled by weights of $\mg$. Moreover, black arcs can carry a finite number of dots. Arcs can intersect but triple intersections are not allowed. We consider strand diagrams up to isotopies that do not change the combinatorial type (labels and decorations) of the diagram and do not create critical points.

Locally around each point of an arc a strand  diagram is either a single line or a crossing of two black strands as in \eqref{cross} or a crossing of a black with a red line as in \eqref{redblack}. In particular, two red arcs never cross and no pair of two lines is allowed to end at the same point. We consider the vector space $\mathbb{D}$ spanned by isotopy classes of such diagrams for varying $m$. It is an algebra, called the {\bf free strand algebra}, by vertical concatenation of diagrams. for instance, the first element in Figure~\ref{idempotent} is an idempotent.

To define an interesting quotient of this free strand algebra we need to introduce some extra data, similar to the quiver Hecke algebras from \cite{KL1}, \cite{Rou2KM},  \cite{BKKL}.

For a weight $\la$ let $\la^i=\al_i^\vee(\la)$ be its Dynkin labels.
For $i\not=j\in\mV$ we set
\begin{eqnarray}
\label{Qij}
Q_{ij}(u,v)&=&\begin{cases} 1  & \text{if } i\neq j\pm 1\\
u-v & \text{if }i=j-1\quad (e\neq 2)\\
v-u & \text{if }i=j+1\quad (e\neq 2)\\
(u-v)(v-u) &\text{if }i=j-1=j+1 \quad(e=2).
\end{cases}
\end{eqnarray}

\begin{definition}
For an $\ell$-tuple of weights, $\bnu$,  the {\bf tensor algebra} $\tilde{T}^\bnu$ is the subquotient algebra of the free strand algebra spanned by all diagrams which have exactly $\ell$ red arcs labeled by $\nu_1,\ldots, \nu_\ell$ in this order  modulo the following relations (R1)-(R3) with all its possible mirror images:
\begin{enumerate}[(R1)]
\item The usual KLR relations from Figure~\ref{quiver-hecke}.\footnote{The polynomial in the box means that we have a linear combination of diagrams, one for each monomial appearing. The diagram attached to the monomial is the identity diagram as indicated equipped with $a_i$ dots on the $i$th strand, where $a_i$ is the exponent of $y_i$. To match it with \cite{BKKL} note that our $y$'s are the negatives of the $y$'s there. }
\item  All black crossings and dots pass through red lines, with possibly a correction term:
  \begin{equation*}
    \begin{tikzpicture}[thick]
      \draw (-3,0)  +(1,-1) -- +(-1,1) node[at start,below]{$i$};
      \draw (-3,0) +(-1,-1) -- +(1,1)node [at start,below]{$j$};
      \draw[wei] (-3,0)  +(0,-1) .. controls +(-1,0) ..  +(0,1);
      \node at (-1,0) {=};
      \draw (1,0)  +(1,-1) -- +(-1,1) node[at start,below]{$i$};
      \draw (1,0) +(-1,-1) -- +(1,1) node [at start,below]{$j$};
      \draw[wei] (1,0) +(0,-1) .. controls +(1,0) ..  +(0,1);
\node at (2.8,0) {$+ $};
      \draw (6.5,0)  +(1,-1) -- +(1,1) node[midway,circle,fill,inner sep=2.5pt,label=right:{$a$}]{} node[at start,below]{$i$};
      \draw (6.5,0) +(-1,-1) -- +(-1,1) node[midway,circle,fill,inner sep=2.5pt,label=left:{$b$}]{} node [at start,below]{$j$};
      \draw[wei] (6.5,0) +(0,-1) -- +(0,1);
\node at (3.8,-.2){$\displaystyle \sum_{a+b+1=\la^i} \delta_{i,j} $}  ;
 \end{tikzpicture}
  \end{equation*}
\begin{equation}\label{dumb}
    \begin{tikzpicture}[thick,baseline=2.85cm]
      \draw[wei] (-3,3)  +(1,-1) -- +(-1,1);
      \draw (-3,3)  +(0,-1) .. controls +(-1,0) ..  +(0,1);
      \draw (-3,3) +(-1,-1) -- +(1,1);
      \node at (-1,3) {=};
      \draw[wei] (1,3)  +(1,-1) -- +(-1,1);
  \draw (1,3)  +(0,-1) .. controls +(1,0) ..  +(0,1);
      \draw (1,3) +(-1,-1) -- +(1,1);    \end{tikzpicture}
  \end{equation}
\begin{equation*}
    \begin{tikzpicture}[thick]
  \draw(-3,6) +(-1,-1) -- +(1,1);
  \draw[wei](-3,6) +(1,-1) -- +(-1,1);
\fill (-3.5,5.5) circle (3pt);
\node at (-1,6) {=};
 \draw(1,6) +(-1,-1) -- +(1,1);
  \draw[wei](1,6) +(1,-1) -- +(-1,1);
\fill (1.5,6.5) circle (3pt);
    \end{tikzpicture}
  \end{equation*}
\item A red line labeled $\la_i$ and a black line labeled $\al_i$ can be separated by adding $\la^i=\al_i^\vee(\la)$ dots to the black strand:
  \begin{equation}\label{cost}
  \begin{tikzpicture}[thick,baseline=1.6cm]
    \draw (-2.8,0)  +(0,-1) .. controls +(1.6,0) ..  +(0,1) node[below,at start]{$i$};
       \draw[wei] (-1.2,0)  +(0,-1) .. controls +(-1.6,0) ..  +(0,1) node[below,at start]{$\la$};
           \node at (-.3,0) {=};
    \draw[wei] (2.8,0)  +(0,-1) -- +(0,1) node[below,at start]{$\la$};
       \draw (1.2,0)  +(0,-1) -- +(0,1) node[below,at start]{$i$};
       \fill (1.2,0) circle (3pt) node[left=3pt]{$\la^i$};
          \draw[wei] (-2.8,3)  +(0,-1) .. controls +(1.6,0) ..  +(0,1) node[below,at start]{$\la$};
  \draw (-1.2,3)  +(0,-1) .. controls +(-1.6,0) ..  +(0,1) node[below,at start]{$i$};
           \node at (-.3,3) {=};
    \draw (2.8,3)  +(0,-1) -- +(0,1) node[below,at start]{$i$};
       \draw[wei] (1.2,3)  +(0,-1) -- +(0,1) node[below,at start]{$\la$};
       \fill (2.8,3) circle (3pt) node[right=3pt]{$\la^i$};
  \end{tikzpicture}
\end{equation}
\end{enumerate}

\end{definition}
This algebra is graded by setting: each black crossing with label $i$ and $j$ is of degree $-2$ if $i=j$, of degree $1$ if $j=\mp 1$ and of degree zero otherwise.  A crossing of a red line labeled $\la$ and a black line labeled $\al_i$ is of degree $\la^i=\al_i^\vee(\la)$,  and a dot is of degree $2$.\\

The geometrical tensor algebra agrees with the diagrammatical tensor algebra:

\begin{prop}
\label{eAeT}
We have an isomorphism of graded rings
$$\tilde{\varepsilon}\colon \tilde{T}^\bnu\cong
\eT\tAb\eT$$ sending each crossings \eqref{cross} of black strands to the corresponding composition of merge and split and the crossings of a red with a black strand to the corresponding shift map in $\eT\tAb\eT$, and finally matching a dot on the
$k$th strand from the left that is labeled $i$ with the
polynomial generators $x_{i,k}$.
\end{prop}
\begin{proof}
In order to check that this map is well-defined, we must check the
relations of $\tilde{T}^\bnu$ in $\eT\tAb\eT$. By Proposition \ref{hl-faithful}, we have a faithful action of
$\eT\tAb\eT$ on 
\begin{eqnarray}
\label{eV}
\eT \tilde{V}^\bnu&\cong &\bigoplus_{\bmuh(i)\in
  \fCompe(\bd)}H^*(\fQ(\grave\mu))\cong \bLa(\bmuh(0)\cup\cdots\cup
\bmuh(\ell)).
\end{eqnarray}
This is a sum of polynomial rings, corresponding to $\ell+1$-tuples of
sequences of simple roots. The explicit formulas for this action
from Propositions \ref{euler-int} and \ref{red-euler} are shown in Figure~\ref{poly-rep}. (There we always display the relevant part of the diagrams indicating the geometric convolution operation. We depict the $k$th strand and, apart from the last picture, the $k+1$th strand and write $y_j$ for the variable attached to the $j$th strand.)
\begin{figure}[h]
\begin{center}
\begin{tikzpicture}
\node at (-4,3.5){
\begin{tikzpicture}[scale=.9]
\draw[wei] (-1,-1) -- (1,1) node[at start,below]{
$\la$} node[at end,above]{
$\la$};
\draw[thick] (1,-1) -- (-1,1) node[at start,below]{
$i$} node[at end,above]{
$i$};
\draw[thick,|->] (1.5,0) -- (2.5,0);
\node at (4,0) {$f\mapsto 1\cdot f$ } ;
\end{tikzpicture}};

\node at (4,3.5){
\begin{tikzpicture}[scale=.9]
\draw[wei] (1,-1) -- (-1,1) node[at start,below]{
$\la$} node[at end,above]{
$\la$};
\draw[thick] (-1,-1) -- (1,1) node[at start,below]{
$i$} node[at end,above]{
$i$};
\draw[thick,|->] (1.5,0) -- (2.5,0);
\node at (4,0) {$f\mapsto y_k^{\la^i}\cdot f$ } ;
\end{tikzpicture}
};

\node at (-2.5,0){
\begin{tikzpicture}[scale=.9]
\draw[thick] (1,-1) -- (-1,1) node[at start,below]{
$j$} node[at end,above]{
$j$};
\draw[thick] (-1,-1) -- (1,1) node[at start,below]{
$i$} node[at end,above]{
$i$};
\draw[thick,|->] (1.5,0) -- (2.5,0);
\node at (5.5,0) {$\displaystyle
\begin{cases}
f\mapsto f^{s_k} & \text{if }i \nrightarrow j\\
f\mapsto (y_{k+1}-y_{k}) f^{s_k} &\text{if } i\to j\\
\displaystyle  f\mapsto \frac{f-f^{s_k}}{y_k-y_{k+1}} &\text{if } i=j \\
\end{cases}$ } ;
\end{tikzpicture}
};

\node at (5,0){
\begin{tikzpicture}[scale=.9]
\draw[thick] (1,-1) -- (1,1) node[at start,below]{
$i$} node[at end,above]{
$i$} node[circle,midway,fill,inner sep=2pt]{};
\draw[thick,|->] (1.5,0) -- (2.5,0);
\node at (4,0) {$f\mapsto y_k\cdot f$ } ;
\end{tikzpicture}
};
\end{tikzpicture}

\end{center}
\caption{The polynomial representation}
\label{poly-rep}
\end{figure}
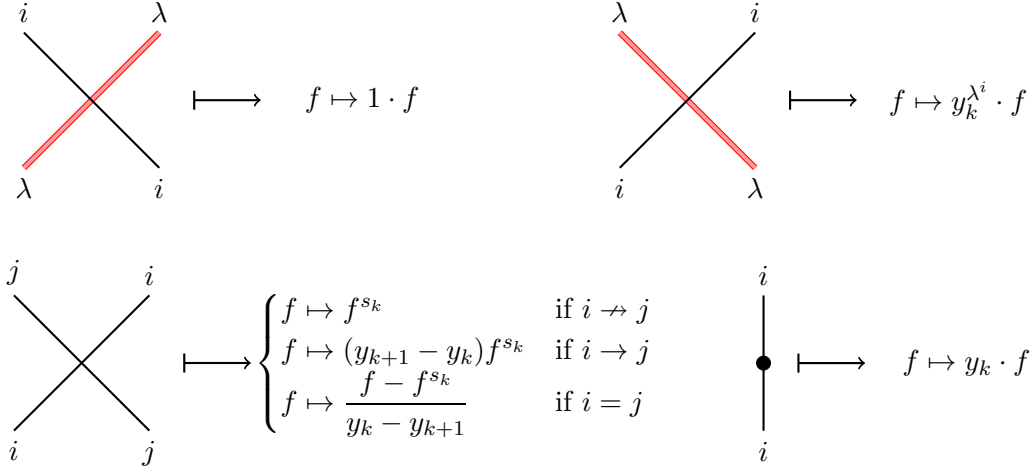
An easy direct calculation shows that the assignments satisfy the relations (R1)-(R3), hence $\tilde{T}^\bnu$ acts on \eqref{eV}, too,  by the assignments from Figure \ref{poly-rep}. By \cite[Lem. 4.17]{Webmerged} this action of $\tilde{T}^\bnu$ is faithful. (We can obtain the indexing used in \cite{Webmerged} by taking $\Bi$ to be the
concatenation of the sequences of roots in the description in this
paper, and the function $\kappa$ to send $j$ to
$\sum_{k<j}|\bmuh(k)|$.) This shows we have an injection $\tilde{\varepsilon}:\tilde{T}^\bnu\hookrightarrow
\eT\tAb\eT$ since we matched faithful representations of these
algebras.  This map satisfies by construction all the assertions of the proposition. Moreover, it sends the basis of \cite[4.17]{Webmerged} for
$\tilde{T}^\bnu$ to a basis of the type given by  $\eT\tAb\eT$ in
Proposition \ref{red-basis}.  Thus, this map is an isomorphism.
\end{proof}

\begin{figure}[h]
\begin{equation*}
    \begin{tikzpicture}[scale=1]
      \draw[thick](-4,0) +(-1,-1) -- +(1,1) node[below,at start]
      {$i$}; \draw[thick](-4,0) +(1,-1) -- +(-1,1) node[below,at
      start] {$j$}; \fill (-4.5,.5) circle (3pt);
      \node at (-2,0){=}; \draw[thick](0,0) +(-1,-1) -- +(1,1)
      node[below,at start] {$i$}; \draw[thick](0,0) +(1,-1) --
      +(-1,1) node[below,at start] {$j$}; \fill (.5,-.5) circle (3pt);
      \node at (4,0){unless $i=j$};
    \end{tikzpicture}
  \end{equation*}
  \begin{equation*}
    \begin{tikzpicture}[scale=1]
      \draw[thick](-4,0) +(-1,-1) -- +(1,1) node[below,at start]
      {$i$}; \draw[thick](-4,0) +(1,-1) -- +(-1,1) node[below,at
      start] {$i$}; \fill (-4.5,.5) circle (3pt);
      \node at (-2,0){=}; \draw[thick](0,0) +(-1,-1) -- +(1,1)
      node[below,at start] {$i$}; \draw[thick](0,0) +(1,-1) --
      +(-1,1) node[below,at start] {$i$}; \fill (.5,-.5) circle (3pt);
      \node at (2,0){+}; \draw[thick](4,0) +(-1,-1) -- +(-1,1)
      node[below,at start] {$i$}; \draw[thick](4,0) +(0,-1) --
      +(0,1) node[below,at start] {$i$};
    \end{tikzpicture}
  \end{equation*}
 \begin{equation*}
    \begin{tikzpicture}[scale=1]
      \draw[thick](-4,0) +(-1,-1) -- +(1,1) node[below,at start]
      {$i$}; \draw[thick](-4,0) +(1,-1) -- +(-1,1) node[below,at
      start] {$i$}; \fill (-4.5,-.5) circle (3pt);
      \node at (-2,0){=}; \draw[thick](0,0) +(-1,-1) -- +(1,1)
      node[below,at start] {$i$}; \draw[thick](0,0) +(1,-1) --
      +(-1,1) node[below,at start] {$i$}; \fill (.5,.5) circle (3pt);
      \node at (2,0){+}; \draw[thick](4,0) +(-1,-1) -- +(-1,1)
      node[below,at start] {$i$}; \draw[thick](4,0) +(0,-1) --
      +(0,1) node[below,at start] {$i$};
    \end{tikzpicture}
  \end{equation*}
  \begin{equation*}
    \begin{tikzpicture}[thick,scale=1]
      \draw (-2.8,0) +(0,-1) .. controls +(1.6,0) ..  +(0,1)
      node[below,at start]{$i$}; \draw (-1.2,0) +(0,-1) .. controls
      +(-1.6,0) ..  +(0,1) node[below,at start]{$i$}; \node at (-.5,0)
      {=}; \node at (0.4,0) {$0$};
\node at (1.5,.05) {and};
    \end{tikzpicture}
\hspace{.4cm}
    \begin{tikzpicture}[thick,scale=1]
      \draw (-2.8,0) +(0,-1) .. controls +(1.6,0) ..  +(0,1)
      node[below,at start]{$i$}; \draw (-1.2,0) +(0,-1) .. controls
+(-1.6,0) ..  +(0,1) node[below,at start]{$j$}; \node at (-.5,0)
      {=};
\draw (1.8,0) +(0,-1) -- +(0,1) node[below,at start]{$j$};
      \draw (1,0) +(0,-1) -- +(0,1) node[below,at start]{$i$};
\node[inner xsep=10pt,fill=white,draw,inner ysep=8pt] at (1.4,0) {$Q_{ij}(y_1,y_2)$};
    \end{tikzpicture}
  \end{equation*}
  \begin{equation*}
    \begin{tikzpicture}[thick,scale=1]
      \draw (-3,0) +(1,-1) -- +(-1,1) node[below,at start]{$k$}; \draw
      (-3,0) +(-1,-1) -- +(1,1) node[below,at start]{$i$}; \draw
      (-3,0) +(0,-1) .. controls +(-1,0) ..  +(0,1) node[below,at
      start]{$j$}; \node at (-1,0) {=}; \draw (1,0) +(1,-1) -- +(-1,1)
      node[below,at start]{$k$}; \draw (1,0) +(-1,-1) -- +(1,1)
      node[below,at start]{$i$}; \draw (1,0) +(0,-1) .. controls
      +(1,0) ..  +(0,1) node[below,at start]{$j$}; \node at (5,0)
      {unless $i=k\neq j$};
    \end{tikzpicture}
  \end{equation*}
  \begin{equation*}
    \begin{tikzpicture}[thick,scale=1]
      \draw (-3,0) +(1,-1) -- +(-1,1) node[below,at start]{$i$}; \draw
      (-3,0) +(-1,-1) -- +(1,1) node[below,at start]{$i$}; \draw
      (-3,0) +(0,-1) .. controls +(-1,0) ..  +(0,1) node[below,at
      start]{$j$}; \node at (-1,0) {=}; \draw (1,0) +(1,-1) -- +(-1,1)
      node[below,at start]{$i$}; \draw (1,0) +(-1,-1) -- +(1,1)
      node[below,at start]{$i$}; \draw (1,0) +(0,-1) .. controls
      +(1,0) ..  +(0,1) node[below,at start]{$j$}; \node at (2.8,0)
      {$+$};        \draw (6.2,0)
      +(1,-1) -- +(1,1) node[below,at start]{$i$}; \draw (6.2,0)
      +(-1,-1) -- +(-1,1) node[below,at start]{$i$}; \draw (6.2,0)
      +(0,-1) -- +(0,1) node[below,at start]{$j$};
\node[inner ysep=8pt,inner xsep=5pt,fill=white,draw,scale=.8] at (6.2,0){$\displaystyle \frac{Q_{ij}(y_3,y_2)-Q_{ij}(y_1,y_2)}{y_3-y_1}$};
    \end{tikzpicture}
  \end{equation*}
\caption{The relations of the quiver Hecke algebra.}
\label{quiver-hecke}
\end{figure}
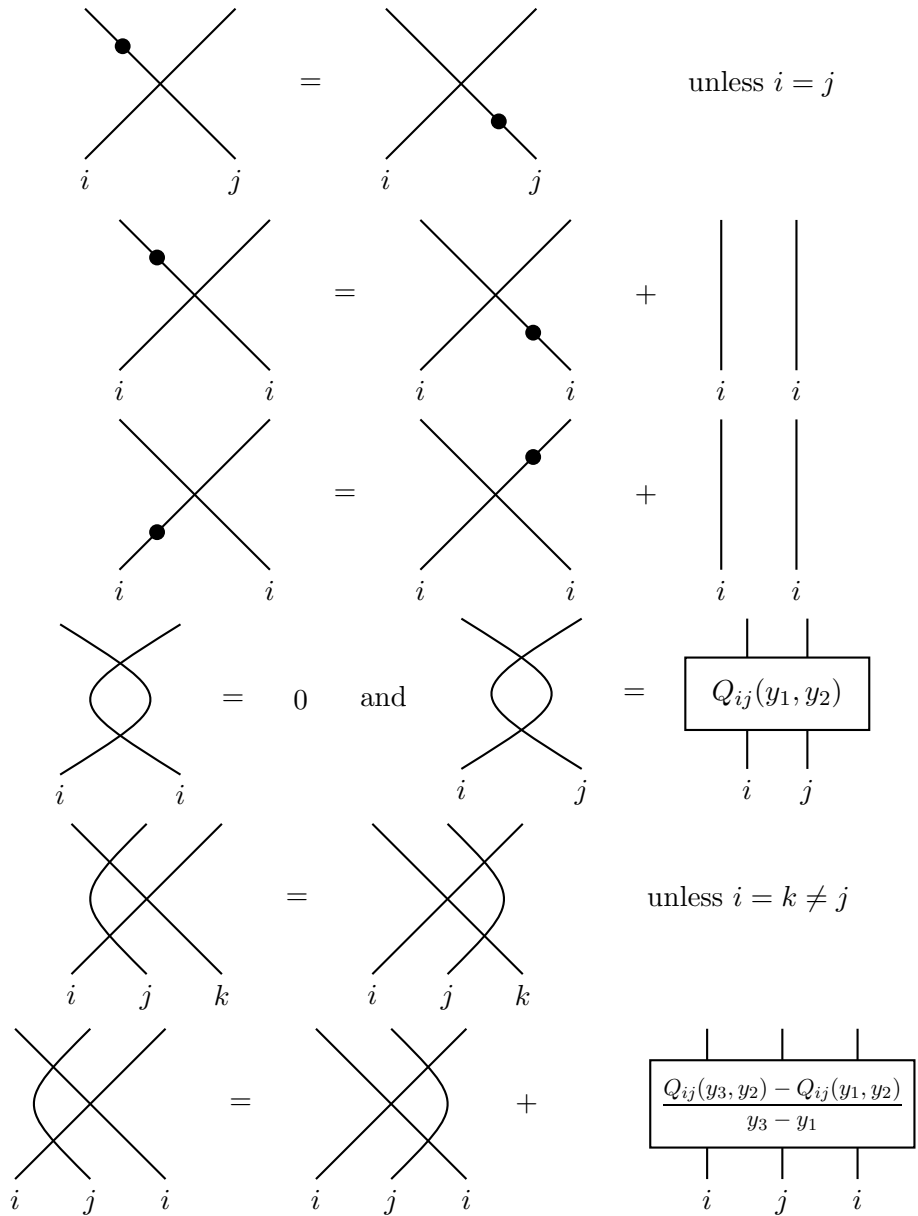

\subsection{Cyclotomic quotients}
The quiver Hecke algebra $R(\bd)$ has a very interesting family of finite
dimensional quotients, the cyclotomic quiver Hecke algebras $T^\la$ (implicitly depending on our choice of $\bd$) where
$\la$ is an integer valued function on $\Gamma$.  We will usually
think of this function as a weight of the affine Lie algebra
$\widehat{\mathfrak{sl}}_e$.  As shown in \cite{BKKL}, blocks of the diagrammatically defined cyclotomic quotients of the quiver Hecke algebra are
isomorphic to blocks of cyclotomic Hecke
algebras for symmetric groups. The latter are Hecke algebra which behave as though they are of ``characteristic $e$''
(i.e. the cyclotomic quotient is defined using powers of an element
$q$ from the ground field and $e$ is the smallest number such that
$1+q+q^2+\cdots +q^{e-1}=0$).

We want now to define similar quotients ${A}^\bnu$ of $\tAb$:

\begin{definition}
Let $I=I^\bnu$ be the ideal of ${A}^\bnu$ generated by all elements of the form $e_{\alpha}|a$, where $\alpha$ is any root.  We call this ideal the {\bf violating ideal} and the quotient ${A}^\bnu=\tAb/I$ the {\bf cyclotomic quotient} of  ${A}^\bnu$ and call ${A}^\bnu$ the {\bf cyclotomic quiver Schur algebra}.
\end{definition}
Pictorially, $I$ is generated by all diagrams where at some point the left-most strand is black and labeled with some root, see Figure \ref{violating} for an example.
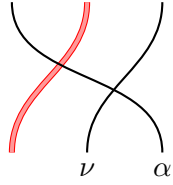
\begin{figure}[h]
\begin{tikzpicture}
\draw[wei] (0,0) to [in=-90,out=90] (1,2);
\draw[thick] (1,0) node (a) [below]{$\nu$} to [in=-90,out=90] (2,2);
\draw[thick] (2,0) node (b) [below]{$\al$} to [in=-90,out=90] (0,2);
\end{tikzpicture}
\caption{An example of an element of the violating ideal.}
\label{violating}
\end{figure}
Obviously, the idempotent \eqref{idempotent} associated to an $\ell+1$-tuple will be $0$ in this quotient unless it begins
with the empty set (hence a red strand to the left).  To avoid much tedious writing of $\emptyset$, we shall henceforth just deal with $\ell$-tuples, and leave the initial $\emptyset$ as given.

Note that if $\bnu=(\la)$, and $r$ is the element which is just a red
strand labeled with $\la$, then $a\mapsto r|a$ followed by projection
defines an algebra map $\tilde{A}^\la\to {A}^\la$ which is surjective. The kernel
of this map is called the {\bf cyclotomic ideal} of $A$.  An argument
analogous to \cite[Th. 4.20]{Webmerged} shows that this ideal has a
definition looking more like a traditional cyclotomic ideal.

Let $T^\bnu=\tilde {T}^\bnu/J$ be the cyclotomic quotient of the tensor algebra as defined in \cite[Def. 4.6]{Webmerged}.
\begin{prop} \label{Bencycliso}
\(\eT {A}^\bnu \eT\cong T^\bnu \) \end{prop}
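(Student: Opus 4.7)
The plan is to deduce this directly from Proposition \ref{eAeT} by tracking the cyclotomic and violating ideals through that isomorphism. First, since $\eT$ is an idempotent in $\tAb$, the quotient map $\tAb \twoheadrightarrow \tAb/I = A^\bnu$ restricts to a surjection $\eT\tAb\eT \twoheadrightarrow \eT A^\bnu \eT$ whose kernel is $\eT I \eT$ (the two-sided ideal of $\eT\tAb\eT$ obtained by cutting down $I$). Composing with the isomorphism $\eT\tAb\eT \cong \tilde{T}^\bnu$ of Proposition~\ref{eAeT}, we obtain a surjection $\tilde{T}^\bnu \twoheadrightarrow \eT A^\bnu \eT$, and it suffices to show that its kernel is exactly $J$; i.e., that the image of $\eT I \eT$ under the isomorphism of Proposition~\ref{eAeT} coincides with the cyclotomic ideal of \cite[Def. 2.2]{CTP}.

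Next I would compare generators on both sides. The violating ideal $I$ is, by definition, generated by all diagrams of the form $e_{\alpha_i}|a$, i.e., diagrams in which at some horizontal level the leftmost strand is a black strand labelled by a simple root $\alpha_i$ (sitting to the left of all red bands). The cyclotomic ideal $J\IDEAL \tilde T^\bnu$ of \cite{CTP} is generated by exactly the analogous diagrams inside the tensor product algebra, namely diagrams in which some black strand passes to the left of every red band. Thus at the level of generators the two ideals visibly match: multiplying any generator $e_{\alpha_i}|a$ of $I$ on both sides by $\eT$ produces, via the explicit description of the isomorphism of Proposition~\ref{eAeT} (which on idempotents is the identification of both $e_{\grave\mu}$'s with the corresponding KLR-style idempotents of complete flag type), an element of $\tilde T^\bnu$ whose underlying diagram violates the cyclotomic condition in the sense of \cite{CTP}; conversely each generator of $J$ is manifestly of the form $\eT x \eT$ for some generator $x = e_{\alpha_i}|b$ of $I$, since its leftmost strand is already a single black strand labelled by a simple root and thus lies in the image of $I$.

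The inclusion $\eT I \eT \subseteq J$ then follows from these generator-level comparisons together with the fact that the isomorphism of Proposition~\ref{eAeT} is an algebra isomorphism, so it intertwines the two-sided ideals generated by matching sets of elements. For the reverse inclusion $J \subseteq \eT I \eT$, one uses that every generator of $J$ comes from a generator of $I$ as above, and that the two-sided ideal in $\eT \tAb \eT$ generated by a set $S \subseteq \eT I \eT$ of the form $\eT s \eT$ with $s \in I$ is contained in $\eT I \eT$ by absorbing the surrounding factors into $I$ using $\eT \tAb\, (\eT s \eT)\, \tAb \eT \subseteq \eT (\tAb \, I \, \tAb) \eT = \eT I \eT$.

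The main obstacle is verifying the second paragraph rigorously, namely that the explicit dictionary of Proposition~\ref{eAeT} translates ``leftmost strand is black and labelled by a simple root'' in $\tAb$ into precisely the same condition in $\tilde T^\bnu$. This is essentially a bookkeeping matter once the local formulas for generators (splits, merges, dots, crossings of black and red strands) from Propositions~\ref{euler-int} and \ref{red-euler} are compared with \cite[Figure~9]{CTP} as in the proof of Proposition~\ref{eAeT}; the compatibility of these local formulas guarantees that the violating condition is preserved in both directions, completing the identification $\eT A^\bnu \eT \cong T^\bnu$.
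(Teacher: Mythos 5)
Your overall framework is right --- reduce to showing that $\eT I\eT$ corresponds to the cyclotomic ideal $J$ under the isomorphism of Proposition~\ref{eAeT} --- and your third paragraph correctly disposes of the easy inclusion $J\subseteq \eT I\eT$. The genuine gap is in the forward inclusion $\eT I\eT\subseteq J$, which you assert ``follows from these generator-level comparisons together with the fact that the isomorphism \dots\ intertwines the two-sided ideals generated by matching sets of elements.'' This is exactly the point that does not follow formally: $I$ is a two-sided ideal of the \emph{big} algebra $\tAb$, so a general element of $\eT I\eT$ has the form $\eT a\,b\,c\,\eT$ with $b=e_{\al_i}|b'$ a generator of $I$ but $a,c\in\tAb$ arbitrary. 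For an idempotent $e$ in an algebra $B$ and an ideal $BSB$, one only gets $(eBe)(eSe)(eBe)\subseteq e(BSB)e$ for free; the reverse containment --- which is what you need in order to say that $\eT I\eT$ is the two-sided ideal of $\eT\tAb\eT$ generated by the truncated generators, and hence matches $J$ --- requires an argument, because $\eT a$ and $c\,\eT$ need not factor through $\eT$ adjacent to $b$.

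This is precisely what the paper's proof supplies, and it is not mere bookkeeping: one invokes the basis theorem (Theorem~\ref{basis}) to write $a=a_1a_2$ and $c=c_2c_1$ with $a_1,c_1\in\eT\tAb\eT$ and with $a_2$ a pure merge (resp.\ $c_2$ a pure split) that leaves the leftmost strand unchanged; such elements commute with $b=e_{\al_i}|b'$, so $abc=a_1ba_2c$ and $abc=ac_2bc_1$, which places $abc$ in the ideal generated by $b$ with an $\eT\tAb\eT$-factor on each side. Without some such commutation argument your proof does not close. Your stated ``main obstacle'' --- matching the diagrammatic violating conditions on the two sides under the dictionary of Proposition~\ref{eAeT} --- is comparatively harmless; the missing idea is the one above.
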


\begin{proof}
The surjection $\Psi\colon \eT {\tilde{A}}^\bnu \eT\to \eT {A}^\bnu \eT$ induces a
surjective map $\Psi\circ \tilde{\varepsilon}\colon \tilde{T}^\bnu\to \eT {A}^\bnu \eT$.  This map
obviously kills the violating ideal $J$, and thus induces a surjective map
$\varepsilon\colon T^\bnu\to \eT {A}^\bnu \eT$.  Thus in order to
check that this is an isomorphism, we need to check that $\ker
(\Psi\circ \tilde{\varepsilon})\subset J$.

By definition, any element $x\in \ker
(\Psi\circ \tilde{\varepsilon})$ is a sum of elements of the form $x'=abc$ where $a\in
\eT \tAb, c\in \tAb \eT$ and $b=e_{\al}|b'$ is a generator of $I$ as above. First, we apply Proposition \ref{red-basis} to $a$ and $c$, so we may
assume that $a=a_1a_2$ and $c=c_2c_1$ where  $a_1,c_1\in \eT \tAb \eT$, and $a_2$ just joins and $c_2$ just splits
strands.  The element $a_2$ does not move any black strands to the
right of reds, so at its top, there is still at least one black strand still left of all reds.  This
shows immediately that
the element $a_1$ is in the violating ideal.  Thus, the factorization $x'=a_1(a_2bc)$ shows that $x'\in
J$.  This completes the proof.
\end{proof}

\begin{prop}\label{T-morita}
  When $e=\infty$, this idempotent induces a Morita equivalence
  between ${A}^\bnu $ and $T^\bnu$
\end{prop}
\begin{proof}
 We need only note that the idempotent $e_{\bd}$ factors through the horizontal
  product $\cdots |e_{d_{1}\al_{1}} |e_{d_0\al_0}|
  e_{d_{-1}\al_{-1}}|\cdots$ of the idempotents $e_{d_i\al_i}$ in
  order (this is a finite product, since $|\bd|<\infty$); the split
  and merge just gives the identity.
\end{proof}

\section{A graded cellular basis}
\label{sec:cyclotomic-quotients}

\subsection{Cellular basis of cyclotomic quotients}
\label{sec:basis-defined}
We specialize in the remaining sections to the case
where $\bnu$ is a sequence of fundamental weights, so
$\bnu=(\omega_{\charge_1},\dots,\omega_{\charge_\ell})$, where the $j$th fundamental weight is of the form $\omega_j(\alpha_k)=\delta_{j,k}$. {\it We also assume for simplicity from now on $e>2$.}
We wish to show that ${A}^\bnu$ has a cellular basis, turning it into a
graded cellular algebra. Let us first recall the relevant definitions from \cite{GL}.
\begin{definition}
  A {\bf cellular algebra} is an associative unital algebra $H$
  together with a {\bf cell datum} $(\La, M, C, *,>)$ such
  that 
  \begin{enumerate}[(C1)] 
  \item $(\La,>)$ is a partially ordered set and
    $M(\la)$ is a finite set for each $\la \in \La$; 
    \item
    $C:\dot\bigcup_{\la \in \La} M(\la) \times M(\la) \rightarrow H,
    (\sS,\sT) \mapsto C^\la_{\sS,\sT}$ is an injective map whose image is a
    basis for $H$;
  \item the map $*:H \rightarrow H$ is an algebra
    anti-automorphism such that $(C^\la_{\sS,\sT})^* = C_{\sT,\sS}^\la$ for
    all $\la \in \La$ and $\sS, \sT \in M(\la)$; 
    \item if
    $\la \in \La$ and $\sS, \sT \in M(\la)$ then for any $x \in
    H$ we have that $$ x C_{\sS,\sT}^\la \equiv \sum_{\sS'
      \in M(\xi)} r_x(\sS',\sS) C_{\sS',\sT}^\la \pmod{H(>
      \la)}$$ where the scalar $r_x(\sS',\sS)$ is independent of
    $\sT$ and $H(> \la)$ denotes the subspace of $H$ generated by
    $\{C_{\sS'',\sT''}^\nu\:|\:\nu > \la, \sS'',\sT'' \in
    M(\nu)\}$. 
    \end{enumerate} 
    The basis consisting of the $C^\la_{T,S}$
  is then called a {\bf cellular basis} of $H$.
\end{definition}

If there is  additionally a degree function $\op{deg}:\bigcup_{\la\in\la} M(\la)\rightarrow\mathbb{Z},$ $S\mapsto \op{deg}_S^\la$
with the property $\op{deg}(C_{S,T})=\op{deg}_S^\la+\op{deg}_T^\la$ (for $(S,T)\in M(\la)\times M(\la)$) such that $H$ turns into a graded algebra, then $H$ is called a {\bf graded cellular algebra} with graded cell datum $(\La, M, C, *,<,\op{deg})$, see \cite{HM}.

We want to construct now such a basis diagrammatically, indexed by pairs $(T,S)$ of semi-standard Young tableaux of the same shape $\la$
(but, not necessarily of the same type) with the shape $\la$ ranging over the set $\Lambda$ of all
$\ell$-multi-partitions (i.e. an $\ell$-multi-composition where the parts are partitions) ordered lexicographically. The involution $*$ will be given by a vertical reflection of the diagram in the diagrammatical
realization.

\begin{definition}
\label{sstableau}
Given an $\ell$-multi-partition $\hat{\la}$, a {\de semi-standard} $\hat{\la}$-{\bf tableau} is a filling of the boxes of its $\ell$-tuple of Young diagrams with numbers from the $\ell$-fold disjoint union of $\mZ_{\geq 0}$, usually denoted with a subscript to show which of the $\ell$ copies it comes from, subject to the following rules (when the partitions are drawn in the English style):

\begin{itemize}
\item the entries in each component are in each row weakly increasing from left to right and in each column strictly increasing from top to bottom, (with respect to the lexicographic order first in the subscripts and then the numbers themselves);
\item the $i$th copy of $\mZ$ can only be used in the first $i$ partitions of the multi-partition;
\item We will also always assume that our tableaux have no gaps, i.e. if $j_k$ appears, then the $i_k$ for all $i\leq j$ also  appear in the tableau.
\end{itemize}
\end{definition}

A semi-standard $\hat{\la}$-tableau $\sS$ is called {\de standard} or
a {\de standard multitableau}, if each of the entries only appears
once and {\de super-standard} if it contains only the numbers $1,\dots,
n$ from the last alphabet.

The {\de type} of $\sS$ is $\bmu=(\bmu^{(1)},\bmu^{(2)},\ldots, \bmu^{(\ell)})$ where $\bmu_i^{(k)}$ denotes the multiplicity of the number $i$ coming from the $k$th copy of the alphabet.

\begin{ex} The example from \cite[4.9]{DJM} of shape $((4,3),(2,1),(2,1))$ ,
$$\sS=\left(\young(\oneone \oneone \oneone \twoone,\twoone\twoone\threeone)\;,\; \young(\onetwo\threethree,\twotwo)\;,\;\young(\onethree\onethree,\twothree)\right)$$
is a semistandard multitableau of type $((3,3,1),(1,1,0),(2,1,1))$.
\end{ex}

Note that the no-gap condition in Definition \ref{sstableau} restricts the possible types of semistandard tableaux to $\ell$-multi-compositions, where $\bmu_i^{(k)}=0$ implies $\bmu_j^{(k)}=0$ for all $j>i$. We call such multi-compositions {\bf admissible}. Note that multi-partitions are automatically admissible and we have the following obvious fact:

\begin{lemma}
\label{admiss}
Let $\bmu$ be an admissible $\ell$-multi-composition of $n$ and $\bla \triangleright\bmu$ be an $\ell$-multi-partition of $n$ larger than $\bmu$ in the dominance ordering. Then $\bla$ is (viewed as a multi-composition) also admissible.
\end{lemma}

In particular, following \cite[\S6]{DJM}, we have the cyclotomic $q$-Schur algebra associated to the set $\Lambda$ of admissible $\ell$-multi-compositions of $n$ defined in \eqref{DefqSchur} as the endomorphism ring of the direct sum of signed permutation modules associated with $\mu\in\Lambda$.
As shown in \cite{DJM},  the Ariki-Koike algebra
$\mathfrak{H}(q,Q_1,\ldots, Q_\ell)$ from the introduction is a cellular algebra with basis labeled by pairs of standard $\hat{\la}$-tableaux of the same shape, but varying over all $\hat{\la}$.\\

We fix an $\ell$-tuple of vertices $\charge=(\charge_1,\dots,\charge_\ell)$ in $\mZ/e\mZ$, which is called the {\bf charge}.  Given a Young diagram, the {\bf content} of a box in the $i$th row and $j$th column of is $j-i$. If it appears in the $k$th Young diagram of an $\ell$-multi-partition, then its {\bf residue} is $\charge_k+j-i\pmod e$.

Let ${\sS}$ be a  semi-standard $\hat{\la}$-tableau. The {\bf residue row-sequence} of ${\sS}$ is  the sequence obtained by reading the residues along the rows from top to bottom starting in the first partition followed by the second partition etc. Hereby the rows are separated by vertical black lines $\mid$ and the partitions themselves with vertical red lines (see e.g. Figure~\ref{idempotent}).
The {\bf residue entry-sequence} is the sequence obtained by reading the the residues along the entries starting from $1_1$, $1_2$, etc. ending with the residue of the box with the largest entry. In case an entry appears more than once we order them according to their appearance in the residue sequence.

Then $w_{\sS}$ denotes the permutation of minimal length such that $w_{\sS}$ applied to the row reading word of $\sS$ is increasing.
In particular, $w_\sS$ is a shortest coset representative for the
Young subgroup $S_{\la}$ attached to the rows (since the entries of
each row are already weakly ordered) acting from the right, and the Young subgroup $S_{\mu}$ associated to the
multiplicities of the entries in $\sS$ (that is, the Young subgroup
that fixes $w_{\sS}$ times the row reading word) acting from the left.

The compositions
$\la$ and $\mu$ get refined by the following two vector
$\ell$-multi\-com\-po\-sitions
$\grave{\la}_\sS=(\blah_\sS(1),\dots,\blah_\sS(\ell))$ and
$\grave{\mu}_\sS=(\bmuh_\sS(1),\dots,\bmuh_\sS(\ell))$: \small
\begin{eqnarray*}
\grave{\la}_\sS(k)[g,h]&=&\#\{\text{boxes of residue $h$ in the $g$th
  row of the $k$th Young diagram}\}\\ \nopagebreak
\grave{\mu}_\sS(k)[g,h]&=&\#\{\text{boxes of residue $h$ and entry $g_{k}$}\}
\end{eqnarray*}
\normalsize
Note that the first does not depend on the entries of $\sS$, but only
on the shape. In particular, if $\xi$ is a shape, then we will use
$\gla_\xi$ to denote $\gla_\sS$ for a tableau of shape $\xi$.

\begin{ex}
\label{runningex}
Let $e=3$, $l=1$ and $\hat{\la}=\yng(4,3,2)$. Then $\sS=\young(1125,244,3)$ is a semistandard $\hat{\la}$-tableau of type $\mu=(2,2,1,2,1)$. The row reading word is $1,1,2,5,2,4,4,3$ and therefore $w_\sS=\begin{pmatrix}1&2&3&4&5&6&7&8\\ 1&2&3&8&4&6&7&5\end{pmatrix}\in S_8$, a shortest coset representative for $S_\mu \backslash S_8/S_\la$, where $S_\la=S_4\times S_3\times S_2$ and $S_\mu=S_2\times S_2\times S_1\times S_2\times S_1$. If $\charge=(1)$, then the residues are $\young(1231,312,2)$, the residue sequence is $1,2,3,1|3,1,2|2$ and we have
\begin{eqnarray*}
\grave{\mu}_\sS&=&(((1,1,0),(0,0,2),(0,1,0),(1,2,0),(1,0,0))),\\
\grave{\la}_\sS&=&(((2,1,1),(1,1,1),(0,1,0)).
\end{eqnarray*}
\end{ex}

By construction $w_\sS$ is a shortest length representative in
$S_{\grave{\la}_\sS}\backslash S_d /S_{\grave{\mu}_\sS}$, hence we can invoke Proposition \ref{red-basis} and associate a basis vector in ${A}^\bnu$:
\begin{definition}
\label{CST}
Given ${\sS}$, a  semi-standard $\hat{\mu}$-tableau, denote by $B_{\sS}$ the element $\grave{\mu}_\sS\overset{w_\sS;1}\Longrightarrow\grave{\la}_\sS$ and by $B_{\sS}^*$ the element obtained by flipping the associated diagram, cf. Figure \ref{Belement},  vertically. Set $C_{\sS,\sT}=B_\sS^* B_\sT$ for $\sS$ and $\sT$ semi-standard tableaux.
\end{definition}
Since the shape of $\sS$ is determined by the idempotent attached to $\grave{\la}_\sS$, applying the definition of $C_{\sS,\sT}=B_\sS^* B_\sT$ to tableaux which are not the same shape gives $0$. Also note that
$C_{\sS,\sT}^\ast=(B_\sS^* B_\sT)^*=(B_\sT B_\sS^*)=C_{\sT,\sS}$.

\begin{figure}[h]
\usetikzlibrary{positioning}
\begin{tikzpicture}[level 1/.style={sibling distance=2em},level
distance=.8cm]
\begin{scope}[node distance=.1mm]
\node (O) at (-2.2,0){};
\node (Aleft) at (-1.2,0){};
\node (Ax) [base left=of O] {residue row sequence:};
\node (AA) at (1.4,0){$\mid$};
\node (A) [base left=of AA]{1 , 2 , 3 , 1$\quad$};

\node (B) [base right=of AA] {3 , 1 , 2};
\node (Ba) at (3.4,0){$\mid$};
\node (C) [base right=of Ba] {2};
\node(Cup) at (3.8,0) {};
\end{scope}

\begin{scope}[node distance=.1mm]
\node (A1) at (-.9,-1.5) {1 , 2};
\node (A1down) at (-1.2,-1.6){};
\node (A1up) at (-1.2,-1.4){};
\node (A1b) at (-.3,-1.5){$\mid$};
\node (A2) [base right=of A1b] {3};
\node (A2b) [base right=of A2] {$\mid$};
\node (A3) [base right=of A2b] {1};
\node (B1a) at (1.4,-1.5) {$\mid$};
\node (B1) [base right=of B1a] {3};
\node (B1b) at (2.2,-1.5){$\mid$};
\node (B2) [base right=of B1b] {1 , 2};
\node (B2b) at (3.4,-1.5){$\mid$};
\node (C1) [base right=of B2b] {2};
\node (C1up) at (3.8,-1.4){};
\node (C1down) at (3.8,-1.6){};
\end{scope}

\draw[thick] (A) .. controls +(down:10mm) and +(up:10mm) .. (A1);
\draw[thick] (A) .. controls +(down:10mm) and +(up:10mm) .. (A2);
\draw[thick] (A) .. controls +(down:10mm) and +(up:10mm) .. (A3);
\draw[thick] (B) .. controls +(down:10mm) and +(up:10mm) .. (B1);
\draw[thick] (B) .. controls +(down:10mm) and +(up:10mm) .. (B2);
\draw[thick] (C) -- (C1);
\draw [thick, dotted, text width=3cm][->] (Aleft) .. controls +(left:20mm) and
+(left:10mm) .. node [left] {refine according to entries} (A1up);
\draw [thick,dotted, text width=3cm][<-] (C) .. controls +(right:15mm) and
+(right:15mm) .. node [right] {group according to rows} (C1up);
\begin{scope}[node distance=.5mm]
\node (Y1) at (-.9,-3.3) {1 , 2};
\node (Y1up) at (-1.2,-3.2) {};
\node (Y1down) at (-1.2,-3.4) {};
\node (Y1b) at (-.3,-3.3){$\mid$};
\node (Y2) [base right=of Y1b] {3};
\node (Y2b) [base right=of Y2] {$\mid$};
\node (Y3) [base right=of Y2b] {3};
\node (Y3b) at (1.4,-3.3){$\mid$};
\node (Y4) [base right=of Y3b] {2};
\node (Y4b) at (2.2,-3.3){$\mid$};
\node (Y5) [base right=of Y4b] {1 , 2};
\node (Y5b) at (3.4,-3.3){$\mid$};
\node (Y6) [base right=of Y5b] {1};
\node (Y6up) at (3.8,-3.2){};
\node (Y6down) at (3.8,-3.4){};
\end{scope}

\draw [thick,dotted,text width=3cm][->] (A1down) .. controls +(left:15mm) and
+(left:15mm) .. node [left] {reorder according to entries} (Y1up);

\draw [thick,dotted, text width=3cm][<-] (C1down) .. controls +(right:15mm) and
+(right:15mm) .. node [right] {reorder according to rows} (Y6up);
\draw[thick] (A1) -- (Y1);
\draw[thick] (A2) -- (Y2);
\draw (A3) .. controls +(down:10mm) and +(up:10mm) .. (Y6);
\draw (B1) -- (Y3);
\draw[thick] (B2) -- (Y5);
\draw (C1) .. controls +(down:10mm) and +(up:10mm) .. (Y4);

\begin{scope}[node distance=.1mm]
\node (Z1) at (-.9,-4.5) {1 , 2};
\node (Z1b) at (-.3,-4.5){$\mid$};
\node (Z2) [base right=of Z1b] {$\quad$3 , 3};
\node (Z2b) at (1.4,-4.5){$\mid$};
\node (Z3) [base right=of Z2b] {2};
\node (Z3b) at (2.2,-4.5){$\mid$};
\node (Z4) [base right=of Z3b] {1 , 2};
\node (Z4b) at (3.4,-4.5){$\mid$};
\node (Z5) [base right=of Z4b] {1};
\end{scope}
\draw[thick] (Y1) -- (Z1);
\draw[thick] (Y2) .. controls +(down:10mm) and +(up:10mm) .. (Z2);
\draw[thick] (Y3) .. controls +(down:10mm) and +(up:10mm) .. (Z2);
\draw (Y4) -- (Z3);
\draw[thick] (Y5) -- (Z4);
\draw  (Y6) -- (Z5);
\draw [thick, dotted, text width=3cm][->] (Y1down) .. controls +(left:15mm) and
+(left:15mm) .. node [left] {group according to entries} (Z1);
\draw [thick,dotted, text width=3cm][<-] (Y6down) .. controls +(right:15mm) and
+(right:15mm) .. node [right] {refine according to rows} (Z5);
\end{tikzpicture}
\\
The bottom line represents $\grave{\mu}_\sS$, the top line, $\grave{\la}_\sS$.
\caption{Construction of the element $B_{\sS}$}
\label{Belement}
\end{figure}

Now, let $\Lambda$ be the set of $\ell$-multi-partitions ordered lexicographically, and for $\xi\in\Lambda$ let $M(\xi)$ be the set of $\xi$-semi-standard tableaux (which is finite thanks to the last condition in Definition \ref{sstableau}).  Let $C:M(\xi)\times M(\xi)\to {A}^\bnu$ be the map $(\sS,\sT)\mapsto C_{\sS,\sT}$ and the involution $*$ from Definition \ref{CST}.
Let $\op{deg}(B_{\sS})$ be the degree of the homogeneous element $B_{\sS}$.\\

\begin{ex}
\label{ourex}
For $e=3$ consider the semistandard
$2$-tableaux \[
\sS=\left(\young(\oneone\twoone\threeone,\fourone\onetwo,\threetwo),\young(\twotwo\fourtwo,\fivetwo)\right) \qquad \sT=\left(\young(\oneone\oneone\twoone,\twoone\onetwo,\twotwo),\young(\onetwo\twotwo,\twotwo)\right)\]
Denoting the residue and row sequences using simple roots we have 
\[B_{\sS}=\tikz[thick,baseline]{ \draw (.5,.5)
  to[out=-90,in=90]  node [above, at start]{$\delta$} node [below, at end]{$\al_2$} (.5,-.5); \draw
  (.5,.5) to[out=-90,in=90]  node [below, at end]{$\al_3$}
  (1,-.5);\draw (.5,.5) to[out=-90,in=90] node [below, at end]{$\al_1$}
  (1.5,-.5); \draw (1.5,.5) to[out=-90,in=90]  node [above, at start]{$\al_{1,2}$} node [below, at end]{$\al_1$} (2,-.5);  \draw (1.5,.5)
  to[out=-90,in=90] node [below, at end]{$\al_2$}  (3,-.5); \draw (2.5,.5) to[out=-90,in=90] node [below, at end]{$\al_3$}  node [above, at start]{$\al_3$} (4,-.5); 
\draw[wei] (3,.5) to[out=-90,in=90] node [below,
at end]{$\omega_3$}  node [above, at start]{$\omega_3$} (2.5,-.5); \draw[wei] (0,.5)
to[out=-90,in=90] node [below,
at end]{$\omega_2$}  node [above, at start]{$\omega_2$} (0,-.5); \draw (4,.5) to[out=-90,in=90] node [below,
at end]{$\al_3$}  node [above, at start]{$\al_{3,1}$} (3.5,-.5); \draw
(4,.5) to[out=-90,in=90] node [below, at end]{$\al_1$}  (4.5,-.5);
\draw (4.8,.5) to[out=-90,in=90] node [below, at end]{$\al_2$}  node [above, at start]{$\al_2$} (5,-.5);
}\qquad B_{\sT}=\tikz[thick,baseline]{ \draw (.7,.5)
  to[out=-90,in=90]  node [above, at start]{$\delta$} node [below, at end]{$\al_{2,3}$} (.6,-.5); \draw (.7,.5) to[out=-90,in=90] node [below, at end]{$2\al_1$}
  (1.4,-.5); \draw (1.5,.5) to[out=-90,in=90]  node [above, at start]{$\al_{1,2}$} (1.4,-.5);  \draw (1.5,.5)
  to[out=-90,in=90] node [below, at end]{$\al_{2,3}$}  (3.2,-.5); \draw (2.2,.5) to[out=-90,in=90]  node [above, at start]{$\al_3$} (4.5,-.5); 
\draw[wei] (2.7,.5) to[out=-90,in=90] node [below,
at end]{$\omega_3$}  node [above, at start]{$\omega_3$} (2.2,-.5); \draw[wei] (0,.5)
to[out=-90,in=90] node [below,
at end]{$\omega_2$}  node [above, at start]{$\omega_2$} (0,-.5); \draw (4,.5) to[out=-90,in=90] node [above, at start]{$\al_{3,1}$} (3.2,-.5); \draw
(4,.5) to[out=-90,in=90] node [below, at end]{$\delta$}  (4.5,-.5);
\draw (4.8,.5) to[out=-90,in=90]node [above, at start]{$\al_{2}$}  (4.5,-.5);
},\] 
abbreviating $\delta=\alpha_1+\alpha_2+\alpha_3$,  in case $\charge=(2,3)$. Then
 \[\op{deg}(B_{\sS}) =0+0+(-2+1)+0+0+0+1+0+0=0,\qquad
\op{deg}(B_{\sT})=-1+1+0+0=0\] where we listed the contributions for each strand starting with the one corresponding to the largest entry. Changing the charge for instance to  $\charge=(0,0)$ gives the degrees
 \[\op{deg}(B_{\sS}) =0+0+1+0+1+0+1+0+0=3,\qquad
\op{deg}(B_{\sT})=0+0+0+0=0.\] 
\end{ex}

\begin{theorem}\label{A-is-cellular}
 The collection $(\Lambda,M,C,*,\op{deg})$ is a graded cell datum for ${A}^\bnu$.
\end{theorem}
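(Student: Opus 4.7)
The plan is to verify the axioms of a graded cellular algebra in turn: the anti-involution property, the indexing/finiteness, the basis property, the straightening rule, and grading compatibility. The grading compatibility is immediate from Propositions \ref{split-deg} and \ref{shift-deg}, since $\op{deg}(B_\sS)$ is well defined and $\op{deg}(C_{\sS,\sT}) = \op{deg}(B_\sS^*) + \op{deg}(B_\sT)$ by construction. The anti-involution $*$ is vertical reflection of diagrams: it squares to the identity, reverses vertical composition, and preserves the violating ideal $I$ (the condition of having a black strand at the leftmost position at some horizontal slice is invariant under vertical flip). Hence $*$ descends to $A^\bnu$ and satisfies $C_{\sS,\sT}^* = C_{\sT,\sS}$. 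Finiteness of $M(\xi)$ follows from the no-gaps clause in Definition \ref{sstableau}, and $\Lambda$ is already ordered lexicographically.

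For the basis property I would start from the basis $\{\grave\mu \overset{w;h}\Longrightarrow \grave\nu\}$ of $\tilde{A}^\bnu$ provided by Proposition \ref{red-basis} and rewrite each vector, modulo $I$, as a $\K$-linear combination of elements $C_{\sS,\sT}$. The main tool is the collection of braid-type relations of Proposition \ref{easyrel}, which permit one to absorb the polynomial factor $h$ into the middle of the diagram and bring the permutation $w$ into a normal form. This normal form encodes a pair of fillings $(\sS,\sT)$ of some common multipartition shape. A careful local analysis shows that if either filling violates semi-standardness at some box, then the corresponding local configuration either contains a generator of $I$ (and so is zero in $A^\bnu$) or can be rewritten via braid and shift relations as a sum of $C_{\sS',\sT'}$ whose shape is strictly larger in $\Lambda$, plus corrections that are semi-standard at the original shape. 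Linear independence of the resulting spanning set will follow by tracking leading terms of the actions on the faithful polynomial module $\tilde V^\bnu$ of Proposition \ref{hl-faithful}, which carries a natural filtration indexed by $\Lambda$ and compatible with the cellular filtration.

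The cellular multiplication rule says that for $x \in A^\bnu$ and $\sS,\sT \in M(\xi)$ one has $x\,C_{\sS,\sT} \equiv \sum_{\sS'} r_x(\sS',\sS)\,C_{\sS',\sT} \pmod{A^\bnu(>\xi)}$, with coefficients independent of $\sT$. Since $C_{\sS,\sT} = B_\sS^* B_\sT$, left multiplication by $x$ acts only on the factor $B_\sS^*$ and leaves $\sT$ untouched, so it suffices to analyze $x\,B_\sS^*$ for $x$ ranging over algebra generators (elementary merges, splits, dots, and red-black shifts from Remark \ref{KL} and \eqref{redblack}), and to show the result is a linear combination of $B_{\sS'}^*$ with $\sS'$ semi-standard of the same shape $\xi$, modulo terms that either vanish via $I$ or lie in $A^\bnu(>\xi)$.

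The main obstacle will be exactly this straightening step: to ensure that every ``error'' term produced by applying a braid or shift relation to $B_\sS^*$ genuinely lies in the higher-dominance ideal $A^\bnu(>\xi)$, rather than remaining at shape $\xi$ in a non-semi-standard form or introducing uncontrolled $\sT$-dependence. I would handle this by a double induction: an outer induction on the dominance order on $\Lambda$ and an inner induction on the length $\ell(w_\sS)$. A case analysis at each inductive step distinguishes the ``harmless'' local moves (which produce the coefficients $r_x(\sS',\sS)$) from moves that force a split, merge, or red-black crossing at a position where the resulting configuration strictly increases the dominance class, invoking either a violating subdiagram (killed in $A^\bnu$) or a refinement that falls into $A^\bnu(>\xi)$. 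This is the combinatorial core of the argument and parallels the analysis of Hu--Mathas \cite{HM} for cyclotomic Hecke algebras, adapted to the richer flag combinatorics encoded by $\grave\mu_\sS$.
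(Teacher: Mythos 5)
Your outline of the axioms to check is the right one, and your treatment of $*$ and the degree function matches the paper. But there are two genuine gaps in the core of the argument.

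First, linear independence. You propose to track leading terms of the action on the faithful polynomial module $\tilde V^\bnu$ of Proposition \ref{hl-faithful}, filtered by $\Lambda$. That module is faithful only for $\tilde A^\bnu$; it does not descend to a faithful module over the cyclotomic quotient $A^\bnu$ (identifying the image of the violating ideal in $\End(\tilde V^\bnu)$ is essentially the whole problem), and no $\Lambda$-indexed filtration of it compatible with the cellular filtration is constructed or obvious. The paper's route is different and essential: it first establishes cellularity of the idempotent truncation $\eT A^\bnu\eT\cong T^\bnu$ by reduction to the Hu--Mathas basis (Step I), then transports linear independence back to $A^\bnu$ via the ``chicken feet/pitchfork'' map $a\mapsto\varphi a\varphi^*$, using the upper-triangular identity $\varphi\CST\varphi^*=C_{\sS^\circ,\sT^\circ}+(\text{lower terms})$ of Lemma \ref{CST-lin-ind} together with the dimension count for signed permutation modules (Lemma \ref{T-per-dim}). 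Without some substitute for this passage through $T^\bnu$, your independence argument does not close. Relatedly, the ``careful local analysis'' you invoke for spanning is exactly the hard content of Lemmas \ref{id-zero}, \ref{id-split} and \ref{multisplit} (explicit Demazure/Euler-class computations showing the pitchfork diagrams collapse to $t^{(j;n)}$'s modulo the higher lexicographic ideal); it is not a routine consequence of Proposition \ref{easyrel}.

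Second, the straightening rule (4). You propose to verify it generator by generator with a double induction. The paper explicitly does not have a presentation of $A^\bnu$ by generators and relations (this is stated as an open problem), so ``ranging over algebra generators'' is not available as stated. More importantly, once the basis property is in hand the straightening rule is formal: writing $xB_\sS^*=\sum r_x(\sS,\sS',\sT')C_{\sS',\sT'}$, a nonzero coefficient forces $\grave\mu_{\sT'}=\grave\la_\sS$, i.e.\ $\sT'$ is semi-standard of type equal to the shape of $\sS$; hence $\sT'$ is either the ground-state tableau (giving the $\sT$-independent coefficients) or of strictly greater shape (landing in $A^\bnu(>\xi)$). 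Your worry about ``uncontrolled $\sT$-dependence'' is resolved by this type/shape observation, not by local case analysis.
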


\subsection{Cellular basis of the quiver Schur algebra}
Before we give the proof of this theorem we deduce a couple of easy extensions to related
algebras and then introduce the combinatorics of the degrees. First, we can remove the restriction that the weights
$\nu_i$ must be fundamental.  Consider an
arbitrary list of weights $\bnu'$ and a list of fundamental weights $\bnu$ such that $\nu_k'=\nu_{j_{k-1}+1}+\cdots +
\nu_{j_k}$ for some
$j_1< \cdots < j_g$.  In this case, we can find an idempotent $e_{\bnu'}$ such that
$A^{\bnu'}=e_{\bnu'}A^{\bnu}e_{\bnu'}$, namely the idempotent defined by
\[ e_{\bnu'}e_{\gmu}x=
\begin{cases}
  0 & \text{if $\hat{\bmu}(j_m)\neq 0$
    for some $m$},\\
  e_{\gmu}x& \text{otherwise}.
\end{cases}\]
In terms of diagrams, it kills all diagrams in which a black strand
ends between the $k$ and $(k+1)$st strands for some $k\neq j_m$.

This idempotent is compatible with the basis of Theorem
\ref{A-is-cellular}.  We let $M(\la,\bnu')$ be the set of standard
tableaux of shape $\la$ which only use entries from the union of the
alphabets indexed by $j_m$ for all $m$.  We let $\becircled M(\la,\bnu')$ be the
complement of this set in standard tableaux of shape $\la$, that is,
those which contain at least one entry from the
alphabet $1_{k},2_k,\dots$ for $k\neq j_m$.

Immediately from the definition, we have that  \[e_{\bnu'}\CST e_{\bnu'}=
\begin{cases}
  \CST & \text{ if } \sS,\sT\in M(\la,\bnu'),\\
0 & \text{ if }\sS\text{ or } \sT\in \becircled M(\la,\bnu').
\end{cases}
\]
We can thus define a graded cell datum $(\Lambda,
M(-,\bnu'),C,*,\deg)$ by restricting the maps $C$ and $\deg$ to
$M(-,\bnu')$ and $*$ to $A^{\bnu'}$.
\begin{corollary}\label{co:general-nu}
 The collection $(\Lambda, M(-,\bnu'),C,*,\deg)$ is a graded cell datum for
 $A^{\bnu'}$.
\end{corollary}
The cellular bases from Corollary \ref{co:general-nu} can be used to define a cellular basis of the quiver Schur algebra $A_\bd$ introduced in
Section \ref{q-Schur} by realizing the latter as an inverse limit of
cyclotomic quotients.  This limit is compatible with the cellular
bases of these quotients, and thus induces a cellular basis on $A_\bd$.

We can describe this system diagrammatically.  For any finite sequence
$\bnu$, we have inclusions $\tilde A^{(\nu_\ell)}\hookrightarrow
\tilde {A}^{(\nu_{\ell-1},\nu_{\ell})}\hookrightarrow \cdots
  \hookrightarrow \tilde {A}^\bnu$. Thus, for an infinite sequence
  $\bnu_\infty=(\dots,\nu_{-1},\nu_0)$ we can construct a tower of algebras
  containing each of its finite truncations; we let $\tilde A^{\bnu_\infty}$ denote the direct
  limit of this tower.   We can imagine this represented by having infinitely many
  red strands which continue off to the left of the page from that
  labeled $\nu_0$. If we let $e_\bd$ be the idempotent that acts by 1 on
  sequences where only the last vector composition is non-zero and 0
  on all others
  (pictorially, all black strands are to the right of all red strands), then
  we have that $e_\bd \tilde A^{\bnu} e_\bd =A_\bd$ for any finite
  list $\bnu$.  Thus, it follows that $e_\bd \tilde A^{\bnu_\infty} e_\bd
  =A_\bd$ as well.

Now, consider the ideal of $\tilde A^{\bnu_\infty}$ generated by the idempotents for
every sequence where a vector composition outside the last $m+1$ is
non-zero (i.e. the ideal consisting of diagrams with a black strand that is left of the
$(m+1)$th red strand).  The quotient by this ideal is just
$A^{(\nu_{-m},\dots, \nu_0)}$, and the image of $A_\bd$ in this
quotient is $A^{\nu(m)}$ where $\nu(m)=\sum_{i=-m+1}^0\nu_i$.  This nested system of
ideals induces an inverse system
\begin{equation}
\label{invsys}
\cdots\to A^{\nu(m+1)}_\bd\to
A^{\nu(m)}_\bd\to \cdots .
\end{equation}

From now on, we consider the infinite sequence $\bnu_c$ which cycles through the fundamental weights in
cyclic order $(\dots,\omega_0, \omega_1, \dots,
\omega_{e-1},\omega_0)$; any sequence in which all fundamental weights
occur infinitely many times will suffice.
\begin{prop}\label{invlim}
For $\bnu_c$, the inverse limit of  \eqref{invsys} in the category of graded algebras is $A_\bd$.
The cellular bases of Corollary
\ref{co:general-nu} are compatible under the maps of \eqref{invsys}; that is, each
basis vector in the target is the image of a unique one in the source,
and all other basis vectors in the source are sent to 0. These bases thus
induce a basis on $A_\bd$.
\end{prop}
\begin{proof}
Since the degrees of elements of $A_\bd$ are bounded below by some
integer $k$, any two-sided ideal generated by elements of degree $\geq
p$ only contains elements of degree $\geq p+2k$.  In particular,
  For each degree $h$, if we choose $g>h-2k$, the
  cyclotomic ideal $I^{\mu_g}$ for the weight $\mu_g=g\omega_0+\cdots +g\omega_{e-1}$ is generated by
  elements of degree $g$, and thus has trivial intersection with the
  elements of degree $h$.  Thus, the map
  $A_\bd\to A^{\mu_g}_\bd$ is an isomorphism
  in degree $h$ for
  all $g>h-2k$. This shows that each graded degree of $A_\bd$ is the inverse limit of
the graded degrees of the inverse system.  This is precisely
equivalent to
$A_\bd$ being the desired inverse limit in the category of graded
algebras.

Now consider the effect of the projection map  $p\colon
A^{\nu(m)}_\bd\to A^{\nu(m-1)}_\bd$ on the basis
of elements $\CST^\la$ where $\sS$ and $\sT$ are standard
tableaux of the same shape $\la$ on $m$-multi-partitions.  If the first
partitions of $\la$ is non-empty, then $p(\CST^\la)=0$.  If the first
partition of $\la$ is empty, then
$p(\CST)=C^{\la'}_{\sS',\sT'}$ where $\la'$, $\sS'$ and $\sT'$ are
obtained by just
stripping out the empty partition.  Thus, we have the desired result.
\end{proof}

This basis of $A_\bd$ has a cellular structure, with the cell data essentially given
by taking the union of the cell data for the inverse system.  For $A^{\nu(m)}_\bd$, the cell datum
is  $(\Lambda_m,M(-,\nu(m)), C,*,\deg)$ where $\Lambda_m$ is the
$m$-multi-partitions of residue $\bd$, and $M(\la,\nu(m))$ is the set of
standard tableaux in the alphabet $\{1_0,2_0,\dots\}$.  We have an inclusion
$\Lambda_m\hookrightarrow \Lambda_{m+1}$ by adding an empty partition
in the first slot.
Let $\Lambda_\infty$ be the direct limit of these maps, which we
can think of as the sequences of partitions indexed by non-positive
integers with total residue
$\bd$.

Note that the inclusion above induces a bijection
$M(\la,\nu(m))\to M(\la,\nu(m+1))$. For each
$\la\in \Lambda_\infty$, we let $\la(m)\in \Lambda_m$ be its truncation to the
first $m$ partitions.  Thus, for all integers $k$
large enough that all non-empty partitions in $\la$ occur in the last $k$ slots,
the sets $M(\la(k),\nu(k))$ are in canonical bijection.  In view of this, we define $M(\la,\infty):=  M(\la(k),\nu(k))$
for $k\gg 0$. We therefore obtain

\begin{corollary}
 $(\Lambda_\infty, M(-,\infty), C,*,\deg)$ is a graded
 cell datum for the algebra $A_\bd$.
\end{corollary}

\subsection{The combinatorics of degrees}
\label{sec:comb-degr}

We want to emphasize that our construction can
be used to define a grading on the set of semistandard multitableaux
$\sS$ by setting $\op{deg}(\sS):=\op{deg}( B_{\sS})$. This degree function extends nicely a known combinatorial definition on standard tableaux as follows:  

For a semi-standard tableau $\sS$ let $\sS(< i_j)$ (or $\sS(\leq i_j)$) be the subdiagram of all boxes with entries smaller (or equal) $i_j$ respectively. Given a box $b$ in $\sS$ with entry say $i_j$ any column in  $\sS$ with the same entry $i_j$ is {\it blocked} with respect to $b$.
\begin{definition}
\label{defdeg}
The {\bf degree} $d_{\sS}(b)$ of $b$ in $\sS$ is the number of addable minus the number of removable boxes in the diagram $\sS(< i_j)$ which are strictly below or to the right of $b$ in the first $j$ components, not in blocked columns and of the same residue as $b$. The {\bf degree} $\op{Deg}(\sS)$ of a tableau is the sum of the degrees of its boxes,
$\sum_b d_{\sS}(b)$.
\end{definition}

\begin{ex}
{\rm We compute $\op{Deg}(\sS)$ for $\sS$ as in Example \ref{ourex} with $\charge=(0,0)$ starting with the largest entry. 
There is no addable or removable box below the entry $5_2$ and no addable or removable box of the correct residue below $4_2$. But there is then an addable box below $3_2$, but no contribution for $2_2$. For $1_2$ there is one addable box to count (in the second component). The box $b$ with $1_4$ has degree zero and for $1_3$ we have now an addable box (at the position of $b$). Finally $2_1$ and $1_1$ do not contribute anything. Hence, $\op{Deg}(\sS)=3=\op{deg}(\sS)$. In case $\charge=(2,3)$ we have no contribution for  $5_2$ and  $4_2$. There is a removable box for  $3_2$ and no contributions for  $2_2$,  $1_2$ and $4_1$. There is an addable box for  $3_1$, and no contribution for  $2_1$ and  $1_1$. Hence $\op{Deg}(\sS)=0=\op{deg}(\sS)$.}
\end{ex}

Our degree function generalizes the degree for standard tableaux from \cite[\S 4.2]{LLT}, \cite[
Definition 2.4]{AM}, \cite[\S 3.5]{BKWSpecht} with the following  (easy to verify) property:

\begin{lemma}
\label{easylemma}
Let $\sS$ be an ordinary standard tableaux, i.e. $1$-standard tableaux, with $\charge_1=c$. Assume there are $n_i$ boxes with residue $i$. Then the number of addable minus the number of removable boxes of residue $i$  equals $\langle \omega_c-\sum_{j=1}^e n_j\alpha_j,\al_i\rangle$. 
\end{lemma}

\begin{prop}\label{degrees}
We have
  $\displaystyle \op{deg}(\sS)=\op{Deg}(\sS).$
\end{prop}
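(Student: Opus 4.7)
I will prove the identity by induction on the multiset of entries occurring in $\sS$. Order the labels $i_j$ lexicographically as in Definition \ref{sstableau}, let $m$ be the maximum label occurring in $\sS$, and let $\sS'$ be the tableau obtained by deleting all boxes with entry $m$ (which is semi-standard since the boxes of entry $m$ form a horizontal strip in a semi-standard tableau). From the very definition of $\op{Deg}$,
\[\op{Deg}(\sS)-\op{Deg}(\sS')=\sum_{b:\,\mathrm{entry}(b)=m} d_{\sS}(b),\]
since removing boxes with the largest entry does not affect the addable/removable counts for boxes with smaller entry, and the contribution of the removed boxes is $d_\sS(b)$ by definition. Thus it suffices to establish the analogous identity at the level of $\op{deg}$.

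\textbf{Factorization of $B_\sS$.} Inspecting the construction of $B_\sS$ (cf.\ Figure \ref{Belement}) one sees that, modulo the relations from Proposition \ref{easyrel} (which preserve the degree), $B_\sS$ factors as $B_\sS = X_m \cdot (B_{\sS'}\,|\,\mathbf{1}_m)$, where $\mathbf{1}_m$ is the identity diagram on a fresh block of strands corresponding to the new boxes of entry $m$, and $X_m$ is a local diagram performing: (i) left shifts of the new block past the first $j-1$ red strands when $m=i_j$; (ii) a permutation of black strands, obtained as a product of crossings, placing each new strand of residue $i$ adjacent to the row of $\grave\la_{\sS'}$ into which it is to be merged; (iii) the merges that attach the new strands to their target rows. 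Since horizontal composition is additive on degrees, $\op{deg}(B_\sS) = \op{deg}(B_{\sS'}) + \op{deg}(X_m)$, and the problem reduces to computing $\op{deg}(X_m)$.

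\textbf{Main computation.} By Proposition \ref{shift-deg}, step (i) contributes $\sum_i c_i\cdot\#\{k<j:\charge_k=i\}$, where $c_i$ is the number of new boxes of residue $i$; since $\bnu$ is a sequence of fundamental weights, this counts precisely the addable boxes of the appropriate residues in the (currently empty) first $j-1$ components. By Proposition \ref{split-deg}, a merge in step (iii) of a single strand of residue $i$ into a row having $d_{i-1}$ boxes of residue $i-1$ and $d_i$ boxes of residue $i$ contributes $d_{i-1}-d_i$; rewriting this in terms of the Young diagram $\sS(\leq m)$ identifies it with the addable-minus-removable contribution of residue $i$ coming from that target row. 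By Remark \ref{KL}, the crossings in step (ii) contribute $+1$, $-2$, or $0$ for neighbouring, equal, or unrelated residues respectively; a direct case analysis shows that, summed over the required crossings, these collect exactly the addable-minus-removable count of residue $i$ coming from each row lying strictly between the position of $b$ and its target row in the $j$-th component, together with all rows in components $k>j$.

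\textbf{Main obstacle.} The crux is verifying that these three local contributions reassemble, box by box, into $\sum_{b:\,\mathrm{entry}(b)=m} d_\sS(b)$. The standard-tableau case is essentially the calculation of \cite{BKWSpecht}; the semi-standard case requires inserting the boxes of entry $m$ in a consistent order (e.g.\ from the rightmost/uppermost to the leftmost/lowermost, reflecting the block-reordering in Figure \ref{Belement}) and checking that, at each intermediate stage, the addable and removable loci seen by the current strand match the addable and removable loci of the partial diagram that has been built. Semi-standardness guarantees that consecutive boxes of entry $m$ occupy distinct columns and yield no unexpected crossings of equal-residue strands, so the induction step closes and $\op{deg}(B_\sS) = \op{Deg}(\sS)$.
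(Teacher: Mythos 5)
Your overall strategy (strip off all boxes with the maximal entry $m$, factor $B_\sS$ into $B_{\sS'}$ horizontally composed with an ``insertion'' diagram $X_m$, and match $\op{deg}(X_m)$ with $\sum_{b:\,\mathrm{entry}(b)=m}d_\sS(b)$) is genuinely different from the paper's. The paper instead (a) verifies the formula for super-standard tableaux by a one-box-at-a-time degree count (or by citing Hu--Mathas), (b) passes to arbitrary standard tableaux by conjugating with the sweeping element $\theta$ and invoking \cite[Corollary 3.14]{BKWSpecht}, and (c) passes from standard to semi-standard by induction on the number of boxes sharing an entry, using a single local move (raising by $1$ all entries weakly above the chosen one) and computing the change of \emph{both} quantities under that move to be $c_{i-1}-c_i-d_{i-1}+d_i$. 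The paper's route has the advantage that the hard combinatorial identity for standard tableaux is outsourced to the literature, and the induction step only ever compares one split/merge against one split/merge-with-an-extra-leg. Your route, by contrast, must reprove the standard-tableau case from scratch inside the computation of $\op{deg}(X_m)$.

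And that is where the gap is. Your ``Main computation'' and ``Main obstacle'' paragraphs assert, but do not prove, precisely the identity that constitutes the proposition: that the shift degrees (Proposition \ref{shift-deg}), the merge degrees (Proposition \ref{split-deg}), and the crossing degrees (Remark \ref{KL}) accumulated along the route of each new strand add up, box by box, to the addable-minus-removable count of $d_\sS(b)$ taken over all rows strictly below $b$ (in its component and in all later components). The phrase ``a direct case analysis shows'' covers the entire content of \cite[Corollary 3.14]{BKWSpecht} and more. Two specific points are left unaddressed. First, when several boxes carry the entry $m$, the definition of $d_\sS(b)$ uses $\sS(\le m)$ for addables but $\sS(<m)$ for removables, so the boxes of entry $m$ interact with one another; your claim that the intermediate partial diagrams ``match'' the addable/removable loci needs an explicit computation (this interaction is exactly what produces the $c_{i-1}-c_i$ terms in the paper's induction step, where $\bc$ records the boxes of entry $m$ sharing a row). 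Second, your accounting of the red-strand crossings in step (i) is off: the new block sits in the $j$-th group of the idempotent and must be shifted past the red strands lying between its source position and its target component, not past ``the first $j-1$'' red strands, and the identification of $\delta_{i,\charge_k}$ with an addable box presupposes facts about the components being crossed that you do not verify. As written, the proof reduces the proposition to an unproved combinatorial identity rather than establishing it.
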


\excise{
We first show the case $\ell=1$.

\begin{lemma}\label{degrees1}
Proposition \ref{degrees} hold for $\ell=1$.
\end{lemma}}
\begin{proof}
First we show the claim for standard tableaux by induction on the
number of boxes. (The base of induction being obvious). Assume we are
given a standard tableau and let $b$ be the box with the largest
entry, say of residue $i$, and with entry $m_p$.  We assume that $b$ is in the $k$th
component, in particular the $k+1$st through $\ell$th components
are empty.  By assumption, the claim holds for the tableau $T$ with $b$ removed.

In terms of diagrams, removing $b$ means we remove the black strand $L$ which is
rightmost at the bottom of the diagram.  The geometric degree $\op{deg}$ will increase by 
\begin{equation}
\label{count}
\left(\sum_{j=k+1}^p \langle \omega_{\charge_j},\al_i\rangle\right)+\left(-2n_i+n_{i+1}+n_{i-1}\right)+y.
\end{equation}
Here the $\omega_{\charge_j}$ are precisely the labels of the red lines crossed by $L$, and $n_s$ is the number of black lines crossed by $L$ which are labelled (by the simple root attached to) $s$, and $y$ is a correction term if the topmost split changes degree when removing $L$. The black strands crossed are those corresponding to rows below the box
$b$ (necessarily in  the $k$th component of $\sS$).  Since the $r$th component for $k+1\leq r\leq p$
is empty, it has no removable box, and only a single addable box of residue $\charge_{r}$.  That is, the number of
addable boxes of residue $i$ in the $k'$th component for $k'>k$ is equal to the first bracket expression in \eqref{count}.  Now, consider in the $k$th component the (ordinary) standard tableau, say  $\xi$, given by the rows below $b$ and denote $c=\charge_k$.  Note that the boxes in $\xi$ correspond precisely to the black lines which $L$ crosses. These lines contribute precisely the second bracket in \eqref{count} to the degree $\op{deg}$. Finally note that the term $\langle\omega_c,\al_i\rangle=\delta_{c,i}$ supplies the correction $y$ for the top split, since $c=i$ if and only if the residue of the row
containing $b$ is a multiple of $\delta$.  Thus by Lemma \ref{easylemma}, removing $b$ has the same effect on $\op{deg}$ and
$\op{Deg}$ and the proof is completed for standard tableaux.
\excise{


If $b$ is added to row $R$ which is not the last row we get contributions from the boxes below.  We consider only the most interesting case where $e=3$ and $\ell=1$, since the arguments for the others are similar (alternatively see \cite[Corollary 3.14]{BKWSpecht}). Without loss of generality the extra box has residue $r=0$. Consider an arbitrary row $R'$ below $R$ and let $a,b\in\{0,1,2\}$ be the residues of the first respectively last box in row $R'$. Let us first assume that consecutive rows differ at least by one box. In case $a=2$, $b=2$ we have then an addable $0$-box in row $R'$, hence get a contribution $1$ to $\op{Deg}$ and a contribution $1$ to $\op{deg}$, since we count the number of boxes of residue $1$ or $2$ minus twice the number of boxes of residue $0$.  In general, the contributions are given by the following table
  \begin{equation*}
  \begin{array}[t]{c||c|c|c|c|c|c|c|c|c|c}
  (a,b)&(2,2)&(2,1)&(2,0)&(1,2)&(1,1)&(1,0)&(0,2)&(0,1)&(0,0)\\
  \hline
  \op{deg}&1&0&-1&1&0&-1&1&0&-1\\
  \op{Deg}&1&0&-1&2&1&0&0&-1&-2
  \end{array}
  \end{equation*}
  Note that in case $a=1$ we have a difference of $1$, whereas in case $a=0$ we are off by $-1$. If the last row has $a\not=1$ there is no addable box of residue $0$ in the row below and the number of rows below $R$ with $a=0$ and $a=\in 1$ agree and hence the differences cancel and the claim follows.
  If the last row has $a=1$, then there is an additional addable box
  contributing $1$ to  $\op{Deg}$ and there is one more row with $a=1$
  than with $a=0$ below $R$. In total $\op{Deg}=\op{deg}$. If now two
  rows have the same length, we count for the combinatorial degree one
  addable and one removable box less and the geometric degree stays
  the same as well. The claim follows for standard tableaux in case
  $r=0$ and similarly for all $r$.}

  Finally, we consider the passage from standard tableaux to
semi-standard.  We work by induction on the number of boxes in the
diagram which share an entry with another box. Consider the ``highest'' box that shares an entry with another (the first encountered in the reading word), and
consider the effect of raising by $1$ the entries of all other boxes with
greater or equal entries in the same alphabet.  This tableau has fewer
boxes that share entries, and so, by induction, its degree $\op{deg}(\sS)$ agrees with $\op{Deg}(\sS)$.  Let
$i$ be the residue of this box, and $k$ its entry. The effect of this transformation for the corresponding diagram and its degree $\op{deg}$ is as follows. The degree on the second part (the permutation) in Figure \ref{Belement} stays the same, only the split and merge parts could change the degree. Thus it suffices to consider the transformation
\[
\begin{tikzpicture}
 \node at (0,0) { \tikz[thick,xscale=2.5,yscale=1.5]{
\draw (0,.3) node[above] {$\bc+\al_i$} to [out=-90,in=90](.3,-.5)
(.6,.3) node[above] {$\bd$} to [out=-90,in=90] (.3,-.5)
(.3,-.5) -- (.3,-.8) node[below] {$\bc+\bd+\al_i$};}};
\draw[->,thick,dashed] (1.5,0) -- (2.5,0);
 \node at (4,0) { \tikz[thick,xscale=2.5,yscale=1.5]{
\draw (0,0) to [out=-90,in=90](.3,-.5)
(.6,.3) node[above] {$\bd$} to [out=-90,in=90] (.3,-.5)
(.3,-.5) -- (.3,-.8) node[below] {$\bc+\bd$};
\draw (-.3,-.8) node[ below]{$\alpha_i$} to[out=90,in=-90] (0,0);
\draw (0,0) to (0,.3) node[above] {$\bc+\al_i$};
}};
\end{tikzpicture}
\]
where $\bc$ is the dimension vector of the $k$'s in the same row as
our chosen box, and $\bd$ is the dimension vector of those in higher
rows.  The degrees of these morphisms are
\[ d_{i-1}-d_i+\sum c_j(d_{j-1}-d_{j})\quad \text{and}\quad
c_{i-1}-c_i+ \sum c_j(d_{j-1}-d_{j}).\]
Hence the difference in degree equals $c_{i-1}-c_i-d_{i-1}+d_i$.

On the other hand, consider the change in $\op{Deg}(\sS)$.  Nothing
changes for the degrees of the boxes below our chosen one $b$.  The degree of $b$ can be affected by every row below $b$ containing $k$'s. If that row has equal numbers
of boxes labeled $k$ with residue $i$ and $i-1$, there are no changes;
however, if there is one more with residue $i$ than $i-1$, then an
addable box of residue $i$ was created by removing the strip consisting of all boxes whose entry was changed from $k$ to $k+1$, hence the combinatorial degree changes by $1$. If there is one more $i-1$ than $i$, then an addable box of residue $i$ was
destroyed changing the degree by $-1$.  Thus, the net change is exactly $d_{i}-d_{i-1}$.  Moreover, the degrees of the boxes $b'$ whose entry was moved from $k$ to $k+1$ in the same row as $b$ could be changed. If $b'$ has residue $i$ a removable box has been created if it has $i-1$ an addable box has been removed. Hence the total difference is $c_{i-1}-c_{i}$. Thus, we arrive at the same difference, and by
induction, the proposition is proved.\excise{
\end{proof}
The general case needs some preparation.  For $r\in\mathbb{N}$ we
denote by $B_r$ the Young diagram of rectangular shape of size
$e\times r$. Let $\hat\la$ be an  $\ell$-multipartition and $\sS$ a
standard $\hat\la$-tableau. Let $1\leq j\leq \ell$ and
$r>|\la^{(j)}_1|$. Then $\sS\cup_j B_r$ denotes the standard tableaux
whose shape is obtained from $\hat\la$ be adding $B_r$ on top of the
$j$th component; the entries remain unchanged if they do not come from
the $j$th alphabet and changed from $i_j$ to $i_j+er$
otherwise. Finally we fill $1_j, 2_j,\ldots (er)_j$ row by row into
the boxes from $B_r$. Note that the no-gap condition is still
satisfied. In the case where $\sS$ consists of empty partitions we abbreviate $B_r^\ell=\sS\cup_j B_r$.

\begin{lemma}
\label{claim1}
With the notation above we have $\op{deg}(\sS)=\op{deg}(\sS\cup_j B_r)+\op{deg}(B_r^\ell)$ and $\op{Deg}(\sS)=\op{Deg}(\sS\cup_j B_r)+\op{Deg}(B_r^\ell)$.
\end{lemma}

\begin{proof}
We compare the degree of $\sS$ with that of $\sS\cup_j B_r$.  First consider the combinatorial degree $\op{deg}$.
The boxes with entries from the $k$th alphabet with $k<j$ give
obviously the same degree in both cases. If $k>j$ they also give the
same degree, since we have to count either one more addable and one
more removable box or no extra addable and no removable box of the
required residue. The same holds in the case where $k=j$ for the boxes
not contained in $B_r$. Finally, the boxes in $B_r$ appear in the $j$th component and are filled with the smallest numbers from the $j$th alphabet, hence they contribute precisely $\op{deg}(B_r^\ell)$ and the first claim follows.

The geometric degree changes as follows. Directly to the right of the $j$th red line we have the residue sequence respectively the entry sequence from the added block $B_r$. Locally the configuration of strands $\sS$ connecting them looks as in the diagram for $B_r^\ell$, hence gives an extra contribution of $\op{Deg}(B_r^\ell)$. Additionally we have a contribution $C$ from all strands intersecting $\cS$ nontrivially. Let $n_i$ be the number of strands labelled $i$ appearing in $B_r$. Then note  that all $n_i$ agree by definition of $B_r$. Any relevant strand labelled $i$ would therefore contribute $-2{n_i}+n_{i+1}+n_{i-1}=0$ to $C$ and so $C=0$.    
\end{proof}

Let $\sS$ be a semistandard tableau. Assume that the entry $i_k$ appears in the $(j-1)$th component with $j\not=k$ and
let $i_k$ be maximal with this property. Let $b$ the box in which it appears and $x$ its residue.

Take $B_r$ as above with $r$ chosen such that there is an addable box of residue $a$ in the first row of
$\sS\cap B_r$. Let  $S'\cap B_r'$ be the semistandard tableau obtained from moving the box $b$ with its entry
to the first row of $\sS\cap B_r$. By definition, the residue will be kept.

\begin{lemma} 
\label{claim2}
We have
$\op{deg}(S\cap B_r)=\op{deg}(S'\cap B_r')+y+1$, and $\op{Deg}(S\cap B_r)=\op{Deg}(S'\cap B_r')+y+1$
where $y$ is the number of boxes of residue $x$ which are addable below $b$
in the $(j-1)$th component minus the number of
such boxes which are removable.
\end{lemma}

\begin{proof}
We first consider the combinatorial degree $\op{deg}$. Nothing changes for the degree of the boxes not equal to $b$ by construction,
since $i_k$ was chosen to be maximal. The box $b$ in $\sS\cap B_r$ gives the contribution $y+1-1+z$, where $z$ is the number
of addable boxes of residue $x$ below $B_r$ minus the number of removable
boxes of residue $x$ below $B_r$.  The new box $b$ in  $S'\cap B_r'$ gives the contribution $z-1$. Hence the first equality holds.
For the geometric degree $\op{Deg}$ we have a situation as follows:

\todo{include picture}
Hence $\op{Deg}(S\cap B_r)-\op{Deg}(S'\cap B_r')=y+1$, where $y$ is the contribution for the extra black crossings and the additional $1$ comes from crossing of the black with the red line.
\end{proof} 

\begin{proof}[Proof of Proposition \ref{degrees}]
We restrict ourselves to the case of standard tableaux, since the passage from standard tableaux to
semi-standard can be done as above. 

Given an entry in $\cS$ we consider the difference $c-k$ of the component $c$ where it occurs and the index $k$ of its alphabet. 
We argue by induction of the total sum $s\geq0$ of these differences. If all entries from the $k$th alphabet appear in the $k$th component we have $s=0$. In this case we can treat every component separately and the claim follows from Lemma \ref{degrees1}. Otherwise to compute the degrees we first apply Lemma \ref{claim1} to get two summands where one is the degree of $B_r^\ell$ and the other summand can be reduced via Lemma \ref{claim2} to a situation with smaller $s$ and we are done. } 
\end{proof}

\subsection{The proof of cellularity}
 We will organize our proof into four parts.  The first step only concerns the tensor product algebras $T^\bnu$ and extends Hu and Mathas' basis from \cite{HM} to these algebras, the second proves that the proposed cellular basis for  ${A}^\bnu$ spans, the third shows that it  is indeed a cellular basis and the final fourth step compares the ideals with the Dipper-James-Mathas cellular ideals.
\subsection*{Step I: the case of $T^\bnu$}
 We start with the following easy observation:
\begin{lemma} We have
\begin{math}
\eT\CST\eT=
\begin{cases}
  \CST & \text{if $\sS$ and $\sT$ are standard,}\\
  0 & \text{otherwise.}
\end{cases}
\end{math}
\end{lemma}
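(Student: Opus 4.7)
The plan is to unwrap the definitions and exploit the orthogonality of the primitive idempotents $e_{\grave\mu}$, so that the computation reduces to identifying which vector multicompositions have complete flag type. First I would observe, via Definition~\ref{CST} and the construction in Figure~\ref{Belement} together with Proposition~\ref{red-basis}, that $B_\sT = e_{\grave\la_\sT}\,B_\sT\, e_{\grave\mu_\sT}$ and, after vertical flipping, $B_\sS^\ast = e_{\grave\mu_\sS}\,B_\sS^\ast\, e_{\grave\la_\sS}$. Hence
\[\CST = B_\sS^\ast B_\sT = e_{\grave\mu_\sS}\,\CST\, e_{\grave\mu_\sT},\]
which already vanishes unless $\grave\la_\sS = \grave\la_\sT$, i.e.\ unless $\sS$ and $\sT$ share a common shape.

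Next I would verify the combinatorial heart of the statement: the idempotent $e_{\grave\mu_\sS}$ is a summand of $\eT$ if and only if $\sS$ is standard. By \eqref{eT}, $\eT$ is precisely the sum of those $e_{\grave\nu}$ for which every part of $\grave\nu$ is a unit vector $\al_i$. On the other hand, by definition the $g$-th part of $\grave\mu_\sS(k)$ is the vector whose $h$-coordinate counts the boxes of residue $h$ carrying the entry $g_k$ in $\sS$; this vector is a unit vector exactly when the entry $g_k$ occurs in a single box of $\sS$. Imposing this for every entry is exactly the standardness condition of Definition~\ref{sstableau}.

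Finally, since the $e_{\grave\mu}$ for different $\grave\mu$ are pairwise orthogonal primitive idempotents, we obtain $\eT \cdot e_{\grave\mu_\sS} = e_{\grave\mu_\sS}$ when $\sS$ is standard and $0$ otherwise, and the analogous statement on the right with $\sT$. Combining this with the bracketing from the first step yields
\[\eT\,\CST\,\eT \;=\; (\eT\, e_{\grave\mu_\sS})\,\CST\,(e_{\grave\mu_\sT}\,\eT),\]
which equals $\CST$ precisely when both $\sS$ and $\sT$ are standard, and vanishes otherwise. I do not anticipate any real obstacle; the only point worth spelling out is the identification of ``complete flag type'' for $\grave\mu_\sS$ with ``standardness'' of $\sS$, which is an immediate unravelling of definitions once one recognises that the parts of $\grave\mu_\sS(k)$ are indexed by the entries of $\sS$ appearing in component $k$.
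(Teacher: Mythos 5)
Your proposal is correct and follows essentially the same route as the paper, whose one-line proof simply observes that the summands of $\eT$ are the idempotents of complete flag type and that these correspond exactly to the standard tableaux. Your write-up merely spells out the bracketing $\CST = e_{\grave\mu_\sS}\CST e_{\grave\mu_\sT}$ and the identification of unit-vector parts of $\grave\mu_\sS$ with entries occurring once, which is exactly the content the paper leaves implicit.
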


\begin{proof}
  Recall from \eqref{eT} that $\eT$ corresponds to the vector compositions of complete
  flag type, which in turn correspond precisely to the standard
  tableaux.
\end{proof}

Thus, Theorem~\ref{A-is-cellular} would imply that the elements $\CST$
for standard tableaux give a basis of $T^\bnu$ from Proposition \ref{Bencycliso}.  However we will prove
this result first as a stepping stone to the full proof. Let $N$
denote the set of standard tableaux.
\begin{theorem}\label{T-is-cellular}
   The data $(\Lambda,N,C|_N,<,*)$ defines a cell datum for $T^\bnu$.
\end{theorem}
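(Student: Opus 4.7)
The plan is to establish this theorem as a stepping stone toward Theorem \ref{A-is-cellular}, by identifying our proposed basis with a unitriangular transform of the graded cellular basis for cyclotomic Hecke algebras constructed by Hu and Mathas in \cite{HM}, transferred to $T^\bnu$ via the Brundan--Kleshchev isomorphism and its generalization to the tensor product algebra in \cite{CTP}.

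First I would show that $\{C_{\sS,\sT} : \sS,\sT \in N(\xi),\ \xi\in\Lambda\}$ spans $T^\bnu = \eT A^\bnu \eT$ (using Proposition \ref{Bencycliso}). Sandwiching the basis of Proposition \ref{red-basis} between two copies of $\eT$, $T^\bnu$ is spanned by morphisms $\grave{\mu} \overset{w;h}{\Longrightarrow} \grave{\nu}$ with $\grave{\mu},\grave{\nu}$ of complete flag type and $h$ a polynomial. By Proposition \ref{defq}, any such $w$ factors naturally through some intermediate vector composition $\grave{\la}_\xi$ determined by a shape $\xi\in\Lambda$, and the associated double-coset representative is of the form $w_\sS^{-1}w_\sT$ for a unique pair of standard $\xi$-tableaux. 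The polynomial coefficient $h$ can then be absorbed into additional splits and merges, at the cost of lower-order terms controlled by the braid relations of Proposition \ref{easyrel} together with the violating ideal. This reduces every spanning element to $C_{\sS,\sT}$ modulo the two-sided span $T^\bnu({>}\xi)$ of $C_{\sS',\sT'}$ with shape$(\sS')>\xi$.

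Next I would verify the cellularity axioms. The involution $*$ condition $C_{\sS,\sT}^\ast = C_{\sT,\sS}$ is immediate from the definition, since $B_\sS^\ast$ is by construction the vertical reflection of $B_\sS$. For the multiplication rule, I would compare $C_{\sS,\sT}$ with the Hu--Mathas element $\psi_{\sS,\sT}$, which lives in the same two-sided ideal stratum: both factor through the row-type idempotent $e_{\grave{\la}_\xi}$, and an analysis of the Euler-class factors from Proposition \ref{euler-int} (which is exactly where split-then-merge produces polynomials), combined with the degree formula of Proposition \ref{degrees}, shows that the change of basis is unitriangular with respect to the shape order, with off-diagonal coefficients in the span of $C_{\sS',\sT'}$ with strictly larger shape. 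Since the Hu--Mathas basis is cellular \cite{HM}, the cellularity multiplication rule $x C_{\sS,\sT} \equiv \sum_{\sS'} r_x(\sS',\sS) C_{\sS',\sT} \pmod{T^\bnu({>}\xi)}$ is inherited by $\{C_{\sS,\sT}\}$. Linear independence then follows from the known total dimension of $T^\bnu$ via the Brundan--Kleshchev isomorphism $\Phi^\nu$.

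The main obstacle is establishing the unitriangular comparison with the Hu--Mathas basis. This requires a careful strand-by-strand analysis showing that the Euler-class polynomials introduced by our splits and merges coincide, up to lower shape terms, with the explicit dot-strings $y_\sS$ appearing in $\psi_{\sS,\sT}$; the bookkeeping is essentially local but must be controlled by an induction on the dominance order on shapes, using the violating ideal to eliminate contributions from dots on leftmost strands. Everything else in the argument is then formal from the cellularity of $\psi_{\sS,\sT}$.
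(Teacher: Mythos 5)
Your argument takes a genuinely different route from the paper's, and two of its steps have real gaps. First, the spanning step: for $\grave\mu,\grave\nu$ of complete flag type the Young subgroups $S_{\grave\mu},S_{\grave\nu}$ are trivial, so Proposition~\ref{defq} does \emph{not} produce a factorization of a double-coset element $w$ through an idempotent $e_{\gla_\xi}$ attached to a multipartition $\xi$; a generic $w$ factors through no such shape. Forcing every element to factor, modulo higher terms in the lexicographic order, through a shape idempotent is exactly where the cyclotomic (violating) relations enter in an essential, non-formal way -- in the paper this is the content of Lemmas~\ref{id-zero}, \ref{id-split}, \ref{multisplit} and Proposition~\ref{CST-span}, which are only established in Step~3 and constitute most of the work. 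Second, the multiplication axiom: the Hu--Mathas elements $\psi_{\sS,\sT}$ form a basis only of the proper subalgebra $T^\nu\subset T^\bnu$ spanned by the $\CST$ with $\sS,\sT$ \emph{super}-standard (only the last alphabet used), whereas $N$ consists of all standard multitableaux; the index sets have different cardinalities, so no unitriangular change of basis between the two families can exist inside $T^\bnu$. The comparison with \cite{HM} only becomes available after applying $a\mapsto\theta a\theta^*$, which collapses onto that subalgebra.

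That is in fact exactly how the paper uses \cite{HM}: it proves \emph{linear independence} (not spanning) by noting $\theta\CST\theta^*=C_{\sS^\diamond,\sT^\diamond}$ and that $(\sS,\sT)\mapsto(\sS^\diamond,\sT^\diamond)$ is injective once the types are fixed, and then gets the basis property from the dimension formula of \cite[3.6]{CTP}. With the basis in hand, axiom (4) follows by a short formal argument: expand $xB_\sS^*$ in the basis; any contributing $C_{\sS',\sT'}$ must have $\grave\mu_{\sT'}=\gla_\sS$, so $\sT'$ is either the ground-state tableau of the shape of $\sS$ (giving the required term) or of strictly greater shape (landing in $H(>\xi)$). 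No strand-by-strand comparison of Euler classes with dot-strings is needed. To salvage your route you would have to first prove the analogue of Proposition~\ref{CST-span} for $\eT A^\bnu\eT$, which amounts to importing the hard part of Step~3 into Step~1.
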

\begin{proof}
By restricting our construction further to tableaux $\nu$ (instead of multitableaux) and only using the last copy of $\mZ$ we obtain an algebra
\begin{equation}
\label{Tnu}
T^\nu\subset T^\bnu
\end{equation}
with a distinguished basis which  is precisely the cellular basis from \cite{HM}.

If one fixes the idempotents $\grave{\mu}_\sS$ and $\grave{\mu}_\sT$,
that is, fixes the multiset of entries in the tableaux, then the map
$(\sS,\sT)\mapsto (\sS^\diamond,\sT^\diamond)$ is injective (because
of our no gap condition).  Since we already know from \cite{HM} that
the $\CST$ for pairs of tableaux which only use the last copy of $\mZ$
are linearly independent, the map given by $a\mapsto\theta a\theta^*$
is injective, and the elements $\CST$ for standard tableaux are
linearly independent.  They are thus a basis by the dimension formula
from \cite[Lem. 4.36]{Webmerged}.  This establishes conditions (C1) and (C2) of cellularity, and (C3) is obvious from the definition as mentioned above.

Finally to see (C4), let $x\in T^\bnu$ be an arbitrary element.  Consider $xB_\sS$; since the $\CST$'s are a basis we must have $xB_{\sS}^*=\sum_{\sS',\sT'} r_x(\sS',\sS,\sT)C_{\sS',\sT'}$ for some uniquely defined coefficients $r_x(\sS',\sS,\sT')$.  For this coefficient to be not $0$ we must have $\grave\mu_\sT'=\grave\la_\sS$; in other words, $\sT'$ is a semi-standard tableaux whose type is the shape of $\sS$. Thus, either $\sT'$ is the super standard tableau $T^\sS$ of the shape
of $\sS$ (by which we mean the tableau where the entries are just the
row numbers) or the shape of $\sT'$ is above that of $\sS$ in dominance order of $\ell$-multi-partitions, hence contained in $H(\geq \mu)$, where $\mu$ is the shape of $\sS$.  If we let $r_x(\sS',\sS)=r_x(\sS',\sS, T^\sS)$ then $$x\CST=xB_\sS B_\sT^*=\sum_{\sS'}r_x(\sS',\sS)C_{\sS', T^\sS}B_\sT^*.$$
Note that $C_{\sS', T^\sS}B_\sT^*=B_\sS' B_{T^\sS}B_\sT^*=B_\sS' B_\sT^*=C_{\sS',\sT}$
since the shape of $\sT$ and $\sS'$ equal the type of $T^\sS$ and $B_{T^\sS}$ is then just the identity. Hence we have precisely condition~(C4).
\end{proof}

\begin{corollary}
$(\Lambda,N,C|_N,*,\op{deg})$ defines a graded cellular algebra.
\end{corollary}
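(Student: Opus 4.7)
The plan is to upgrade the (ungraded) cellular structure from Theorem~\ref{T-is-cellular} to a graded one, which only requires checking that the cellular basis elements $\CST$ are homogeneous of the correct degree. All the nontrivial combinatorics and spanning/independence statements have already been carried out for Theorem~\ref{T-is-cellular}, so very little actual work remains.

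First I would observe that $T^\bnu$ inherits a $\mZ$-grading from the ambient algebra $\tAb$ via the identification $T^\bnu \cong \eT A^\bnu \eT$ of Proposition~\ref{Bencycliso}, together with the fact that $\eT$ is a sum of homogeneous idempotents and the cyclotomic ideal is homogeneous (being generated by idempotents). Thus the only thing to verify is that for every pair of standard tableaux $\sS,\sT$ of the same shape the element $C_{\sS,\sT} = B_\sS^* B_\sT$ is homogeneous, and that its degree splits additively as $\op{deg}(\sS) + \op{deg}(\sT)$ with $\op{deg}(\sS) := \op{deg}(B_\sS)$.

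Next, homogeneity of $B_\sS$ is immediate from its construction in Definition~\ref{CST} via Proposition~\ref{red-basis}: it is a composition of splits, merges, crossings of black strands, and red/black crossings, each of which is declared homogeneous with an explicit degree by Propositions~\ref{split-deg} and \ref{shift-deg} (see also Remark~\ref{degreecross}). Since the anti-involution $*$ is realized geometrically as vertical reflection of diagrams, it swaps the roles of pull-back and push-forward along the correspondence defining each elementary morphism, which, under the symmetric perversity convention \eqref{gradconv}, preserves degrees. Hence $B_\sS^*$ is homogeneous of the same degree as $B_\sS$, and consequently $C_{\sS,\sT}$ is homogeneous of degree $\op{deg}(\sS) + \op{deg}(\sT)$.

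Combining these two observations with Theorem~\ref{T-is-cellular} (which provides all the cellularity axioms) yields the required graded cell datum on $T^\bnu$. The only mildly subtle point is checking that the degree assignment is consistent, i.e.\ that axiom (4) of cellularity holds at the level of graded modulo higher terms, but this follows since multiplication in the graded algebra $T^\bnu$ preserves degree and the ideal $T^\bnu(>\mu)$ is homogeneous. No obstacle arises beyond careful bookkeeping of the grading convention \eqref{gradconv}; in particular, the combinatorial identification $\op{deg}(\sS) = \op{Deg}(\sS)$ of Proposition~\ref{degrees} is not needed for the corollary itself, only for making the degree function explicit.
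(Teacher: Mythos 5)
Your proposal is correct and follows essentially the same route as the paper, whose entire proof is the one-line observation that $\op{deg}(\CST)=\op{deg}(B_\sS)+\op{deg}(B_\sT)$ by definition; you simply make explicit the points the paper leaves implicit (homogeneity of $B_\sS$ from Propositions~\ref{split-deg} and \ref{shift-deg}, degree-preservation of $*$ under the convention \eqref{gradconv}, and homogeneity of the cell ideals). No gaps.
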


\begin{proof}
By definition $\op{deg}(\CST)=\op{deg}(B_\sS)+\op{deg}(B_\sT)$.
\end{proof}

\subsection*{Step 2: signed permutation modules and spanning set}
Let now $n$ be a fixed natural number and let
$$R_n=\bigoplus_{|\bd|=n}R_\bd
$$
be the  subalgebra of the quiver Hecke algebra of diagrams with $n$ strands.
The summand $T^\nu_n$ of the algebra $T^\nu$ from \eqref{Tnu} corresponding to partitions of $n$ is then a cyclotomic quotient $R^\nu_n$ of $R_n$.

In \cite[(4.36)]{BKKL}, an isomorphism between $R^{\nu}_n$ and a
cyclotomic Hecke algebra $\mathfrak{H}^\nu_n$ of $S_n$ with parameters
$(\zeta, \zeta^{\charge_1},\dots, \zeta^{\charge_\ell})$ was established, where $\zeta$ is an
element of the separable algebraic closure of $\K$ which satisfies
$\zeta+\zeta^2+\cdots +\zeta^{e-1}$ and $e$ is the smallest integer
where this holds.

This isomorphism $R^\nu_n\cong \mathfrak{H}^\nu_n$ from \cite[(4.36)]{BKKL} depends on the choice of certain polynomials $Q_r(\Bi)$ which we want to fix now. It will turn out to be convenient not to follow the suggested choice of \cite[(4.36)]{BKKL}, but instead fix
\begin{eqnarray}
\label{PQchoice}
Q_r({\Bi})&=&\begin{cases} 1-\zeta-\zeta y_{r+1}+y_r& \text{if } i_r=i_{r-1},\\
\frac{P_r(\Bi)-1}{y_r-y_{r+1}} &\text{if }i_{r+1}=i_{r}+ 1,\\
P_r(\Bi)-1 &\text{if }  i_{r+1}\neq i_{r},i_{r}+ 1,\\
\end{cases}
\end{eqnarray}
 with the notations from \cite[(4.27)]{BKKL} except that, as in Section \ref{KLRalgebra} (R1), our $\zeta$ is $q$ there and our $y_r, y_{r+1}$ are the $-y_r, -y_{r+1}$ there. The equation \cite[(4.28)]{BKKL} implies that ${}^{s_r}(1-P_r(s_r({\Bi})))=\zeta+P_r(\Bi)$, so the equations \cite[(4.33-35)]{BKKL} follow immediately and this choice indeed defines an isomorphism of algebras
\begin{eqnarray}
\label{BKiso}
R^\nu_n\cong \mathfrak{H}^\nu_n.
\end{eqnarray}
In the following we will identify
\begin{eqnarray}
T^\nu_n=R^\nu_n= \mathfrak{H}^\nu_n,&\text{and}& T^\nu=R^\nu:=\bigoplus_{n\geq 0} R^\nu_n =\bigoplus_{n\geq 0}\mathfrak{H}^\nu_n=:\mathfrak{H}^\nu.
\end{eqnarray}
Analogously, we will write $A^\bnu_n=\bigoplus_{|\bd|=n}A^\bnu_\bd$.

Let $M$ be a representation of
$\mathfrak{H}^\nu_n=T^\nu_n$. Recall, \cite[4.1]{BKKL}, that
$\mathfrak{H}^\nu_n$ contains the finite dimensional Iwahori-Hecke algebra as a
natural subalgebra. A vector $v\in M$ {\bf generates a sign
  representation} if it generates a 1-dimensional sign representation
for this finite Hecke algebra (i.e. $T_rv=-v$ for all generators $T_r$
in the notation of \cite{BKKL}). We first express this condition in
the standard generators $\psi_r$ of $R^\bnu_n$:
\begin{lemma}\label{sign-transform}
A vector $v\in M$ generates a sign representation if and only if it transforms under the action of $\psi_r$ by \[\psi_re_{\Bi}v=\begin{cases} 0 & \text{if } i_r=i_{r+1},\\({y_r-y_{r+1}})e_{\Bi}v &\text{if } i_{r+1}=i_{r}+ 1,\\
e_{\Bi}v &\text{if } i_{r+1}\neq i_{r},i_{r}+ 1.\\
	\end{cases}\]
\end{lemma}
\begin{proof}
  This is immediate by plugging the choices \eqref{PQchoice} into \cite[(4.38)]{BKKL} using the fact that
  $T_iv=-v$: the first case follows since in this case
  $P_r({\Bi})=1$, so $(T_r+P_r(\Bi))e(\Bi)v=0$; the last two follow
  immediately from the substitution of $-1$ for $T_r$.
\end{proof}
It might seem strange that we use here anti-invariant vectors instead of invariant vectors.  This could be fixed by picking a different isomorphism to the Hecke algebra, but it is ``hard-coded'' into the isomorphism chosen in \cite{BKKL}, since the defining quadratic relation for the Hecke algebra is $(T_r+1)(T_r-q)=0$, hence it comes along with a sign representation (but not a trivial representation where $T_r$ acts by $1$).

We are interested in a version of Lemma~\ref{sign-transform} for right $T^\nu_n$-modules $M$ (recall the obvious identification of $\mathfrak{H}^\nu_n=T^\nu_n$ with its opposite algebra):
\begin{prop}
\label{sign-transform2}
A vector $v\in M$ generates a sign representation (for the right action) if and only if it transforms under the action of $\psi_r$ by
\[ve_{\Bi}\psi_r=\begin{cases}
0 & \text{if }i_r=i_{r+1},\\
ve_{s_r(\Bi)}({y_r-y_{r+1}}) & \text{if }i_{r}=i_{r+1}+ 1,\\
ve_{s_r(\Bi)} &\text{if }i_{r}\neq i_{r+1},i_{r+1}+ 1,\\
	\end{cases}\]
where $s_r$ acts on $\Bi$ by swapping the $r$th and $r+1$th entry.
\end{prop}
\begin{proof}
This is an easy consequence from Lemma~\ref{sign-transform} using the defining relations.
\end{proof}

In \cite[\S 9.2]{Webmerged} the isomorphism \eqref{BKiso} was extended to an isomorphism
\begin{eqnarray}
\label{Beniso}
T^\bnu_n&\cong&\End_{\mathfrak{H}^\bnu_n}\left(\bigoplus_{\sum_{i=1}^\ell a_i=n} \mathfrak{H}^\nu_n u^+_{\mathbf{a}}\right)
\end{eqnarray}
where $u^+_{\mathbf{a}}$, for $\mathbf{a}=(a_1,\dots, a_\ell)\in\mathbb{Z}^\ell$, is the element defined by Dipper, James and Mathas, \cite[Definition 3.1]{DJM}, as \[u^+_{\mathbf{a}}=\prod_{s=1}^\ell\prod_{k=1}^{a_{k}}(L_k-\zeta^{\charge_s}).\]
For an $\ell$-multi-composition $\xi$, we let $\mathbf{a}_\xi=\big(0,|\xi^{(1)}|,|\xi^{(1)}|+|\xi^{(2)}|,\dots, |\xi^{(1)}|+\cdots +|\xi^{(\ell-1)}|\big)$ and, following \cite[(3.2)(ii)]{DJM}, will assume from now on that $$0\leq a_1\leq a_2\leq \cdots \leq a_\ell\leq n.$$
Thus, we can describe the cyclotomic Schur algebra $\mathscr \mathfrak{H}^\bnu_n$ from the introduction (for the parameters $(q,Q_1,Q_2,\ldots Q_\ell):=(\zeta, \zeta^{\charge_1},\dots, \zeta^{\charge_\ell})$) as the endomorphism algebra of the $T^\bnu$ module
\begin{eqnarray}
\label{Schur}
\Hom_{\mathfrak{H}^\bnu_n}\Big(\bigoplus_{|\mathbf{a}|=n} \mathfrak{H}^\bnu_n u^+_{\mathbf{a}},\bigoplus_{\xi} \mathfrak{H}^\bnu_n u^+_{\xi}x_\xi\Big)= \bigoplus_{|\xi|=n} T^\bnu  x_\xi
\end{eqnarray}
where $x_\xi$ is the projection to vectors which transform under the sign representation for the Young subgroup $S_\xi$.  We refer to the modules $T^\bnu x_\xi$ as {\bf signed permutation modules} for $T^\bnu$.

Using the cellular basis of $\mathscr \mathfrak{H}^\bnu_n$, one can directly deduce, see \cite[(4.14)]{DJM} the fact:
\begin{lemma}\label{T-per-dim}
The dimension of  $T^\bnu x_\xi$ is precisely the number of pairs $(\sS,\sT)$ of tableaux on $\ell$-multi-partitions with $n$ boxes with the same shape satisfying
\begin{itemize}
\item $\sT$ is of type $\xi$ (i.e. the number of occurrences of $i_j$ is the length of the $i$th row in the $j$th diagram of $\xi$), and
\item $\sS$ is standard.
\end{itemize}
\end{lemma}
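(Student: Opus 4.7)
The plan is to read the dimension of $T^\bnu x_\xi$ directly off the Dipper--James--Mathas cellular basis of the cyclotomic $q$-Schur algebra $\mathscr{H}^\bnu_n$ from \cite[(4.14)]{DJM}. That theorem exhibits a cellular basis $\{\phi^\lambda_{\sS\sT}\}$ of $\mathscr{H}^\bnu_n$ indexed by $\ell$-multipartitions $\lambda$ of $n$ together with pairs of semi-standard $\lambda$-tableaux $(\sS,\sT)$, and under the usual idempotent decomposition the subspace $\Hom_{H^\bnu_n}(M(\eta), M(\xi)) = \mathbf{1}_\xi\,\mathscr{H}^\bnu_n\,\mathbf{1}_\eta$ has basis given by those $\phi^\lambda_{\sS\sT}$ with $\sS$ of type $\xi$ and $\sT$ of type $\eta$.

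First I would identify the module $\bigoplus_{|\mathbf{a}|=n} H^\bnu_n u^+_\mathbf{a}$ appearing on the left of \eqref{Schur} with $\bigoplus_\eta M(\eta)$, where $\eta$ ranges over those $\ell$-multi-compositions of $n$ all of whose parts equal $1$. For such an $\eta$ the Young subgroup $S_\eta$ is trivial, so $x_\eta = 1$ and $M(\eta) = H^\bnu_n u^+_{\mathbf{a}_\eta}$; conversely, every admissible tuple $\mathbf{a}$ with $0\le a_1\le\cdots\le a_\ell\le n$ arises from exactly one such $\eta$. Combined with \eqref{Schur}, this yields an identification
\[
T^\bnu x_\xi \;\cong\; \bigoplus_{\eta}\,\Hom_{H^\bnu_n}\!\bigl(M(\eta), M(\xi)\bigr),
\]
with $\eta$ ranging over the family just described.

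The remaining step is purely combinatorial. A semi-standard $\lambda$-tableau of type $\eta$, for some $\eta$ whose parts are all equal to $1$, uses each available entry exactly once, which is precisely the condition for being a standard tableau in the sense of Definition \ref{sstableau}; conversely, any standard tableau has type equal to a unique such $\eta$, recording which entries from which alphabets occur (the no-gaps convention making the correspondence bijective). Summing the DJM basis count over $\eta$ therefore produces exactly the pairs $(\sS,\sT)$ with $\sS$ semi-standard of shape $\lambda$ and type $\xi$ and $\sT$ standard of the same shape $\lambda$, with $\lambda$ an $\ell$-multipartition of $n$, matching the claimed cardinality. I expect the only mildly subtle point to be lining up the type and no-gaps conventions of Definition \ref{sstableau} with the DJM conventions on semi-standard tableaux of type $\eta$, which is a routine verification.
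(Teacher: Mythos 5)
Your argument is correct and is exactly the route the paper takes: the paper gives no written proof beyond the pointer ``using the cellular basis of $\mathscr H^\bnu_n$, one can easily deduce, see [DJM, (4.14)]'', and your proposal simply spells out that deduction (identifying the summands $H^\bnu_n u^+_{\mathbf a}$ with the permutation modules $M(\eta)$ for $\eta$ with all parts $1$, and matching semi-standard tableaux of such type with standard multitableaux). The only cosmetic difference is that you have swapped the names $\sS$ and $\sT$ relative to the lemma statement, which is harmless for the count.
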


\subsection*{Step 3: graded cellular basis}
\label{sec:proof-theorem}
We start with some preparatory lemmata. For $j\in \mZ/e\mZ$ and $n\in \mZ_{\geq 0}$ we let $e_{j;n}$ be the idempotent for the vector composition $\bmuh_{j;n}=(\al_j,\al_{j+1},\dots,\al_{j+n})$, and let $\bd_{j;n}$ be its dimension vector.
Note that for fixed $j$ and varying $n$ such dimension vectors $\bd$ are characterized by $d_{k}+1\geq d_{k-1}\geq d_{k}$
for $k\not=j$ and $d_{j-1}\leq d_{j}\leq d_{j-1}+1$.

\begin{lemma}\label{id-zero}
Let $j\in \mZ/e\mZ$ and $n\in \mZ_{\geq 0}$. Let $e_{(\bd)}$ be an idempotent corresponding to a vector composition corresponding to step $1$ flags.
In ${A}^{\omega_j}$ we have $e_{(\bd)}=0$ unless $\bd=\bd_{j;n}$ for some $n$.  Furthermore, $e_{(\bd_{j;n})}{A}^{\omega_j}e_{(\bd_{j;n})}\cong \K$.
\end{lemma}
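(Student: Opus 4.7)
The plan is to work diagrammatically with the violating ideal $I\subset\tilde{A}^{\omega_j}$, inserting shifts of the red $\omega_j$-strand across portions of the black $\bd$-strand so as to factor through configurations lying in $I$. More precisely, for any subvector $\bc\leq\bd$ the composition
\[
e_{(\bd)} \xrightarrow{\text{split}} e_{(\bc,\bd-\bc)} \xrightarrow{\text{right shift}} e_{(\bc,\omega_j,\bd-\bc)} \xrightarrow{\text{left shift}} e_{(\omega_j,\bc,\bd-\bc)} \xrightarrow{\text{merge}} e_{(\bd)}
\]
factors through an idempotent whose leftmost strand is black, hence lies in $I$. Propositions \ref{easyrel}, \ref{euler-int}, and \ref{red-euler} together identify its action on the faithful polynomial representation $\bLa((\bd))$ with multiplication by a polynomial $P(\bc)=\Delta_w(E_\bc)\cdot t^{\omega_j(\bc)}$, where $E_\bc$ is the Euler class of the split, $\Delta_w$ the Demazure operator of the merge, and the $t$-factor records the shift contribution. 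Since $A^{\omega_j}$ inherits its grading from $\tilde{A}^{\omega_j}$ and $I$ is homogeneous, each graded component of $P(\bc)$ must vanish in the corner algebra $e_{(\bd)}A^{\omega_j}e_{(\bd)}$; in particular, if some $\bc$ yields a $P(\bc)$ with nonzero constant term, then $e_{(\bd)}=0$ in $A^{\omega_j}$.

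For the first claim, I would suppose $\bd\ne\bd_{j;n}$ for any $n$. Then by the characterization preceding the lemma some staircase inequality fails at a vertex $k$. In the principal case $k\ne j$ with $d_k=d_{k-1}+1$, take $\bc=\al_k$: the shift factor is trivial ($\omega_j(k)=0$), $E_{\al_k}=\prod_{l=1}^{d_{k-1}}(x_{k,1}-x_{k-1,l})$ by \eqref{euler}, and the merge reduces at position $k$ to the Demazure operator $\Delta_{w_0^{1,d_k-1}}$. The classical identity $\Delta_{w_0^{1,d-1}}(x_1^a)=h_{a-d+1}(x_1,\ldots,x_d)$ then makes the constant term of $P(\al_k)$ equal to $1$, forcing $e_{(\bd)}=0$. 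The remaining violating configurations---$d_k-d_{k-1}\geq 2$ for $k\ne j$, or $d_j\notin\{d_{j-1},d_{j-1}+1\}$---are handled by iterating the split with $\bc=m\al_k$ or by taking $\bc=\al_j$ and tracking the extra $t_j$-factor coming from the nontrivial shift.

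For the second claim, fix $\bd=\bd_{j;n}$. The corner in $\tilde{A}^{\omega_j}$ equals $\bLa((\bd))$, and the goal is to show its augmentation ideal lies in $I$. Taking $\bc=\al_j$: the staircase structure of $\bd_{j;n}$ makes $E_{\al_j}=1$ and the merge trivial, while the shift contributes $t_j=x_{j,1}$ by Proposition \ref{red-euler}, yielding $P(\al_j)=x_{j,1}$. Hence $x_{j,1}\equiv 0$ in the corner. The crossings of Remark \ref{KL}, which multiply by $x_{i+1,1}-x_{i,1}$ on polynomials when $\al_i$ and $\al_{i+1}$ swap, then propagate the vanishing successively to $x_{j+1,1},\ldots,x_{j+n,1}$; iterating over all nonzero entries of $\bd_{j;n}$ drives every generator of $\bLa((\bd))$ to zero, so that the corner collapses to $\K$. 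Faithfulness (Proposition \ref{hl-faithful}) ensures it is exactly $\K$, not $0$.

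The main obstacle will be the combinatorial constant-term analysis for $P(\bc)$: for large gaps $d_k-d_{k-1}\geq 2$ the basic single-strand split gives $P(\al_k)=0$ and one must either pass to a multi-strand split $\bc=m\al_k$ or chain several reductions, invoking nontrivial identities among complete-homogeneous and elementary symmetric functions. The same combinatorics will also be needed for the second claim when $n\geq e$, where $\bLa((\bd_{j;n}))$ carries nontrivial $S_\bd$-invariants and one must verify that the iterated construction generates the entire augmentation ideal rather than just its linear part.
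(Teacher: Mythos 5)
Your mechanism for producing elements of the violating ideal is sound and close in spirit to the paper's: both arguments split a piece off the thick strand, drag it across the red $\omega_j$-line, and translate the resulting vanishing into a polynomial identity via the faithful representation of $\tilde{A}^{\omega_j}$. The paper always splits off a \emph{single} strand $\al_k$ decorated with $R=d_k-d_{k-1}-1$ dots (so the element is automatically of degree zero) and shows by an inductive Demazure computation that this split--dot--merge equals $e_{(\bd)}$, then disposes of the dotted strand using the black/red double-crossing relation; you avoid dots by splitting off several strands and letting the red-crossing Euler class of Proposition \ref{red-euler} absorb the degree at node $j$. Your principal case ($d_k=d_{k-1}+1$, $k\neq j$) is correct, and the remaining cases of the first claim do go through with $\bc=m\al_k$ for $m=d_k-d_{k-1}$ (resp.\ $m=d_j-d_{j-1}-1$ at $k=j$), reducing to the standard identity $\Delta_{w_0^{m,d-m}}\bigl((x_1\cdots x_m)^{d-m}\bigr)=1$; but you have only gestured at this, and it is the bulk of the work. (Also, $P(\bc)$ should be $\Delta_w(E_\bc\, t_j)$ rather than $\Delta_w(E_\bc)\cdot t_j$; on total invariants the operator is indeed multiplication by this, by the twisted Leibniz rule.)

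The second claim is where the argument genuinely breaks. First, ``$E_{\al_j}=1$ and the merge is trivial'' fails as soon as $n\geq e-1$: then $d_{j-1}\geq 1$, so $E_{\al_j}=\prod_{r=1}^{d_{j-1}}(x_{j,1}-x_{j-1,r})\neq 1$, and for $n\geq e$ also $d_j\geq 2$, so the merge is a nontrivial Demazure operator; your element is therefore $\Delta_{w_0^{1,d_j-1}}(E_{\al_j}x_{j,1})$, not $x_{j,1}$, and the subsequent ``propagation by crossings'' that is supposed to kill the entire augmentation ideal of $\bLa((\bd))$ is exactly the step you concede you cannot yet carry out. Second, Proposition \ref{hl-faithful} cannot ``ensure it is exactly $\K$, not $0$'': faithfulness is a statement about $\tilde{A}^{\omega_j}$ acting on $\tilde{V}^{\omega_j}$, not about the cyclotomic quotient $A^{\omega_j}$ --- indeed the whole content of your first part is that elements acting nontrivially on the polynomial module (such as $e_{(\bd)}$ itself) can die in the quotient. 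Nonvanishing of $e_{(\bd_{j;n})}$ in $A^{\omega_j}$ requires a separate input, e.g.\ the nontriviality of the corresponding weight space of the cyclotomic quiver Hecke algebra $R^{\omega_j}$.
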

\begin{proof}
Assume $\bd\not=\bd_{j;n}$ for any $n$. If there exists $k\not=j$ such that $d_{k}>d_{k-1}$ or  $d_k>d_{k-1}+1$ for $k=j$, then set
$R=d_{k}-d_{k-1}-1$. In either case we claim that

\[
\tikz[thick,xscale=2.5,yscale=1.5,label distance=-3pt]{
\draw[wei] (-1,0) node[below]{$\omega_j$} to (-1,1);
\draw (-.8,0) node[below]{$\bd$} -- (-.8,1) node[above]{$\bd$}
node at (-0.45,.5){$=$}
node at (-0.15,.5){};
\draw[wei] (0,0) node[below]{$\omega_j$} to (0,1);
\draw(.3,.2) to [out=90,in=-90](.1,.5)
(.3,0)-- node[below]{$\bd$} (.3,.2) to [out=90,in=-90] (.5,.5) node[right]{$\bd-\al_{k}$}
(.1,.5) to [out=90,in=-90] node[pos=0,circle,fill=black,inner sep=2pt,outer sep=0pt,label=175:{$\circledR$}]{}
(.3,.8) -- (.3,1) node[above]{$\bd$}
(.5,.5) to [out=90,in=-90] (.3,.8)
node at (1.1,0.5){$=$};
\draw[wei] (1.4,0) node[below]{$\omega_j$} to [out=90,in=-90] (1.8,.5) to [out=90,in=-90](1.4,1);
\draw (2,.2) to [out=90,in=-90](1.6,.5)
(2,0) node[below]{$\bd$} -- (2,.2) to [out=90,in=-90]
(2.2,.5) node[right]{$\bd-\al_{k}$}
(1.6,.5) to [out=90,in=-90] node[pos=0,circle,fill=black,inner sep=2pt,outer sep=0pt]{}
(2,.8) -- (2,1) node[above]{$\bd$}
(2.2,.5) to [out=90,in=-90] (2,.8)
node at (3,0.5){$=0$}
(1.3,.5) node[right]{$\circledS$}
;}
\]
where the labels $\circledR$ and $\circledS$ with $S=R-\delta_{j,k}$ denote the number of dots on the strand (following Remark \ref{KL}). Since the two cases correspond to $S=R\geq 0$ respectively $S+1=R\geq 1$, the last equality holds. The second relation is \cite[(4.2)]{Webmerged} with Proposition \ref{eAeT}. To see the first equality in case $k\not=j$ note that the second diagram corresponds to
\begin{eqnarray}
f\mapsto X(d_{k-1},d_k)(f):=\Delta_w (\prod_{r=1}^{d_{k-1}}(x_{k,1}-x_{k-1,r})x_{k,1}^Rf),
\end{eqnarray}
with
$w=s_{d_{k-1}}\cdots s_2s_1$, where $s_t$ denotes the transposition swapping the variables $x_{k,t}$ and $x_{k,t+1}$; hence it is enough to see that
$X(d_{k-1},d_k)=\op{id}$ or even $X(d_{k-1},d_k)(1)=1$.
If $d_{k-1}=0,1$ this is easily verified, and so we proceed by induction. Using formula \eqref{Demazureprop} we obtain
\begin{eqnarray*}
X(d_{k-1},d_k)&=&\Delta_w(\prod_{r=1}^{d_{k-1}-1}(x_{k,1}-x_{k-1,r})x_{k,1}^R)\\
&&+\quad\Delta_{ws_1}(\prod_{r=1}^{d_{k-1}-1}(x_{k,2}-x_{k-1,r})x_{k,1}^R)\Delta_{s_1}(x_{3,1}-x_{2,d_{k-1}})\\
&=&0+1\quad=\quad 1
\end{eqnarray*}
using twice the induction hypothesis and the fact $\Delta_{s_1}(x_{3,1}-x_{2,d_{k-1}})=1$.
In case $k=j$, the split in the diagram (just an inclusion) is followed by multiplication with $x_{j,1}^R$ and the operator $\Delta_w$ for $w=s_{d_{j-1}}\cdots s_2s_1$.
Now if $d_{k}\geq d_{k-1}$ for all $k\not= j$ and  $d_j\leq d_{j-1}+1$, we automatically have $d_k> d_{k-1}-1$ and $ d_j< d_{j-1}$. Hence if $e_{(\bd)}{A}^{\omega_j}e_{(\bd)}\not=0$ then $\bd=(\bd_{j;n})$ for some $j,n$.

In this case all elements $e_{(\bd_{j;n})}xe_{(\bd_{j;n})}$ where $x$ is of the form $\grave\mu \overset{w;h}\Longrightarrow \grave\nu$ (as in Proposition \ref{red-basis}) and $w\not=\op{id}$ or $w=\op{id}$ and $h\not=1$ can be written in terms of similar diagrams where a single strand is pulled off, so they are again zero and only the scalars remain.
\end{proof}

 Now, we consider the element
 $t^{(j;n)}:=(\prod_{k=1}^{\lfloor
   \nicefrac{n}{e}\rfloor}x_{j-1,k})e_{\bmuh_{j;n}}$, that is, $e_{\bmuh_{j;n}}$ with a dot on every
 strand labeled by $j-1$ and no dots on any others, see Figure \ref{idempotent}. This element
 plays an important role in the basis of Hu and Mathas, \cite{HM}.

\begin{lemma}\label{id-split}
Let $\alpha_{i_1}, \alpha_{i_2},\ldots \alpha_{i_n}$ be an arbitrary sequence of simple roots. Then
\[\tikz[baseline,thick,xscale=1.7, yscale=2]{
\draw[wei] (0,-.5) node[below]{$\omega_j$} -- (0,.5);
\draw (.5,-.5) node[below]{$\al_{i_1}$} to[out=90,in=-90] (1.5,-.1)
(1.5,.1) to[out=90,in=-90] (.5,.5) node[above]{$\al_{i_1}$}
(1,-.5) node[below]{$\al_{i_2}$} to[out=90,in=-90] (1.5,-.1)
(1.5,.1) to[out=90,in=-90] (1,.5) node[above]{$\al_{i_2}$}
(2,-.5) node[below]{$\al_{i_{n-1}}$} to[out=90,in=-90] (1.5,-.1)
(1.5,.1) to[out=90,in=-90] (2,.5) node[above]{$\al_{i_{n-1}}$}
(2.5,-.5) node[below]{$\al_{i_n}$} to[out=90,in=-90] (1.5,-.1)
(1.5,.1) to[out=90,in=-90] (2.5,.5) node[above]{$\al_{i_n}$}
(1.5,-.1) -- (1.5,.1);}
\quad = \quad\begin{cases} \mp t^{(j;n)} & \text{if $\al_{i_g}=\al_{j-g+1}$ for all $g$,}\\ 0 &
\text{otherwise.} \end{cases}
\]
\end{lemma}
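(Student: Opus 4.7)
The diagram represents the product $s\circ m$, where $m$ is the merge from $e_{\bmuh_{\cdot}}$ (with $\bmuh_{\cdot}=(\alpha_{i_1},\dots,\alpha_{i_n})$) to the single-block idempotent $e_{(\bd)}$ for $\bd=\sum_g \alpha_{i_g}$, followed by the split $s$ going back. Since $m=e_{(\bd)}m$ and $s=se_{(\bd)}$, the composition factors through $e_{(\bd)}$. The plan is to first invoke Lemma \ref{id-zero}: in the cyclotomic quotient $A^{\omega_j}$, the idempotent $e_{(\bd)}$ is zero unless $\bd=\bd_{j;m}$ for some $m$; since $|\bd|=n$, this forces $m=n$ and pins down the multiset of residues $\{i_1,\dots,i_n\}\pmod e$ to be exactly that of the cyclic sequence $j,j-1,\dots,j-n+1$.

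The next step is to show that even when the multiset is correct, the labels must appear in the decreasing order $i_g=j-g+1$. I would argue by induction on $n$, mimicking the strategy used in the proof of Lemma \ref{id-zero}. Factor the merge as: first merge $\alpha_{i_2},\dots,\alpha_{i_n}$ into a block of dimension $\bd-\alpha_{i_1}$, then merge with $\alpha_{i_1}$, and similarly for the split. The inductive hypothesis applied to the inner $(n-1)$-strand diagram forces $i_g=j-g+1$ for $g\geq 2$ or else yields zero. For the outermost strand, pull $\alpha_{i_1}$ off the merged block using Proposition \ref{euler-int} to introduce a splitting followed by multiplication by a suitable Euler class, and then apply the cyclotomic relation \cite[(2.2)]{CTP} to slide a dotted strand across the red strand; this forces the exponent of the resulting dot to be non-negative, which is possible only when $i_1=j$, exactly as in the vanishing calculation of Lemma \ref{id-zero}.

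When the ordering is the required $i_g=j-g+1$, the explicit value $\mp t^{(j;n)}$ follows from a direct calculation via Propositions \ref{euler-int} and \ref{Demazure}. The split from $e_{(\bd_{j;n})}$ down to $e_{\bmuh_{j;n}}$ is controlled by the Euler class $E$ from \eqref{euler}; its successive application corresponding to each step of the split produces, after the many cancellations forced by the fact that consecutive strand labels differ by exactly $1$, precisely the variables $\prod_{k=1}^{\lfloor n/e\rfloor}x_{j-1,k}$ that appear in $t^{(j;n)}$, the surviving factors being exactly those coming from residues $j-1$ that complete a full cycle modulo $e$. The overall sign comes from the alternating sum in the longest-element Demazure operator $\Delta_{w_0}$ as in \eqref{Demlong}. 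The main obstacle is the second stage: controlling the ordering requires a delicate combination of Euler class computations and the cyclotomic relation to systematically rule out all non-decreasing orderings, which is more intricate than the single-strand argument in Lemma \ref{id-zero}.
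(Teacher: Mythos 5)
Your overall architecture---factor the double pitchfork through the one--block idempotent $e_{(\bd)}$, invoke Lemma~\ref{id-zero} to constrain the multiset of residues, then induct on $n$ using Euler classes, Demazure operators and the cyclotomic relation---is the same as the paper's. But there are two genuine gaps. First, your induction peels off the \emph{leftmost} strand $\al_{i_1}$ and then claims to apply the inductive hypothesis to the double pitchfork on strands $2,\dots,n$. That sub-diagram is separated from the red strand $\omega_j$ by $\al_{i_1}$, and the lemma is a statement about configurations immediately adjacent to the red strand in the cyclotomic quotient; the inductive hypothesis therefore does not apply to it as stated. The paper's induction goes the other way: it peels off the \emph{rightmost} strand $\al_k$, so that the accumulated block $\bd$ stays adjacent to $\omega_j$ and the $(n-1)$-strand statement is directly usable.

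Second, and more seriously, the evaluation $\mp t^{(j;n)}$ in the correct case---the actual content of the lemma---is left as ``a direct calculation'' with ``many cancellations.'' The paper's proof runs through a specific identity, \eqref{polyb}/\eqref{polyb2}: the merge--split of $(\bd,\al_k)$ through $(\bd+\al_k)$ equals a double crossing plus multiplication by an explicit polynomial $b=F(1)$, computed via the twisted Leibniz rule \eqref{Demazureprop} and the identities such as \eqref{stupid1}; the double-crossing term is then annihilated by the cyclotomic condition (using the lemma for smaller $n$, so the induction is genuinely interleaved); and $b$ collapses to $(-1)^a$ except when the new strand closes a cycle ($k+1\equiv j$), in which case $b=e_1(k+1,a)-e_1(k,a)$, which reduces to the single variable $x_{j-1,a}$ because $e_{(\bd)}$ admits only scalar endomorphisms by Lemma~\ref{id-zero}. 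Your intuition that the surviving dots sit on the residue-$(j-1)$ strands that complete a cycle is correct, but without the identity \eqref{polyb2} and the collapse of $b$ you have no argument that the product of Euler classes reduces to $\pm 1$ or a single dot rather than a large symmetric polynomial; this is precisely the step that requires work, and it is missing. (The base case $n<e$, which the paper settles directly from Proposition~\ref{euler-int}, should also be made explicit.)
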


\begin{proof}
The fact that if $\al_{i_g}\neq \al_{j-g+1}$ for some $g$ then the element is $0$ follows easily from Lemma \ref{id-zero}.
Now let $\bd=\bd_{j;n}$, and $k=j+n$.
The proof is then by induction on $n$.  When $n<e$ the claim follows directly from Lemma \ref{euler-int}. Otherwise, we claim that

\excise{Now, consider the element \[\tikz[thick,xscale=2.5,yscale=1.5]{
\draw[wei] (-.3,0) node[below]{$\omega_j$} to (-.3,1.3) ;
\draw (0,0) node[below]{$\bd$} to [out=90,in=-90](.3,.5) ;
\draw (.6,0) node[below]{$\alpha_k$} to [out=90,in=-90] (.3,.5) ;
\draw (.3,.5) -- (.3,.8) node[right, midway]{$\bd+\alpha_k$};
\draw (.3,.8) to [out=90,in=-90](.6,1.3) node[above]{$\alpha_k$};
\draw (.3,.8) to [out=90,in=-90] (0,1.3) node[above]{$\bd$};
}\]}

\begin{equation}
\label{polyb}
\tikz{
\node at (-4,0) {
\tikz[xscale=2.5,yscale=1.5,baseline=.65]{
\draw[wei] (-.3,0) node[below]{$\omega_j$} to (-.3,1.3) ;
\draw[thick] (0,0) node[below]{$\bd$} to [out=90,in=-90](.3,.5) ;
\draw (.6,0) node[below]{$\alpha_k$} to [out=90,in=-90] (.3,.5) ;
\draw[thick] (.3,.5) -- (.3,.8) node[right, midway]{$\bd+\alpha_k$};
\draw (.3,.8) to [out=90,in=-90](.6,1.3) node[above]{$\alpha_k$};
\draw[thick] (.3,.8) to [out=90,in=-90] (0,1.3) node[above]{$\bd$};
}};
\node at (-1.5,0){$=$};
\node at (1,0){\tikz[xscale=2.5,yscale=1.5,baseline=.65]{
\draw[wei] (-.4,0) node[below]{$\omega_j$} to (-.4,1.3) ;
\draw[thick] (0,0) node[below]{$\bd$} to [out=90,in=-90](0,.2) ;
\draw (.6,0) node[below]{$\alpha_k$} to [out=90,in=-90] (0,1.1) ;
\draw[thick] (0,.2) to[out=90,in=-90] (-.1,.65) to[out=90,in=-90] (0,1.1);
\draw (0,.2) to [out=90,in=-90](.6,1.3) node[above]{$\alpha_k$};
\draw[thick] (0,1.1) to [out=90,in=-90] (0,1.3) node[above]{$\bd$};
}};
\node at (3,0){$+$};
\node at (5,0){\tikz[xscale=2.5,yscale=1.5,baseline=.65]{
\draw[wei] (-.4,0) node[below]{$\omega_j$} to (-.4,1.3) ;
\draw[thick] (0,0) node[below]{$\bd$} to [out=90,in=-90](0,1.3)
node[above]{$\bd$};;
\draw (.6,0) node[below]{$\alpha_k$} to [out=90,in=-90] (.6,1.3)
node[above]{$\alpha_k$};
\node[inner xsep=30pt,inner ysep=2pt,draw,thin,fill=white] at (.3,.65){$b$};
}};}
\end{equation}

for some polynomial $b\in \bLa(\bd,\alpha_k)$.  To see this let $G$ and $F$ denote the first and second morphism. Abbreviating $a=d_k$ and setting $E=\prod_{m=1}^a(x_{k+1,m}-x_{k,a+1})$ we have
\begin{eqnarray}
G(f)&=&
\begin{cases}
E\Delta_1^{(k)}\Delta_2^{(k)}\cdots \Delta_{a-1}^{(k)}\Delta_a^{k}(f)&\text{if $d_k-d_{k+1}=1$,}\\
E\Delta_1^{(k)}\Delta_2^{(k)}\cdots \Delta_{a-1}^{(k)}(f)&\text{if $d_k=d_{k-1}$,}
\end{cases}
\end{eqnarray}
where $\Delta_i^{k}$ denotes the Demazure operator involving the variables $x_{k,i}, x_{k,i+1}$. Note that $F$ is a composition of first multiplying with an Euler class $E_1$, followed by a merge $M_1$, an inclusion and finally a merge $M_2$ as follows
\small
\begin{align*}
F&=(\bd,\al_k) \to (\bd-\al_k,\al_k,\al_k) \to (\bd-\al_k,2\al_k) \to   (\bd-\al_k,\al_k,\al_k)  \to (\bd,\al_k)(1)
\end{align*}
\normalsize
\excise{& =-(\bd-\al_k,\al_k,\al_k) \to (\bd-\al_k,2\al_k) \to   (\bd-\al_k,\al_k,\al_k)  \to (\bd,\al_k) \:\cdot\: \prod_{m=1}^{g}(x_{k,f-1}-x_{k+1,m})\\
& =-(\bd-\al_k,2\al_k) \to   (\bd-\al_k,\al_k,\al_k)  \to (\bd,\al_k) \:\cdot\: \sum_{m=1}^g (x_{k,f-1}^{g-m-1}+x_{k,f-1}^{g-m-2}x_{k,f}+\cdots + x_{k,f}^{g-m-1})e_m(\mathbf{x}_{k+1})\\
& =\begin{cases} (-1)^g &  \text{if }g=f-1\\
(-1)^g(e_1(\mathbf{x}_k)+e_1(\mathbf{x}_{k+1}))& \text{if }g=f
\end{cases}}
so we get
\begin{eqnarray*}
F(f)&=&
\begin{cases}
\Delta_1^{(k)}\Delta_2^{(k)}\cdots \Delta_{a-1}^{(k)}\Delta_a^{k}(E_1f)&\text{if $d_k-d_{k+1}=1$,}\\
\Delta_1^{(k)}\Delta_2^{(k)}\cdots \Delta_{a-1}^{(k)}(E_1f)&\text{if $d_k=d_{k-1}$.}
\end{cases}
\end{eqnarray*}
where $E_1=\prod_{m=1}^a(x_{k+1,m}-x_{k,a})$. The latter is a polynomial in $x_{k,a}$ of degree $a$. Using \eqref{Demazureprop}
we obtain
\begin{eqnarray}
F(f)&=&
\begin{cases}
\Delta_1^{(k)}\Delta_2^{(k)}\cdots \Delta_{a-1}^{(k)}\Delta_a^{k}(E_1)f+G(f)&\text{if $d_k-d_{k+1}=1$,}\\
\Delta_1^{(k)}\Delta_2^{(k)}\cdots \Delta_{a-1}^{(k)}(E_1)f+ G(f)&\text{if $d_k=d_{k-1}$.}\label{Cases}
\end{cases}
\end{eqnarray}
since all the other terms vanish. Hence the claim follows with $b=F(1)$. By an easy induction one can show
\begin{eqnarray}
\label{stupid1}
\Delta_1^{(k)}\Delta_2^{(k)}\cdots \Delta_{a-1}^{(k)}\Delta_a^{k}(x_{k,a}^n)&=&\begin{cases}1 &a=n,\\ 0&\text{otherwise,}\end{cases}
\end{eqnarray}
and so we are only interested in the leading term $(-1)^ax_{k,a}^a$ of $E_1$. On the other hand, again an easy induction shows that
\begin{eqnarray}
\Delta_1^{(k)}\Delta_2^{(k)}\cdots \Delta_{a-1}^{(k)}(x_{k,a}^n)&=&(-1)^{a-1}( e_1(k+1,a)-e_1(k,a)),
\end{eqnarray}
where $e_1(p,a)$ denotes the first elementary symmetric polynomial in the variable $x_{p,q}$, $1\leq q\leq a$.
Altogether we obtain
\begin{eqnarray}
b=F(1)&=&
\begin{cases}
(-1)^a&\text{if $d_k-d_{k+1}=1$,}\\
e_1(k+1,a)-e_1(k,a)&\text{if $d_k=d_{k-1}$.}
\end{cases}
\end{eqnarray}
Note that $d_k=d_{k-1}$ if and only if $k+1=j$.

So far we did not use the cyclotomic condition, which gives the even stronger relation
\begin{equation}
\label{polyb2}
\tikz{
\node at (-2,0) {
\tikz[xscale=2.5,yscale=1.5,baseline=.65]{
\draw[wei] (-.3,0) node[below]{$\omega_j$} to (-.3,1.3) ;
\draw[thick] (0,0) node[below]{$\bd$} to [out=90,in=-90](.3,.5) ;
\draw (.6,0) node[below]{$\alpha_k$} to [out=90,in=-90] (.3,.5) ;
\draw[thick] (.3,.5) -- (.3,.8) node[right, midway]{$\bd+\alpha_k$};
\draw (.3,.8) to [out=90,in=-90](.6,1.3) node[above]{$\alpha_k$};
\draw[thick] (.3,.8) to [out=90,in=-90] (0,1.3) node[above]{$\bd$};
}};
\node at (0,0){$=$};
\node at (2,0){\tikz[xscale=2.5,yscale=1.5,baseline=.65]{
\draw[wei] (-.4,0) node[below]{$\omega_j$} to (-.4,1.3) ;
\draw[thick] (0,0) node[below]{$\bd$} to [out=90,in=-90](0,1.3)
node[above]{$\bd$};;
\draw (.6,0) node[below]{$\alpha_k$} to [out=90,in=-90] (.6,1.3)
node[above]{$\alpha_k$};
\node[inner xsep=30pt,inner ysep=2pt,draw,thin,fill=white] at (.3,.65){$b$};
}};}
\end{equation}
by noting that the middle term in \eqref{polyb} is in fact zero by Lemma \ref{id-split} (since the idempotent after splitting of the two $\alpha_k$ causes it to vanish). Since all positive degree endomorphisms of $e_{(\bd)}$ are $0$, we have $e_1(j,a)-e_1(j-1,a)e_{\bd,\al_k}=x_{j-1;a}e_{\bd,\al_j}$ and the lemma follows therefore by induction.
\end{proof}

Given a vector composition $\bmuh$, there is an induced usual composition of length $r$ given by $\mathfrak{c}(\bmuh)=\{\sum_{j=1}^e \mu[-,j]\}$.  These compositions are endowed with the usual lexicographic order on parts (this is a refinement of dominance order).  We can define a filtration of ${A}^{\omega_j}$ by letting $A_{>\mathfrak{c}}^{\omega_j}$ be the two-sided ideal generated by $e_{\bmuh}$ for $\mathfrak{c}(\bmuh)> \mathfrak{c}$ in lexicographic order.

\begin{lemma}\label{multisplit}For a composition $\mathfrak{n}=(n_1,
n_2, \dots)$, we have

\begin{multline*}
\begin{tikzpicture}[baseline,thick,xscale=1.9, yscale=1.9]
\draw[wei] (0,-.5) node[below]{$\omega_j$} -- (0,.5);
\draw (.5,-.5) node[below,scale=.8]{$\al_{j}$} to [out=90,in=-90] (1.5,-.1);
\draw (1.5,.1) to [out=90,in=-90] (.5,.5) node[above,scale=.8] {$\al_j$};
\draw (1,-.5) node[below,scale=.8]{$\al_{j+1}$} to [out=90,in=-90]
(1.5,-.1);
\draw (1.5,.1) to [out=90,in=-90] (1,.5) node[above,scale=.8] {$\al_{j+1}$};
\draw (2,-.5) node[below,scale=.8]{$\al_{j+n_1-1}$} to [out=90,in=-90]
(1.5,-.1);
\draw (1.5,.1) to [out=90,in=-90] (2,.5) node[above,scale=.8]
{$\al_{j+n_1-1}$};
\draw (2.5,-.5) node[below,scale=.8]{$\al_{j+n_1}$} to [out=90,in=-90]
(1.5,-.1);
\draw (1.5,.1) to [out=90,in=-90] (2.5,.5) node[above,scale=.8]
{$\al_{j+n_1}$};
\draw (1.5,-.1) -- (1.5,.1);
\draw (3.5,-.5) node[below,scale=.8]{$\al_{j+n_1+1}$} to [out=90,in=-90]
(4.5,-.1);
\draw (4.5,.1) to [out=90,in=-90] (3.5,.5) node[above,scale=.8]
{$\al_{j+n_1+1}$};
\draw (4,-.5) node[below,scale=.8]{$\al_j$} to [out=90,in=-90] (4.5,-.1);
\draw (4.5,.1) to [out=90,in=-90] (4,.5) node[above,scale=.8] {$\al_j$};
\draw (5,-.5) node[below,scale=.8]{$\al_{j+n_2-1}$} to [out=90,in=-90]
(4.5,-.1);
\draw (4.5,.1) to [out=90,in=-90] (5,.5) node[above,scale=.8]
{$\al_{j+n_2-1}$};
\draw (5.5,-.5) node[below,scale=.8]{$\al_{jn_2}$} to [out=90,in=-90]
(4.5,-.1);
\draw (4.5,.1) to [out=90,in=-90] (5.5,.5) node[above,scale=.8]
{$\al_{j+n_2}$};
\draw (4.5,-.1) -- (4.5,.1);
\node[scale=2] at (6,0){$\cdots$};
\end{tikzpicture}
\quad\quad\\
\\
\equiv \quad\pm t^{(j;n_1)}| t^{(j+1;n_2)}|t^{(j+2;n_3)}|\cdots
\pmod{{A}^{\omega_j}_{>\mathfrak{c}(\bd_{j;n_1},\bd_{j+1;n_2},\dots)}.}
\end{multline*}
\end{lemma}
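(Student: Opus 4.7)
\emph{Plan.} I would prove Lemma \ref{multisplit} by induction on the number of blocks $r$. The base case $r=1$ is exactly Lemma \ref{id-split}, so the content lies entirely in the inductive step. For the inductive step, I would exploit the monoidal (horizontal) structure on $A^{\omega_j}$ from Proposition \ref{monoid} and Lemma \ref{tensor} to factor the diagram on the LHS horizontally into two pieces: the first piece contains the red strand $\omega_j$ together with the leftmost block of merges on $(\alpha_j,\alpha_{j+1},\dots,\alpha_{j+n_1})$, while the second piece contains the remaining $r-1$ blocks of merges, with no red strand present. Lemma \ref{id-split} applied to the first piece immediately produces $\pm t^{(j;n_1)}$; thus one reduces to showing that each of the subsequent block merges produces $t^{(j+k-1;n_k)}$ modulo the higher-composition ideal $A^{\omega_j}_{>\mathfrak{c}}$.

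The difficulty is that the subsequent blocks do not see a red strand, so the cyclotomic vanishings of Lemma \ref{id-zero} are not directly available. Modulo the higher-composition ideal, however, one only needs to identify the leading composition-degree term of an iterated merge of $(\alpha_{j+k-1},\dots,\alpha_{j+k-1+n_k})$. My plan is to run a direct Euler-class calculation analogous to the one appearing in the proof of Lemma \ref{id-split}: iterate Proposition \ref{euler-int} to express the merge as a chain of multiplications by Euler classes $\prod(x_{i+1,r}-x_{i,s})$ interspersed with Demazure operators, and then use the twisted Leibniz identity \eqref{Demazureprop} and computations of the type \eqref{stupid1} to extract the polynomial $\prod_{m=1}^{\lfloor n_k/e \rfloor} x_{j+k-2,m}$ that defines $t^{(j+k-1;n_k)}$. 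All other terms produced along the way should either vanish on degree grounds or factor through a more refined idempotent and hence lie in $A^{\omega_j}_{>\mathfrak{c}}$.

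The main obstacle is the bookkeeping in this last step, especially when $n_k\geq e$ and the residues wrap around the affine Dynkin quiver: a non-cyclotomic analog of the identity \eqref{polyb2} is required, in which the clause ``all positive-degree endomorphisms of $e_{(\bd)}$ vanish'' (used in the proof of Lemma \ref{id-split}) is replaced by ``all positive-degree endomorphisms of $e_{(\bd)}$ lie in the higher-composition ideal'', the latter being justified by the basis of Proposition \ref{red-basis} (any nontrivial permutation or polynomial decoration increases $\mathfrak{c}$). Granted this, one can iterate the argument block-by-block to obtain $t^{(j;n_1)}\mid t^{(j+1;n_2)}\mid\cdots\mid t^{(j+r-1;n_r)}$ modulo $A^{\omega_j}_{>\mathfrak{c}}$, up to the overall sign inherited from Lemma \ref{id-split}. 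The sign itself can be tracked by counting crossings of strands of equal residue, much as in the base case.
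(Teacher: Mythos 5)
Your base case and the overall induction on the number of blocks match the paper, but the mechanism you propose for the inductive step contains a genuine gap. The crux is your claimed ``non-cyclotomic analog of \eqref{polyb2}'': that all positive-degree endomorphisms of $e_{(\bd)}$ lie in the higher-composition ideal $A^{\omega_j}_{>\mathfrak{c}}$ because ``any nontrivial permutation or polynomial decoration increases $\mathfrak{c}$.'' This is false for polynomial decorations. By Theorem \ref{basis} (or Proposition \ref{red-basis}), the elements $\bmuh\overset{\op{id};h}\Longrightarrow\bmuh$ for $h$ ranging over a basis of $\bLa(\bmuh)$ are part of a basis of $e_{\bmuh}Ae_{\bmuh}$ and are linearly independent of the ideal generated by coarser idempotents; a dot does not factor through any join of strands and does not change $\mathfrak{c}$. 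The only reason positive-degree endomorphisms of $e_{(\bd_{j;n})}$ vanish in Lemma \ref{id-zero} is the cyclotomic relation coming from the red strand sitting \emph{immediately} to the left. Once you factor the diagram horizontally and isolate a later block with no red strand adjacent to it, you lose exactly this input, and the Euler-class/Demazure computation for that block produces genuine dot terms (e.g.\ the $e_1(k+1,a)-e_1(k,a)$ term in \eqref{polyb} when the residue wraps around) that neither vanish nor lie in $A^{\omega_j}_{>\mathfrak{c}}$, and which do not on their own reduce to the specific dotted idempotent $t^{(j+k-1;n_k)}$.

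The paper's proof confronts this head-on: the induction hypothesis handles the first $k$ blocks, the last pitchfork is evaluated as if a red strand $\omega_{j+k}$ were adjacent (giving the desired term plus errors in the violating ideal of $A^{\omega_{j+k}}$, i.e.\ diagrams with a single strand $\alpha_q$, possibly dotted, pulled to the left of that block), and then the bulk of the argument consists of propagating those violating strands leftward through each intervening block $t^{(j+k-1;n_k)},\dots$ via the case analysis on $q$ versus $p+1$ versus $j+k-1$ and the relations \eqref{polyb}--\eqref{polyb2}, showing at each stage that the term either factors through a join of all strands (hence lies in $A^{\omega_j}_{>\mathfrak{c}}$) or passes into the violating ideal one block further left, until it reaches the genuine red strand $\omega_j$ and dies. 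This leftward propagation is the essential content of the inductive step, and it is precisely what your block-by-block factorization cannot replace.
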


\begin{proof}
If the composition $\mathfrak{n}$ has 1 part, then we are done by Lemma \ref{id-split}. We assume we have $k+1$ parts and the statement is true for $k$ parts. Thus the displayed diagram is equivalent to $$t^{(j;n_1)}| t^{(j+1;n_2)}|t^{(j+2;n_3)}|\cdots|t^{(j+k-1;n_{k})}| m$$ where $m$ is the last of the double headed pitchforks, modulo $I:=A_{>\mathfrak{c}}^{\omega_j}$. By Lemma \ref{id-split}, this is the same as the desired element, plus $$t^{(j;n_1)}| t^{(j+1;n_2)}|t^{(j+2;n_3)}|\cdots|t^{(j+k-1;n_{k})}|m'$$ where $m'$ lies in the cyclotomic ideal for ${A}^{\omega_{j+k}}$. That is, it can be written so that every diagram in it has a strand at the left labeled with a single root $\alpha_q$, which in addition carries a dot if that root is $\al_{j+k-1}$. Hence we have for instance a situation

\[\tikz[baseline,thick,xscale=2.5, yscale=2]{
\draw (3,-.5) node[below]{$\al_q$} -- (3,.5);
\draw (.5,-.5) node[below]{$\al_{j+k-1}$} to[out=90,in=-90] (1.5,-.1);
\draw (1.5,.1) to[out=90,in=-90] (.5,.5) node[above]{$\al_{j+k-1}$};
\draw (1,-.5) node[below]{$\al_{j+k}$} to[out=90,in=-90] (1.5,-.1);
\draw (1.5,.1) to[out=90,in=-90] (1,.5) node[above]{$\al_{j+k}$};
\draw (2,-.5) node[below]{$\al_{p-1}$} to[out=90,in=-90] (1.5,-.1);
\draw (1.5,.1) to[out=90,in=-90] (2,.5) node[above]{$\al_{p-1}$};
\draw (2.5,-.5) node[below]{$\al_{p}$} to[out=90,in=-90] (1.5,-.1);
\draw (1.5,.1) to[out=90,in=-90] (2.5,.5) node[above]{$\al_{p}$};
\draw (1.5,-.1) -- (1.5,.1);
}
\]
In order to complete the induction, it suffices to show that this can be
written as a sum of elements each of which factors through the join of all strands (and hence is contained in $I$) or is contained in the cyclotomic ideal  ${A}^{\omega_{j+k-1}}$ (and we can repeat our argument until we finally obtain only elements in $I$ or in the cyclotomic ideal $J$ for ${A}^{\omega_{j}}$.)

If $q\not\equiv p+1\mod e$, then the diagram above is already in $J$ by Lemma \ref{id-zero}. If $q\equiv{p+1}\not\equiv  j+k-1\mod e$, then by \eqref{polyb2} we have
\[\tikz[thick,xscale=2.5,yscale=1.5, baseline=25pt]{
\draw[wei] (-.3,0) node[below]{$\omega_j$} to (-.3,1.3) ;
\draw (0,0) node[below]{$\bd$} to [out=90,in=-90](.3,.5) ;
\draw (.6,0) node[below]{$\alpha_k$} to [out=90,in=-90] (.3,.5) ;
\draw (.3,.5) -- (.3,.8) node[right, midway]{$\bd+\alpha_j$};
\draw (.3,.8) to [out=90,in=-90](.6,1.3) node[above]{$\alpha_k$};
\draw (.3,.8) to [out=90,in=-90] (0,1.3) node[above]{$\bd$};
}
\quad =\quad
\tikz[thick,xscale=2.5,yscale=1.5, baseline=25pt]{
\draw[wei] (-.3,0) node[below]{$\omega_j$} to (-.3,1.3) ;
\draw (0,0) node[below]{$\bd$} to [out=90,in=-90](0,.5) ;
\draw (.3,0) node[below]{$\alpha_k$} to [out=90,in=-90] (.3,.5) ;
\draw (.3,.5) to [out=90,in=-90](.3,1.3) node[above]{$\alpha_k$};
\draw (0,.5) to [out=90,in=-90] (0,1.3) node[above]{$\bd$};
}
\]
If $k={p+n+1}=p+1 \mod e$, then
\[\tikz[thick,xscale=2.5,yscale=1.5, baseline=25pt]{
\draw[wei] (-.3,0) node[below]{$\omega_j$} to (-.3,1.3) ;
\draw (0,0) node[below]{$\bd$} to [out=90,in=-90](.3,.5) ;
\draw (.6,0) node[below]{$\alpha_k$} to [out=90,in=-90] (.3,.5) ;
\draw (.3,.5) -- (.3,.8) node[right, midway]{$\bd+\alpha_j$};
\draw (.3,.8) to [out=90,in=-90](.6,1.3) node[above]{$\alpha_k$};
\draw (.3,.8) to [out=90,in=-90] (0,1.3) node[above]{$\bd$};
}
\quad =\quad
\tikz[thick,xscale=2.5,yscale=1.5, baseline=25pt]{
\draw[wei] (-.3,0) node[below]{$\omega_j$} to (-.3,1.3) ;
\draw (0,0) node[below]{$\bd$} to [out=90,in=-90](0,.5) ;
\draw (.3,0) node[below]{$\alpha_k$} to [out=90,in=-90](.3,1.3)
node[above]{$\alpha_k$};
\node[circle,inner sep=2.5pt,fill=black] at (.3,.65) {};
\draw (0,.5) to [out=90,in=-90] (0,1.3) node[above]{$\bd$};
}
\]
since the middle term of \eqref{polyb} vanishes (consider the idempotent after splitting of the two $\alpha_k$ and apply Lemma \ref{id-split}).
Hence our element factors through the join of all strands and hence lies in $A_{>\mathfrak{c}}^{\omega_j}$. The lemma follows.
\end{proof}

\begin{prop}\label{CST-span}
  The vectors $\CST$ span ${A}^\bnu$.
\end{prop}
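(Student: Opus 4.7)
The plan is to reduce the spanning question to the already-established cellular basis of $T^\bnu$ from Theorem \ref{T-is-cellular}, by combining it with the split-merge calculus of Lemmas \ref{id-split} and \ref{multisplit}. By Proposition \ref{red-basis}, it suffices to show that every basis element $x = \grave\mu \overset{w;h}\Longrightarrow \grave\nu$ of $\tAb$ lies, after passing to the quotient $A^\bnu$, in the span of the $C_{\sS,\sT}$. I would proceed by induction on the $\ell$-multipartition shape (in lexicographic order), working modulo the two-sided ideal generated by idempotents whose composition type is strictly larger.

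First I would factor $x = M_\nu \circ \tilde x \circ S_\mu$ where $S_\mu$ is a sequence of splits refining $\grave\mu$ all the way down to complete flag type, $M_\nu$ is the analogous sequence of merges from a complete flag type refinement of $\grave\nu$, and $\tilde x \in \eT A^\bnu \eT = T^\bnu$ (using Proposition \ref{Bencycliso}). By Theorem \ref{T-is-cellular}, $\tilde x$ is a linear combination of $B_{\sS_0}^* B_{\sT_0}$ for standard tableaux $\sS_0,\sT_0$, modulo elements of strictly higher shape. The next step is to identify the product $B_{\sT_0} \circ S_\mu$, up to higher-shape corrections, with $B_\sT$ for a semi-standard tableau $\sT$ whose type $\grave\mu_\sT$ agrees with $\grave\mu$. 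The semi-standard $\sT$ would be obtained from $\sT_0$ by grouping the entries that the split $S_\mu$ fuses back together; Lemma \ref{multisplit} is the key combinatorial input, guaranteeing that the ensuing split-merge configuration agrees (modulo strictly larger composition type) with the $t^{(j;n)}$ pieces appearing in the diagrammatic description of $B_\sT$ in Figure \ref{Belement}. Symmetrically, $M_\nu \circ B_{\sS_0}^*$ is identified with $B_\sS^*$ for a semi-standard $\sS$, and together these give the required expansion of $x$ modulo the inductive ideal.

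The main obstacle will be ensuring that the grouping operation actually produces a valid semi-standard tableau in the sense of Definition \ref{sstableau}: weak increase along rows and strict increase down columns translate, via the correspondence between boxes and strands, into the requirement that $w_\sT$ be the minimal double coset representative in $S_{\grave\la_\sT}\backslash S_d / S_{\grave\mu_\sT}$. If a naive grouping produces a pair violating this minimality, one must apply the braid relations of Proposition \ref{easyrel} to re-express the resulting diagram; each such rewrite either replaces the permutation with a strictly shorter one (absorbed into lower-order terms of the Bruhat filtration internal to $T^\bnu$) or produces a diagram that factors through an idempotent of strictly larger composition type (absorbed into the inductive ideal). Carefully tracking the interaction between this Bruhat-type filtration inside $T^\bnu$ and the composition-type filtration on $A^\bnu$ — so that every correction term really lies in a strictly higher stratum — is the key technical challenge of the proof.
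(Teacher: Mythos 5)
Your reduction hinges on the factorization $x = M_\nu\circ\tilde x\circ S_\mu$ with $\tilde x\in \eT A^\bnu\eT$, and this first step is where the argument breaks. Splitting a block down to complete flag type and then merging back is not the identity: by Proposition \ref{easyrel}(1) the composite $(\bc+\bd)\linj(\bc,\bd)\linj(\bc+\bd)$ is multiplication by $\Delta_w(E)$, which for a same-coloured block (where $E=1$ and $w\neq \op{id}$) is zero. So you cannot insert split--merge pairs for free; you would in effect have to show that every element of $A^\bnu$ lies in the two-sided ideal $A^\bnu\eT A^\bnu$. That is false: it would say that the Schur functor $M\mapsto \eT M$ kills no simple module, i.e.\ that $A^\bnu$ and $T^\bnu$ are Morita equivalent, which already fails for $\ell=1$ and $n\geq e$ (simples indexed by non-$e$-restricted partitions die under the Schur functor). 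Concretely, the elements $C_{\sS,\sT}$ with $\sS,\sT$ semi-standard but not standard --- exactly the ones you need beyond Theorem \ref{T-is-cellular} --- satisfy $\eT C_{\sS,\sT}\eT=0$, and since the $C_{\sS,\sT}$ span while $A^\bnu\eT A^\bnu$ is a proper ideal, some of them cannot be reached by any $T^\bnu$-sandwich, no matter how one rewrites modulo the lexicographic filtration.

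The paper does not route the spanning argument through $T^\bnu$ at all. Starting from Proposition \ref{red-basis}, every spanning vector is put in the form $\gmu_1\overset{w_1^{-1};h_1}\Longrightarrow\gla\overset{w_2;h_2}\Longrightarrow\gmu_2$, and one inducts on $\mathfrak{c}(\gla)$ in lexicographic order: Lemma \ref{multisplit} shows that if $\gla$ is not of the form $\gla_\sS$ for a semi-standard tableau, or if the polynomials $h_1,h_2$ are nontrivial, then the central split--merge configuration lies in $A^\bnu_{>\mathfrak{c}(\gla)}$ and is handled by induction; what survives is precisely a $C_{\sS,\sT}$. Your instinct that Lemma \ref{multisplit} and the lexicographic filtration are the decisive tools is correct, but the induction must run over the middle composition of the element itself, not over a purported factorization through the complete-flag idempotent. (The cellularity of $T^\bnu$ enters the paper's argument only later, for linear independence via $a\mapsto\varphi a\varphi^*$, where one maps \emph{into} $T^\bnu$ rather than trying to lift out of it.)
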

\begin{proof}
  By Proposition \ref{red-basis}, we need only show that vectors of the
  form \[\gmu_1\overset{w_1^{-1};h_1}\Longrightarrow\gla\overset{w_2;h_2}\Longrightarrow\gmu_2\]
  can be written in terms of the $\CST$'s.  As usual, we induct on
  $\mathfrak{c}(\gla)$.

By Lemma \ref{multisplit}, if $\gla$ is not of the form $\gla_{\sS}$
for some semi-standard tableaux, then we can rewrite our vector to
factor through $\grave{\eta}$ which is higher in lexicographic order.
Thus, we need only to show that elements of the
form \[\gmu_1\overset{w_1^{-1};h_1}\Longrightarrow\gla_\sS\overset{w_2;h_2}\Longrightarrow\gmu_2\]
can be written in terms of the $\CST$'s (which are the special case
where $h_1=h_2=1$).  At the center of this diagram, the picture looks
precisely like that shown in the statement of Lemma \ref{multisplit},
except that some of the legs may not split all the way down to unit
vectors.  We assume that we add the action of $h_1$ at the
bottom of that portion of the diagram, which is after applying all
crossings coming from $w_1$.  By
Lemma \ref{multisplit}, the central portion of the diagram lies in
$A_{>\mathfrak{c}(\gla)}$, and so by induction, this element can be
written as a linear combination of $\CST$'s.  By induction, the result follows.
\end{proof}

Given a semi-standard tableau $\sS$, we can associate a standard tableaux $\sS^\circ$ of the same shape with the following properties:
\begin{itemize}
\item Any box contains labels from the same alphabets in $\sS$ and $\sS^\circ$.
\item Any pair of boxes with different entries in $\sS$ has entries in $\sS^\circ$ in the same order.
\item The row reading word of $\sS^\circ$ is maximal in Bruhat order amongst the standard tableau satisfying the first 2 conditions.
\end{itemize}
While the map $\sS\mapsto \sS^\circ$ is obviously not injective, it is
injective on the set of tableaux with a fixed type.

\excise{
Fix $\blah_1,\bmuh,\blah_2$ and classes $w_1\in S_{\blah_1} \backslash S_{\bd}/S_{\bmuh}, w_2\in S_{\bmuh} \backslash S_{\bd}/S_{\blah_2}$.  Then we obtain an element \[\psi_{w_1,w_2}=\blah_1\overset{w_1}\Longrightarrow\bmuh \overset{w_2}\Longrightarrow\blah_2\]
\begin{lemma} We have
\[\displaystyle \varphi\psi_{w_1,w_2}\varphi^*=\sum_{\blah_1'\in K_1,\blah_2'\in K_2} \blah_1'\overset{w_1^\circ}\Longrightarrow\bmuh \overset{1;t^{\bmuh}}\Longrightarrow\bmuh\overset{w_2^\circ}\Longrightarrow\blah_2' +c \pmod{{A}^\nu_{>\mathfrak{c}(\bmuh)}}\]
where $w_i^\circ$ is the longest coset rep of $w_i$ and $c$ is a sum of elements of the form $\blah_1'\overset{w_1';h_1}\Longrightarrow\bmuh \overset{w_2';h_2}\Longrightarrow\blah_2$ for $w_i'\leq w_i^\circ$ in Bruhat order.
\end{lemma}}

To $\gmu$ we associate a vector $\varphi_{\gmu}$ of ${A}^\bnu e_{\gmu}$ defined as follows: let $K$ be the set of  vector compositions whose
corresponding flags are complete refinements of that for $\gmu$.  For each $\gla\in K$, there is a diagram $\gmu\linj \gla$ which looks like a bunch of chicken feet where the top is labeled with the sequence $\blah$.

\begin{definition}
\label{chicken}
We let $\varphi_{\gmu}=\sum_{\gla\in K} \gmu\linj \gla$ and $\varphi=\sum_{\gmu} \varphi_{\gmu}$. We call them {\de chicken feet} vectors and their duals  $\varphi_{\gmu}^*$, $\varphi^*$ {\de pitchfork vectors}.
\end{definition}

Now, consider the map ${A}^\bnu\to T^\bnu$, $a\mapsto\varphi a\varphi^*.$
This map is obviously not injective, but it is on $e_\mathfrak{n}{A}^\bnu e_\mathfrak{m}$ for a fixed pair of compositions $\mathfrak{n},\mathfrak{m}$.

\begin{lemma}\label{CST-lin-ind} For all $\sS,\sT$, we have
  \begin{equation}\label{CST-circ}
\displaystyle  \varphi \CST\varphi^*=C_{\sS^\circ,\sT^\circ}+\sum_{\substack{\sS'<\sS^\circ \\ \sT' <\sT^\circ}} a_{\sS',\sT'}C_{\sS',\sT'}\pmod {{A}^\bnu_{>\mathfrak{c}(\grave\la_{\sS})}}.
\end{equation}

In particular, the elements $\CST$ are all linearly independent.
\end{lemma}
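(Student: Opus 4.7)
The plan is to compute $\varphi\CST\varphi^\ast$ explicitly up to the ideal $A^\bnu_{>\mathfrak{c}(\grave\la_\sS)}$ and then use cellularity of $T^\bnu$ (Theorem~\ref{T-is-cellular}) to deduce both the asserted formula \eqref{CST-circ} and linear independence. The basic input is that $\varphi = \sum_{\gmu}\varphi_{\gmu}$ and $\varphi^\ast = \sum_{\gla}\varphi^\ast_{\gla}$ live in $\eT A^\bnu$ and $A^\bnu\eT$ respectively, so the output of $\varphi\CST\varphi^\ast$ lies in $\eT A^\bnu \eT = T^\bnu$ by Proposition~\ref{eAeT}. Moreover $\CST = B_\sS^\ast B_\sT$ factors through $e_{\grave\mu_\sS}$ on the left and $e_{\grave\mu_\sT}$ on the right, so only the summands $\varphi_{\grave\mu_\sS}$ and $\varphi^\ast_{\grave\mu_\sT}$ contribute, reducing the computation to $\varphi_{\grave\mu_\sS}\, B_\sS^\ast B_\sT \, \varphi^\ast_{\grave\mu_\sT}$.

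The first step is to analyse the factor $\varphi_{\grave\mu_\sS} B_\sS^\ast$. Expanding $\varphi_{\grave\mu_\sS}$ as a sum of splits $\grave\mu_\sS\!\to\!\blah$ over all complete flag refinements $\blah$ of $\grave\mu_\sS$, each summand yields a composition $\grave\la_\sS\!\to\!\grave\mu_\sS\!\to\!\blah$ of a merge and a split. By Theorem~\ref{basis} (together with the braid relations of Proposition~\ref{easyrel}) this composition can be rewritten as a linear combination of basis morphisms $\grave\la_\sS\overset{u;h}\Longrightarrow\blah$ for various coset representatives $u$, plus terms factoring through larger compositions which lie in $A^\bnu_{>\mathfrak{c}(\grave\la_\sS)}$. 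The summand with longest $u$ corresponds precisely to the standard tableau of shape equal to the shape of $\sS$ whose row reading word is the longest-order refinement compatible with $\sS$, namely $\sS^\circ$ (this is exactly the defining property of $\sS^\circ$: its reading word is maximal in Bruhat order subject to refining the entries of $\sS$). Thus, modulo $A^\bnu_{>\mathfrak{c}(\grave\la_\sS)}$,
\[
\varphi_{\grave\mu_\sS} B_\sS^\ast \;\equiv\; B_{\sS^\circ}^\ast \;+\;\sum_{\sS'<\sS^\circ} \beta_{\sS'} B_{\sS'}^\ast,
\]
and the symmetric identity holds for $B_\sT\varphi^\ast_{\grave\mu_\sT}$ with leading term $B_{\sT^\circ}$. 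Multiplying the two expressions and using $C_{\sS',\sT'} = B_{\sS'}^\ast B_{\sT'}$ yields \eqref{CST-circ}.

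For linear independence, suppose $\sum a_{\sS,\sT}\CST = 0$ in $A^\bnu$. Fix a block $(\gmu_1,\gmu_2)$ of idempotents; the only tableaux entering are those with $\grave\mu_\sS=\gmu_1$, $\grave\mu_\sT=\gmu_2$. Working by downward induction on $\mathfrak{c}(\grave\la_\sS)$ in lexicographic order, we may reduce modulo $A^\bnu_{>\mathfrak{c}(\grave\la_\sS)}$ and assume all surviving tableaux have a common shape $\la$. Apply $\varphi(\,\cdot\,)\varphi^\ast$ to the relation: by \eqref{CST-circ} this produces a linear combination of cellular basis elements $C_{\sU,\sV}$ of $T^\bnu$ (for \emph{standard} $\sU,\sV$ of shape $\la$) with upper triangular transition matrix, the leading terms being indexed by the pairs $(\sS^\circ,\sT^\circ)$. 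Since the map $\sS\mapsto\sS^\circ$ is injective on tableaux of fixed type, the pairs $(\sS^\circ,\sT^\circ)$ are distinct, so upper triangularity combined with the linear independence of the $C_{\sU,\sV}$ in $T^\bnu$ from Theorem~\ref{T-is-cellular} forces $a_{\sS,\sT}=0$ for all $(\sS,\sT)$ in this block. Inducting on $\mathfrak{c}(\grave\la)$ and summing over idempotent blocks gives the general statement.

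The main obstacle is the leading-term identification in the second paragraph: one must verify that after rewriting the split/merge compositions via the braid relations and Lemma~\ref{multisplit}, the coefficient of $B_{\sS^\circ}^\ast$ is nonzero (in fact $\pm 1$), and all other standard tableaux that appear are strictly smaller in Bruhat order. This is a careful bookkeeping of Demazure operators and dot contributions along the lines of Lemma~\ref{id-split} and parallels the leading-term computation of Hu--Mathas \cite{HM} for the quiver Hecke algebra case; the extra input needed here is the matching between the chicken-feet refinements and the combinatorial operation $\sS \mapsto \sS^\circ$.
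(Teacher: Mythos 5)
Your proposal follows essentially the same route as the paper: you compute $\varphi_{\grave\mu_\sS}B_\sS^*\equiv B_{\sS^\circ}^*$ plus Bruhat-lower standard terms modulo $A^\bnu_{>\mathfrak{c}(\grave\la_\sS)}$ (non-standard refinements being absorbed into the ideal), and then deduce linear independence from the upper-triangularity of \eqref{CST-circ} together with Theorem~\ref{T-is-cellular}, restricting to fixed idempotent blocks exactly as the paper does. The leading-term identification you flag as the main obstacle is precisely the point the paper settles by the defining maximality of $w_{\sS^\circ}$, so the argument is correct and matches.
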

\begin{proof}
  First, we note that \[\varphi B_\sS=B_{\sS^\circ}+\sum_{\sS'<\sS}a_{\sS'}B_{\sS'}
  \pmod{{A}^\bnu_{>\mathfrak{c}(\gla_\sS)}}.\] The first term of the RHS
  comes from the term
  $\gmu_{\sS^\circ}\linj\gmu_{\sS}\cdot B_\sS=B_{\sS^\circ}$ of the
  product on the LHS; for
  any other pitchfork terms, we will either have a standard tableaux where
  $\ell(w_{\sS'})< \ell(w_{\sS^\circ})$ (by assumption), or a non-standard tableau, in
  which case the term lies in ${A}^\bnu_{>\mathfrak{c}(\gla_\sS)}$.
Thus, the equality follows.  If we have a non-trivial relation between
$\CST$'s, then (by multiplying by idempotents $e_{\mathfrak{n}}$ on the left and right) we
may assume that all tableaux which appear are of the same type.  Since
the $a\mapsto \varphi a\varphi^*$ is injective on such elements, we
have a non-trivial relation between the right hand sides of
\eqref{CST-circ}. This is impossible because of the
upper-triangularity in \eqref{CST-circ} and since the vectors
$C_{\sS^\circ,\sT^\circ}$ are linearly independent modulo the image of
${A}^\bnu_{>\mathfrak{c}}(\gla_\sS)$ by Theorem \ref{T-is-cellular}.
\end{proof}

\begin{proof}[Proof of Theorem \ref{A-is-cellular}]
We verify that $(\La,M,C,*,<, \op{deg})$ is a graded cell datum.
\begin{enumerate}[(C1)]
\item Clear.
\item This is the claim that the vectors $\CST$ are a basis.  They span by Lemma \ref{CST-span} and are linearly independent by Lemma \ref{CST-lin-ind}.
\item By definition $\CST^*=(B_\sS B_\sT^*)^*=B_\sT B_\sS^*=C_{\sT,\sS}.$
\item This is essentially identical to the proof of Theorem \ref{T-is-cellular}.  Consider $\sS$ of shape $\xi$.  Since the $\CST$ are a basis,  \[xB_{\sS}=\sum_{\sS',\sT} r_x(\sS,\sS',\sT)C_{\sS',\sT}\] for some coefficients $r_x(\sS,\sS',\sT)$.  Since $\sT$ must be a semi-standard tableau of type $\xi$, we must have that the shape of $\sT$ is above $\xi$ in dominance order, unless $\sT$ is the super-standard tableau.  So, \[xB_{\sS}=\sum_{\sS'} r_x(\sS,\sS')B_{\sS'}\pmod{{A}^\bnu(>\xi)}.\]
\item The degree function $\op{deg}$ clearly satisfies the required conditions.\qedhere
\end{enumerate}
\end{proof}

\section{Dipper-James-Mathas cellular ideals and graded Weyl modules}
\label{Section6}
Let again $\bnu$ be of the form
$\bnu=(\omega_{\charge_1},\dots,\omega_{\charge_\ell})$; recall
from the introduction that having for fixed $\zeta$, a primitive $n$th
root of unity in $\K$, we have an induced choice of parameters for the
cyclotomic Hecke algebra, given by
$q=\zeta, Q_i=\zeta^{\charge_I}$.

\begin{prop}
The chicken feet vector $\varphi_{\gmu}$ from Definition \ref{chicken} transforms according to the sign representation for the Young subgroup $S_{\mathfrak{c}(\gmu)}$.
\end{prop}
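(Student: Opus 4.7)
The plan is to verify directly the three conditions of Proposition~\ref{sign-transform} for $v=\varphi_{\gmu}$ acting via generators $\psi_r$ whose strands $r,r+1$ lie in a common block of $\mathfrak{c}(\gmu)$. Since $\varphi_{\gmu}$ is the horizontal composition $\varphi_{\gmu^{(1)}}\,|\cdots|\,\varphi_{\gmu^{(r)}}$ and $S_{\mathfrak{c}(\gmu)}$ is the product of block-wise symmetric groups, I would first reduce to a single block $\gmu^{(k)}$ via Proposition~\ref{monoid} (horizontal multiplication commutes with the $\psi_r$-action restricted to that block). Moreover $e_{\Bi}\varphi_{\gmu^{(k)}}$ is either zero (when $\Bi$ is not the residue sequence of any $\gla\in K$) or equal to the single summand $(\gmu^{(k)}\linj\gla_{\Bi})$, so I may focus on one such summand.

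The key step is the factorization $(\gmu^{(k)}\linj\gla_{\Bi})=(\gmu^{(k)}\linj\gla^{+})\circ(\gla^{+}\linj\gla_{\Bi})$, where $\gla^{+}$ is obtained from $\gla_{\Bi}$ by merging positions $r$ and $r+1$ into a single part $\alpha_{i_r}+\alpha_{i_{r+1}}$. Since $\psi_r$ only touches the top two strands, the whole calculation localizes to evaluating the elementary composition $\psi_r\circ(\gla^{+}\linj\gla_{\Bi})$, which is a two-strand "split then crossing''.

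This local composition is then evaluated case by case using Proposition~\ref{euler-int}, Proposition~\ref{Demazure}, and Remark~\ref{KL}:
\begin{itemize}
\item If $i_r=i_{r+1}=j$, the split $2\alpha_j\to(\alpha_j,\alpha_j)$ is the inclusion of $S_2$-invariants, and $\psi_r$ acts as the Demazure operator $\Delta_r$, which annihilates $S_2$-invariant polynomials. Hence $\psi_r\circ(\gla^{+}\linj\gla_{\Bi})=0$.
\item If $i_r=i_{r+1}+1$, the Euler class of the split $\alpha_{i_r}+\alpha_{i_{r+1}}\to(\alpha_{i_r},\alpha_{i_{r+1}})$ contributes a factor $(x_{i_r,\ast}-x_{i_{r+1},\ast})$, while the opposite split $\gla^{+}\linj\gla_{\Bi}^{s_r}$ has trivial Euler class; combined with the crossing convention of Remark~\ref{KL}, the ratio is precisely the polynomial $(y_r-y_{r+1})$ acting in the sector $e_{\Bi^{s_r}}$.
\item Otherwise the Euler class is trivial and the crossing is a pure tensor flip, giving $\psi_r\circ(\gla^{+}\linj\gla_{\Bi})=(\gla^{+}\linj\gla_{\Bi}^{s_r})$.
\end{itemize}
Recomposing with the outer factor $(\gmu^{(k)}\linj\gla^{+})$ and identifying $(\gmu^{(k)}\linj\gla_{\Bi}^{s_r})$ with $e_{\Bi^{s_r}}\varphi_{\gmu^{(k)}}$ yields exactly the three transformation rules of Proposition~\ref{sign-transform}.

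The main obstacle will be the careful sign/convention bookkeeping in case~(ii): one must verify that the Euler factor of Proposition~\ref{euler-int}, combined with the crossing rule of Remark~\ref{KL} (which itself depends implicitly on the choice \eqref{PQchoice} through the isomorphism \eqref{BKiso}), produces exactly the polynomial $(y_r-y_{r+1})$ with the sign demanded by Proposition~\ref{sign-transform}. In particular, one must track that in the sector $e_{\Bi^{s_r}}$ the generators $y_r,y_{r+1}$ correspond to $x_{i_{r+1},\ast}$ and $x_{i_r,\ast}$ respectively, so that the identification of the polynomial factor from the diagrammatic calculation with $(y_r-y_{r+1})$ is literal and not off by a sign.
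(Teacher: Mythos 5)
Your proof is correct and takes essentially the same route as the paper's: the paper's entire proof is the single sentence that the claim ``follows directly from the definition [BKKL (4.38)], using Proposition~\ref{euler-int} and Proposition~\ref{sign-transform}'', i.e.\ precisely the block-by-block verification of the three cases of Proposition~\ref{sign-transform} via the Euler-class and Demazure-operator formulas that you carry out in detail. Your case analysis supplies exactly what the paper leaves implicit, and the sign/convention bookkeeping you flag in the case $i_r=i_{r+1}+1$ is indeed the only delicate point, on which the paper's own proof is silent.
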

\begin{proof}
We verify the formulas from Proposition~\ref{sign-transform2} using Proposition~\ref{euler-int} or more precisely Remark \ref{KL}. The case $i_r=r_{r+1}$ is obvious, since it corresponds to the fact that ${\Delta_{r}}^2=0$, whereas the case $i_{r}=i_{r+1}+ 1$ corresponds to the last bullet point in Remark~\ref{KL}. The other cases are treated by the first bullet point. The claim follows.
\end{proof}

We can consider ${A}^{\bnu}\eT$ as a $T^\bnu$-representation (acting
from the right).  For each $\ell$-multi-composition $\xi$, we let
$\varphi_\xi=\sum_{\mathfrak{c}(\gmu)=\xi}\varphi_{\gmu}$, and let
$e_\xi=\sum_{\mathfrak{c}(\gmu)=\xi}e_{\gmu}$.  Then $\varphi_{\xi}$
is a vector in $e_{\xi}{A}^{\bnu}\eT$ which generates a sign
representation for $S_\xi$.  Thus, it induces a map $h_\xi\colon x_\xi
T^\bnu \to e_\xi A^\bnu\eT $ such that $x_\xi a\mapsto \varphi_\xi a$.

\begin{thm}\label{schur-isomorphism}
  The map $h_\xi$ is an isomorphism.  In particular, ${A}^\bnu\eT\cong
  \bigoplus_\xi  x_\xi T^\bnu$ as right $T^\bnu$-modules, and there is
  an isomorphism $\Phi^\bnu\colon {A}^\bnu\to \mathbf{S}^\bnu$.  On
  the cyclotomic $q$-Schur algebra of rank $n$, we obtain an induced
  isomorphism  \[\Phi^{\bnu}_n\colon\quad A^{\bnu}_{n}\to
\mathbf{S}(n;q,Q_1,\ldots, Q_\ell).\]
\end{thm}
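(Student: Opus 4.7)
The strategy is to first show each $h_\xi$ is an isomorphism of right $T^\bnu$-modules, then sum them to obtain the bimodule isomorphism $A^\bnu \eT \cong \bigoplus_\xi x_\xi T^\bnu$, and finally take endomorphism rings to construct $\Phi^\bnu$. Well-definedness of $h_\xi$ is immediate: by the preceding proposition $\varphi_\xi$ transforms under the sign character of the Young subgroup $S_\xi$, so the relations defining $x_\xi T^\bnu$ as a quotient of $T^\bnu$ annihilate $\varphi_\xi$ from the right, and the formula $x_\xi a \mapsto \varphi_\xi a$ descends.

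The next step is to check that $h_\xi$ is bijective. The graded cellular basis of Theorem \ref{A-is-cellular} gives a $\K$-basis of $e_\xi A^\bnu \eT$ by those $\CST$ with $\sS$ a semi-standard tableau of type $\xi$ (so $e_{\grave{\mu}_\sS}$ is a summand of $e_\xi$) and $\sT$ standard (so $\grave{\mu}_\sT$ is of complete flag type); by Lemma \ref{T-per-dim} this set has cardinality $\dim_\K x_\xi T^\bnu$. For surjectivity I would induct on shape in the cellular filtration, using Lemma \ref{CST-lin-ind}: the formula $\varphi \CST \varphi^* \equiv C_{\sS^\circ, \sT^\circ}$ modulo lower cellular terms, combined with the fact that $\varphi_\xi$ equals the $e_\xi$-component of $\varphi$, expresses each cellular basis vector of $e_\xi A^\bnu \eT$ as $\varphi_\xi \cdot a$ for some $a \in \eT A^\bnu \eT = T^\bnu$ modulo cellularly lower terms which are handled inductively. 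Together with the dimension count this gives that $h_\xi$ is an isomorphism, and summing over $\xi$ yields $A^\bnu \eT \cong \bigoplus_\xi x_\xi T^\bnu$.

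For the algebra isomorphism I would take $T^\bnu$-linear endomorphism rings of both sides: the right hand side is canonically identified with $\mathscr{H}^\bnu$ via the Dipper-James-Mathas description \eqref{Schur} together with the tensor-product algebra identification \eqref{Beniso} (which intertwines the signed permutation modules $H^\bnu u^+_{\mathbf{a}_\xi} x_\xi$ with $x_\xi T^\bnu$); left multiplication then defines the canonical algebra map $\Phi^\bnu\colon A^\bnu \to \End_{T^\bnu}(A^\bnu \eT)^{\mathrm{op}} \cong \mathscr{H}^\bnu$. Restricting to the summand of vector compositions with total dimension $n$ yields $\Phi^\bnu_n\colon A^\bnu_n \to \mathbf{S}(n;q,Q_1,\ldots,Q_\ell)$. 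The main obstacle will be proving $\Phi^\bnu$ is bijective, which reduces to showing that $\eT$ is a full idempotent in $A^\bnu$, i.e., $A^\bnu \eT A^\bnu = A^\bnu$. For this I would factor each cellular basis element $C_{\sS, \sT}$ through an idempotent of complete flag type via a split followed by its inverse merge, using Proposition \ref{easyrel}(1) to evaluate the split-merge zigzag as multiplication by a Demazure operator applied to an Euler class, and verifying that modulo cellularly higher terms this product recovers the identity on each cellular layer.
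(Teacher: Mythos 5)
Your handling of $h_\xi$ is essentially the paper's argument: well-definedness from the sign-representation property of $\varphi_\xi$, and bijectivity from the dimension count of Lemma \ref{T-per-dim} against the linearly independent vectors $\CST$ of Lemma \ref{CST-lin-ind}. (The paper's surjectivity step is more direct than your proposed induction: by Proposition \ref{red-basis}, any basis vector of $e_\xi{A}^\bnu\eT$ has one end of complete flag type, which forces the refinement $\grave\mu'$ occurring in its canonical factorization to be a complete refinement of $\grave\mu$; hence every basis vector already factors through a single leg of the chicken foot $\varphi_{\grave\mu}$, with no induction on the cellular filtration required.)

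The final step, however, rests on a false claim. You propose to prove that $\Phi^\bnu$ is bijective by showing that $\eT$ is a \emph{full} idempotent, i.e.\ ${A}^\bnu\eT{A}^\bnu={A}^\bnu$. This fails in general. Already for $\ell=1$ and $n\geq e$, the algebra $A^\bnu_n\cong\mathbf{S}(n;q,Q_1,\ldots,Q_\ell)$ is quasi-hereditary with simple modules indexed by \emph{all} partitions of $n$ (Theorem \ref{cellular}), whereas $\eT A^\bnu_n\eT=T^\nu_n=H^\nu_n$ is the cyclotomic Hecke algebra, whose simples are indexed only by the $e$-restricted partitions. So there is a simple $A^\bnu_n$-module $L$ with $\eT L=0$, and then $A^\bnu\eT A^\bnu\cdot L\subseteq A^\bnu\cdot(\eT L)=0\neq L$, so $\eT$ cannot be full. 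Fullness is a sufficient but not necessary condition for the double centralizer property, and here it is precisely the wrong tool: the Schur functor $M\mapsto\eT M$ kills simples. The argument that actually closes the proof is a dimension count: the natural map $A^\bnu_n\to\End_{\eT A^\bnu_n\eT}(A^\bnu_n\eT)\cong\mathbf{S}(n;q,Q_1,\ldots,Q_\ell)$ is compared against the fact that the cellular basis $\{\CST\}$ of $A^\bnu_n$ and the Dipper--James--Mathas basis of $\mathbf{S}(n;q,Q_1,\ldots,Q_\ell)$ are both indexed by pairs of semistandard multitableaux of the same shape, so the two algebras have equal dimension; together with the spanning property of the $\CST$ this forces the map to be an isomorphism. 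You should replace the fullness reduction by this counting argument.
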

\begin{proof}
The surjectivity of $h_\xi$ is clear; every element of $\eT {A}^\bnu e_{\xi}$ is of the form \[\sum a_{\gla}\cdot( \gmu\linj\gla)=\sum a_{\gla}e_{\gla}\varphi_{\gmu},\] where $\gla$ only contains compositions of type $e$ which correspond to simple roots $\al_i$, and $a_\gla\in T^\bnu {A}^\bnu \eT.$ Thus, the map is an isomorphism if and only if the spaces have the
same dimension.  Thanks to
Lemma \ref{T-per-dim}, the dimension of $ x_\xi T^\nu$ is the number of pairs of tableaux $(\sS,\sT)$
where $\sS$ is standard and $\sT$ is type $\xi$.  By Lemma
\ref{CST-lin-ind}, the elements $\CST$ for the same set of pairs are
linearly independent vectors in $\eT {A}^\bnu e_{\xi}$, so it must
have at least this dimension. Thus, $h_\xi$ is an isomorphism and we obtain $\Phi^\bnu\colon {A}^\bnu\to \mathbf{S}^\bnu$ by Proposition~\ref{Bencycliso} and the definition of $\mathbf{S}^\bnu$. The theorem follows.
\end{proof}

Theorem \ref{A-is-cellular} shows that $A^{\bnu}_{n}$ comes with a natural grading and hence Theorem \ref{schur-isomorphism} shows that $\Phi^{\bnu}_n\colon\quad A^{\bnu}_{n}$ is a graded version of $\mathbf{S}(n;q,Q_1,\ldots, Q_\ell).$ Following \cite{Strgrad}, we say that a $\mathbf{S}(n;q,Q_1,\ldots, Q_\ell)$-module $\overline{M}$ {\it has a graded lift} $M$ if there is a graded $A^{\bnu}_{n}$-module which is isomorphic to $\overline{M}$ after forgetting the grading. Any such $M$ is then a {\it graded lift} of $\overline{M}$. By \cite[Lemma 1.5]{Strgrad}, graded lifts of indecomposable modules are unique up to isomorphism and overall grading shifts.

\begin{thm}
\label{cellular}
Under the isomorphism \eqref{BKiso}, the cellular structure from
Theorem \ref{A-is-cellular} is intertwined with the
Dipper-James-Mathas cellular structure \cite[Def.\ 6.7]{DJM}  on the
cyclotomic q-Schur algebra in that sense that:
\begin{enumerate}
\item The cellular ideals coincide when the order in the Dipper-James-Mathas structure
  is weakened to the  lexicographic
  ordering on multi-partitions.
\item The cell modules $W^\xi$ are graded lifts of the Weyl modules of $\mathbf{S}^\bnu$, and the
  $F^\xi=W^\xi/\op{rad} W^\xi$ form (up to overall grading shift) a complete, irredundant set of
  graded lifts of simple modules.
\item Any other graded lift of a Weyl module differs (up to
  isomorphism) only by an overall shift in the grading.
\end{enumerate}
\end{thm}
\begin{proof}
  The cellular ideals of either our basis or the Dipper-James-Mathas
  basis can be defined in terms of maps between permutation modules
  factoring through those greater in lexicographic order.  That is,
  identifying $e_\xi\mathbf{S}^\bnu e_{\xi'}$ with
  $\Hom_{\mathbf{S}^\bnu}(\mathbf{S}^\bnu e_{\xi}, \mathbf{S}^\bnu e_{\xi'})$, the
  intersection with $e_\xi\mathbf{S}^\bnu (>\vartheta)e_{\xi'}$ is
  just the maps factoring through $\mathbf{S}^\bnu e_{\vartheta'}$
  with $\vartheta' <\vartheta$ in the lexicographic order.  It is clear that the same definition works for our cellular
  structure. The second statement is then clear and the third follows
  by standard arguments (e.g. \cite[Lemma 1.5]{Strgrad}), since the
  Weyl modules are the standard modules of a highest weight structure,
  and thus indecomposable.
\end{proof}

\begin{remark}
\label{Schurlevel1}
{\rm
In the case where $\nu$ is itself a fundamental weight,
$\mathbf{S}^\nu$ is the sum of the usual $q$-Schur algebras for all
different ranks.   Except for some
degenerate cases, a grading on this algebra has already been defined
by Ariki, \cite{Ariki}.}
\end{remark}

\begin{theorem}\label{same-Ariki}
  The grading defined on $\mathbf{S}^\nu$ via the isomorphism
  $\Phi^\nu$ agrees with Ariki's, \cite{Ariki}, up to graded Morita equivalence.
\end{theorem}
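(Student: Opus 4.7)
The plan is to exploit the essential uniqueness of graded lifts of cellular algebras, which has already been set up in Theorem \ref{cellular}. Both Ariki's grading and ours turn the ungraded algebra $\mathscr{H}^\nu$ (for $\ell=1$) into a positively graded cellular algebra whose cell modules are graded lifts of the classical Weyl modules. The central statement in our cellularity theorem (part (3)) says that any two graded lifts of a given Weyl module differ by an overall grading shift. So my strategy is to leverage this rigidity to produce an explicit graded Morita equivalence.

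First, I would extract from \cite{Ariki} the graded cellular datum implicit there: the cell modules are Ariki's graded Weyl modules $W^\la_{\mathrm{Ar}}$, and the graded simples $L^\la_{\mathrm{Ar}}$ arise as their heads. Then I would apply Theorem \ref{cellular}(3) to conclude that, for each multipartition $\la$, there exists an integer $n_\la$ with $W^\la_{\mathrm{Ar}} \cong W^\la(n_\la)$ as graded $\mathscr{H}^\nu$-modules, and similarly $L^\la_{\mathrm{Ar}} \cong L^\la(n_\la)$ as graded simples (using that the head inherits the shift uniquely). Here I am fixing one choice of graded lift for $W^\la$ on our side; the identification of the underlying ungraded modules is already guaranteed by the DJM cellular structure and Theorem \ref{cellular}(1), via $\Phi^\nu$.

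With the shifts $n_\la$ in hand, a graded Morita equivalence is produced as follows. Pick a minimal graded projective generator $P = \bigoplus_\la P(\la)$ on our side and its Ariki-analogue $P_{\mathrm{Ar}} = \bigoplus_\la P_{\mathrm{Ar}}(\la)$. Since projective covers of graded simples are unique up to isomorphism and shift, $P_{\mathrm{Ar}}(\la) \cong P(\la)(n_\la)$ as graded modules for the underlying ungraded algebra, carrying the grading induced from $P(\la)$ shifted by $n_\la$. Then the graded endomorphism algebras $\mathrm{End}^{\mathrm{gr}}(P(\la)(n_\la),\,P(\mu)(n_\mu)) = \mathrm{End}^{\mathrm{gr}}(P(\la),P(\mu))(n_\mu - n_\la)$ assemble into a graded algebra graded-Morita equivalent to $A^\nu$ (since $P$ is a projective generator on our side) and also to Ariki's graded $\mathscr{H}^\nu$ (by construction). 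This gives the desired graded Morita equivalence.

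The main technical obstacle is verifying that Ariki's construction genuinely fits into the graded cellular framework in the form needed to apply Theorem \ref{cellular}(3) — that is, that his grading on $W^\la_{\mathrm{Ar}}$ is concentrated in non-negative degrees with a one-dimensional top and is compatible with the DJM cellular filtration. Concretely this amounts to matching his degree function on standard tableaux with the combinatorial degree $\op{Deg}$ of Proposition \ref{degrees}, up to a shape-dependent shift. The exceptional ``degenerate'' range (small $e$ and $n$) where Ariki's grading is not defined is simply excluded from the comparison, so no additional argument is needed there.
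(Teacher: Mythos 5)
There is a genuine gap at the central step. Theorem \ref{cellular}(3) asserts uniqueness of graded lifts of a Weyl module up to shift \emph{over the fixed graded algebra} $A^{\bnu}$. Ariki's graded Weyl module $W^\la_{\mathrm{Ar}}$ is a graded module over Ariki's graded algebra, which is a priori a \emph{different} grading on the same underlying ungraded algebra $\mathscr{H}^\nu$; it is not a graded module over $A^{\bnu}$ at all, so the asserted isomorphism $W^\la_{\mathrm{Ar}}\cong W^\la(n_\la)$ ``as graded $\mathscr{H}^\nu$-modules'' is ill-posed --- it presupposes exactly the identification of gradings you are trying to establish. Uniqueness of graded lifts can never by itself compare two gradings on the same algebra (for instance $\K[x]/(x^2)$ carries many inequivalent gradings, each of which has ``unique graded lifts'' of its unique simple); one needs a common anchor on which both gradings are known to induce the \emph{same} structure.

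The paper supplies exactly such an anchor: the Brundan--Kleshchev grading on $H^\nu$. Both Ariki's grading and the one obtained via $\Phi^\nu$ come with a graded Schur functor compatible with this one fixed grading on $H^\nu$, so the images of the indecomposable projectives of the two graded Schur algebras are graded lifts of indecomposable summands of permutation modules \emph{over the fixed graded algebra} $H^\nu$. There the uniqueness-up-to-shift argument (\cite[Lemma 1.5]{Strgrad}) legitimately applies, and the double centralizer property then identifies the two graded Schur algebras, up to graded Morita equivalence, with graded endomorphism algebras of sums of shifts of the same indecomposables. Your fallback --- matching Ariki's degree function on tableaux with $\op{Deg}$ up to a shape-dependent shift --- would indeed suffice, but it is not a ``technical obstacle'' to be deferred: in that route it is the entire content of the theorem, and it is not clear Ariki's construction even comes packaged with a combinatorial degree function on semistandard tableaux to compare against.
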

\begin{proof}
   Ariki's grading is defined uniquely (up to Morita equivalence) by
   the fact that there is a graded version of the Schur functor
   compatible with the Brundan-Kleshchev grading on $\mathfrak{H}^\nu$.  The
   image of an indecomposable projective of the $q$-Schur algebra
   under the Schur functor is a graded lift of an {\it indecomposable}
   summand of a permutation module; hence a graded lift is unique up to shift by \cite[Lemma 1.5]{Strgrad}.  This shows
   that such a grading is unique, and both Ariki's and our gradings
   satisfy this condition.
\end{proof}

\begin{remark}
\label{gldim}
{\rm
From our identification of the cyclotomic quiver Schur algebras  with the  cyclotomic Schur algebras,  Theorem \ref{schur-isomorphism},  it follows that  ${A}^\bnu$ and ${A}^\bnu_{\bd}$ have finite global dimension.  The indecomposable projective modules form (in the nongraded and also in the graded version) a $\mZ$-basis of the Grothendieck group. We will describe the underlying combinatorics in the next section.
}
\end{remark}

\section{Graded multiplicities and $q$-Fock space}
\label{sec:q-fock-space}
As promised in the introduction, we now draw the connection between
the algebras ${A}^\bnu$ and the theory of higher representation theory as in the
work of Rouquier \cite{Rou2KM} and Khovanov-Lauda \cite{KLIII}. We start with the combinatorics of Fock space, obtain the main result on decomposition numbers and finish with a categorification result.

\subsection{Induction and restriction functors}
\label{sec:categorical-action}
We still assume that $\bnu=(\omega_{\charge_1},\dots,\omega_{\charge_\ell})$ and also $e>2$ (although all of our results can be extended to an arbitrary sequence of weights by replacing the Fock space by any highest weight representation of $U_q(\glehat)$).
\\

As before, we identify the dimension vector $\bd$ with the element
$\sum d_i\al_i$ of the root lattice of $\slehat$;  we let $\langle -,-
\rangle$ denote the usual pairing between the root and weight lattice
of $\slehat$, in particular $\langle \omega_i,\bd\rangle=\bd_i.$ In terms of the bilinear Euler form $\{\bd',\bd''\}=
\sum_{i=1}^e\bd'_i(\bd_{i}''-\bd_{i+1}'')$ we have that \[\langle \bd',\bd''\rangle
=\{\bd',\bd''\}+ \{\bd'',\bd'\}.\]
Consider the inclusion of algebras $\gamma_\bd\colon {A}^\bnu_{\bc}\,\to
{A}^\bnu_{\bc+\bd}$ defined by
$a\mapsto a|e_\bd$ and $A^\bnu_{\bc|\bd}=\gamma_{\bd}A^\bnu_{\bc+\bd}\gamma_{\bd}$ with the appropriate idempotent $\gamma_{\bd}$ (the sum of all $e_{\grave\mu}$ where the last
  multi-composition ends with $\bd$). The most interesting case is when $\bd=\alpha_i$ for which we use the abbreviation
$\gamma_i=\gamma_\bd$ and denote the image of the inclusion also by $A^\bnu_{\bc|i}$.

\begin{definition}
Define,  for any $\bc$, the graded
  $\bd$-induction and $\bd$-restriction functors 
\begin{eqnarray*}
  \fF_\bd\colon\quad{A}^\bnu_{\bc}\mgmod\to {A}^\bnu_{\bc+\bd}\mgmod,&&  M\mapsto {\fF_\bd}M= {{A}^\bnu_{\bc+\bd}}\gamma_{\bd} \otimes_{A^\bnu_{\bc}}M, \\
 \fE_\bd\colon\quad{A}^\bnu_{\bc+\bd}\mgmod\to {A}^\bnu_{\bc}\mgmod,&&N\mapsto\fE_\bd N=
\Hom_{A^\bnu_{\bc+{\bf d}}}(\gamma_{\bd}{A}^\bnu_{\bc +\bd}, N)\langle s(\bd,\bc)\rangle.
\end{eqnarray*}
where $\langle_-\rangle$ denotes the grading shift as in Section~\ref{sec:Hall} and $s(\bd,\bc)=\{\bd,\bd\}+\{\bc,\bd\}+\{\bd,\bc\}-\sum_{i=1}^\ell\bd_{\charge_ i}$. 
\end{definition}

\begin{lemma}
\label{keep proj}
  The functors $\fF_\bd$ send projective objects to projective
  objects, and the
  functors $\fE_\bd$
  are exact.
\end{lemma}
\begin{proof}
  The vector space underlying $\fE_\bd M$ is $\gamma_{\bd}N$, the image of an
  idempotent  acting on $M$, so  $\fE_\bd$ is
  exact.  The left adjoint of an exact functor always sends
  projectives to projectives, see also the proof of Lemma \ref{pairingfunc}.
\end{proof}
Note that viewing ${A}^\bnu\mmod$ as the representation category of ${A}^\bnu$,
these functors are induced by the monoidal action of ${A}$
described earlier and its adjoints. 
In the special case $\bd=\alpha_i$ we obtain after forgetting the grading via the isomorphism,  Theorem \ref{schur-isomorphism},  \[\Phi^{\bnu}_n\colon\quad A^{\bnu}_{n}\to
\mathbf{S}(n;q,Q_1,\ldots, Q_\ell)\] the ordinary  $i$-induction and $i$-restriction functors for the cyclotomic $q$-Schur algebras,
\begin{eqnarray*}
  \overline\fF_i&:& \mathbf{S}(n;q,Q_1,\ldots, Q_\ell) \mmod\to \mathbf{S}(n+1;q,Q_1,\ldots, Q_\ell)\mmod, \\
  \overline\fE_i&:&\mathbf{S}(n+1;q,Q_1,\ldots, Q_\ell)\mmod\to \mathbf{S}(n;q,Q_1,\ldots, Q_\ell)\mmod.
\end{eqnarray*}
A detailed study of the ordinary $i$-induction and
$i$-restriction functors can be found in \cite[\S
5]{Wada}. The functors $\overline\fF_i$ and $\overline\fE_i$ are
biadjoint, hence in particular exact, \cite[Theorem 4.14]{Wada};
however, when the grading is taken into account the left and right
adjoints of $\overline\fF_i$ will differ by a shift in the grading. For general $\bd$, the functors $(\overline\fF_\bd,\overline\fE_\bd)$ form an adjoint pair, but are not biadjoint.

\excise{
\begin{theorem}  The isomorphism $\Phi^\bnu$ from Theorem \ref{schur-isomorphism} intertwines the functors $\overline\fF_i$ and $\overline\fE_i$ with the
  usual functors of $i$-induction and $i$-restriction on modules over
  the cyclotomic $q$-Schur algebra.
\end{theorem}
\begin{proof}

We have a natural inclusion of algebras $A^\bnu_n\to A^\bnu_{n+1}$ given by the map
 $\sum_i\gamma_i$, which adds a new strand of all possible labelings
 by simple roots.
 Restricted to $R^\nu_n\to R^\nu_{n+1}$, this map is intertwined by
 Brundan and Kleshchev's isomorphism with  the
 usual inclusion of affine Hecke algebra of $S_n$ into that of  $S_{n+1}$.

 Thus,  $\Phi^\bnu$ intertwines the map
 $\sum_i\gamma_i$  with the induced  inclusion $\mathbf{S}^\bnu_d\to
 \mathbf{S}^\bnu_{d+1}$.  Furthermore, the
 decomposition into the summands $\gamma_i$'s matches under the
 isomorphism $\Phi^\bnu$ with the decomposition of this
 functor according to eigenvalues of the deformed Jucys-Murphys
 element under the isomorphism \eqref{BKiso}; see \cite[(4.21)]{BKKL}.
\end{proof}
}

\subsection{Combinatorics of higher Fock space }
\label{sec:decategorification}

We now introduce the combinatorics which will control graded versions of induction and restriction functors. 

Fix  $\charge\in \mZ$ and let  $V=\mZ[q,q^{-1}]^\mZ$ with basis $u_i, i\in\mZ$. For a
partition $\la$ we denote by $\la'$ its transposed partition. 


\begin{definition}
  The {\bf level 1 quantized Fock space} $\Fock_1(\charge)$ of charge
  $\charge$ is the $\mZ[q,q^{-1}]$-module freely generated by a symbol
  $\la$ for each partition $\la$.
\end{definition}

Recall the realization of Fock space in terms of the free $\mC[q,q^{-1}]$-module  $\wedge^\frac{\infty}{2}V$ of semi-infinite wedges in $V$ on basis $u_{i_1}\wedge u_{i_2}\wedge\cdots$ where the indices $i_k\in\mZ$ form an increasing sequence $i_1<i_2<\dots$ such that ${i_k}=\charge+k-1$ for $k\gg 1$:

\begin{lemma} Fix a charge $\charge$. There is an isomorphism of $\mZ[q,q^{-1}]$-modules
\begin{eqnarray}
\label{Fockiso}
\Fock_1(\charge)&\cong& \wedge^\frac{\infty}{2}V\\
  \la&\mapsto &u_\la:=u_{\charge-\la_1'}\wedge u_{\charge-\la_2'-1}\wedge
  u_{\charge-\la_3'-2}\wedge \cdots.\nonumber
  \end{eqnarray}
Under this identification, there is an addable box in column $k$ of $\la$ iff $i_k>i_{k-1}+1$. In this case the index $i_k$ taken modulo e is the residue of this unique addable box.
\end{lemma}

\begin{proof}
This follows directly from the definitions.
\end{proof}
Th space \eqref{Fockiso} carries an action of $U_q(\slehat)$ defined by Hayashi
\cite{Hayashi}. The explicit formulas crucially depend on the identification \eqref{Fockiso} and the choice of coproduct on the Hall algebra discussed in Section \ref{sec:Hall}. We chose a slightly unusual identification \eqref{Fockiso}, but better suitable for our purpose than for instance \cite{VV} (where the partition gets not transposed and the indices form a decreasing sequence). We choose the coproduct 
\begin{equation*}
\Delta(f_i)= 1\otimes f_i+f_i\otimes k_i,\quad \Delta(e_i)=k_i^{-1}\otimes e_i+e_i\otimes 1,\quad \Delta(k_i)=k_i\otimes k_i.
\end{equation*}
for $U_q(\slehat)$. It extends by \cite[(10)]{VVDuke} and the Drinfeld double construction, see e.g. \cite{Xiao}, to a coproduct of $U_q(\glehat)$ by setting on the standard generators, Section \ref{sec:Hall}, 
\begin{eqnarray}
\label{coproduct}
&\Delta(\bbf_\bd)= \displaystyle{\sum_{\bd=\bd'+\bd''}}q^{-\{\bd',\bd''\}}
\bbf_{\bd''}\otimes\bbf_{\bd'}\mathbf{k}_{\bd''},\quad \Delta(\be_\bd)= \displaystyle{\sum_{\bd=\bd'+\bd''}}q^{\{\bd'',\bd'\}}
\mathbf{k}_{\bd'}^{-1}\be_{\bd''}\otimes\be_{\bd'},&\nonumber\\
&\Delta(\bk_\bd)=\Delta(\bk_\bd)\otimes\Delta(\bk_\bd),&
\end{eqnarray}
where $U_q(\glehat)$ is the Drinfeld double of $U_e^-$. By
\cite[6.2]{VVDuke}, the Hayashi action extends to an action of $U_q(\glehat)$, such that acting with $\bbf_\bd$  on a partition $\la$ produces a  $\mZ[q,q^{-1}]$-linear combination $\bbf_\bd\la=\sum_\mu q^{m(\mu/\la)_-}\mu$ of all partitions $\mu$ obtained from $\la$ by adding $|\bd|$ boxes {\it but at most one per column} and exactly $d_i$ of residue $i$ for $i\in\mV$. We write $\op{res}(\mu/\la)=\bd$ for all such partitions $\mu$. Similarly $\be_{\bd}$ acts by removing $|\bd|$ boxes, again at most one per column, and exactly $d_i$ of residue $i$ for $i\in\mV$. To describe the coefficients $q^{m(\mu/\la)_-}$ assume $\op{res}(\mu/\la)=\bd$ and let $m_j=1$ if there was a box added in column $j$ to obtain $\mu$ from $\la$ and set $m_j=0$ otherwise for all $j\geq 1$. Using the identification $\eqref{Fockiso}$ for $\la$ define
\begin{eqnarray*}
m(\mu/\la)_-&=&\sum_{i_s<i_t}m_t(1-m_s)(\delta_{\overline{i_s},\overline{i_t}}
-\delta_{\overline{i_s+1},\overline{i_t}}).
\end{eqnarray*}
Here $t$ denotes a column where a box $b$ was added and then
$\delta_{\overline{i_s},\overline{i_t}}--\delta_{\overline{i_s+1},\overline{i_t}}$ counts the number of addable minus the number of removable boxes below $b$ with the same residue as $b$, excluding the blocked columns (see Definition \ref{defdeg}) thanks to the factor $(1-m_s)$. Similarly, we have the `reversed' statistics involving boxes above
\begin{eqnarray*}
m(\mu/\la)_+&=&\sum_{i_s>i_t}m_t(1-m_s)(\delta_{\overline{i_s},\overline{i_t}}
-\delta_{\overline{i_s+1},\overline{i_t}}).
\end{eqnarray*}
\begin{lemma}
\label{eq:hayashi}
There is an $U_q(\glehat)$-action on Fock space $\Fock_1(\charge)$ given by
\begin{eqnarray*}
&\bbf_\bd\la=\sum_{\op{res}(\mu/\la)=\bd}q^{m(\mu/\la)_-}\mu,\quad\quad
\be_{\bd}\la=\sum_{\op{res}(\la/\mu)=\bd}q^{m(\mu/\la)_+}u_\la,&\\
&\bk_{\bd}\la=q^{\langle \bd,\op{wt}(\la)\rangle}\la.&
\end{eqnarray*}
Here $\langle \bd,\op{wt}(\la)\rangle=\sum_{i=1}^e d_i(\op{add}_i-\op{rem}_i)$, where $\op{add}_i$ respectively $\op{rem}_i$ denotes the number of addable respectively removable boxes in $\la$ of residue $i$.
\end{lemma}
\begin{proof}
This follows by straightforward calculations from our definitions, see \cite{VVDuke} for details (remembering that we work with the transposed partitions).
\end{proof}

 \begin{remark}\mbox{}
{\rm We should note that conventions vary from author to author. Our conventions are transposed to \cite{VV} and also \cite[2.1]{Uglov} and
    designed so as to match the projectives of our
    categorification with the canonical basis of Fock space; different
    conventions can match it instead with the tilting modules or other
    collections of modules.
}
 \end{remark}

\begin{definition}[Higher Level Fock space]
Let $\{\charge_1,\dots,\charge_\ell\}$ be a fixed $\ell$-tuple of charges. Then $\Fock_\ell$ denotes the $\ell$-fold level $1$ fermionic Fock space
  $$\Fock_\ell=\Fock_1(\charge_1)\otimes \cdots \otimes \Fock_1(\charge_\ell),$$ with charges $\charge_1,\dots,\charge_\ell$
  equipped with the basis
  $u_{\xi}=u_{\xi^{(1)}}\otimes \cdots \otimes u_{\xi^{(\ell)}}$ where $\xi$ ranges over all $\ell$-multi-partitions. It comes with the
  usual $\C[q,q^{-1}]$-bilinear inner product $(-,-)$ where the basis
  $u_\xi$ is orthonormal. The elements $u_{\xi}$ are called {\de
    standard basis vectors}.
\end{definition}

\begin{remark}{\rm
In the bosonic realization of Fock space, \cite{KacBombay}, our standard basis is just
  the products of Schur functions $u_\xi=s_{\xi^{(1)}}s_{\xi^{(2)}}\cdots
  s_{\xi^{(\ell)}}$ in $\ell$ different alphabets. As we pointed out already in the introduction, we are {\it not} considering the higher level
Fock space studied by Uglov \cite{Uglov}, but the more naive tensor
product of level 1 Fock spaces.  This distinction is discussed
extensively in \cite[\S 3]{BKgd}, see also Theorem~\ref{twoFocks}.
}
\end{remark}

We can easily generalize our description of the Hayashi action to this
tensor product.  For boxes $(i,j,k)$ and $(i',j',k')$ in the diagram
of a multi-partition, we say that $(i',j',k')$ is {\bf below} $(i,j,k)$ if
$k'>k$ or $k'=k$ and $i'<i$. The careful reader should note that this
doesn't match some writers' conventions (for
example, from \cite{HM}; our convention will match theirs if one
indexes partitions in opposite order). Then, we can define $m(\mu/\la)_-$ for multi- partitions exactly as before, namely by going through all added boxes $b$ and count the
number of addable minus the number of removable boxes below $b$ with the same residue
as $b$, excluding the blocked columns. The definitions imply
\begin{lemma} The action is given by
\begin{eqnarray}
\label{actionFockl}
\bbf_{\bd}u_\xi=\sum_{\op{res}(\eta/\xi)=\bd}q^{m(\eta/\xi)_+}u_\eta,&& \be_{\bd}u_\eta=\sum_{\op{res}(\eta/\xi)=\bd}q^{-m(\eta/\xi)_-}u_\xi
  \end{eqnarray}
  where $\eta$ and $\xi$ ranges over pairs of $\ell$-multi-partitions such that $\eta/\xi$
  has no two boxes in the same column.
  \end{lemma}

\begin{ex}
For instance, if $e=3$, $\ell=2$ and $\charge=(0,0)$, then we have the following (where we abuse notation and write the residues into the boxes)
\begin{eqnarray*}
e_0f_0.\left(\young(012,2), \emptyset\right)&=&e_0.\left(q\cdot q\left(\young(0120,2), \emptyset\right)+1\cdot q\left(\young(012,20), \emptyset\right)+1\left(\young(012,2),\young(0)\right)\right).\\
&=&(q^2\cdot1+q\cdot q^{-1}+1\cdot q^{-2})\left(\young(012,2), \emptyset\right).
\end{eqnarray*}
where the numbers indicate the residues. Hence $e_of_0-f_0e_0$ acts by multiplication with $(q^2\cdot1+q\cdot q^{-1}+1\cdot q^{-2})=\frac{(q^3-q^{-3})}{(q-q^{-1})}$ which agrees with the action of $\frac{k_i-k_i^{-1}}{q-q^{-1}}$.
\begin{eqnarray*}
f_{(1,0,1)}.\left(\young(012,2), \emptyset\right)&=&
q^2\left(\young(0120,2,1), \emptyset\right)+q\left(\young(012,20,1), \emptyset\right)+\left(\young(012,2,1), \young(0)\right)
\end{eqnarray*}
(Note that the two boxes can't be put both in the first column.)
\end{ex}

\begin{lemma} We have the following adjunction formula for the action on $\Fock_\ell$:
\begin{eqnarray} 
\label{adjointcomb}
(\bbf_{\bd}u_\xi,u_\eta)&=&(u_\xi, q^{\{\bd,\bd\}}\be_{\bd}\mathbf{k}_{\bd}u_\eta)
\end{eqnarray}
\end{lemma}
\begin{proof}
This follows directly from \eqref{actionFockl} and the formula 
  \[m(\eta/\xi)_+ +m(\eta/\xi)_-=\{\bd,\bd\}+\langle
  \bd,\operatorname{wt}(u_\eta)\rangle,\] where $\langle
  -,-\rangle$ denotes the usual pairing between roots and weights of
  $\slehat$ as in Lemma~\ref{eq:hayashi} summed over all occurring partitions, since $\mathbf{k}_{\bd}u_\eta=q^{\langle
  \bd,\operatorname{wt}(u_\eta)\rangle}$.
\end{proof}

\subsection{A weak categorification}
\label{sec:weak-categ}

Let $K^0_q({\mathbf A}^{\bnu})$ be the split Grothendieck group of the graded category of graded projective
modules over ${A}^{\bnu}$; since the algebra $A^\bnu$ has finite
global dimension by Remark \ref{gldim}, this is canonically isomorphic to the Grothendieck group
of all $A^\bnu$-modules, not just the projectives. By Theorem \ref{cellular}(2), the Grothendieck group of  $A^\bnu\mmod$
has a basis over $\mZ[q,q^{-1}]$ given by the classes of the cell
modules ${W}^\xi$ with the chosen standard lift in the grading such that the head is in degree zero. The action of
grading shift induces a $\mZ[q,q^{-1}]$-module structure on $K^0_q({\mathbf A}^{\bnu})$ where $q$ shifts the
grading up by $1$.   We'll be interested in the  isomorphism
\begin{eqnarray}
\label{isowithFock}
\psi\colon\quad
\mathbb{Z}K^0_q({\mathbf
  A}^{\bnu})\overset{\sim}\longrightarrow \Fock_\ell,&\text{ defined by }& [W^\xi]\mapsto u_\xi.
  \end{eqnarray}
\begin{remark}
\label{rem:weight}
Note that under the isomorphism $\psi$, isomorphism classes from ${A}^\bnu_{\bc}\mgmod$ get sent to elements of weight $\op{wt}(\bc)=\sum_{i=1}^\ell\la_i-\sum_{i=1}^e c_i\alpha_i$. 
\end{remark}
We define the following grading shifting functors for any $\bc, \bd$: 
\begin{eqnarray}
\fK_{\bd}=\langle \bd, \op{wt}(c)\rangle
: &&{A}^\bnu_{\bc}\mgmod\to {A}^\bnu_{\bc}\mgmod.
\end{eqnarray}
By the above remark they are well-defined and their induced action on the Grothendieck group agrees with $\bk_\bd$ via $\psi$.
Moreover, we have the following:
\begin{thm}\label{fock-space}
The functors $\fF_\bd$ and $\fE_\bd$ induce an action of
$U_q(\glehat)$ on $\mC(q)\otimes_{\mZ[q,q^{-1}]}K^0_q({\mathbf A}^{\bnu})$;
the map $\psi$ defines an isomorphism of representations $K^0_q({\mathbf
  A}^{\bnu})\cong \Fock_\ell$.
\end{thm}
Before giving the proof of this theorem, we need some preparation.

\begin{definition}
\label{Defh}
Given a residue datum $\bmuh$, define $\bbf_{\bmuh}=\bbf_{\mu^{(r)}}\cdots
\bbf_{\mu^{(1)}}\in U^-_e$ and inductively the vectors $h_{\gmu}\in\Fock_\ell$  as
\[
h_{\gmu}=\bbf_{\bmuh(\ell)}\left(u_\emptyset\otimes h_{(\bmuh(\ell-1),\dots,\bmuh(1))}\right).\]
{\rm(}The $j$th factor acts here on elements of the $j$th-fold level $1$ Fock space $\mathbb{F}_j$.{\rm )}
\end{definition}
\begin{prop}
\label{psi}
  We have that \[\displaystyle h_{\gmu}=\sum_{\substack{\op{sh}(\sS)=\xi\\
        \op{type}(\sS)=\gmu}}q^{\op{Deg}(\sS)}u_\xi, \qquad \text{ and }  \qquad  [A^\bnu_{\gmu}]=\sum_{\substack{\op{sh}(\sS)=\xi\\
        \op{type}(\sS)=\gmu}}q^{\op{deg}(\sS)}[W^\xi].\]
In particular, it follows from \eqref{isowithFock} and Proposition \ref{degrees} that $\displaystyle\psi([A^\bnu e_{\gmu}])=h_{\gmu}$.
\end{prop}
\begin{proof}

We start with the first displayed equality and prove this by induction on $\ell$. For $\ell=1$ this is clear from the definitions and Lemma \ref{eq:hayashi}. We fix now $\ell>1$ and assume the result holds for shorter $\bmuh$. In particular we only need to consider the case where $\bmuh^{(\ell)}\not=\emptyset$ and can assume the claim to hold for $\grave\nu$ obtained from $\grave\mu$ with the last part  $\bmuh^{(\ell)}$ of $\bmuh$ removed. Let this part be $\bd$. Then
  $h_{\gmu}=\bbf_\bd h_{\grave \nu}$ and
 \[\displaystyle h_{\grave\nu}=\sum_{\substack{\op{sh}(\sS)=\xi\\
     \op{type}(\sS)=\grave \nu}}q^{-\op{deg}(\sS)}u_\xi\]
Thus, we have that
\begin{eqnarray*}
  \bbf_\bd h_{\grave \nu}&=&\sum_{\substack{\op{sh}(\sS)=\xi\\
     \op{type}(\sS)=\grave \nu}}q^{\op{deg}(\sS)}\bbf_\bd u_\xi
=\sum_{\substack{\op{sh}(\sS)=\xi\\ \op{type}(\sS)=\gmu}}q^{\op{deg}(\sS)}u_\xi.
\end{eqnarray*}
For the last equality we used Lemma \ref{actionFockl} and the definition of the comultiplication \eqref{coproduct} in comparison with the Definition \ref{defdeg} of the combinatorial degree $\op{Deg}$. This completes the proof of the first formula.

For the second equality, we apply the result \cite[2.14]{HM}; the projective
module $A^\bnu e_{\gmu}$ has a filtration by $W_\xi$ with multiplicity
spaces $\dot{W}_\xi\otimes_{A^\bnu}A^\bnu e_{\gmu}\cong
e_{\gmu}W_\xi$ and hence we are done by Theorem
\ref{A-is-cellular}: the space $e_{\gmu}W_\xi$ has a homogeneous basis
indexed by semi-standard tableaux of type $\gmu$ and shape $\xi$.
\end{proof}

 The Grothendieck group $K^0_q({\mathbf
  A}^{\bnu})$ is endowed with a $q$-bilinear pairing defined by
\begin{eqnarray}
\label{pairing}
\lsy [P],[P']\rsy&=&\dim_q \left({\dot P}\otimes_{{A}^{\bnu}} P'\right).
\end{eqnarray}
Here $\dot{P}$ is $P$ considered as a right module using the $*$-antiautomorphism and $\dim_q M=\sum \dim M_iq^i$ denotes the graded Poincare polynomial
for any finite dimensional $\mZ$-graded vector space $M=\oplus_{i\in Z}M_i$.
Then the pairing \eqref{pairing} extends
to objects $M$ and $N$ by taking the derived tensor product $\lsy
[M],[N]\rsy =\dim_q \left({\dot M}\otimes^\mathbb{L}_{{A}^{\bnu}}
  N\right)$.
Since $\{u_\xi\}$ and $\{[W^\xi]\}$ are orthonormal bases for the
natural $q$-bilinear forms on the two spaces, the map $\psi$
intertwines the inner product of $\Fock_\ell$ and the form $\lsy
-,-\rsy$ on $K^0_q({\mathbf
  A}^{\bnu})$.

Let $\Bf_{\bd}=[\cL\Bf_{\bd}]$ and  $\Be_{\bd}=[\cL\fE_{\bd}]=[\fE_{\bd}]$ be the endomorphisms on the Grothendieck groups induced by the derived functors $\cL\fF_{\bd}$ and $\cR\fE_{\bd}$, and $\Bk_{\bd}$ the endomorphism induced by $\fK_{\bd}$ (which is just multiplication with a certain power of $q$). We have then the following categorification of the adjunction formula  \eqref{adjointcomb}

\begin{lemma}
\label{pairingfunc}
With the pairing from \eqref{pairing} we have 
\begin{eqnarray}
\label{oje}
\left(\Bf_{\bd}[M],[N]\right)&=&q^{\{\bd,\bd\}} \left([M],\Be_\bd \Bk_{\bd}[N]\right).
\end{eqnarray}
\end{lemma}

\begin{proof} 
By Remark \ref{gldim} it is enough to check the formula on projectives, where all the functors are exact. 
Abbreviate $B=A^\bnu_{\bc+\bd}$ and $A=A^\bnu_{\bc}=\gamma_{\bd}B\gamma_{\bd}$ and and let $M=Ae_{\gmu}$ and $N=Be_\gla$.
Then the definitions imply
\begin{eqnarray*}
(\Bf_{\bd}[M],[N])&=&\dim_q ((\dot{\fF_\bd M})\otimes_{B} N)
=\dim_q \dot{\left(B\gamma_{\bd}\otimes_A  M\right)}\otimes_B Be_\gla\\
&=&\dim_q (e_{\gmu}A\otimes_A\gamma_{\bd}B\otimes_B) Be_\gla
=\dim_q (e_{\gmu|\bd}B e_\gla).\\
([M],\Be_\bd [N])&=&\dim_q(\dot{M}\otimes_{A} \fE_\bd N)
=q^{s(\bc,\bd)}\dim_q ({e_{\gmu}A} \otimes_A \gamma_{\bd}Be_\gla\\
&=&q^{s(\bc,\bd)}\dim_q (e_{\gmu|\bd}B e_\gla).
\end{eqnarray*}
Then the right hand side of \eqref{oje} equals $q^c$ times the left hand side, where $c={\{\bd,\bd\}}+{s(\bc,\bd)}+\sum_{i=1}^\ell\bd_{\charge_ i}-\langle\bd,\bc+\bd\rangle=0$.
\end{proof}

\begin{proof}[Proof of Theorem \ref{fock-space}]
From Proposition \ref{psi} we have $\bbf_\bd \psi ([A^\bnu e_\gmu])=\psi([A^\bnu
e_{\gmu|\bd}])$. On the other hand  $\Bf_\bd([A^\bnu e_\gmu])=[A^\bnu
e_{\gmu|\bd}]$ by definition of $\fF_{\bd}$. Hence $\psi$  intertwines  $\Bf_\bd$ with $\bbf_{\bd}$ by Remark \ref{gldim} and as we know already $\fK_\bd$ with $\bk_{\bd}$. Then it also intertwines $\Be_\bd$ with $\be_{\bd}$ by Lemma \ref{pairingfunc}, since it intertwines the two nondegenerate bilinear forms.
\end{proof}

\subsection{The involution induced by Serre twisted duality}
For any left ${A}^\bnu$-module $M$, the space
$H=\Hom_{{A}^\bnu}(M,{A}^\bnu)$ is naturally a right ${A}^\bnu$-module
via the action $(f\cdot a)(m)=f(m)\cdot a$.  Using the anti-involution
$*$, this can be turned into a
left ${A}^\bnu$-module via $(f\cdot a)(m)=f(m)\cdot a^*$ for $f\in H, m\in M, a\in{A}^\bnu$. This extends to the derived
functor \[\mD=\RHom_{{A}^\bnu}(-,{A}^\bnu)\colon \quad D^b({A}^\bnu\mmod)\to D^b({A}^\bnu\mmod).\]
We refer to this functor as {\bf Serre-twisted duality}.  

This name can be explained by the following alternate description of the same functor. Let $\star: {A}^\bnu\mmod \to {A}^\bnu\mmod$ be the duality functor given
by taking vector space dual, and then twisting the action of ${A}^\bnu$
by the anti-automorphism $*$.
Let $\Serre:D^b({A}^\bnu\mmod)\to D^b({A}^\bnu\mmod)$ be the graded version of the Serre functor. Since
${A}^\bnu_\bd$ is finite-dimensional and has finite global dimension (Remark \ref{gldim}), 
this is simply the derived functor of tensor product with the bimodule
$({A}^\bnu_\bd)^*$; thus, $\star\circ \Serre=\mD$, see \cite[pg. 37]{Happel} and \cite{MSSerre}.\\

The following gives a natural construction of a bar-involution on
$\Fock_\ell$ which we will show coincides with the construction in
\cite{BKgd}.
Let $g\mapsto \overline{g}$ be the unique $q$-antilinear automorphism of $U_e^-$ which fixes the standard generators $\bbf_\bd$.
\begin{thm}\label{psi-properties}
  For any $\ell$ and any fixed charge let $\Psi:\Fock_\ell\to \Fock_\ell$ be the map induced by $\mD$ on the Grothendieck
  group. These maps satisfy the compatibility properties
 \begin{enumerate}[({B}1)]
  \item $\Psi(g\cdot v)=\bar g\cdot \Psi(v)$ for $g\in U^-_e$.
  \item $\Psi(v\otimes u_\emptyset)=\Psi(v)\otimes u_\emptyset$.
  \item $\Psi(u_\xi)=u_\xi+\sum_{\eta<\xi}a_{\eta,\xi}u_\eta$ for
    some $a_{\eta,\xi}\in \mZ[q,q^{-1}]$
  \end{enumerate}
and are uniquely characterized by the first two properties. Moreover, the vectors $h_\gmu$ are invariant under this involution.
\end{thm}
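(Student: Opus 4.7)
The plan is to deduce everything from two functorial facts: (i) the distinguished projective $A^\bnu e_\gmu$ is self-dual under $\mD$, and (ii) $\mD$ intertwines the categorical $U^-_e$-action and horizontal multiplication up to the appropriate twists. Once (i) is established, the invariance of $h_\gmu$ is immediate; (ii) then gives (B1) and (B2); (B3) is read off the cellular filtration; and uniqueness follows because $\{h_\gmu\}$ is generated from $u_\emptyset^{\otimes \ell}$ by applying $\Bf_\bd$'s.

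First I would verify the self-duality $\mD(A^\bnu e_\gmu) \cong A^\bnu e_\gmu$ in $D^b(A^\bnu\mmod)$. Since $\mD$ sends the projective $A^\bnu e_\gmu$ to $\RHom_{A^\bnu}(A^\bnu e_\gmu, A^\bnu)=e_\gmu A^\bnu$, viewed as a \emph{right} module, I would reinterpret it as a left module via the antiautomorphism $*$ from Theorem \ref{A-is-cellular}. Because $*$ is given diagrammatically by vertical reflection, the idempotents $e_\gmu$ are fixed, so $(e_\gmu A^\bnu)^* \cong A^\bnu e_\gmu$. The fact that the grading shift is trivial is forced by the symmetric degree formula $\deg(C_{\sS,\sT}) = \deg(\sS) + \deg(\sT)$ for the cellular basis. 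This shows $\Psi[A^\bnu e_\gmu] = [A^\bnu e_\gmu]$, i.e.\ $\Psi(h_\gmu)=h_\gmu$.

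Next, for (B1), I would use that the action of $\Bf_\bd$ on the Grothendieck group is induced by horizontal multiplication with $P((\bd)) = A^{\omega} e_{(\bd)}$ (Proposition \ref{monoid} and Lemma \ref{tensor}), together with the biadjointness of $\fF_i$ and $\fE_i$ from Section \ref{sec:categorical-action}. Since $\mD$ interchanges left and right adjoints while the bar involution on $U^-_e$ fixes $\Bf_\bd$, the commutation $\mD \circ \fF_\bd \cong \fF_\bd \circ \mD$ holds (up to the degree shifts already absorbed into the convention \eqref{gradconv}), yielding $\Psi(\Bf_\bd \cdot v) = \Bf_\bd \cdot \Psi(v)$ on classes. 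For (B2), the empty tableau corresponds to the monoidal unit $\mathbf{1} = P(\emptyset)$, which is trivially $\mD$-invariant, so tensoring on the right with $u_\emptyset$ commutes with $\Psi$.

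For (B3), I would analyze $\mD(W^\xi)$ using the cellular filtration from Theorem \ref{A-is-cellular}: the two-sided ideal $A^\bnu(\geq \xi)/A^\bnu(>\xi)$ is filtered with subquotients $W^\eta \boxtimes (W^\eta)^\vee$ indexed by $\eta \geq \xi$. Standard cellular-algebra arguments (analogous to those for Specht/Weyl modules of cellular algebras) show that $\mD(W^\xi)$ lies in the subcategory generated by $W^\eta$ with $\eta \leq \xi$, with top copy $W^\xi$ occurring exactly once; passing to Grothendieck groups gives $\Psi(u_\xi) = u_\xi + \sum_{\eta<\xi} a_{\eta,\xi} u_\eta$ with $a_{\eta,\xi}\in\mZ[q,q^{-1}]$. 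Finally, uniqueness of $\Psi$ subject to (B1) and (B2) follows by iterating the identity $h_\gmu = \Bf_{\bmuh(\ell)}\cdot(h_{(\bmuh(\ell-1),\ldots,\bmuh(1))}\otimes u_\emptyset)$: (B2) fixes the behavior on the vacuum, and (B1) then propagates this across all $h_\gmu$, which span $\Fock_\ell$.

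The main obstacle will be pinning down the grading shifts in step one, specifically showing that the Serre-twisted duality on the indecomposable summands of $A^\bnu e_\gmu$ does not introduce an overall shift. This should reduce, via the graded cellular structure, to checking that the bilinear form on each cell module pairs classes of matching degree, which is a direct consequence of Proposition~\ref{degrees} and the symmetric expression $\deg(C_{\sS,\sT}) = \deg(\sS)+\deg(\sT)$.
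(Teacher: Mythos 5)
Your argument for (B1) contains a genuine gap. You derive $\Psi(\Bf_\bd\cdot v)=\Bf_\bd\cdot\Psi(v)$ from "the biadjointness of $\fF_i$ and $\fE_i$" together with the claim that $\mD$ interchanges left and right adjoints. But (B1) is a statement about all of $U^-_e\cong U_q^-(\glehat)$, whose standard generators are the $\Bf_\bd$ for \emph{arbitrary} dimension vectors $\bd$, not just the simple roots $\al_i$; and the paper explicitly points out (in the introduction, citing \cite[5.1,5.2]{ShanFock}) that the functors attached to these generators do \emph{not} have biadjoints — they send projectives to projectives but not injectives to injectives, so their left adjoints are not even exact. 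Hence the commutation $\mD\circ\fF_\bd\cong\fF_\bd\circ\mD$ cannot be obtained the way you propose for exactly the generators you need. The correct (and much shorter) route is the one you already have all the ingredients for: since $\Psi$ is $q$-antilinear and the $\Bf_{\bmuh}$ span $U^-_e$ while the $h_{\gla}$ span $\Fock_\ell$, it suffices to check (B1) on these spanning sets, where it reads $\Psi(\Bf_{\bmuh}\cdot h_{\gla})=\Psi(h_{\bmuh\cup\gla})=h_{\bmuh\cup\gla}=\overline{\Bf_{\bmuh}}\cdot\Psi(h_{\gla})$, using the invariance of the $h$'s (which you did establish correctly via $\mD(Ae)=Ae^*$ and $e_\gmu^*=e_\gmu$) and the bar-invariance of the monomials $\Bf_{\bmuh}$. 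The same spanning-set device, not "the unit is $\mD$-invariant," is what proves (B2): $h_\gmu\otimes u_\emptyset=h_{\gmu'}$ with $\gmu'=(\bmuh(1),\dots,\bmuh(\ell),\emptyset)$, and both sides are fixed by the respective involutions.

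Your treatment of (B3) via a putative Weyl filtration of $\mD(W^\xi)$ is also heavier than needed and would require justification you do not supply (exactness/vanishing of higher $\Ext$'s against $A^\bnu$); the paper instead deduces (B3) purely on the Grothendieck group by downward induction from the unitriangular expansion $h_{\gla_\xi}=u_\xi+\sum_{\eta<\xi}b_{\eta,\xi}(q)u_\eta$ and the invariance of $h_{\gla_\xi}$. Your uniqueness argument and your verification of the invariance of $h_\gmu$ (including the observation that no overall grading shift can appear) do match the paper.
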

\begin{proof}
By Proposition \ref{psi}, the vectors $h_\gmu$ correspond to (the standard lifts of) projective modules which are obviously fixed by $\mD$, hence the last statement follows. 
  \excise{First, we note that $h_\gmu$ are invariant since for any algebra
  $A$ with fixed anti-automorphism $*$
  and idempotent $e$, we have that \[\mD(Ae)=\Hom_{A}(Ae,A) =eA \cong
  Ae^*.\]  In the last step, we have made $eA$ a left $A$-module
by the formula $a\cdot (eb)=eba^*$, and thus the
  automorphism $*$ provides an isomorphism to $Ae^*$ with the obvious
  left module structure.
 Of course
  $e_\gmu^*=e_\gmu$, so
 \[ \mD({A}^\bnu e_\gmu)\cong {A}^\bnu e_\gmu.\]}
 To prove (B1), it suffices to check that it is true for a
   bar-invariant spanning set of $U^-_e$.  Such a spanning set is
   given by the monomials $\bbf_{\bmuh}$.  Since
   $\bbf_{\bmuh}h_{\gla}=h_{ \bmuh\cup\gla}$, the result follows. For
   (B2), it is enough to note that $h_{\gmu}\otimes
   u_\emptyset=h_{\gmu'}$ where
   $\gmu'=(\bmuh(1),\dots,\bmuh(\ell),\emptyset)$. Since
  properties (B1) and (B2) determine the behavior on a spanning set, they uniquely
  characterize the map. 
  
  Finally, we prove (B3) by induction.
Since $h_{\gla_{\xi}}=\sum_{\op{type}(\sS)=\xi}q^{\op{Deg}(\sS)} u_{\op{sh}(\sS)}$
we have that $h_{\gla_{\xi}}=u_\xi+\sum_{\eta<\xi}
b_{\eta,\xi}(q)u_\eta$ for $b_{\eta,\xi}\in\mZ[q,q^{-1}]$.  Thus, for $\xi$ minimal, $\Psi({u_\xi})=
\Psi({h_{\gla_\xi}})={h_{\gla_\xi}}=u_\xi$.  Now, we assume the claim holds for $\eta<\xi$ and get
\begin{align*}
  \Psi({u_\xi})&=\Psi{h_{\gla_{\xi}}-\sum_{\eta<\xi}
    b_{\eta,\xi}(q)u_\eta}=h_{\gla_{\xi}}-\Psi{\sum_{\eta<\xi}
    b_{\eta,\xi}(q)u_\eta}=u_\xi+\sum_{\eta<\xi}
  a_{\eta,\xi}'(q)u_\eta
\end{align*}
where again $a_{\eta,\xi}\in\mZ[q,q^{-1}]$.
\end{proof}

Choosing integers $(\tilde{\charge}_1,\dots,\tilde{\charge}_\ell)$
such that $\tilde{\charge}_i\equiv \charge_i\pmod e$, one has a
natural vector space isomorphism $\beta$ taking standard vectors to standard vectors
between our Fock space $\Fock_\ell$ and Uglov's Fock space
$\tilde{\Fock}_\ell$ (in \cite{Uglov}, this is denoted
$\mathbf{F}_{q}[(\tilde{\charge}_1,\dots,\tilde{\charge}_\ell)]$. On the level of  $\ell$-multipartition this isomorphism is just given by reading the components in opposite order. 
Using the isomorphism $\beta$, one can also define a $q$-antilinear involution $\Psi'$ on $\Fock_\ell$ by pulling back Uglov's bar involution $\Psi_U$. We wish to compare $\Psi'$ with $\Psi$.
\begin{definition}
A multi-charge is
{\de $m$-dominant} if for each $i$, we have $\tilde{\charge}_{i+1}-\tilde{\charge}_{i}\geq m.$
\end{definition}

If our charge is $m$-dominant, then the map $\beta$ is an isomorphism of $U_e^-$-modules on the
weight spaces of height $\leq m$, \cite[Lemma 3.20]{BKgd}; we should
note that we have several differences of convention \cite{BKgd}, but in this case, they felicitously cancel.

\begin{theorem}
\label{twoFocks}
If the multicharge
  $(\tilde{\charge}_1,\dots,\tilde{\charge}_\ell)$ is $m$-dominant, then on
  the weight spaces of height $\leq m$, we have that $\displaystyle \Psi=\Psi'.$
\end{theorem}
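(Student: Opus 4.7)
The plan is to reduce the statement to the uniqueness clause in Theorem~\ref{psi-properties}: since $\Psi$ is characterized among $q$-antilinear endomorphisms of $\Fock_\ell$ by the two properties (B1) and (B2), it suffices to prove that the pulled-back involution $\Psi'$ satisfies (B1) and (B2) on each weight space of height $\leq m$. Uniqueness then forces $\Psi=\Psi'$ there.

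First I would establish (B1) for $\Psi'$ on the relevant weight spaces. The involution $\Psi_U$ on Uglov's Fock space $\tilde{\Fock}_\ell$ is compatible with the $U_q(\slehat)^-$-action essentially by construction. By \cite[Lemma~3.20]{BKgd}, the isomorphism $\beta$ is $U_q(\slehat)^-$-equivariant on weight spaces of height $\leq m$ once the multicharge is $m$-dominant, so (B1) for $\Psi'$ holds for generators $\Bf_{\alpha_i}$. To extend this to all of $U_e^-$ one must handle the imaginary root generators $\Bf_{(d,d,\ldots,d)}$; here one uses Schiffmann's description \cite{SchifCyc} of $U_e^-\cong U_q^-(\slehat)\otimes\bLa(\infty)$ to write an arbitrary $\Bf_{\bd}$ as a $\bar{\cdot}$-invariant polynomial in the $\Bf_{\alpha_i}$'s and central imaginary generators, and verifies (B1) for the latter directly from the explicit formulas in \cite[\S 3]{BKgd} for the Hall-algebra action on both Fock spaces.

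Next, for (B2), I would show that on height-$\leq m$ vectors the operation $v\mapsto v\otimes u_\emptyset$ commutes with $\Psi_U$ after transport through $\beta$. Uglov defines $\Psi_U$ through straightening relations for $q$-deformed semi-infinite wedges with multicharge $(\tilde{\charge}_1,\ldots,\tilde{\charge}_\ell)$; the crucial combinatorial input is that under $m$-dominance the inequalities $\tilde{\charge}_i-\tilde{\charge}_{i+1}\geq m$ guarantee that no straightening move acting on a weight-$\leq m$ wedge can move a tensor factor across the boundary between the $\ell$th and earlier slots. Hence appending $u_\emptyset$ as the $(\ell+1)$st factor (with any compatible charge shift) is a strictly right-commuting operation for $\Psi_U$ in this range, translating under $\beta$ precisely to (B2) for $\Psi'$.

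The main obstacle is this compatibility of $\Psi_U$ with the tensor decomposition under $m$-dominance: while (B1) is a reasonably formal transport argument once the Hall-algebra action is understood on both sides, (B2) is a genuinely combinatorial statement about $q$-wedges that has no analogue in the naive tensor picture. I would extract it from the formulas in \cite[\S 3]{BKgd} (in particular, their comparison of the canonical bases on $\Fock_\ell$ and $\tilde{\Fock}_\ell$ in the $m$-dominant regime), where essentially the same bound on the height is used. Once both (B1) and (B2) are verified for $\Psi'$ on the height-$\leq m$ subspace, Theorem~\ref{psi-properties} immediately yields $\Psi=\Psi'$ there.
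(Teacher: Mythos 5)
Your proposal is correct and follows essentially the same route as the paper: reduce to the uniqueness clause of Theorem~\ref{psi-properties}, verify (B1) by transporting through $\beta$ using its Hall-algebra equivariance on height $\leq m$ weight spaces, and verify (B2) by the combinatorial observation that under $m$-dominance the wedge factors corresponding to the appended $u_\emptyset$ do not interact with Uglov's bar involution. The only cosmetic difference is that you re-derive the $U_e^-$-compatibility in (B1) via Schiffmann's decomposition, whereas the paper cites \cite[Lemma 3.20]{BKgd} directly for full $U_e^-$-equivariance of $\beta$.
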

\begin{proof}
 Since we have given uniquely characterizing properties (B1)-(B2) of $\Psi$, we
 need only to show that $\Psi'$ satisfies these.
  Since $\beta$ is an isomorphism of $U_e^-$-modules on the
weight spaces of height $\leq m$, we have that
\[\Psi'(g\cdot v)
=\beta^{-1}(\Psi_U(\beta(g.v)))
=\beta^{-1}(\bar{g}\cdot(\Psi_U(\beta(v)))
=\bar{g}\cdot\beta^{-1}(\Psi_U(\beta(v)))
=\bar g\cdot \Psi'(v)\]
for weight vectors $v$ of height $\leq m$ and $g\in U^-_e$ (using \cite[3.31]{Uglov}); hence (B1) holds.

To see (B2) we must consider the effect
of tensoring with $u_\emptyset$ in Uglov's language:  we add a new
variable at every $\ell$th place in the bosonic realization of the
Fock space, and include all of these variable up to the charge in our
wedge, which is larger than
any of the variables corresponding to any other partitions that appear in the
wedge.
Applying Uglov's formula \cite[3.23]{Uglov} for the bar
involution in terms of wedges, we must reverse the order of all terms
in the wedge, and use the rules  \cite[3.16]{Uglov} to reorder
them.

First, we pull all of the new variables to the right end of the
wedge.  We claim that no non-zero correction terms appear when we do
this, so the expression remains a pure wedge.
Let $u_i$ be a new variable, and $u_j$ any other.  By
\cite[3.16]{Uglov}, $u_i\wedge u_j=\pm q^{?} u_j\wedge u_i$ modulo
wedges that replace this with $u_{i+h\ell}\wedge u_{j-h\ell}$ or
$u_{j-h\ell}\wedge u_{i+h\ell}$ for some $h$ such that $i+h\ell$ is
strictly
between $i$ and $j$.  Thus, all terms which arise have the form
\[\cdots \wedge u_{i+h\ell}\wedge \cdots \wedge u_{i+(h-1)\ell}\wedge
\cdots \wedge u_{i+\ell}\wedge \cdots \wedge u_{i+h\ell}\wedge \cdots\] if
$j>i$ or
\[\cdots \wedge u_{i+h\ell}\wedge \cdots \wedge u_{i-\ell}\wedge
\cdots \wedge u_{i+(h+1)\ell}\wedge \cdots \wedge u_{i+h\ell}\wedge \cdots\]
if $i<j$.  Both of these wedges are clearly 0 by the straightening rules
if $h=1$, and by induction, we can reduce to this case. That is,
\begin{equation}
  \label{wedge-commute}
  \cdots \wedge u_i\wedge u_j\wedge \cdots =\cdots \wedge\pm q^{?} u_j\wedge u_i\wedge \cdots
\end{equation}
as long as all new variables between $i$ and $j$ appear somewhere in
the wedge. Now, reorder the old variables;  again using \cite[3.16]{Uglov}, we
see that exactly the same correction terms appear here.  Now, applying
\eqref{wedge-commute}, we see that reordering the new variables and
shuffling them back into place introduces no correction terms.  Note
that the we need not worry about the sign or power of $q$ appearing in
\eqref{wedge-commute}; by  \cite[3.24]{Uglov}, they must be correct. This shows that \[\Psi'(v\otimes
u_\emptyset)=\Psi'(v)\otimes u_\emptyset\] and completes the proof.
\end{proof}

\subsection{Decomposition numbers and canonical bases}
\label{sec:decomp-numb-canon}

For the remainder of this article, we assume that
$\operatorname{char}(\K)=0$. In the setup of Theorem~\ref{cellular}, let  $P^\xi$ be the graded projective cover of $F^\xi$ (which is the same as the projective cover of $W^\xi$).  Obviously, since we have a
surjective map ${A}^\bnu e_{\gla_\xi}\to F^\xi$, the projective $P^\xi$
is a summand of this module with multiplicity $1$.  We let $p_\xi=[P^\xi]$, and $\phi_\xi=[F^\xi]$.
\begin{theorem}
\label{decompnumb}
 Assume that
$\operatorname{char}(\K)=0$. We have  
\begin{eqnarray}
\label{multipl}
p_\xi&=&u_\xi+\sum_{\eta< \xi} a_{\xi\eta}(q)u_\eta\quad\text{
   for polynomials }
 a_{\xi\eta}(q)\in q\mZ_{\geq 0}[q].
 \end{eqnarray}
 By BGG-reciprocity,
 we also have $u_\xi=\phi_\xi+\sum_{\xi>\eta}a_{\xi\eta}(q)\phi_\eta$,
 and so the coefficients $a_{\xi\eta}$ are the graded decomposition
 numbers of ${A}^\bnu$.
\end{theorem}

\begin{proof}
First note that BGG reciprocity applies in the nongraded version, since ${A}^\bnu$ is quasi-hereditary by \cite{DJM} and Theorem \ref{cellular}. The graded version follows then by general arguments as for instance in \cite[\S 8]{MSTransl}. Therefore it remains to show \eqref{multipl}. Since the complementary summand of $e_{\gla_\xi}{A}^\bnu$ to $P_\xi$ is filtered
  by Weyl modules $W^\eta$ for $\eta<\xi$, we must have an expression as above for some
  Laurent polynomials $b_{\xi\eta}(q)\in\mZ[q]$, see Proposition \ref{psi} (the coefficients of these
  polynomials are manifestly non-negative integral, since they are graded
  multiplicities of Weyl modules in the standard filtration of
  $P^\xi$).

The negativity of powers of $q$ is equivalent to showing that
$\End_{\tilde{A}^\bnu }(\bigoplus P^\xi)$ is a positively graded algebra. For this, it
suffices to show the same for $\End_{\tilde{A}^\bnu }(\bigoplus \tilde P^\xi)$, the
corresponding modules over $\tilde{A}^\bnu$, since this ring surjects onto
$\End_{{A}^\bnu }(\bigoplus P^\xi)=\End_{\tilde{A}^\bnu }(\bigoplus
P^\xi)$ by the universal property of projectives.
This is the ring of self-extensions of the sum of shifts of simple perverse
sheaves $\bigoplus_{\grave
\mu}p_*\K_{\fQ(\grave\mu)}$, so the indecomposable projectives are all
of the form $\Ext_{\GRep_\bd}\big(L_\xi,\bigoplus_{\grave
\mu}p_*\K_{\fQ(\grave\mu)}\big)$ where $L_\xi$ is a shift of a simple perverse
sheaf appearing in $\bigoplus_{\grave
\mu}p_*\K_{\fQ(\grave\mu)}$.
The projective $\tilde P^\xi$ appears  with multiplicity 1 in $A^\bnu e_\xi$,
so $L_\xi$ appears with multiplicity 1 in $p_*\K_{\fQ(\grave{\la}_\xi)}$.
Since $p_*\K_{\fQ(\grave{\la}_\xi)}$ is Verdier self-dual, so is $L_\xi$
and thus $L_\xi$ is perverse.
Thus, we have an isomorphism 
\[\Ext^*_{\GRep_\bd}(L_{\xi},L_{\xi'})\cong \Hom_{\tilde{A}^\bnu}(\tilde
P^\xi,\tilde P^{\xi'})\]
and a surjective map \[\Ext^*_{\GRep_\bd}(L_{\xi},L_{\xi'})\to
\Hom_{A^\bnu}(P^\xi,P^{\xi'}).\]  Since perverse sheaves are the heart of a
$t$-structure, this $\Ext$ group is positively graded, so only
positive shifts of Weyl modules can appear in the standard filtration
of $P^\xi$. 
\end{proof}

This property of upper-triangularity with respect to a standard
basis is one of the hallmarks of a {\bf canonical basis} (in the sense of Lusztig), the other
being fixed under a bar-involution, such as the $\Psi$ defined
above; in fact, the basis $\{p_\xi\}$ does satisfy these properties and thus can be thought of a canonical basis, justifying our naming  (A general definition of canonical bases including this one as
a special case is given in \cite{WebCB}):

\begin{theorem}\label{canonical}
   Assume $\operatorname{char}(\K)=0$. The basis $p_\xi$ is the unique basis of $\Fock_\ell$ such that:
  \begin{itemize}
  \item $\Psi(p_\xi)=p_\xi$ and
  \item $p_\xi=u_\xi+\sum_{\eta< \xi} a_{\xi\eta}(q)u_\eta$ for
 $e_{\xi\eta}(q)\in q\mZ_{\geq 0}[q]$.
  \end{itemize}
That is, $p_\xi$ is the ``canonical basis'' of the involution $\Psi$
with ``dual canonical basis'' $\phi_\xi$.
\end{theorem}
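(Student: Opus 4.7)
The plan is to argue by the classical Kazhdan--Lusztig uniqueness scheme: first I would verify that $p_\xi=[P^\xi]$ satisfies both conditions, and then derive uniqueness purely formally from (1) and (2).

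For existence, condition (2) is exactly the content of Theorem~\ref{decompnumb}, so the only real task is bar-invariance. I would exploit the already established identity $\mD(A^\bnu e_\mu)\cong A^\bnu e_\mu$ from the proof of Theorem~\ref{psi-properties}. Taking a Krull--Schmidt decomposition of $A^\bnu e_\mu$ into graded indecomposable projectives and comparing it with the $\mD$-image, the functor $\mD$ must permute the graded indecomposable projectives up to grading shift; in particular $\mD(P^\xi)\cong P^{\xi'}(b)$ for some $\xi'$ and some $b\in\mZ$, so on the Grothendieck group $\Psi(p_\xi)=q^{c}p_{\xi'}$ (with $c=\pm b$ according to the shift convention). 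Expanding $p_\xi$ by (2) and applying property~(B3) of Theorem~\ref{psi-properties} term by term, the coefficient of $u_\xi$ in $\Psi(p_\xi)$ is $1$ and no $u_\eta$ with $\eta>\xi$ appears. Comparing with the leading term $q^c u_{\xi'}$ of $q^c p_{\xi'}$ forces $\xi'=\xi$ and $c=0$, so $\Psi(p_\xi)=p_\xi$.

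For uniqueness, suppose $\{v_\xi\}$ is a second basis satisfying (1) and (2) and put $w_\xi=v_\xi-p_\xi$. Then the $u_\xi$-coefficients cancel, so $w_\xi\in\sum_{\eta<\xi}q^{-1}\mZ[q^{-1}]\,u_\eta$, and $\Psi(w_\xi)=w_\xi$. Expand $w_\xi=\sum_\eta c_\eta(q)\,p_\eta$ uniquely in the canonical basis; bar-invariance of $w_\xi$ and of each $p_\eta$ forces $\overline{c_\eta(q)}=c_\eta(q)$. Assume for contradiction $w_\xi\ne 0$ and let $\eta_0$ be maximal with $c_{\eta_0}\ne 0$; by the triangularity $p_\eta=u_\eta+\sum_{\eta'<\eta}(\ast)u_{\eta'}$ the coefficient of $u_{\eta_0}$ in the $p$-expansion of $w_\xi$ equals $c_{\eta_0}(q)$, which therefore lies in $q^{-1}\mZ[q^{-1}]$. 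But $q\mapsto q^{-1}$ has no nonzero fixed element of $q^{-1}\mZ[q^{-1}]$, contradicting $c_{\eta_0}\ne 0$. Hence $w_\xi=0$ and $v_\xi=p_\xi$.

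No step is genuinely difficult: the only mildly delicate point is bookkeeping the grading-shift behaviour of $\mD$ to pin down the sign of the exponent $c$ above, and this is anyway irrelevant to the leading-term comparison that concludes the existence step. The identification of $\phi_\xi$ as the ``dual canonical basis'' of $\Psi$ then follows immediately by inverting the transition matrix, using the BGG reciprocity already recorded in Theorem~\ref{decompnumb}.
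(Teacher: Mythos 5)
Your argument is correct and follows essentially the same route as the paper's: triangularity comes from Theorem~\ref{decompnumb}, bar-invariance from $\mD({A}^\bnu e_{\gla_\xi})\cong {A}^\bnu e_{\gla_\xi}$, and uniqueness from the standard maximal-term argument against property (B3) of Theorem~\ref{psi-properties}. The only (harmless) deviation is in the existence step: the paper deduces $\mD(P^\xi)\cong P^\xi$ directly from the multiplicity-one occurrence of $P^\xi$ in the $\mD$-fixed module $e_{\gla_\xi}{A}^\bnu$ (via invariance of isotypic components), whereas you first observe that $\mD$ permutes indecomposable graded projectives up to shift and then pin down the permutation and the shift by the leading-coefficient comparison -- both are valid.
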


\begin{remark}{\rm 
As with the bar involution
$\Psi$, we can view this basis as a ``limit'' of Uglov's canonical
basis $G_*^+$ defined in \cite[3.25]{Uglov}, in the sense that the transition matrix from
the standard basis to
Uglov's basis stabilizes to the transition matrix for our basis on the
 weight spaces of height $\leq m$ once the multicharge is $m$-dominant.}
\end{remark}

\begin{proof}[Proof of Theorem \ref{canonical}]
  We have already shown that the second property holds. For the first,
  we note that $\mD$ fixes the projective
  ${A}^\bnu_{\gla_\xi}$, and thus fixes each of its ``isotypic
  components'' (the maximal summands of $e_{\gla_\xi}{A}^\bnu$ which are
  direct sums of the same indecomposable projective).  Since $P_\xi$
  appears with multiplicity 1, it is also invariant under $\mD$, so $\Psi(p_\xi)=p_\xi$.

 Uniqueness holds by the usual standard
 arguments: if $\{p'_\xi\}$ is another such basis, then we have that
 \[p_\xi-p'_\xi=\sum (a_{\xi\eta}(q)-a'_{\xi\eta}(q))u_\eta\] is
 $\Psi$-invariant and has only coefficients in $q\mZ[q]$ with respect to the standard basis. This is impossible by Theorem
 \ref{psi-properties}(B3) (consider a maximal standard basis vector). 
 
 Finally Theorem \ref{fock-space} together with the definition \eqref{pairing} show that the classes of the simple
 objects are dual to the classes of indecomposable projectives.
\end{proof}

\begin{corollary}
Assume $\operatorname{char}(\K)=0$. The graded decomposition numbers
$a_{\xi\eta}$ are the entries of the matrix expressing the canonical
basis in terms of the standard basis, hence given by  Proposition \ref{psi}.
\end{corollary}

When $e=\infty$, a number of interpretations of this canonical basis
as classes of indecomposable projectives have already appeared in the literature, for instance:
\begin{itemize}
\item in  category $\cO$
  of type A,  e.g. \cite{Subasis}, \cite{BS3}, \cite{FKS}, \cite{MSslk}, \cite{SS}.
\item for generalized Khovanov algebras, \cite{BS3},
\item with projective modules over the algebra $T^\bnu$ constructed
  earlier in \cite[6.8]{WebCB}, which we have already shown is Morita
  equivalent to $A^\bnu$, and
\item with projective modules over what Hu and Mathas call a ``quiver
  Schur algebra'' \cite[\S
7.4]{HMQ} (which is different from our definition).
\end{itemize}
Unsurprisingly, all of these categories are graded Morita equivalent;
in fact, a direct proof of the equivalence of each pair has appeared
for certain parabolic category $\cO$ and generalized Khovanov algebras
in \cite{BS3},  for $T^\bnu$ and Hu and Mathas's algebra in \cite[Th. 5.31]{Webmerged}, for $T^\bnu$ and category $\cO$ in \cite[\S 9.2]{Webmerged}, and for Hu and Mathas's algebra and category $\cO$ by \cite[Th. C]{HMQ}, and in abstract terms in \cite{BLW}.

In the special cases \cite[Th. 45, Prop.
76]{FSS} explicit formulas are available which play an important role
in the context of link homology theories developed therein. In general it is a nontrivial task to compute these bases explicitly. 

It is a general phenomenon that canonical bases appear naturally from
categorifications with positivity and integrality properties. For
example, in all simply-laced finite dimensional Lie algebras,
Lusztig's canonical bases for tensor products of irreducible
representations were shown by the second author to arise from the
projectives in the so-called tensor algebra. This and the general interplay between
canonical bases and higher representation theory is discussed in
greater detail in \cite{WebCB}. 

Also note that our result is another striking
example of how $\glehat$ behaves like a Kac-Moody algebra, even though
it is not covered by the theory of categorification for quantum groups from \cite{KLIII}, \cite{Rou2KM}. One theme in recent years has been a recognition that theorems like
Theorem \ref{fock-space}, which prove the existence of a {\it weak}
categorification, can be considerably strengthened to a {\it strong} categorification or strong action by considering $2$-morphisms, \cite{CR04}, \cite{Rou2KM}, \cite{KLIII}, \cite{CaLa}. At the moment, no consensus definition of such a strong action for $\glehat$
exists. Some examples and partial constructions have appeared in the literature: for example, some pieces occur in 
\cite[\S 9]{HYII}, in the context of polynomial functors and \cite{ShVa}, in the context of representations of rational Cherednik
algebras; but even in these situations, the notion of a
strong action remains unclear and a formal definition is so far not available. We expect that our construction should provide such a $\glehat$-action. 

\bibliographystyle{amsalpha}

\bibliography{./ref2}

\end{document}